\newcommand{\renc}{\renewcommand}
 \newlength{\baseunit}               
\newtheorem{theorem}{Theorem}[section]
\newtheorem{lemma}[theorem]{Lemma}
\newtheorem{remark}[theorem]{Remark}
\newtheorem{prop}[theorem]{Proposition}
\newtheorem{corollary}[theorem]{Corollary}
\newtheorem{conjecture}[theorem]{Conjecture}
\newtheorem{definition}[theorem]{Definition}
\newtheorem{ex}[theorem]{Example}
\newcommand{\cA}{{\mathcal A}}
\newcommand{\cB}{{\mathcal B}}
\newcommand{\cC}{{\mathcal C}}
\newcommand{\cE}{{\mathcal E}}
\newcommand{\cF}{{\mathcal F}}
\newcommand{\cM}{{\mathcal M}}
\newcommand{\cO}{{\mathcal O}}
\newcommand{\cS}{{\mathcal S}}
\newcommand{\cT}{{\mathcal T}}
\newcommand{\ov}{\overline}
\newcommand{\B}{\mathbb{B}_{\Lambda}}
\newcommand{\D}{\mathbb{D}_{\Lambda}}
\def\down{\vee}
\def\up{\wedge}
\newcommand{\MOD}{\text{-}\op{mod}}
\newcommand{\gMOD}{\text{-}\op{gmod}}
\newcommand{\mg}{\mathfrak{g}}
\newcommand{\mh}{\mathfrak{h}}
\newcommand{\mR}{\mathbb{R}}
\newcommand{\mC}{\mathbb{C}}
\newcommand{\mZ}{\mathbb{Z}}
\newcommand{\mX}{\mathbb{X}}
\newcommand{\mD}{\mathbb{D}}
\newcommand{\pos}{\operatorname{pos}}
\newcommand{\la}{\lambda}
\newcommand{\La}{\Lambda}
\newcommand{\Lap}{\Lambda_k^{\ov{p}}}
\newcommand{\p}{\mathfrak{p}}
\newcommand{\op}{\operatorname}
\newcommand{\Ssym}{{\mathbb{S}^{\operatorname{sym}}_k}}
\renewcommand{\S}{\mathbb{S}_k}
\DeclareMathOperator{\End}{End}   \DeclareMathOperator{\Id}{Id}
\DeclareMathOperator{\Mod}{\text{-}mod}
\DeclareMathOperator{\Perv}{Perv}
\newcommand{\CupConnectGraph}[1]{%
\begin{tikzpicture}[#1]%
\draw[thick] (0,0.6ex) -- (2.75ex,0.6ex);%
\draw[opacity=0] (0,0) -- (3ex,0);%
\end{tikzpicture}%
}
\newcommand{\DCupConnectGraph}[1]{%
\begin{tikzpicture}[#1]%
\draw[thick] (0,0.6ex) -- (2.75ex,0.6ex);%
\fill (1.375ex,0.6ex) circle(0.4ex);%
\draw[opacity=0] (0,0) -- (3ex,0);%
\end{tikzpicture}%
}
\newcommand{\RayConnectGraph}[1]{%
\begin{tikzpicture}[#1]%
\draw[opacity=0] (0,0) -- (3ex,0);%
\draw[thick] (0,0.6ex) -- (2.75ex,0.6ex);%
\draw[thick] (2.75ex,1.2ex) -- (2.75ex,0);%
\end{tikzpicture}%
}
\newcommand{\DRayConnectGraph}[1]{%
\begin{tikzpicture}[#1]%
\draw[opacity=0] (0,0) -- (3ex,0);%
\draw[thick] (0,0.6ex) -- (2.75ex,0.6ex);%
\fill (1.375ex,0.6ex) circle(0.4ex);%
\draw[thick] (2.75ex,1.2ex) -- (2.75ex,0);%
\end{tikzpicture}%
}
\newcommand{\CupConnect}{\CupConnectGraph{}}
\newcommand{\DCupConnect}{\DCupConnectGraph{}}
\newcommand{\RayConnect}{\RayConnectGraph{}}
\newcommand{\DRayConnect}{\DRayConnectGraph{}}
\newcommand{\un}{\underline}
\begin{document}
\title[Diagrams for isotropic Grassmannians]{Diagrams for perverse sheaves on isotropic Grassmannians and the supergroup $SOSP(m|2n)$.}
\author{Michael Ehrig}
\author{Catharina Stroppel}
\thanks{M.E. was financed by the DFG Priority program 1388. This material is based on work supported by the National Science Foundation under Grant No. 0932078 000, while the authors were in residence at the MSRI in Berkeley, California.}

\begin{abstract}
We describe diagrammatically a positively graded Koszul algebra $\mathbb{D}_k$ such that the category of finite dimensional
$\mathbb{D}_k$-modules is equivalent to the category of perverse sheaves on the isotropic Grassmannian of type $D_k$ constructible with
respect to the Schubert stratification. The connection is given by an explicit isomorphism to the endomorphism algebra of a projective
generator described in \cite{Braden}. The algebra is obtained by a "folding" procedure from the generalized Khovanov arc algebras. We relate this algebra to the category of finite dimensional representations of the orthosymplectic supergroups. The proposed equivalence of categories gives a concrete description of the categories of finite dimensional $\op{SOSP}(m|2n)$-modules.
\end{abstract}

\maketitle

\tableofcontents
\section*{Introduction}
\renc{\thetheorem}{\Alph{theorem}}
This is the first of three articles
studying some generalizations $\mathbb{D}_k$
of Khovanov's diagram algebra from \cite{KhJones} to type $D$,
as well as their quasi-hereditary covers.
In this article we introduce these algebras and prove that the category of $\mathbb{D}_k$-modules is equivalent to the category $\op{Perv}_k$ of perverse sheaves on the isotropic Grassmannian $X$ of type $D_k$ constructible with respect to the Schubert stratification. The equivalence will be established by giving an explicit isomorphism to the endomorphism algebra of a projective
generator described in \cite{Braden}. Since we have $X=G/P$ for $G=SO(2k,\mC)$ and $P$ the parabolic subgroup of type $A_{k-1}$ these categories are then also equivalent to the Bernstein-Gelfand-Gelfand parabolic category $\cO_0^\p(\mathfrak{so}(2k,\mC))$ of type $D_k$ with parabolic of type $A_{k-1}$.

The algebra $\mathbb{D}_k$ is constructed elementary, purely in terms of diagrams; it comes naturally equipped with a grading. As a vector space it has an explicit homogeneous basis given by certain oriented circle diagrams similar to \cite{BS1}. Under our isomorphism, Braden's algebra inherits a grading which agrees with the geometric Koszul grading established in \cite{BGS}.

To be more specific, recall that Schubert varieties, and hence the simple objects in $\op{Perv}_k$ are labelled by symmetric partitions fitting into a box of size $k\times k$ or equivalently by the representatives of minimal length for the cosets $S_k\backslash W(D_k)$ of the Weyl group $W(D_k)$ of type $D_k$ modulo a parabolic subgroup isomorphic to the symmetric group $S_k$. We first identify in Section~\ref{section21} these cosets with {\it diagrammatical weights} $\la$ (ie. with $\{\up, \down\}$-sequences of length $k$ with an even number of $\up$'s) and then associate to each such diagrammatical weight a {\it (decorated) cup diagram} $\underline{\la}$ on $k$ points, see Definition~\ref{decoratedcups}. For instance, the eight possible decorated cup diagrams for $k=4$ are displayed in Figure~\ref{fig:quiv} below. Such a cup diagram $\underline{\la}$ can be paired with a second cup diagram  $\underline{\mu}$ by putting $\underline\mu$ upside down on top of $\underline{\la}$ to obtain a {\it (decorated) circle diagram} $\underline{\la}\overline{\mu}$. Adding additionally a compatible weight diagram in between gives us an {\it oriented circle diagram}, see the right hand side of \eqref{exmul} for two examples. Let $\mathbb{D}_k$ be the vector space spanned by these decorated circle diagrams for fixed $k$.

\begin{theorem}
\label{intro}
\begin{enumerate}
\item The vector space  $\mathbb{D}_k$ can be equipped with a diagrammatically defined associative algebra structure such that  $\mathbb{D}_k$ is isomorphic to the endomorphism algebra of a minimal projective generator of $\op{Perv}_k$.
\item The multiplication is compatible with the grading (Proposition~\ref{forgotgraded}), such that $\mathbb{D}_k$ becomes a graded algebra.
\item The underlying graph of the  $\op{Ext}$-quiver of $\op{Perv}_k$ is a finite graph with vertices labelled by the cup diagrams on $k$ points. Each cup $C$ in a cup diagram $\la$ defines an ingoing and outgoing arrow from resp. to a cup diagram $\mu$, where $\mu$ is obtained from $\la$ by swapping $\up$ with $\down$ at the points connected by $C$.
    For $k=4$ we have for instance the graph from Figure~\ref{fig:quiv}.
\item The graded Cartan matrix $C_k$ of $\mathbb{D}_k$ is indexed by the cup diagrams $\underline{\la}$ and the entries count the number of possible orientations of the circle diagram $\overline{\la}\mu$ with their gradings, see Figure~\ref{fig:CM} for $k=4$.
\end{enumerate}
\end{theorem}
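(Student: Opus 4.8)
The plan is to establish the four assertions in the order in which they build on one another, taking (1) as the crux and deriving (2)--(4) from the explicit homogeneous basis of oriented circle diagrams.

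I would first equip $\mathbb{D}_k$ with a product by adapting the surgery procedure of the type $A$ arc algebras in \cite{BS1}. Concretely, the product of two oriented circle diagrams of shapes $\underline{\la}\overline{\mu}$ and $\underline{\gamma}\overline{\nu}$ is declared zero unless $\mu=\gamma$; otherwise one stacks the two diagrams along their common middle section $\overline{\mu}\,\underline{\mu}$ and iteratively applies the merge and split maps of a suitable Frobenius structure to the closed circles that appear, reading off the resulting oriented diagrams of shape $\underline{\la}\overline{\nu}$. The feature genuinely new to type $D$ is the propagation of the decorations (dots) inherited from the folding of the type $A$ picture, so I would fix sign and dot-migration rules making the decorated merges and splits well defined. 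Associativity is then checked most cleanly by transporting it from the already-associative unfolded type $A$ algebra through the folding map; controlling how dots migrate when two surgeries are carried out in opposite orders is where I expect the essential difficulty.

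Granting the associative product, part (2) follows by grading each oriented circle diagram by the number of its clockwise (decorated) cups and caps and verifying that every elementary merge and split is homogeneous of the predicted degree, so that the surgery product is graded. For the identification in (1) with Braden's endomorphism algebra I would compare presentations: Braden's algebra \cite{Braden} is given by a quiver with relations, and I would define an algebra map sending his idempotents to the identity diagrams $\underline{\la}\overline{\la}$ and his degree-one arrows to the elementary diagrams that toggle $\up$ and $\down$ along a single cup, then check that the diagrammatic relations imply Braden's. Surjectivity is immediate since these elementary diagrams generate $\mathbb{D}_k$ under surgery, so it remains to match graded dimensions; here one compares the graded count of oriented circle diagrams of each shape with the Hom-spaces between indecomposable projectives of $\op{Perv}_k$ coming from the Schubert stratification of $X=G/P$, and concludes that the map is an isomorphism. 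As Braden's algebra is by construction the endomorphism algebra of a minimal projective generator of $\op{Perv}_k$, this yields (1).

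Parts (3) and (4) are then formal. The graded Cartan matrix entry indexed by $(\underline{\la},\underline{\mu})$ is the graded dimension $\gdim \HOM(P_{\la},P_{\mu})$, equal to the $\gdim$ of the block $e_{\la}\mathbb{D}_k e_{\mu}$, whose basis is exactly the orientations of $\overline{\la}\mu$; summing $q^{\deg}$ over these orientations gives (4). For the $\op{Ext}$-quiver in (3), since $\mathbb{D}_k$ is positively graded with semisimple degree-zero part, the arrows $\la\to\mu$ form a basis of the degree-one component of $e_{\mu}\mathbb{D}_k e_{\la}$; the surgery rules show such a degree-one diagram exists precisely when $\mu$ arises from $\la$ by swapping $\up$ and $\down$ at the two endpoints of a single cup $C$, producing one ingoing and one outgoing arrow per cup, and the Koszulity of $\mathbb{D}_k$ ensures no further generators are hidden in higher degree. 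The main obstacle throughout is the bookkeeping of decorations: both the proof of associativity and the verification of Braden's relations hinge on getting the dot-migration signs right, and this is the step I would treat with the most care.
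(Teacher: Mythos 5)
Your outline reproduces the surface architecture of the paper (surgery product, grading check, map to Braden's algebra, Cartan/quiver bookkeeping), but two of its load-bearing steps fail as stated. The first is associativity. You propose to transport it from the type $A$ arc algebra through the folding map of \cite{LS}; this is precisely the route the paper rules out. Once the signs are introduced, the split map is multiplication by $(-1)^{\pos(i)}(X_j\mp X_i)$, so the product is no longer local and no longer invariant under abstract homeomorphism of diagrams; the TQFT functoriality that makes the type $A$ product manifestly associative is gone, and the folding of \cite{LS} is only a bijection of diagrams/bases, not an algebra homomorphism intertwining the two products (none is constructed, here or in \cite{LS}). The paper instead proves associativity directly (Proposition~\ref{prop:multiplication_associative}): it introduces triple circle diagrams $\cT(G^i_j)$, reduces the claim to the commutativity of one rhombus in Figure~\ref{fig:huge}, and proves that by a case analysis on the number $C(i,j)\in\{0,2,4\}$ of connections between the two surgery sites, the hardest case requiring the notion of inner connections (Lemma~\ref{lem:exist_inner_connections}) and explicit sign computations in the quotient rings. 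Nothing in your proposal substitutes for this argument, and it is exactly where the ``dot-migration signs'' you defer actually bite.

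The second gap is the identification with Braden's algebra. $A(D_k)$ is not presented merely by idempotents $e_\la$ and arrows $p(\la,\mu)$: it has additional generators $t_{\alpha,\la}$ and relations (R5)--(R8) coupling them to the $p$'s, which your map ignores entirely. Moreover the naive assignment you propose, sending $p(\la,\mu)$ to the elementary degree-one diagram ${}_{\la}\mathbbm{1}_{\mu}$, does not satisfy Braden's relations; the paper's isomorphism (Theorem~\ref{mainthm}) must take $p(\la,\mu)\mapsto {}_{\la}\mathbbm{1}_{\mu}+\tfrac{1}{2}(-1)^{j}\,{}_{\la}X_{\mu}$ and $t_{\alpha,\la}\mapsto {}_{\la}\mathbbm{1}_{\la}\pm X_{\alpha,\la}$ --- the ``formal logarithm'' of Braden's generators emphasized in the introduction --- and checking well-definedness is the content of Proposition~\ref{welldefined} together with the diamond Lemmas~\ref{stupidcalcc}--\ref{stupidcalca}. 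Finally, your ``match graded dimensions'' step is not free: equality of the diagrammatic and geometric Cartan matrices (Corollary~\ref{CorCartan}) rests on the cellular structure of $\D$, on the identification of its decomposition numbers with parabolic Kazhdan--Lusztig polynomials of type $(D_k,A_{k-1})$ (Lemma~\ref{lem:KLpolys}, the main theorem of \cite{LS}), and on \cite{BGS} giving $C=M^tM$ on the perverse-sheaf side. For the same reason your appeal to ``Koszulity of $\mathbb{D}_k$'' in part (3) is circular: in this paper Koszulity of $\mathbb{D}_k$ is only available a posteriori, through the isomorphism with Braden's algebra. Parts (2) and (4) of your argument are sound and agree with the paper (Proposition~\ref{forgotgraded} and the computation of $c_{\la,\mu}(q)$ from the basis of $e_\la\mathbb{D}_ke_\mu$).
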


\begin{figure}
\begin{equation*}
\begin{tikzpicture}[thick,scale=.8]
\begin{scope}[xshift=3.1cm,yshift=-2cm]
\draw (.4,0) to +(0,-.5);
\draw (.8,0) to +(0,-.5);
\draw (1.2,0) to +(0,-.5);
\draw (1.6,0) to +(0,-.5);
\end{scope}
\draw[<-] (4.25,-.8) to node[right]{$b_1$} +(0,-.9);
\draw[->] (4,-.8) to node[left]{$a_1$} +(0,-.9);

\begin{scope}[xshift=3.1cm]
\draw (.4,0) .. controls +(0,-.5) and +(0,-.5) .. +(.5,0);
\fill (.65,-.35) circle(2.5pt);
\draw (1.2,0) to +(0,-.5);
\draw (1.6,0) to +(0,-.5);
\end{scope}
\draw[->] (5.25,-.2) to node[above]{$a_2$} +(1,0);
\draw[<-] (5.25,-.4) to node[below]{$b_2$} +(1,0);

\begin{scope}[xshift=6.2cm]
\draw (.4,0) to +(0,-.5);
\fill (.4,-.25) circle(2.5pt);
\draw (.8,0) .. controls +(0,-.5) and +(0,-.5) .. +(.5,0);
\draw (1.6,0) to +(0,-.5);
\end{scope}
\draw[->] (8.25,.5) to node[above,pos=0.4]{$a_3$} +(1,.6);
\draw[<-] (8.25,.3) to node[below,pos=0.8]{$b_3$} +(1,.6);
\draw[<-] (8.25,-.6) to node[above,pos=0.7]{$b_4$} +(1,-.6);
\draw[->] (8.25,-.8) to node[below,pos=0.4]{$a_4$} +(1,-.6);

\begin{scope}[xshift=9.2cm,yshift=1.5cm]
\draw (.4,0) to +(0,-.5);
\fill (.4,-.25) circle(2.5pt);
\draw (.8,0) to +(0,-.5);
\draw (1.2,0) .. controls +(0,-.5) and +(0,-.5) .. +(.5,0);
\end{scope}

\begin{scope}[xshift=9.2cm,yshift=-1.5cm]
\draw (.4,0) .. controls +(0,-.5) and +(0,-.5) .. +(.5,0);
\draw (1.2,0) to +(0,-.5);
\fill (1.2,-.25) circle(2.5pt);
\draw (1.6,0) to +(0,-.5);
\end{scope}
\draw[->] (11.25,1.1) to node[above,pos=.65]{$a_5$} +(1,-.6);
\draw[<-] (11.25,.9) to node[below,pos=.4]{$b_5$} +(1,-.6);
\draw[<-] (11.25,-1.2) to node[above,pos=.3]{$b_6$} +(1,.6);
\draw[->] (11.25,-1.4) to node[below,pos=.7]{$a_6$} +(1,.6);

\begin{scope}[xshift=12.2cm]
\draw (.4,0) .. controls +(0,-.5) and +(0,-.5) .. +(.5,0);
\draw (1.2,0) .. controls +(0,-.5) and +(0,-.5) .. +(.5,0);
\end{scope}
\draw[->] (14.25,-.1) to node[above]{$a_7$} +(1,0);
\draw[<-] (14.25,-.3) to node[below]{$b_7$} +(1,0);

\begin{scope}[xshift=15cm]
\draw (.5,0) .. controls +(0,-1) and +(0,-1) .. +(1.5,0);
\draw (1,0) .. controls +(0,-.5) and +(0,-.5) .. +(.5,0);
\end{scope}
\draw[->] (16.15,.2) to node[left]{$a_8$} +(0,1.3);
\draw[<-] (16.35,.2) to node[right]{$b_8$} +(0,1.3);
\draw[<-] (16.15,-1) -- +(0,-1.5) -- node[above]{$a_{10}$} +(-8.8,-1.5) -- +(-8.8,.3);
\draw[->] (16.35,-1) -- +(0,-1.7) -- node[below]{$b_{10}$} +(-9.2,-1.7) -- +(-9.2,.3);

\begin{scope}[xshift=15.2cm,yshift=2.1cm]
\draw (.4,0) .. controls +(0,-.5) and +(0,-.5) .. +(.5,0);
\fill (.65,-.35) circle(2.5pt);
\draw (1.2,0) .. controls +(0,-.5) and +(0,-.5) .. +(.5,0);
\fill (1.45,-.35) circle(2.5pt);
\end{scope}
\draw[<-] (15.3,2) -- +(-11.3,0) -- node[left]{$a_9$} +(-11.3,-1.7);
\draw[->] (15.3,1.8) -- +(-11.1,0) -- node[right]{$b_9$} +(-11.1,-1.5);
\end{tikzpicture}
\end{equation*}
\caption{The Ext-graph for $k=4$.}
\label{fig:quiv}
\end{figure}
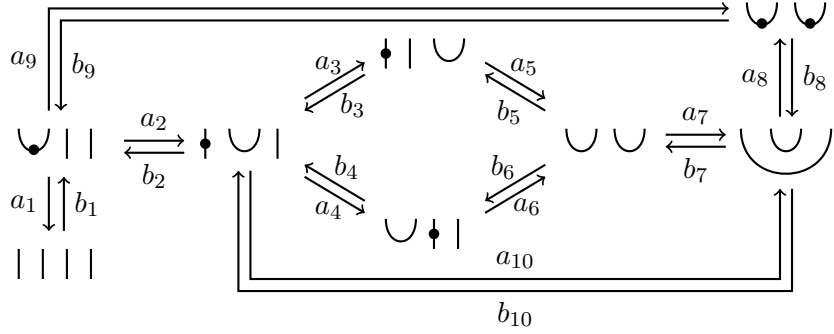

The definition of the multiplication on $\mathbb{D}_k$ is similar to Khovanov's original definition in type $A$ which used the fact that the Frobenius algebra $R=\mC[x]/(x^2)$ defines a 2-dimensional TQFT, hence assigns in a functorial way to a union $(S^1)^n$ of $n$ circles the tensor product $\cT((S^1)^n)=R^{\otimes n}$ of $n$ copies of $R$ and to a cobordisms $(S^1)^n\rightarrow (S^1)^m$ between two finite union of circles a linear map $\cT((S^1)^n)\rightarrow \cT((S^1)^m)$.  A basis vector in our algebra $\mathbb{D}_k$ corresponds to a fixed orientation of some circle diagram with say $n$ circles and so can be identified canonically with a standard basis vector of $\cT((S^1)^n)$, see Proposition~\ref{coho}. This allows to use the linear maps attached to the cobordisms to define the multiplication of $\mathbb{D}_k$. In contrast to the type $A$ case, this construction needs however some additional signs incorporated in a subtle way (encoded by the decorations on the diagrams). Depending on the viewpoint these signs destroy either the locality or force to consider the circles to be embedded in the plane. Therefore general topological arguments using the TQFT-structure cannot be applied (for instance to deduce the associativity of the multiplication).

\begin{figure}
\begin{equation*}
\label{Cartanintro}
\begin{pmatrix}
1&q&0&0&0&0&0&q^2\\
q&1+q^2&q&0&0&0&q^2&q+q^3\\
0&q&1+q^2&q&q&q^2&q+q^3&q^2\\
0&0&q&1+q^2&q^2&q+q^3&^2&0\\
0&0&q&q^2&1+q^2&q+q^3&q^2&0\\
0&0&q^2&q+q^3&q+q^3&1+2q^2+q^4&q+q^3&0\\
0&q^2&q+q^3&q^2&q^2&q+q^3&1+2q^2+q^4&q+q^3\\
q^2&q+q^3&q^2&0&0&0&q+q^3&1+2q^2+q^4
\end{pmatrix}
\end{equation*}
\caption{The Cartan matrix for $k=4$.}
\label{fig:CM}
\normalsize
\end{figure}

We therefore first ignore these difficulties and explain the construction over $\mathbb{F}_2$ in Section~\ref{F2}, but have to work hard afterwards to show the associativity of the multiplication over $\mZ$ or $\mC$ by algebro-combinatorial methods, see Proposition~\ref{prop:multiplication_associative}.

In Section~\ref{sec:cell} we study in detail the structure of the associative algebra $\mathbb{D}_k$. We establish its cellularity in the sense of Graham and Lehrer \cite{GL} in the graded version of \cite{MH} and determine explicitly the decomposition matrices $M_k$ by an easy counting formula in terms of parabolic Kazhdan-Lusztig polynomials of type $(D_k, A_{k-1})$. This allows to identify the Cartan matrix $C_k=M^t_kM_k$ with the Cartan matrix of Braden's algebra $A(D_k)$.
 This is then used to show that the explicit assignment in Theorem~\ref{mainthm} defines indeed an isomorphism $\Phi_k$. Note that our generators are obtained from Braden's by some formal logarithm. This fact is responsible for two major advantages of our presentation in contrast to Braden's which will play an important role in the subsequent papers:
 \begin{itemize}
 \item the nice positive (Koszul) grading of $\mathbb{D}_k$  becomes invisible under the exponential map $\Phi_k^{-1}$; and
\item the Cartan matrix and decomposition numbers which are totally explicit in our setup are impossible to compute in terms of Braden's generators.
\end{itemize}

To emphasize that our results should not just be seen as straight-forward generalizations of known results, let us indicate some  applications and connections which will appear in detail in the subsequent papers of the series.

\subsection*{The orthosymplectic supergroup}
In Section~\ref{super} of this paper we relate our diagram algebra to the category $\cF(\op{SOSP}(m|2n))$ of finite dimensional representations of the orthosymplectic Lie supergroup $\op{SOSP}(m|2n)$.

In analogy to \cite{BS1}, certain natural idempotent truncations $\mathbb{H}_k$ of $\mathbb{D}_k$ have a limiting version $\mathbb{H}_k^\infty$ built up from infinite cup diagrams with $k$ cups. Our combinatorics attaches to each indecomposable projective module $P(\la)$ in $\cF(\op{SOSP}(m|2n))$ a cup diagram $\underline{\la}$. In contrast to \cite{GS2} our assignment is injective and moreover allows to compute Jordan-H\"older multiplicities by an easy {\it positive counting formula} involving parabolic Kazhdan-Lusztig polynomials of type $D$. We will give a dictionary relating our algebras to the combinatorics developed in \cite{GS2}.

\begin{theorem}
 Given a block $\cB$ of $\cF(\op{SOSP}(m|2n))$  of atypicality $k$ and $P:=\oplus_{\la\in\La}P(\la)$ a minimal projective generator.
 For any finite subset $S$ of $\Lambda$ let $\mathbb{D}_S$ be the vector space with basis all oriented circle diagrams of shape  $\underline{\la}\overline{\mu}$, $\la,\mu\in\La$. Let $\mathbb{I}$ be the subspace spanned by all of those which contain a non-propagating line.
 Then there is an isomorphism of vector spaces
 \begin{eqnarray}
\label{isoPS1}
\Psi_S:\quad \op{End}_\mg(P_S)\cong \mathbb{D}_{S}/\mathbb{I}.
\end{eqnarray}
Moreover, as vector space, $\mathbb{D}_{S}$ is isomorphic to an idempotent truncation of our diagram algebra $\mathbb{D}_{k}$ containing $\mathbb{I}$ as ideal.
\end{theorem}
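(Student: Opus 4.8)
The plan is to adapt the strategy of \cite{BS1} to type $D$: rather than comparing the two algebras directly, I would match a distinguished basis of $\op{End}_\mg(P_S)$ with the basis of $\mathbb{D}_S/\mathbb{I}$ given by those oriented circle diagrams all of whose strands propagate, and read the isomorphism off as a dimension-preserving bijection. The first step is to fix the combinatorial dictionary: the indecomposable projectives $P(\la)$ of the block $\cB$ of atypicality $k$ are indexed by $\la\in\La$, and the assignment $P(\la)\mapsto\underline{\la}$ sends each to an infinite cup diagram with exactly $k$ cups, the atypicality governing the number of cups. This rests on identifying the weight combinatorics of $\cB$ with diagrammatic weights, on the limiting algebra $\mathbb{H}_k^\infty$, and on the translation of the Gruson--Serganova combinatorics \cite{GS2} announced in the introduction.

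Next I would compute the left-hand side. As $P(\la)$ is the projective cover of the simple $L(\la)$, one has $\dim\op{Hom}_\mg(P(\la),P(\mu))=[P(\mu):L(\la)]$, whence
\begin{equation*}
\dim\op{End}_\mg(P_S)=\sum_{\la,\mu\in S}[P(\mu):L(\la)].
\end{equation*}
By the positive counting formula these Jordan--H\"older multiplicities are expressed through parabolic Kazhdan--Lusztig polynomials of type $(D_k,A_{k-1})$, reducing the left-hand side to a purely combinatorial quantity.

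On the diagram side, $\dim\mathbb{D}_S/\mathbb{I}$ equals the number of admissible orientations of the circle diagrams $\underline{\la}\overline{\mu}$ (for $\la,\mu\in S$) with no non-propagating strand. Using the Cartan matrix $C_k=M_k^tM_k$ from Section~\ref{sec:cell}, whose entries record precisely these orientation counts, this number coincides with the Kazhdan--Lusztig count above, and I would define $\Psi_S$ on basis vectors by sending each such oriented diagram to the corresponding composite of elementary homomorphisms between the $P(\la)$, built from the adjunction (unit and counit) morphisms of the translation functors on and off the walls, then check that it is a bijection of bases and hence the desired linear isomorphism. For the final assertion, the homogeneous basis of $\mathbb{D}_k$ lets one realize $\mathbb{D}_S$ as the idempotent truncation $e\mathbb{D}_ke$ with $e=\sum_{\la\in S}e_\la$; that $\mathbb{I}$ is an ideal follows from the surgery description of the multiplication, since the number of propagating strands can only drop under composition, so no product leaves the span of diagrams already carrying a non-propagating strand.

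The main obstacle is the dimension match underlying the second and third steps. On one side it demands extracting the Jordan--H\"older multiplicities for $\op{SOSP}(m|2n)$ in the form dictated by \cite{GS2} and rewriting them through type-$D$ parabolic Kazhdan--Lusztig polynomials; on the other it demands showing that the orientations killed in passing to $\mathbb{D}_S/\mathbb{I}$---exactly those forcing a non-propagating strand---are precisely those with no representation-theoretic counterpart. Reconciling the infinite combinatorics of the supergroup block with the finite idempotent truncation of $\mathbb{D}_k$ is where the technical weight of the argument lies.
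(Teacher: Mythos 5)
Your outline reproduces the easy parts of the paper's argument but assumes the hard part. The combinatorial dictionary (projectives in the block correspond to cup diagrams with $k$ cups) and the final truncation/ideal step are sound and match the paper: the dictionary is Lemma~\ref{AB}, Lemma~\ref{GSES} and Proposition~\ref{better}, and your observation that non-propagating strands cannot disappear under surgery is essentially the paper's Lemma~\ref{isideal}. The gap is in your second and third steps. The ``positive counting formula'' expressing the multiplicities $[P(\mu):L(\la)]$ in $\cF(\op{SOSP}(m|2n))$ through parabolic Kazhdan--Lusztig polynomials of type $(D_k,A_{k-1})$ is \emph{not} an available input: it is exactly the consequence of this theorem that the introduction advertises, so invoking it makes the argument circular. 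What is actually known from \cite{GS2} is a \emph{signed} formula,
\begin{equation*}
\dim\op{Hom}_\cF(P(\la),P(\mu))=\sum_\nu a(\la,\nu)\,a(\mu,\nu),\qquad a(\la,\nu)\in\{0,1,-1\},
\end{equation*}
where the $a$'s are coefficients of Euler characteristics. The heart of the paper's proof, Proposition~\ref{cancel}, converts this signed sum into a positive count: pairing each $\nu$ with the weight $\nu'$ obtained by swapping the symbols along a closed component of $\underline{\la^\circ}\,\overline{\mu^\circ}$, one shows the two contributions cancel whenever some closed component has an odd number of dots (or a non-propagating line is present), and that each surviving term equals $+1$, so the dimension is $2^c$ with $c$ the number of closed components --- precisely the number of orientations of a diagram not in $\mathbb{I}$. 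Without this cancellation argument your dimension match, and hence the existence of $\Psi_S$, is unproven. Note also that the Cartan matrix $C_k=M_k^tM_k$ of $\mathbb{D}_k$ counts \emph{all} orientations, including those of orientable diagrams that do contain non-propagating lines, so it does not compute $\dim\mathbb{D}_S/\mathbb{I}$ directly; you flag this as ``the main obstacle'' but never resolve it, and it is the theorem.

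Two smaller points. First, the paper's $\Psi_S$ is a pure basis-matching isomorphism of vector spaces; no explicit morphisms are constructed, and compatibility with the algebra structures is deliberately left open (Conjecture~\ref{conjnaiv}). Your proposal to realize basis vectors as composites of adjunction morphisms of translation functors is therefore both unnecessary for the stated vector-space claim and unsupported --- proving that such composites are linearly independent would be a substantial separate project. Second, for the last assertion of the theorem you should, as the paper does, first truncate the infinite super weight diagrams to a finite block $\La(N)$ (choosing $N$ so that all weights in $S$ are ``good''), identify $\mathbb{D}_S$ with $e_S\mathbb{D}_{\La(N)}e_S$, and only then quote the ideal property; the passage from the infinite diagrams of the supergroup block to a finite idempotent truncation is part of what has to be checked, not a formality.
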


In particular, $\mathbb{D}_{S}/\mathbb{I}$ inherits a (positively graded) algebra structure and we conjecture that the isomorphisms $\Psi_S$ are compatible isomorphisms of algebras. After taking the limit
we obtain a full description of the blocks:

\begin{conjecture}
\label{conj}
Let $m,n$ be natural numbers and fix a block $\cB$ of atypicality $k$ in $\cF(\op{SOSP}(m|2n))$. Then $\cB$ is equivalent to the category of finite dimensional $\mathbb{H}_k^\infty/\mathbb{I}$-modules, where $\mathbb{I}$ is the subspace spanned by all those diagrams which contain a non-propagating line.
\end{conjecture}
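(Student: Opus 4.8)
The plan is to prove the vector space isomorphism $\Psi_S\colon \op{End}_\mg(P_S)\cong \mathbb{D}_S/\mathbb{I}$ by first establishing that both sides have the same graded dimension, indexed compatibly by pairs $(\la,\mu)\in\Lambda\times\Lambda$, and then constructing an explicit map realizing the bijection on basis elements. For the dimension count, I would use the fact that $\op{End}_\mg(P_S)$ decomposes as $\bigoplus_{\la,\mu}\HOM_\mg(P(\la),P(\mu))$, and the dimension of each homomorphism space equals $\sum_\nu [P(\la):L(\nu)]\cdot[P(\mu):L(\nu)]$ by standard arguments in a highest weight category with a duality fixing simples (here the $\cF(\op{SOSP}(m|2n))$ blocks should carry such a structure, so projectives have $\Delta$-filtrations and $[P(\la):L(\nu)]=[\Delta(\nu):L(\la)]\cdot(\text{BGG reciprocity})$). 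The key input is that these Jordan--H\"older multiplicities are governed by parabolic Kazhdan--Lusztig polynomials of type $(D_k,A_{k-1})$, exactly as computed for $\mathbb{D}_k$ via the decomposition matrix $M_k$ in Section~\ref{sec:cell}. This is where the combinatorics developed in \cite{GS2}, translated through our dictionary, enters.

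Next I would match this count against the right-hand side. By the cellular structure of $\mathbb{D}_k$, the oriented circle diagrams of shape $\underline{\la}\overline{\mu}$ biject with orientations of the circle diagram, and the number of such orientations (with their $q$-gradings) is precisely the entry $(C_k)_{\la,\mu}=\sum_\nu (M_k)_{\nu,\la}(M_k)_{\nu,\mu}$ of the graded Cartan matrix, by Theorem~\ref{intro}(4). The subspace $\mathbb{I}$ spanned by diagrams with a non-propagating line must be shown to correspond exactly to the discrepancy between the two counts: the passage from $\op{SOSP}(m|2n)$ to the type-$D$ combinatorics is governed by the \emph{propagating} lines, so that quotienting out $\mathbb{I}$ restricts attention to those orientations whose underlying diagram has all lines propagating. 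Thus I expect $\dim_q \mathbb{D}_S/\mathbb{I}$ to equal the same Kazhdan--Lusztig sum, matching $\dim\op{End}_\mg(P_S)$.

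With dimensions matched, I would define $\Psi_S$ directly: send a diagram basis element $\underline{\la}\,\gamma\,\overline{\mu}$ (where $\gamma$ is the intermediate weight orientation) to the homomorphism $P(\mu)\to P(\la)$ built from the combinatorial data, paralleling the construction of $\Phi_k$ in Theorem~\ref{mainthm}. Surjectivity is then immediate once one checks the images span, and injectivity follows from the dimension equality provided $\Psi_S$ is well-defined, i.e.\ provided the elements of $\mathbb{I}$ map to zero. The final claim, that $\mathbb{D}_S$ is an idempotent truncation of $\mathbb{D}_k$ containing $\mathbb{I}$ as an ideal, I would obtain by choosing the idempotent $e=\sum_{\la\in\La}e_{\underline{\la}}$ cutting out the relevant cup diagrams, noting that $e\mathbb{D}_k e$ has basis the circle diagrams of shape $\underline{\la}\overline{\mu}$ with $\la,\mu\in\Lambda$, and verifying that the span of non-propagating-line diagrams is closed under left and right multiplication by $\mathbb{D}_S$ (this uses the surgery rules: multiplying cannot turn a non-propagating line into a collection of propagating ones).

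The main obstacle will be pinning down the correspondence between the representation-theoretic multiplicities in $\cF(\op{SOSP}(m|2n))$ and the type-$D$ parabolic Kazhdan--Lusztig polynomials with the \emph{correct} identification of weights, since the atypicality-$k$ block combinatorics of the orthosymplectic supergroup is described in \cite{GS2} in a different language; establishing that our injective cup-diagram assignment $\la\mapsto\underline{\la}$ intertwines the two multiplicity formulas is the crux. A secondary difficulty is verifying that $\mathbb{I}$ is genuinely a two-sided ideal and that it is precisely the kernel of $\Psi_S$ rather than merely contained in it — this requires the surgery/cobordism description of multiplication in $\mathbb{D}_k$ to interact cleanly with the notion of a propagating line, which is exactly the subtle point (signs and decorations) flagged in the type-$A$ comparison in the introduction.
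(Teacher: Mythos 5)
Your proposal does not prove the statement, and in fact the paper does not prove it either: Conjecture~\ref{conj} is genuinely open in this paper. What the paper establishes in Section~\ref{super} (Theorem~\ref{mainprop}, Proposition~\ref{cancel}, Lemma~\ref{isideal}, and the proposition containing \eqref{isoPS}) is exactly a \emph{graded vector space} isomorphism $\op{End}_\mg(P_S)\cong e_S\mathbb{D}_{\La(N)}e_S/\mathbb{I}$ together with the fact that $\mathbb{I}$ is a two-sided ideal; the assertion that this isomorphism respects multiplication is then split off as Conjecture~\ref{conjnaiv}, and the resulting equivalence of categories is deferred to \cite{SS}. Your outline has the same reach and therefore inherits the same gap: a dimension match, a bijection of bases, and the ideal property of $\mathbb{I}$ together yield at best a linear isomorphism. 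To conclude that $\cB$ is equivalent to $\mathbb{H}_k^\infty/\mathbb{I}\MOD$ you must show that composition of homomorphisms between the projectives $P(\la)$ in $\cF(\op{SOSP}(m|2n))$ is intertwined with the diagrammatic surgery multiplication — an isomorphism of \emph{algebras} — and nothing in your argument addresses this; no dimension count can. (Two further steps you gloss over, the passage from finite truncations $\mathbb{D}_S$ to the limit $\mathbb{H}_k^\infty$ and the deduction of an abelian-category equivalence from an algebra isomorphism for an infinite block, are comparatively minor, but they too are absent.)

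There is also a problem in the dimension count you do propose. You invoke ``standard arguments in a highest weight category'' with genuine $\Delta$-filtrations of projectives, but the paper stresses that $\cF(\op{SOSP}(m|2n))$ is \emph{not} a highest weight or cellular category — cell (Kac-type) modules do not exist here. The form of BGG reciprocity that is available, \cite[Theorem 1]{GS2}, is stated in terms of virtual Euler characteristics $\cE(\nu)$ whose coefficients $a(\la,\nu)$ lie in $\{0,1,-1\}$, so the formula $\dim\op{Hom}_\cF(P(\la),P(\mu))=\sum_\nu a(\la,\nu)a(\mu,\nu)$ is a \emph{signed} sum. The crux of the paper's argument (Proposition~\ref{cancel}) is a sign-cancellation analysis showing that contributions cancel in pairs unless the circle diagram $\underline{\la^\circ}\overline{\mu^\circ}$ has no non-propagating line and every closed component carries an even number of dots, in which case every surviving term contributes $+1$ and the sum counts orientations, i.e.\ equals $2^c$. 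The positivity and Kazhdan--Lusztig interpretation you take as input are consequences of this cancellation, not available tools; so even the part of your plan corresponding to the paper's proven theorem would need to be rebuilt along these lines.
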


If the conjecture is true, $\cB$ could be equipped with a grading induced from a positive grading on $\mathbb{H}_k^\infty$. Moreover, $\mathbb{H}_k^\infty$ is a cellular algebra in the sense of Graham and Lehrer, \cite{GL}, hence comes equipped with a class of {\it cell modules}. Unfortunately this property doesn't carry over to the quotient $\mathbb{H}_k^\infty/\mathbb{I}$ and $\cF(\op{SOSP}(m|2n))$ is not cellular in general. This phenomenon explains and is responsible for serious extra difficulties in comparison to the type $A$ case, where cell modules (usually called Kac modules) exist. Moreover, the arguments in \cite{BoeNakano} in connection with our quiver description of the algebra $\mathbb{D}_k$ involving diamonds \eqref{diamonds2} imply that all blocks of atypicality greater than $1$ are wild, which agrees with the classification result in \cite{Germoni}.

\begin{ex}
For $\cF(\op{SOSP}(3|2))$ our conjecture can be checked by hands. All blocks are semisimple or equivalent to the principal block $\cB$ (of atypicality $1$) containing the trivial representation.  The block $\cB$ contains the simple modules $L(i)$, $j=0,1,2\ldots$ of highest weight $\la$, where $\la+\rho=(i-\frac{1}{2}, i-\frac{1}{2})$ for $i>0$ and  $\la+\rho=(-\frac{1}{2},\frac{1}{2})$ for $i=0$, see \eqref{laab} and Remark \ref{superlink}. To $P(i)$ we assign the cup diagrams shown below (with infinitely many rays to the right). The socle, radical and grading filtration of the corresponding projective $\mathbb{H}_1^\infty/\mathbb{I}$-modules agree and are displayed just underneath the cell module filtrations of the corresponding lifts $\tilde{P}(i)$ in $\mathbb{H}_1^\infty$.

\special{em:linewidth 0.7pt} \unitlength 0.80mm
\linethickness{1pt}
\begin{eqnarray*}
\begin{array}[t]{c|c|c|c|c|c}
P(0)&P(1)&P(2)&P(3)&P(4)&\cdots\\
\hline
&&&&\vspace{-4mm}\\
\hline
&&&&\vspace{-2mm}\\
\begin{tikzpicture}[scale=0.5]
\draw (-5,0) .. controls +(0,-.5) and +(0,-.5) .. +(.5,0);
\fill (-4.75,-0.35) circle(3pt);
\draw (-4,0) -- +(0,-.8);
\fill (-4,-.4) circle(3pt);
\draw (-3.5,0) -- +(0,-.8);
\draw (-3,0) -- +(0,-.8);
\draw (-2.5,0) -- +(0,-.8) node[midway,right] {$\cdots$};
\end{tikzpicture}
&
\begin{tikzpicture}[scale=0.5]
\draw (-.5,0) .. controls +(0,-.5) and +(0,-.5) .. +(.5,0);
\draw (.5,0) -- +(0,-.8);
\draw (1,0) -- +(0,-.8);
\draw (1.5,0) -- +(0,-.8);
\draw (2,0) -- +(0,-.8) node[midway,right] {$\cdots$};
\end{tikzpicture}
&
\begin{tikzpicture}[scale=0.5]
\draw (4,0) -- +(0,-.8);
\draw (4.5,0) .. controls +(0,-.5) and +(0,-.5) .. +(.5,0);
\draw (5.5,0) -- +(0,-.8);
\draw (6,0) -- +(0,-.8);
\draw (6.5,0) -- +(0,-.8) node[midway,right] {$\cdots$};
\end{tikzpicture}
&
\begin{tikzpicture}[scale=0.5]
\draw (8.5,0) -- +(0,-.8);
\draw (9,0) -- +(0,-.8);
\draw (9.5,0) .. controls +(0,-.5) and +(0,-.5) .. +(.5,0);
\draw (10.5,0) -- +(0,-.8);
\draw (11,0) -- +(0,-.8) node[midway,right] {$\cdots$};
\end{tikzpicture}
&
\begin{tikzpicture}[scale=0.5]
\draw (13,0) -- +(0,-.8);
\draw (13.5,0) -- +(0,-.8);
\draw (14,0) -- +(0,-.8);
\draw (14.5,0) .. controls +(0,-.5) and +(0,-.5) .. +(.5,0) ;
\draw (15.5,0) -- +(0,-.8);
\node at (16.3,-.42) {$\cdots$};
\end{tikzpicture}
&\cdots
\\
\hline
&&&&\\
\begin{picture}(20.00,8.00)
\special{em:linewidth 0.4pt} \unitlength 0.80mm
\linethickness{1pt}
\put(7.50,10.00){\makebox(0,0)[cc]{\tiny{$0$}}}
\put(7.50,5.00){\makebox(0,0)[cc]{\tiny{$2$}}}
\put(5.00,0){\makebox(0,0)[cc]{\tiny{$3$}}}
\put(10.00,0){\makebox(0,0)[cc]{\tiny{$0$}}}
\linethickness{0.3pt}
\dashline{1}(5.50,11.50)(5.50,8.50)
\dashline{1}(9.50,11.50)(9.50,8.50)
\dashline{1}(5.50,11.50)(9.50,11.50)
\dashline{1}(5.50,8.50)(9.50,8.50)
\dashline{1}(2.50,-2.00)(12.50,-2.00)
\dashline{1}(2.50,2.00)(2.50,-2.00)
\dashline{1}(12.50,2.00)(12.50,-2.00)
\dashline{1}(12.50,2.00)(7.50,7.50)
\dashline{1}(2.50,2.00)(7.50,7.50)
\end{picture}
&
\begin{picture}(20.00,0.00)
\special{em:linewidth 0.4pt} \unitlength 0.80mm
\linethickness{1pt}
\put(7.50,10.00){\makebox(0,0)[cc]{\tiny{$1$}}}
\put(7.50,5.00){\makebox(0,0)[cc]{\tiny{$2$}}}
\put(7.50,0){\makebox(0,0)[cc]{\tiny{$1$}}}
\linethickness{0.3pt}
\dashline{1}(5.50,11.50)(5.50,3.00)
\dashline{1}(9.50,11.50)(9.50,3.00)
\dashline{1}(5.50,11.50)(9.50,11.50)
\dashline{1}(5.50,3.00)(9.50,3.00)
\dashline{1}(5.50,2.00)(5.50,-2.00)
\dashline{1}(9.50,2.00)(9.50,-2.00)
\dashline{1}(5.50,2.00)(9.50,2.00)
\dashline{1}(5.50,-2.00)(9.50,-2.00)
\end{picture}
&
\begin{picture}(20.00,0.00)
\special{em:linewidth 0.4pt} \unitlength 0.80mm
\linethickness{1pt}
\put(7.50,10.00){\makebox(0,0)[cc]{\tiny{$2$}}}
\put(2.50,5.00){\makebox(0,0)[cc]{\tiny{$0$}}}
\put(7.50,5.00){\makebox(0,0)[cc]{\tiny{$1$}}}
\put(12.50,5.00){\makebox(0,0)[cc]{\tiny{$3$}}}
\put(7.50,0){\makebox(0,0)[cc]{\tiny{$2$}}}
\linethickness{0.3pt}
\dashline{1}(5.50,6.50)(5.50,-2.00)
\dashline{1}(9.50,6.50)(9.50,-2.00)
\dashline{1}(5.50,6.50)(9.50,6.50)
\dashline{1}(5.50,-2.00)(9.50,-2.00)
\dashline{1}(0.50,7.50)(0.50,2.50)
\dashline{1}(4.50,7.50)(4.50,2.50)
\dashline{1}(0.50,2.50)(4.50,2.50)
\dashline{1}(10.50,7.50)(10.50,2.50)
\dashline{1}(14.50,7.50)(14.50,2.50)
\dashline{1}(10.50,2.50)(14.50,2.50)
\dashline{1}(5.50,11.50)(9.50,11.50)
\dashline{1}(4.50,7.50)(10.50,7.50)
\dashline{1}(0.50,7.50)(5.50,11.50)
\dashline{1}(14.50,7.50)(9.50,11.50)
\end{picture}&
\begin{picture}(20.00,8.00)
\special{em:linewidth 0.4pt} \unitlength 0.80mm
\linethickness{1pt}

\put(7.50,10.00){\makebox(0,0)[cc]{\tiny{$3$}}}
\put(5.50,5.00){\makebox(0,0)[cc]{\tiny{$2$}}}
\put(12.50,5.00){\makebox(0,0)[cc]{\tiny{$4$}}}
\put(3.00,0){\makebox(0,0)[cc]{\tiny{$3$}}}
\put(8.00,0){\makebox(0,0)[cc]{\tiny{$0$}}}
\linethickness{0.3pt}
\dashline{1}(5.50,11.50)(5.50,8.50) 
\dashline{1}(5.50,11.50)(9.50,11.50) 
\dashline{1}(5.50,8.50)(10.50,3.00)  %
\dashline{1}(9.50,11.50)(14.50,6.00) %
\dashline{1}(11.50,3.00)(14.50,3.00) %
\dashline{1}(14.50,3.00)(14.50,6.00)

\dashline{1}(0.50,-2.00)(10.50,-2.00)
\dashline{1}(0.50,2.00)(0.50,-2.00)
\dashline{1}(10.50,2.00)(10.50,-2.00)
\dashline{1}(10.50,2.00)(5.50,7.50)
\dashline{1}(0.50,2.00)(5.50,7.50)
\end{picture}
&
\begin{picture}(20.00,8.00)
\special{em:linewidth 0.4pt} \unitlength 0.80mm
\linethickness{1pt}
\put(7.50,10.00){\makebox(0,0)[cc]{\tiny{$4$}}}
\put(2.50,5.00){\makebox(0,0)[cc]{\tiny{$3$}}}
\put(12.50,5.00){\makebox(0,0)[cc]{\tiny{$5$}}}
\put(7.50,0){\makebox(0,0)[cc]{\tiny{$4$}}}
\linethickness{0.3pt}
\dashline{1}(5.50,11.50)(5.50,8.50) 
\dashline{1}(5.50,11.50)(9.50,11.50) 
\dashline{1}(5.50,8.50)(10.50,3.00)  %
\dashline{1}(9.50,11.50)(14.50,6.00) %
\dashline{1}(11.50,3.00)(14.50,3.00) %
\dashline{1}(14.50,3.00)(14.50,6.00)
%
\dashline{1}(0.50,6.50)(0.50,3.50) 
\dashline{1}(0.50,6.50)(4.50,6.50) 
\dashline{1}(0.50,3.50)(5.50,-2.00)  %
\dashline{1}(4.50,6.50)(9.50,1.00) %
\dashline{1}(6.50,-2.00)(9.50,-2.00) %
\dashline{1}(9.50,-2.00)(9.50,1.00)
\end{picture}
&\cdots\\
&&&&&\vspace{-0.2cm}\\
\hline
&&&&&\\

\begin{picture}(20.00,8.00)
\special{em:linewidth 0.4pt} \unitlength 0.80mm
\linethickness{1pt}
\put(7.50,10.00){\makebox(0,0)[cc]{\tiny{$0$}}}
\put(7.50,5.00){\makebox(0,0)[cc]{\tiny{$2$}}}
\put(7.50,0){\makebox(0,0)[cc]{\tiny{$0$}}}
\end{picture}
&
\begin{picture}(20.00,0.00)
\special{em:linewidth 0.4pt} \unitlength 0.80mm
\linethickness{1pt}
\put(7.50,10.00){\makebox(0,0)[cc]{\tiny{$1$}}}
\put(7.50,5.00){\makebox(0,0)[cc]{\tiny{$2$}}}
\put(7.50,0){\makebox(0,0)[cc]{\tiny{$1$}}}
\linethickness{0.3pt}
\end{picture}
&
\begin{picture}(20.00,0.00)
\special{em:linewidth 0.4pt} \unitlength 0.80mm
\linethickness{1pt}
\put(7.50,10.00){\makebox(0,0)[cc]{\tiny{$2$}}}
\put(2.50,5.00){\makebox(0,0)[cc]{\tiny{$0$}}}
\put(7.50,5.00){\makebox(0,0)[cc]{\tiny{$1$}}}
\put(12.50,5.00){\makebox(0,0)[cc]{\tiny{$3$}}}
\put(7.50,0){\makebox(0,0)[cc]{\tiny{$2$}}}
\end{picture}&
\begin{picture}(20.00,8.00)
\special{em:linewidth 0.4pt} \unitlength 0.80mm
\linethickness{1pt}
\put(7.50,10.00){\makebox(0,0)[cc]{\tiny{$3$}}}
\put(2.50,5.00){\makebox(0,0)[cc]{\tiny{$2$}}}
\put(12.50,5.00){\makebox(0,0)[cc]{\tiny{$4$}}}
\put(7.50,0){\makebox(0,0)[cc]{\tiny{$3$}}}
\linethickness{0.3pt}
\end{picture}
&
\begin{picture}(20.00,8.00)
\special{em:linewidth 0.4pt} \unitlength 0.80mm
\linethickness{1pt}
\put(7.50,10.00){\makebox(0,0)[cc]{\tiny{$4$}}}
\put(2.50,5.00){\makebox(0,0)[cc]{\tiny{$3$}}}
\put(12.50,5.00){\makebox(0,0)[cc]{\tiny{$5$}}}
\put(7.50,0){\makebox(0,0)[cc]{\tiny{$4$}}}
\end{picture}
&\cdots
\\
\vspace{-0.2cm}
&&&&\\
\hline
\end{array}
\\
\end{eqnarray*}

\normalsize

Note that there are two cell modules labelled by $1$, since the truncation $\mathbb{H}_k$ of our quasi-hereditary algebra $\mathbb{D}_k$ is not compatible with the quasi-hereditary ordering. Altogether $\cB$ is equivalent to the category of finite dimensional modules over the path algebra of the quiver
\begin{eqnarray*}
\begin{xy}
  \xymatrix{
  0\ar@/^/[dr]^{f_1'}\\
  &2\ar@/^/[ul]^{g_1'}\ar@/^/[dl]^{g_1}\ar@/^/[r]^{f_2}
  &3\ar@/^/[l]^{g_2}\ar@/^/[r]^{f_3}
  &4\ar@/^/[l]^{g_3}\ar@/^/[r]^{f_4}
  &5\ar@/^/[l]^{g_4}\cdots\\
 1\ar@/^/[ur]^{f_1}}
\end{xy}
\end{eqnarray*}
\normalsize
with relations $f_{i+1}\circ f_i=0=g_{i}\circ g_{i+1}$, $g_{i+1}\circ f_{i+1}=f_{i}\circ g_{i}$, $g_1'\circ f_1=0=g_1\circ f_1'$, $f_1'\circ g_1'=g_1\circ f_1$ and  $f_2\circ f_1'=0=g_1'\circ g_2$. The last two relations are the relations from $I$ and force the projective modules to become injective, a property which is well-known to hold in $\cB$, see \cite{BKN}. This category was studied in detail by Gruson \cite{Gruson}, who also showed that it is representation finite and classified the indecomposable modules.
\end{ex}

We want to stress that our approach indicates a new relationship between the representation theory of the classical orthogonal Lie algebra and the representation theory of the orthosymplectic Lie superalgebras. It is for instance not a special case of the general super duality of \cite{CLW}, where more general parabolic categories would be involved.

Conjecture~\ref{conj} with the expected equivalence of categories will be studied in detail in \cite{SS} by connecting it to the combinatorics of Fock space.

\subsection*{Cyclotomic VW-algebras}
The strategy of the proof will be built on a super higher Schur-Weyl duality relating the orthosymplectic Lie algebra with level 2 cyclotomic Nazarov-Wenzl algebras or short VW-algebras from \cite{AMR}. These are certain degenerate versions of cyclotomic Birman-Murakawi-Wenzl algebras.

In part II of this series \cite{ES2} we develop in detail the graded versions of level 2 cyclotomic Nazarov-Wenzl algebras and show their blocks are isomorphic to certain idempotent truncations of the algebras $\mathbb{D}_k$. In particular, they inherit from our description here a positive Koszul grading and a geometric interpretation in terms of perverse sheaves on isotropic Grassmannians. In the same paper we will also relate the non-semisimple versions of Brauer algebras \cite{CMD1}, \cite{CMD2} to idempotent truncations of our algebras $\mathbb{D}_k$.
This gives in particular a conceptual explanation, why {\it all} the Brauer algebras are determined by the combinatorics of Weyl groups of type $D$, an amazing phenomenon first observed by Martin, Cox and DeVisscher.

\subsection*{Categorified coideal subalgebras}
In the same paper we will also study the action of translation functors in detail. Here a completely new phenomenon appears. Besides their natural categorifications of Hecke algebras, translation functors were usually used in type $A$ to categorify  actions of (super) quantum groups, see for instance \cite{BS3}, \cite{FKS}, \cite{Antonio}, or \cite{Mazcat} for an overview. In particular they give standard examples of $2$-categorifications of Kac-Moody algebras in the sense of \cite{Rou2KM} and \cite{KL} and the combinatorics of the quantum group and the Hecke algebras are directly related, see e.g. \cite{FKK}, \cite{BS3} and \cite{Antonio}. This coincidence fails outside type $A$. Instead we obtain the action of a quantized fixed point Lie algebra $\mathfrak{gl}(n)^\sigma$ for an involution $\sigma$. Although these algebras were studied in detail by algebraic methods under the name {\it coideal subalgebras}, \cite{Letzter}, it seems a geometric approach is completely missing.  Our main Theorem~\ref{intro} will then finally provide for the first time such a geometric approach and also gives a first instance of a categorifications of modules for these algebras. For these coideal subalgebras the graded versions of cyclotomic VW-algebras play the analogous role to cyclotomic {\it quiver Hecke algebra} or {\it Khovanov-Lauda algebras} attached to the universal enveloping algebra $\mathcal{U}(\mathfrak{g})$ in \cite{Rou2KM}, \cite{KL}.

\subsection*{Hermitian symmetric pairs}
Our focus on the type $D$ case might look artificial. Instead one could consider all $G/P$ for classical pseudo hermitian symmetric cases, \cite{BHKostant}, \cite{BoeCollingwood}. In fact, the natural inclusions of algebraic groups $SO(2n+1,\mC)\hookrightarrow SO(2n+2,\mC)$ and
$Sp(2n,\mC)\hookrightarrow SL(2n,\mC)$ induce isomorphisms of the partial flag varieties for the pairs $(G,P)$ of type $(B_n, A_{n-1})$ and $(D_{n+1}, A_{n})$ respectively $(C_n,C_{n-1})$ and $(A_{2n-1}, A_{2n-2})$ which are compatible with the Schubert stratification,  \cite[3.1]{BrionPolo}. Hence the corresponding categories of perverse sheaves are naturally isomorphic. Therefore, the algebras from \cite{BS1} and our algebras $\mathbb{D}_{n+1}$ introduced here govern these cases as well. In particular, the non-simply laced case can be "unfolded" to the simply laced case for larger rank. For the remaining classical symmetric pairs, apart from type $(D_n,D_{n-1})$ one might use the fact that the principal blocks of category $\cO$ for type $B_n$ and $C_n$ are equivalent via Soergel's combinatorial description, \cite{Sperv}. Using our connection of the algebra $\mathbb{D}_k$ with the geometry of Springer fibres and Slodowy slices, there is also a Langlands's dual picture of this unfolding relating different Springer fibers and Slodowy slices. Details which will appear in a subsequent paper.

\subsection*{Acknowledgements} {We like to thank Antonio Sartori and in particular Vera Serganova for many helpful discussions.}

\renc{\thetheorem}{\arabic{section}.\arabic{theorem}}

\section{The isotropic Grassmannian, weight and cup diagrams}

\subsection{The isotropic Grassmannian}
\label{section21}
We fix a natural number $k$. Let $n=2k$ and consider the group $\op{SL}(n,\mC)$ with standard Borel $B$ given by all upper triangular
matrices. Let $P_{k,k}$ be the standard parabolic given by all matrices $A=(a_{i,j})_{1\leq i,j\leq n}$ where $a_{i,j}=0$ for $i>k, j\leq
k$ and $\mathcal{X}_{k,k}=\op{Gr}_k(k)=\op{SL}(n,\mC)/P_{k,k}$ the corresponding Grassmannian variety of $k$-dimensional subspaces in $\mC^k$.\\

We fix the non-degenerate quadratic form
$$Q(x_1, \ldots, x_{2k}) = x_1x_{2k} + x_2x_{2k-1} + \cdots + x_kx_{k+1}$$
on $\mC^n$. The space $\{V \in \mathcal{X}_{k,k} \mid Q_{|V} = 0\}$ has two connected components (namely the two equivalence classes for the
relation $V\sim V'$ if $\dim{V\cap V'}-k$ is even). Let $Y_k$
be the component containing the point $\{x_{k+1}=\cdots =x_n = 0\}$. The subgroup $\op{SO}(n,\mC)$ of
$\op{SL}(n,\mC)$ of transformations preserving $Q$ acts on $Y_k$. Moreover, $$Y_k = \op{SO}(n,\mC)/(SO(n,\mC)\cap P_{k,k}),$$ the
isotropic {\it Grassmannian of type $D_k$}. The group $B_D=\op{SO}(n,\mC)\cap B$ is a Borel subgroup of $\op{SO}(n,\mC)$. We fix the
stratification of $Y_k$ by $B_D$-orbits. Note that the latter are just the intersections $Y_\la=\Omega_\la\cap Y_k$ of the Schubert cells
$\Omega_\la$ of $\mathcal{X}_{k,k}$ with $Y_k$.
Taking the natural labelling of Schubert cells of $\mathcal{X}_{k,k}$ by partitions of Young diagrams fitting into a $k\times k$-box (such that the dimension of the cell equals the number of boxes), then
$\Omega_\la\not=\emptyset$ precisely if its Young diagram is {\it symmetric} in the sense that it is fixed under reflection about the
diagonal and the number of boxes on the diagonal is even. Let $\Omega_k$ be the set of symmetric Young diagrams fitting into an
$n\times n$-box.
For instance, the set $\Omega_4$ labelling the strata of $Y_4$ is the following
\begin{eqnarray}
\label{Young}
{\tiny\Yvcentermath1\yng(4,4,4,4)\quad\quad\yng(4,4,2,2)\quad\quad\yng(4,3,2,1)\quad\quad\yng(4,2,1,1)\quad\quad\yng(3,3,2)\quad\quad\yng(3,2,1)
\quad\quad\yng(2,2)\quad\quad\emptyset}
\end{eqnarray}

To such a Young diagram, we associate a $\{+,-\}$-sequence of length $n=2k$ as follows: starting at the bottom left corner of the $n\times n$-square walk
along the right boundary of the diagram to the top right corner of the  $n\times n$-square - each step either to the right, encoded by a $+$, or up, encoded
by a $-$. If the diagram labels a $B_D$-orbit $Y_\la$ then the resulting sequence $s$ is automatically antisymmetric of length $n=2k$. We denote by
$\Ssym$ the set of all antisymmetric sequences obtained in this way, and by $\S$ those given by the second halves of any $s\in \Ssym$. Hence $\S$ consists of sequences of length $k$ with an even number of $+$'s.
In the example \eqref{Young} we get the following set $\mathbb{S}_4$ of sequences of length $k=4$
\small
\begin{equation*}
{----}\;,\;{++--}\;,\;{+-+-}\;,\;{-++-}\;,\;
{+--+}\;,\;{-+-+}\;,\;{--++}\;,\;
{++++}
\end{equation*}
\normalsize
\subsection{Weights and linkage}
\label{linkage}
In the following we identify $\mZ_{> 0}$ with integral points, called {\it vertices}, on the positive numberline. A {\it (diagrammatic) weight} $\la$ is a diagram obtained by labelling each of the vertices by  $\circ$ (nought), $\times$ (cross), $\down$ (down) or
$\up$ (up) such that the vertex $i$ is labelled $\circ$ for $i\gg 0$. For instance,
\begin{eqnarray}
\label{lla}
\la&=&\overset{1}{\down}\overset{2}\up\overset{3}\up\overset{4}\up\overset{5}\up\overset{6}\down\overset{7}\down\overset{8}\down\overset{9}\down\overset{10}\up\overset{11}\circ\overset{12}\circ \, \cdots,\\
\label{lla2}
\la&=&{\up}\times\times\up\circ\down\down\circ\up\up\circ\circ\, \cdots,
\end{eqnarray}
are examples of weights. The $\cdots$'s indicate that all further entries of the sequence are equal to $\circ$ and the label above the symbols indicates the positions. As in the second example we will usually omit these numbers. We denote by $\mX$ the set of diagrammatic weights. To $\lambda \in \mX$ we attach the sets
$$
P_*(\lambda) = \{ i \in \mZ_{> 0} \mid \lambda_i = * \} \text{ if } \lambda \in \mX,$$
where $* \in \{\up, \down, \times, \circ \}$.

For the weights $\eqref{lla}$ and \eqref{lla2}
we have $P_\up(\lambda)=\{2,3,4,5,10\}$, $P_\down(\lambda)=\{1,6,7,8,9\}$, $P_\times(\lambda) = \emptyset$, and $P_\circ(\lambda) = \{ i \in \mZ_{> 0} \mid i \geq 11 \}$ and
$P_\up(\lambda)=\{1,4,9,10\}$, $P_\down(\lambda)=\{6,7\}$, $P_\times(\lambda) = \{2,3\}$, and $P_\circ(\lambda) = \{5,8, i\mid i \geq 11 \}$ respectively.
\

Two diagrammatic weights $\la,\mu\in\mX$ are {\it linked} if $\mu$ is obtained from $\la$ by finitely many combinations of {\it basic linkage moves} of swapping neighboured labels $\up$ and $\down$ (ignoring symbols $\circ$ and $\times$) or swapping the pairs  $\up\up$ and $\down\down$ at the first two positions (again ignoring symbols $\circ$ and $\times$) in the sequence. The equivalence classes of the linkage relation are called {\it blocks}.

\begin{ex}
The weight $\la$ from \eqref{lla}
is linked to
$\mu={\down}\down\up\down\up\down\down\down\down\up\circ\circ \, \cdots$
but not to
$\mu'={\down}\down\up\up\up\down\down\down\down\up\circ\circ \, \cdots.$
\end{ex}

Obviously a block $\Lambda$ is then given by fixing the positions of all $\times$ and $\circ$ and the parity $\ov{|P_\up(\lambda)|}$ of the number of $\up$'s. Formally a block $\Lambda$ is the data of a parity, either $\ov{0}$ or $\ov{1}$, and a \emph{block diagram} $\Theta$, that is a sequence $\Theta_i$ of symbols $\times$, $\circ$, $\bullet$, indexed by $i\in\mZ_{>0}$ with the same conditions as for diagrammatic weights, ie. all but finitely many $\Theta_i$ are equal to $\circ$. Attached to a block $\Lambda$ or a block diagram $\Theta$ we have the sets $P_\times(\Lambda)=P_\times(\Theta)$, $P_\circ(\Lambda)=P_\circ(\Theta)$, and $P_\bullet(\Lambda)=P_\bullet(\Theta)$ defined analogously to the diagrammatic weights. The block $\Lambda$ is then the equivalence class of weights
\begin{eqnarray}
\label{defblock}
\Lambda=\Lambda_\Theta^{\ov{\epsilon}} = \left\lbrace \begin{array}{l|c}
& P_\up(\lambda) \cup P_\down(\lambda) = P_\bullet(\Theta),\\
\lambda \in \mX  & P_\times(\lambda) = P_\times(\Theta)\text{, } P_\circ(\lambda) = P_\circ(\Theta),\\
& \ov{|P_\up(\lambda)|} = \ov{\epsilon}
\end{array} \right\rbrace
\end{eqnarray}

 A block $\Lambda$ is called {\it regular} if $P_\times(\Lambda) = \emptyset$ and {\it principal} of type $D_k$ if additionally $P_\bullet(\Lambda) = \{1,\ldots,k\}$. A weight $\lambda$ is called {\it regular} if it belongs to a regular block. For instance $\eqref{lla}$ is a principal weight, and thus also a regular weight.

For fixed $k$ the set of principal weights of type $D_k$ decomposes into exactly two blocks, the {\it even} block $\Lambda_k^{\ov{0}}$, where each weight has an even number of $\up$'s, and the {\it odd} block $\Lambda_k^{\ov{1}}$, where the weight has an odd number of $\up$'s. Both blocks correspond to the same sequence
$$\Lambda = \underset{k}{\underbrace{{\bullet}\, \cdots \, \bullet}}\circ\circ \, \cdots.$$ Example~\ref{exn4} shows the two blocks for $k=4$.

\begin{lemma} \label{bijection_S_Lambda}
Assigning to $s=(x_1,\ldots, x_k)\in\S$ the diagrammatic weight $\la=\la(s)$ supported on the integers with $\la(s)_i = \up$ if $x_i = +$, $\la(s)_i = \down$ if $x_i=-$, and $\la(s)_i = \circ$ otherwise defines a bijection between $\S$ and $\Lambda_k^{\ov{0}}$.
\end{lemma}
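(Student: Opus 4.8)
The plan is to prove the statement by exhibiting the claimed map together with an explicit inverse, and then checking that both are well defined and mutually inverse; there is no substantive obstacle here, since the statement is essentially a matter of matching up the two defining conditions.

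First I would check that the assignment $s\mapsto\la(s)$ really lands in $\Lambda_k^{\ov 0}$. By construction, for $i\le k$ the entry $\la(s)_i$ equals $\up$ or $\down$ according to whether $x_i=+$ or $x_i=-$, while for $i>k$ there is no symbol $x_i$, so $\la(s)_i=\circ$. Hence $P_\times(\la(s))=\emptyset$, $P_\bullet(\la(s))=P_\up(\la(s))\cup P_\down(\la(s))=\{1,\ldots,k\}$, and $P_\circ(\la(s))=\{i\mid i>k\}$, which is exactly the block diagram $\Theta$ of the principal blocks of type $D_k$ appearing in \eqref{defblock}. Thus $\la(s)$ is a principal weight of type $D_k$. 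Moreover $|P_\up(\la(s))|$ equals the number of $+$'s occurring in $s$, which is even by the definition of $\S$; therefore $\ov{|P_\up(\la(s))|}=\ov 0$ and $\la(s)\in\Lambda_k^{\ov 0}$.

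Next I would write down the candidate inverse. Given $\la\in\Lambda_k^{\ov 0}$, its defining conditions say that the positions $1,\ldots,k$ carry labels in $\{\up,\down\}$, all later positions carry $\circ$, and no position carries $\times$. I set $s(\la)=(x_1,\ldots,x_k)$ with $x_i=+$ if $\la_i=\up$ and $x_i=-$ if $\la_i=\down$. The number of $+$'s in $s(\la)$ equals $|P_\up(\la)|$, which is even since $\la$ lies in the even block, so $s(\la)\in\S$. It then remains to observe that the two assignments are mutually inverse: composing $s(-)$ with $\la(-)$ recovers each $x_i$ for $i\le k$ (the symbols at positions $>k$ play no role), while composing $\la(-)$ with $s(-)$ recovers each label of $\la$, the $\circ$'s at positions $>k$ being forced in both directions. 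This is immediate from the definitions.

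The only point deserving any attention is the compatibility of the parity conditions, namely that an even number of $+$'s in $s$ corresponds precisely to an even number of $\up$'s in $\la(s)$; but this is exactly how $\S$ (sequences of length $k$ with an even number of $+$'s) and $\Lambda_k^{\ov 0}$ (principal weights of type $D_k$ with an even number of $\up$'s) were defined, so the verification is routine.
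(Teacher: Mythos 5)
Your proof is correct. It differs from the paper's argument only in how bijectivity is finished: the paper observes that $s\mapsto\la(s)$ is obviously well defined and injective, and then concludes surjectivity by a cardinality count (both $\S$ and $\Lambda_k^{\ov 0}$ are finite sets of size $2^{k-1}$, so an injection between them is automatically a bijection), whereas you instead construct the explicit inverse $\la\mapsto s(\la)$ and check the two compositions are the identity. Your route is marginally more self-contained, since it never needs to know the common cardinality and makes the parity bookkeeping (even number of $+$'s $\leftrightarrow$ even number of $\up$'s) explicit in both directions; the paper's route is shorter but leaves the counting implicit. For a statement this close to a tautology either method is perfectly adequate, and both verifications hinge on the same single observation, namely that the defining conditions of $\S$ and of $\Lambda_k^{\ov 0}$ match term by term under the dictionary $+\leftrightarrow\up$, $-\leftrightarrow\down$.
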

\begin{proof}
Obviously $\la(s)$ is principal and the assignment $s\mapsto \la(s)$ is injective. By cardinality count it is also surjective.
\end{proof}
The basic linkage moves induces a partial ordering on each block, the {\it reversed Bruhat order} by declaring that changing a pair of labels $\up\down$ to $\down\up$ or a pair
$\down\down$ to $\up\up$ makes a weight smaller in this reversed Bruhat order. Repeatedly applying the basic moves implies

\begin{lemma}
\label{lem:Bruhat}
Changing a (not necessarily neighboured) pair of labels $\up\down$ to $\down\up$ or a pair
$\down\down$ to $\up\up$ makes a weight smaller in the Bruhat order.
\end{lemma}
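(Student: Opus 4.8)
The plan is to realise each of the two long-range moves as a composition of the neighboured moves that are declared to be decreasing, so that the target weight is genuinely smaller in the order these moves generate. I would encode a weight $\la$ in a fixed block by its sequence of $\down$-partial sums $d_t(\la)=\#\{p\le t:\la_p=\down\}$ (with the fixed $\circ$ and $\times$ positions simply ignored), and then invoke the classical bubble-sort description of the order generated by the neighboured swaps $\up\down\to\down\up$ \emph{alone}: among weights with the same number of $\up$'s one has $\mu\le\la$ exactly when $d_t(\mu)\ge d_t(\la)$ for all $t$. To control strictness I would also carry the potential $h(\la)=\sum_{p\in P_\down(\la)}p$, which drops by a positive amount under every decreasing basic move.

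For the first assertion I would take $\la$ with $\up$ at position $i$ and $\down$ at some $j>i$, and $\mu$ obtained by interchanging only these two labels. Then $\mu$ has the same number of $\up$'s as $\la$ and $d_t(\mu)-d_t(\la)=[i\le t]-[j\le t]$, which is $\ge 0$ for all $t$ and strictly positive for $i\le t<j$; by the dominance description this yields $\mu\le\la$, and $h(\mu)-h(\la)=i-j<0$ shows $\mu\ne\la$, so $\mu<\la$. Notably the special move $\down\down\to\up\up$ is not needed here: the relation is witnessed purely by neighboured $\up\down\to\down\up$ swaps.

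For the second assertion $\la$ has $\down$ at two positions $i<j$ and $\mu$ replaces both by $\up$, so $\mu$ has two \emph{more} $\up$'s than $\la$ and the special move becomes unavoidable. Here the strategy I would use is to sandwich: first push the two left-most $\down$'s of $\la$ to the first two $\bullet$-positions by neighboured $\up\down\to\down\up$ swaps, giving a weight $\rho\le\la$ with $\down\down$ at the front; then apply the basic move $\down\down\to\up\up$ to get $\rho'\le\rho$; and finally check $\mu\le\rho'$ once more from the dominance description. Concretely, if the $\down$-positions of $\la$ are $q_1<\cdots<q_c$, then $\rho'$ has $\down$-positions $\{q_3,\dots,q_c\}$ while $\mu$ has $\down$-positions $\{q_1,\dots,q_c\}\setminus\{q_a,q_b\}$, where $a<b$ are determined by $\{q_a,q_b\}=\{i,j\}$; since $q_1\le q_a$ and $q_2\le q_b$ one gets $d_t(\mu)-d_t(\rho')=[q_1\le t]+[q_2\le t]-[q_a\le t]-[q_b\le t]\ge 0$ for all $t$, with both weights carrying the same number of $\up$'s. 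Chaining $\mu\le\rho'\le\rho\le\la$ then gives $\mu\le\la$, and strictness again follows from $h$.

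The hard part will be the second assertion, and within it the verification that after removing the left-most pair of $\down$'s we still have $\rho'$ dominating $\mu$; the point that makes it work is precisely $q_1\le q_a$ and $q_2\le q_b$, expressing that annihilating the two left-most $\down$'s is the extremal (smallest) way to delete two $\down$'s. Beyond this the remaining obstacle is purely bookkeeping: one must keep the $\circ$ and $\times$ entries undisturbed throughout, and note that the intermediate weights $\rho,\rho'$ are permitted to differ from both $\la$ and $\mu$ away from the distinguished positions, since only the two endpoints of the chain of basic moves are required to be $\la$ and $\mu$.
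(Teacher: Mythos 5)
Your proof is correct and implements precisely what the paper's one-line proof asserts --- namely that both non-neighboured moves decompose into chains of decreasing basic (neighboured) moves; your detour through the partial-sum dominance criterion and the sandwich $\mu\le\rho'\le\rho\le\la$ is just a systematic way of exhibiting those chains. In particular, you correctly isolated the only genuinely non-local point, that the $\down\down\to\up\up$ case cannot be done in place but must pass through flipping the two left-most $\down$'s at the front and then re-sorting by $\up\down\to\down\up$ swaps, which is exactly how the basic moves compose.
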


For $\mathbb{S}_4$ as above the associated weights via Lemma~\ref{bijection_S_Lambda} (omitting $\circ$'s) are
\begin{eqnarray*}
&{\down\down\down\down}\;,\;{\up\up\down\down}\;,\;{\up\down\up\down}\;,\;{\down\up\up\down}\;,\;
{\up\down\down\up}\;,\;{\down\up\down\up}\;,\;{\down\down\up\up}\;,\;
{\up\up\up\up}&
\end{eqnarray*}
If we label them $\la_1, \ldots, \la_8$ then $\la_1>\la_2>\la_3>\la_4,\la_5>\la_6>\la_7>\la_8$.

\begin{lemma}
Lemma~\ref{bijection_S_Lambda} defines an isomorphism of posets between $\S$, equipped with the partial order given by containment of the corresponding Young diagrams with the smaller diagram corresponding to the smaller element, and $\Lambda_k^{\ov{0}}$ equipped with the Bruhat order.
\end{lemma}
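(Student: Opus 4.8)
The plan is to verify that the bijection $s\mapsto\la(s)$ of Lemma~\ref{bijection_S_Lambda} identifies the covering relations of the two partial orders, and then to invoke the general fact that a finite partial order is the transitive closure of its covering relations. Write $D(s)$ for the symmetric Young diagram attached to $s\in\S$ by the boundary walk of its antisymmetric length-$2k$ extension, so that the full box corresponds to $\down\cdots\down$ and the empty diagram to $\up\cdots\up$ (compare the list $\la_1,\dots,\la_8$ preceding the statement). Since the Bruhat order $\le_B$ on $\Lambda_k^{\ov 0}$ is by definition the transitive closure of the basic linkage moves, and the containment order $\le_Y$ on $\S$ is the transitive closure of its own covering relations, it suffices to treat the two inclusions $\le_B\subseteq\le_Y$ and $\le_Y\subseteq\le_B$ separately.

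For $\le_B\subseteq\le_Y$ I would first record the effect of a single basic move on the lattice path. A neighbour swap $\up\down\rightsquigarrow\down\up$ at positions $i,i+1$ pushes the corresponding corner of the path inward and, by antisymmetry, its mirror image in the first half as well, so that $D$ loses exactly one symmetric off-diagonal pair of cells; the first-two move $\down\down\rightsquigarrow\up\up$ instead retracts the path across the diagonal, removing the innermost symmetric configuration of cells straddling it. In either case the diagram strictly shrinks, so each decreasing basic move sends $D(s')$ to some $D(s)\subsetneq D(s')$. As $\le_B$ is generated by these moves and $\subseteq$ is transitive, $\la(s)\le_B\la(s')$ forces $D(s)\subseteq D(s')$, i.e.\ $s\le_Y s'$.

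For the reverse inclusion I would analyse the covering relations of the containment order on symmetric diagrams subject to the even-diagonal constraint. Taking a saturated chain from $D(s)$ up to $D(s')$, it is enough to show that each cover is realised by a single basic move on the weights. The key claim is that a maximal proper symmetric subdiagram of a symmetric diagram is obtained either by deleting one symmetric off-diagonal pair of cells, recorded by a decreasing neighbour swap $\up\down\rightsquigarrow\down\up$, or by deleting the minimal symmetric configuration at the diagonal, recorded by the decreasing first-two move $\down\down\rightsquigarrow\up\up$. Here the even-diagonal condition is essential and purely type $D$: it forbids removing a single diagonal box and forces the larger diagonal jump that the first-two move records, matching the fact that this move alters the weight at two positions at once.

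The main obstacle is precisely this translation between removable symmetric cells of $D(s')$ and admissible basic moves on $\la(s')$, especially near the diagonal, where the diagram combinatorics interacts with the parity $\ov{|P_\up(\la)|}$. The cleanest way to dispatch it is to attach to each weight the partial-sum statistic counting $\up$'s in its first $j$ entries (corrected for the antisymmetric extension), to show that $D(s)\subseteq D(s')$ is equivalent to the resulting dominance inequalities, and then to read off that the minimal failures of dominance are repaired exactly by a neighbour swap or by the first-two move; Lemma~\ref{lem:Bruhat} guarantees that the non-neighbouring moves one is naturally led to remain decreasing for $\le_B$. Once the covering relations are matched in both directions, the two orders coincide under the bijection, which is the asserted isomorphism of posets.
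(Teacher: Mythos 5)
Your proposal is correct, and it is precisely the detailed unwinding of the paper's one-line proof ("this follows directly from the definitions"): decreasing basic linkage moves correspond to removing a symmetric off-diagonal pair of boxes (neighbour swap $\up\down\to\down\up$) or a $2\times 2$ diagonal block (the move $\down\down\to\up\up$ at the first two positions, where the even-diagonal constraint enters), and any containment of symmetric diagrams refines into such elementary removals. So you have supplied, correctly, exactly the verification the paper leaves implicit.
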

\begin{proof}
This follows directly from the definitions.
\end{proof}

\subsection{The Weyl group of type $D$}
\label{typeD}
To make the connection with the Bruhat order on Coxeter groups let $W=W(k)$ be the Weyl group of type $D_k$. It is generated by simple reflections $s_i$,  $0\leq i\leq k-1$ subject to the relations $s_i^2=e$ for
all $i$, and for $i,j\not=0$  $s_is_j=s_js_i$ if $|i-j|>1$ and $s_is_js_i=s_js_is_j$ if $|i-j|=1$, and additionally $s_0s_2s_0=s_2s_0s_2$
and $s_0s_j=s_js_0$ for $j\not=2$.
 It has two parabolic subgroups isomorphic to the symmetric group $S_k$, namely $W_0$ generated by $s_i$, $i\not=0$ and $W_1$ generated
 by $s_i$, $i\not=1$. Let $W^0$ and $W^1$ be the set of shortest coset representatives for $W_0\backslash W$ and $W_1\backslash W$
 respectively.

The group $W$ acts from the right on the sets $\Lambda_k^{\ov{p}}$ for $p=\ov{0}, \ov{1}$ as follows: $s_i$ for $i>1$ swaps the $i$th and $(i+1)$th label, whereas
$s_1$ swaps the first two in case they are $\up\down$ or $\down\up$ and is the identity otherwise whereas $s_0$ turns $\up\up$ into $\down\down$ and vice versa at the first two positions and is the identity otherwise. The sequence consisting of $k$ $\down$'s is
fixed by the parabolic $W_0$, whereas the sequence consisting of one $\up$ followed by $k-1$ $\down$'s is fixed by $W_1$. Sending the identity element $e\in W$ to either of these
sequences $s$ and $w$ to $s.w$  defines canonical bijections $\Phi^0$ and $\Phi^1$ between the sets $W^0$ (resp. $W^1$) and $\Lambda_k^{\ov{0}}$
(resp. $\Lambda_k^{\ov{1}}$). In the above example the elements from $W$ are the following in the respective cases:
\begin{eqnarray}
{e}\quad {s_0}\quad {s_0s_2}\quad {s_0s_2s_1}\quad {s_0s_2s_3}\quad {s_0s_2s_3s_1}\quad {s_0s_2s_1s_3s_2}\quad {s_0s_2s_1s_3s_2s_0}\\
{e}\quad {s_1}\quad {s_1s_2}\quad {s_1s_2s_0}\quad {s_1s_2s_3}\quad {s_1s_2s_3s_0}\quad {s_1s_2s_3s_0s_2}\quad {s_1s_2s_3s_0s_2s_1}
\end{eqnarray}
More generally, blocks $\Lambda$ with $|P_\bullet(\Lambda)|=k$ can be identified with $W$-orbits, where $W$ acts on the vertices not labelled $\circ$ or $\times$. Moreover, our combinatorially defined partial ordering on weights agrees with the ordering induced from the reversed Bruhat order on $W$ or reversed inclusion order on
Schubert cells.

\begin{remark}
{\rm Note that flipping the first symbol in $\la(s)$ from Lemma~\ref{bijection_S_Lambda} would alternatively give us a bijection between $\mathbb{S}_4$ and $\Lambda_k^{\ov{1}}$. At the moment we will stick with the even choice, but one should think of the second choice as in fact passing to the second natural choice of parabolic of Dynkin type $A_k$ in the complex semisimple Lie algebra $\mg$ of type $D_k$. It corresponds to different, but canonically equivalent parabolic category $\cO(\mg)$'s related by a {\it parity switch functor}. This functor plays an important role in \cite{ES2} where we relate our setup to cyclotomic versions of VW-algebras (degenerate versions of BMW-algebras). The name parity switch functor should also indicate that it plays the analogous role to parity switch functors in the theory of Lie superalgebras. It should not be confused with the slightly mysterious extra translation functors, called switch functors, appearing in the finite dimensional representation theory of the orthosymplectic Lie superalgebras, \cite{GS1}, \cite{GS2}. In \cite{ES2} the switch functors are used to categorify a skew Howe duality for quantum symmetric pairs using special cases of Letzter's coideal subalgebras \cite{Letzter}.}
\end{remark}

\section{Cup diagrams and the quiver}
The goal of this section is to introduce the required combinatorics to define the type $D$ generalized Khovanov algebra and describe the graph of the quiver (Definition \eqref{Extquivdef} and Corollary~\ref{CorCartan}).
Throughout this section we fix a block $\Lambda$ with its diagram (also denoted $\Lambda$). We abbreviate
$$ P_\bullet = P_\bullet(\Lambda), \qquad P_\times = P_\times(\Lambda), \qquad P_\circ = P_\circ(\Lambda).$$

Consider the semi-infinite strip
$$R = \{ (x,y) \in \mR_{\geq 0} \times \mR \mid 1 \geq y \geq -1 \} \subset \mR_{\geq 0} \times \mR.$$
as the union of the two smaller strips $R^- = \{ (x,y) \in R \mid 0 \geq y \geq -1\}$ and $R^+ = \{ (x,y) \in R \mid 1 \geq y \geq 0 \}$.
For $* \in \{ \bullet, \circ, \times \}$ and $t \in \{-1,0,1\}$ we denote $$ P_*^t = \{(p,t) \mid p \in P_* \}.$$ Later on we will need the following statistic for $p \in P_\bullet$ and $(p,t) \in P_\bullet^t$
$$\pos((p,t))=\pos(p):=\#\{ q \in P_\bullet \mid q \leq p\} \in \mZ_{>0}.$$
For principal blocks $\pos(p)$ is nothing else than the position of $p$.  For $r \in \mR$ we denote $T_r:\mR_{\geq 0} \times \mR \longrightarrow \mR_{\geq 0} \times \mR$, $T_r(x,y) = (x,y+r)$ the vertical translations by $r$.

\subsection{Cup diagrams}
\begin{definition}
\label{defarcs}
A subset $\gamma \subset R^-$ is called an \emph{arc} if there exists an injective continuous map $\tilde{\gamma}:[0,1] \rightarrow R$ with image $\gamma$ such that
\begin{enumerate}
\item $\widetilde{\gamma}(0) \in P_\bullet^0$,
\item $\widetilde{\gamma}(1) \in P_\bullet^0$ or $\widetilde{\gamma}(1) \in \mR_{\geq 0} \times \{-1\}$,
\item $\widetilde{\gamma}( (0,1) ) \subset (R^-)^\circ$, the interior of $R^-$.
\end{enumerate}
If the arc satisfies the first condition in (2) we call it a \emph{cup}, if it satisfies the second we call it a \emph{ray}.
\end{definition}

\begin{definition} \label{cup_diagram}
An \emph{undecorated cup diagram} $c$ (with respect to the block diagram $\Lambda$) is a finite union of arcs $\{ \gamma_1,\ldots,\gamma_r \}$, such that
\begin{enumerate}
\item $\gamma_i \cap \gamma_j = \emptyset$ for $i \neq j$,
\item for every $(p,0) \in P_\bullet^0$ there exists an arc $\gamma_{i(p)}$ such that $(p,0) \in \gamma_{i(p)}$.
\end{enumerate}
\end{definition}

In the following we will always consider two diagrams as the same if they differ only by an isotopy of $R$ fixing the axis of $\mR_{\geq 0} \times \{0\}$ pointwise.

Following \cite{ES1}, we have the notion of a decorated cup diagram, a generalization of the setup from \cite{BS1}:

\begin{definition}
Let $c$ be a cup diagram with set of arcs $\{\gamma_1,\ldots, \gamma_r\}$.Then a \emph{decoration} of $c$ is a map
$${\rm deco}_c:\quad\{\gamma_1,\ldots, \gamma_r\} \rightarrow \{0,1\},$$
such that ${\rm deco}_c(\gamma_i)=1$ implies that an arbitrary point in the interior of $\gamma_i$ can be connected with the left boundary of $R$ by a path not intersecting any other arc. \footnote{This condition is very well-known in the context of blob algebras as studied for instance in \cite{Martinblob}, and generalized Temperley-Lieb algebras \cite{Green}. It arises naturally from the theory in \cite{BS1} by a `folding' procedure, see \cite{LS}.}
If the value for $\gamma_i$ is $1$ we call the arc \emph{dotted}, otherwise we call it \emph{undotted}. A cup diagram together with a decoration is called a \emph{(decorated) cup diagram}. We visualize this by putting a "$\bullet$" at an arbitrary interior point of the arc in the underlying undecorated cup diagram.
\end{definition}

\begin{ex}
\label{excup}
The following shows two examples of possible decorations for the same undecorated cup diagram with the block diagram listed on top.

\begin{eqnarray*}
\begin{tikzpicture}[thick,scale=0.50]
\node at (0,.5) {$\bullet$};
\node at (1,.5) {$\bullet$};
\node at (2,.5) {$\bullet$};
\node at (3,.5) {$\circ$};
\node at (4,.5) {$\times$};
\node at (5,.5) {$\times$};
\node at (6,.5) {$\bullet$};
\node at (7,.5) {$\bullet$};
\node at (8,.5) {$\circ$};
\node at (9,.5) {$\cdots$};

\node at (-1,-.5) {i)};
\draw (0,0) .. controls +(0,-2) and +(0,-2) .. +(6,0);
\draw (1,0) .. controls +(0,-1) and +(0,-1) .. +(1,0);
\draw (7,0) -- (7,-1.5);

\begin{scope}[yshift=-2.5cm]
\node at (-1,-.5) {ii)};
\draw (0,0) .. controls +(0,-2) and +(0,-2) .. +(6,0);
\fill (3,-1.5) circle(3.5pt);
\draw (1,0) .. controls +(0,-1) and +(0,-1) .. +(1,0);
\draw (7,0) -- (7,-1.5);
\fill (7,-.75) circle(3.5pt);
\end{scope}

\end{tikzpicture}
\end{eqnarray*}
Note that the interior cup is not allowed to carry a dot.
\end{ex}

\begin{definition}
\label{decoratedcups}
For a weight $\la \in \Lambda$, the associated \emph{(decorated) cup diagram}, denoted $\underline \la$ is obtained by the following procedure.
\begin{enumerate}[(C1)]
\item First connect neighboured vertices in $P_\bullet$ labelled $\down\up$ successively by a cup (ignoring already joint vertices and vertices not in $P_\bullet$) as long as possible. (Note that the result is independent of the order the connections are made).
\item Attach to each remaining $\down$ a vertical ray.
\item Connect from left to right pairs of two neighboured $\up$'s by cups.
\item If a single $\up$ remains, attach a vertical ray.
\item Finally define the decoration ${\rm deco}_c$ such that all arcs created in step (1) and (2) are undotted whereas the ones created in steps (3) and (4) are dotted.
\end{enumerate}
\end{definition}

We denote by $\op{C}_\Lambda$ the set of cup diagrams associated with a block $\Lambda$. For instance the cup diagram (i) from Example~\ref{excup} corresponds to the weight $\down\down\up\circ\times\times\up\down\circ\cdots$, whereas the diagram (ii) corresponds to the weight $\up\down\up\circ\times\times\up\up\circ\cdots$.
\begin{ex}
\label{exn4}
For $k=4$ we get the following set of even regular weights with the set  $\op{C}_k^{\ov{0}}$ of corresponding cup diagrams

\begin{eqnarray} \label{sequ1}
\begin{tikzpicture}[thick,scale=0.5]
\begin{scope}[xshift=-3cm]
\node at (.8,.8) {\small $\down\down\down\down$};
\draw (0,0) -- +(0,-.8);
\draw (.5,0) -- +(0,-.8);
\draw (1,0) -- +(0,-.8);
\draw (1.5,0) -- +(0,-.8);

\node at (3.8,.8) {\small $\up\up\down\down$};
\draw (3,0) .. controls +(0,-.5) and +(0,-.5) .. +(.5,0);
\fill (3.25,-.35) circle(2.5pt);
\draw (4,0) -- +(0,-.8);
\draw (4.5,0) -- +(0,-.8);

\node at (6.8,.8) {\small $\up\down\up\down$};
\draw (6,0) -- +(0,-.8);
\fill (6,-.4) circle(2.5pt);
\draw (6.5,0) .. controls +(0,-.5) and +(0,-.5) .. +(.5,0);
\draw (7.5,0) -- +(0,-.8);

\node at (9.8,.8) {\small $\down\up\up\down$};
\draw (9,0) .. controls +(0,-.5) and +(0,-.5) .. +(.5,0);
\draw (10,0) -- +(0,-.8);
\fill (10,-.4) circle(2.5pt);
\draw (10.5,0) -- +(0,-.8);
\end{scope}

\begin{scope}[xshift=9cm, yshift=2cm]
\node at (.8,-1.2) {\small $\up\down\down\up$};
\draw (0,-2) -- +(0,-.8);
\fill (0,-2.4) circle(2.5pt);
\draw (0.5,-2) -- +(0,-.8);
\draw (1,-2) .. controls +(0,-.5) and +(0,-.5) .. +(.5,0);

\node at (3.8,-1.2) {\small $\down\up\down\up$};
\draw (3,-2) .. controls +(0,-.5) and +(0,-.5) .. +(.5,0);
\draw (4,-2) .. controls +(0,-.5) and +(0,-.5) .. +(.5,0);

\node at (6.8,-1.2) {\small $\down\down\up\up$};
\draw (6,-2) .. controls +(0,-1) and +(0,-1) .. +(1.5,0);
\draw (6.5,-2) .. controls +(0,-.5) and +(0,-.5) .. +(.5,0);

\node at (9.8,-1.2) {\small $\up\up\up\up$};
\draw (9,-2) .. controls +(0,-.5) and +(0,-.5) .. +(.5,0);
\fill (9.25,-2.35) circle(2.5pt);
\draw (10,-2) .. controls +(0,-.5) and +(0,-.5) .. +(.5,0);
\fill (10.25,-2.35) circle(2.5pt);
\end{scope}
\end{tikzpicture}
\end{eqnarray}
and odd regular weights with the set  $\op{C}_k^{\ov{1}}$ of corresponding cup diagrams:
\begin{eqnarray} \label{sequ2}
\begin{tikzpicture}[thick,scale=.5]
\begin{scope}[xshift=-3cm]
\node at (.8,.8) {\small $\up\down\down\down$};
\draw (0,0) -- +(0,-.8);
\fill (0,-.4) circle(2.5pt);
\draw (.5,0) -- +(0,-.8);
\draw (1,0) -- +(0,-.8);
\draw (1.5,0) -- +(0,-.8);

\node at (3.8,.8) {\small $\down\up\down\down$};
\draw (3,0) .. controls +(0,-.5) and +(0,-.5) .. +(.5,0);
\draw (4,0) -- +(0,-.8);
\draw (4.5,0) -- +(0,-.8);

\node at (6.8,.8) {\small $\down\down\up\down$};
\draw (6,0) -- +(0,-.8);
\draw (6.5,0) .. controls +(0,-.5) and +(0,-.5) .. +(.5,0);
\draw (7.5,0) -- +(0,-.8);

\node at (9.8,.8) {\small $\up\up\up\down$};
\draw (9,0) .. controls +(0,-.5) and +(0,-.5) .. +(.5,0);
\fill (9.25,-0.35) circle(2.5pt);
\draw (10,0) -- +(0,-.8);
\fill (10,-.4) circle(2.5pt);
\draw (10.5,0) -- +(0,-.8);
\end{scope}

\begin{scope}[yshift=2cm, xshift=9cm]
\node at (.8,-1.2) {\small $\down\down\down\up$};
\draw (0,-2) -- +(0,-.8);
\draw (0.5,-2) -- +(0,-.8);
\draw (1,-2) .. controls +(0,-.5) and +(0,-.5) .. +(.5,0);

\node at (3.8,-1.2) {\small $\up\up\down\up$};
\draw (3,-2) .. controls +(0,-.5) and +(0,-.5) .. +(.5,0);
\fill (3.25,-2.35) circle(2.5pt);
\draw (4,-2) .. controls +(0,-.5) and +(0,-.5) .. +(.5,0);

\node at (6.8,-1.2) {\small $\up\down\up\up$};
\draw (6,-2) .. controls +(0,-1) and +(0,-1) .. +(1.5,0);
\draw (6.5,-2) .. controls +(0,-.5) and +(0,-.5) .. +(.5,0);
\fill (6.75,-2.75) circle(2.5pt);

\node at (9.8,-1.2) {\small $\down\up\up\up$};
\draw (9,-2) .. controls +(0,-.5) and +(0,-.5) .. +(.5,0);
\draw (10,-2) .. controls +(0,-.5) and +(0,-.5) .. +(.5,0);
\fill (10.25,-2.35) circle(2.5pt);
\end{scope}
\end{tikzpicture}
\end{eqnarray}
\end{ex}

Note that  $\op{C}_k^{\ov{p}}$ is precisely the set of diagrams, where the number of dotted rays plus undotted cups is even (resp. odd), depending if $p=0$ or $p=1$.

\subsection{$\la$-pairs and the arrows in the quiver}
Recall from the introduction that we like to describe the quiver of the category of perverse sheaves on isotropic Grassmannians. The vertices are labelled by Schubert varieties or their corresponding cup diagram $\underline{\la}$ introduced above. The arrows attached to a vertex $\la$ will correspond to so-called $\la$-pairs, a notion introduced in \cite{Braden} which we now adapt to our setting.

\begin{definition} \label{lapair}
Fix a block $\Lambda$. Let $\la \in \Lambda$ be a weight and $\gamma$ a cup in $\underline{\la}$ connecting the vertices $l_\gamma < r_\gamma$ in $P_\bullet$.
With $\gamma$ we associate a pair of integers $(\alpha,\beta)\in\mathbb{Z} \times \mathbb{N}$ called a $\la$-\emph{pair}. If $\gamma$ is undotted we set  $(\alpha,\beta)=(\pos(l_\gamma),\pos(r_\gamma))$ and if $\gamma$ is dotted we set $(\alpha,\beta)=(-\pos(l_\gamma),\pos(r_\gamma))$. Given a $\la$-pair let $\la'$ be the weight obtained by changing $\down$ into $\up$ and $\up$ into $\down$ at the entries $l_\gamma$ and $r_\gamma$ of $\lambda$. In this case we write $\la\stackrel{(\alpha,\beta)}{\longrightarrow}\la'$ or short $\la\rightarrow\la'$ or equivalently $\la'\leftarrow \la$. Note that this implies $\la'>\la$. We abbreviate $\la\leftrightarrow\mu$ in case $\la\rightarrow\mu$ or $\mu\rightarrow\la$.
\end{definition}

The relation $\mu\leftarrow \la$ has a nice interpretation in terms of the associated cup diagrams $\underline\la\leftarrow\underline\mu$:

\begin{lemma}
\label{lapairscup}
Given weights $\la,\mu$. We have $\mu\leftarrow \la$ if and only if the corresponding cup diagrams $\underline\la$ and $\underline\mu$ differ precisely by one of the following local moves  $\underline\mu\leftarrow\underline\la$. In particular $\la$ and $\mu$ are from the same block:\footnote{the attached $+,-$-sequences will be used later; it encodes the weights $\la$ and $\mu$ when replacing $+$ with $\up$ and $-$ with $\down$ as in Lemma~\ref{bijection_S_Lambda}.}

\begin{eqnarray}
\begin{tikzpicture}[thick,scale=0.7]
\draw (0,0) node[above]{$-$} .. controls +(0,-.5) and +(0,-.5) .. +(.5,0) node[above]{+};
\draw (1,0) node[above]{$-$} .. controls +(0,-.5) and +(0,-.5) .. +(.5,0) node[above]{+};
\draw[<-] (1.75,-.2) -- +(1,0);
\draw (3,0) node[above]{$-$} .. controls +(0,-1) and +(0,-1) .. +(1.5,0) node[above]{+};
\draw (3.5,0) node[above]{$-$} .. controls +(0,-.5) and +(0,-.5) .. +(.5,0) node[above]{+};

\begin{scope}[xshift=7cm]
\draw (0,0) node[above]{+} -- +(0,-.8);
\fill (0,-.4) circle(2.5pt);
\draw (.5,0) node[above]{$-$} .. controls +(0,-.5) and +(0,-.5) .. +(.5,0) node[above]{+};
\draw[<-] (1.25,-.2) -- +(1,0);
\draw (2.5,0) node[above]{$-$} .. controls +(0,-.5) and +(0,-.5) .. +(.5,0) node[above]{+};
\draw (3.5,0) node[above]{+} -- +(0,-.8);
\fill (3.5,-.4) circle(2.5pt);
\end{scope}

\begin{scope}[xshift=12cm]
\draw (0,0) node[above]{+} .. controls +(0,-.5) and +(0,-.5) .. +(.5,0) node[above]{+};
\fill (.25,-.36) circle(2.5pt);
\draw (1,0) node[above]{$-$} .. controls +(0,-.5) and +(0,-.5) .. +(.5,0) node[above]{+};
\draw[<-] (1.75,-.2) -- +(1,0);
\draw (3,0) node[above]{+} .. controls +(0,-1) and +(0,-1) .. +(1.5,0) node[above]{+};
\fill (3.75,-.74) circle(2.5pt);
\draw (3.5,0) node[above]{$-$} .. controls +(0,-.5) and +(0,-.5) .. +(.5,0) node[above]{+};
\end{scope}

\begin{scope}[yshift=-2cm, xshift=1cm]
\draw (0,0) node[above]{+} -- +(0,-.8);
\fill (0,-.4) circle(2.5pt);
\draw (.5,0) node[above]{$-$} -- +(0,-.8);
\draw[<-] (.75,-.2) -- +(1,0);
\draw (2,0) node[above]{$-$} .. controls +(0,-.5) and +(0,-.5) .. +(.5,0) node[above]{+};

\begin{scope}[xshift=6cm]
\draw (0,0) node[above]{+} .. controls +(0,-.5) and +(0,-.5) .. +(.5,0) node[above]{+};
\fill (.25,-.36) circle(2.5pt);
\draw (1,0) node[above]{$-$} -- +(0,-.8);
\draw[<-] (1.25,-.2) -- +(1,0);
\draw (2.5,0) node[above]{+} -- +(0,-.8);
\fill (2.5,-.4) circle(2.5pt);
\draw (3,0) node[above]{$-$} .. controls +(0,-.5) and +(0,-.5) .. +(.5,0) node[above]{+};
\end{scope}

\begin{scope}[xshift=11.5cm]
\draw (0,0) node[above]{$-$} .. controls +(0,-.5) and +(0,-.5) .. +(.5,0) node[above]{+};
\draw (1,0) node[above]{$-$} -- +(0,-.8);
\draw[<-] (1.25,-.2) -- +(1,0);
\draw (2.5,0) node[above]{$-$} -- +(0,-.8);
\draw (3,0) node[above]{$-$} .. controls +(0,-.5) and +(0,-.5) .. +(.5,0) node[above]{+};
\end{scope}
\end{scope}

\begin{scope}[yshift=-4cm]
\draw (0,0) node[above]{$-$} .. controls +(0,-1) and +(0,-1) .. +(1.5,0) node[above]{+};
\draw (.5,0) node[above]{$-$} .. controls +(0,-.5) and +(0,-.5) .. +(.5,0) node[above]{+};
\draw[<-] (1.75,-.2) -- +(1,0);
\draw (3,0) node[above]{+} .. controls +(0,-.5) and +(0,-.5) .. +(.5,0) node[above]{+};
\fill (3.25,-.36) circle(2.5pt);
\draw (4,0) node[above]{+} .. controls +(0,-.5) and +(0,-.5) .. +(.5,0) node[above]{+};
\fill (4.25,-.36) circle(2.5pt);

\begin{scope}[xshift=7cm]
\draw (0,0) node[above]{$-$} -- +(0,-.8);
\draw (.5,0) node[above]{$-$} .. controls +(0,-.5) and +(0,-.5) .. +(.5,0) node[above]{+};
\draw[<-] (1.25,-.2) -- +(1,0);
\draw (2.5,0) node[above]{+} .. controls +(0,-.5) and +(0,-.5) .. +(.5,0) node[above]{+};
\fill (2.75,-.36) circle(2.5pt);
\draw (3.5,0) node[above]{+} -- +(0,-.8);
\fill (3.5,-.4) circle(2.5pt);
\end{scope}

\begin{scope}[xshift=13cm]
\draw (0,0) node[above]{$-$} -- +(0,-.8);
\draw (.5,0) node[above]{$-$} -- +(0,-.8);
\draw[<-] (.75,-.2) -- +(1,0);
\draw (2,0) node[above]{+} .. controls +(0,-.5) and +(0,-.5) .. +(.5,0) node[above]{+};
\fill (2.25,-.36) circle(2.5pt);
\end{scope}
\end{scope}

\begin{scope}[xshift=6.5cm, yshift=-6cm]
\draw (0,0) node[above]{$+$} .. controls +(0,-1) and +(0,-1) .. +(1.5,0) node[above]{+};
\draw (.5,0) node[above]{$-$} .. controls +(0,-.5) and +(0,-.5) .. +(.5,0) node[above]{+};
\fill (0.75,-.74) circle(2.5pt);
\draw[<-] (1.75,-.2) -- +(1,0);
\draw (3,0) node[above]{$-$} .. controls +(0,-.5) and +(0,-.5) .. +(.5,0) node[above]{+};
\draw (4,0) node[above]{+} .. controls +(0,-.5) and +(0,-.5) .. +(.5,0) node[above]{+};
\fill (4.25,-.36) circle(2.5pt);
\end{scope}

\end{tikzpicture}
\end{eqnarray}
\end{lemma}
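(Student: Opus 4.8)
The plan is to prove both implications by first reducing the construction of $\underline\la$ in Definition~\ref{decoratedcups} to a parenthesis‑matching description, and then analysing how flipping the two endpoints of a single cup propagates through that matching. First I would record the following reformulation of steps (C1)--(C4): reading $\down$ as an opening and $\up$ as a closing bracket, step (C1) matches brackets in the usual nested fashion, and the matched pairs are exactly the undotted cups; a short stack argument shows the unmatched $\up$'s all lie to the left of the unmatched $\down$'s; step (C2) puts undotted rays on the unmatched $\down$'s; and steps (C3)--(C4) pair the unmatched $\up$'s from left to right into dotted cups, with a dotted ray left over when their number is odd. In this language every cup $\gamma$ of $\underline\la$ is of exactly one of two types: an undotted matched pair $(l_\gamma,r_\gamma)$ with $\la_{l_\gamma}=\down$ and $\la_{r_\gamma}=\up$, or a dotted pair of consecutive unmatched $\up$'s.

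Next I would prove the crucial \emph{locality} statement: flipping $\{l_\gamma,r_\gamma\}$ changes $\underline\la$ only in a bounded neighbourhood of $\gamma$. The interior of $\gamma$ is balanced, hence matched entirely within itself, so all arcs nested inside $\gamma$ are untouched, and the labels strictly outside $[l_\gamma,r_\gamma]$ are unchanged. The only effect is that $l_\gamma$, which becomes a closing bracket, seeks the nearest available opening to its left and $r_\gamma$, which becomes an opening bracket, seeks the nearest available closing to its right, after which the leftover $\up$'s are re‑paired. The point is that the flip alters the number of unmatched $\up$'s lying to the left of any fixed far‑away vertex by an \emph{even} amount, so the left‑to‑right pairing of all leftover $\up$'s away from $\gamma$ is preserved; only $\gamma$ together with at most one neighbouring arc (an enclosing cup, or an immediately adjacent cup, ray, or dotted ray) is affected, the latter possibly splitting or merging.

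With locality in hand, the forward implication becomes a finite case check. I would organise it by the type of $\gamma$ together with the nature of its relevant exterior neighbour(s): for a dotted cup the outcome depends only on whether it is followed by another dotted cup, a dotted ray, or nothing, yielding the three moves in which an $\up\up$ turns into $\down\down$; for an undotted cup one records, independently, whether $l_\gamma$ finds an opening to its left (splitting an enclosing cup, or absorbing an undotted ray) or becomes a fresh unmatched $\up$, and whether $r_\gamma$ finds a closing to its right (splitting an enclosing cup, absorbing an unmatched $\up$, or entering an adjacent dotted cup) or becomes an undotted ray. Recomputing $\underline\mu$ in each admissible combination reproduces precisely the pictures displayed in the statement, and the orientation of the arrow is fixed by $\la'>\la$, which follows from Lemma~\ref{lem:Bruhat} (a pair $\down\up$ turning into $\up\down$, or $\up\up$ into $\down\down$, moves a weight up in the Bruhat order). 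The converse is then immediate: each displayed move, read from right to left, is one of these reconnections, so the right‑hand diagram is $\underline\la$ for a weight admitting the cup $\gamma$ whose flip produces the left‑hand diagram.

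The hard part will be the locality step, and specifically controlling the decorations: because steps (C3)--(C4) pair the leftover $\up$'s \emph{globally} from the left, a naive flip appears to re‑pair all of them and hence to act non‑locally. The parity bookkeeping above — that the count of unmatched $\up$'s to the left of any far vertex changes by an even number — is exactly what confines the re‑pairing to $\gamma$ and its single neighbour, and it is also what distinguishes flipping the leftmost dotted cup, which merges with its right neighbour into nested undotted cups, from flipping a dotted cup that is followed only by a dotted ray or by nothing.
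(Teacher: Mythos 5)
Your overall plan is sound, and it is worth saying up front that the paper itself offers no argument at all here — its proof of Lemma~\ref{lapairscup} is the single sentence ``This follows directly from the definitions'' — so your bracket-matching formalization of (C1)--(C4), the locality analysis, and the case check are exactly the verification the authors leave to the reader. Carried out carefully, your scheme does reproduce precisely the ten displayed moves: for an undotted cup $\gamma$ the stack picture gives the nested-cup split, the ray absorption, and the five configurations in which $l_\gamma$ becomes a fresh unmatched $\up$; for a dotted cup it gives the three moves according to whether $\gamma$ is followed by another dotted cup, a dotted ray, or nothing. The converse direction is indeed immediate once the forward list is complete.

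There are, however, two places where your write-up as it stands would fail. First, the parity claim you designate as the crux — that a flip changes the number of unmatched $\up$'s to the left of any far-away vertex by an \emph{even} amount — is false in three of the ten configurations: the isolated undotted cup (second row, first move, where $\down\up$ becomes a dotted ray next to an undotted ray), the undotted cup preceded by a dotted ray (second row, second move), and the dotted cup followed by a dotted ray (third row, second move). In these cases the count of leftover $\up$'s changes by $+1$, $+1$ and $-3$ respectively. Locality still holds, but for a different reason which your argument does not supply: since leftover $\up$'s always precede leftover $\down$'s, in exactly these configurations the change occurs at the right-hand \emph{end} of the list of leftover $\up$'s, so there is nothing to the right whose pairing could be disturbed. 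A correct statement is: either the count changes evenly at every far vertex, or all insertions/deletions occur at the tail of the list of unmatched $\up$'s. Second, the fates of $l_\gamma$ and $r_\gamma$ cannot be recorded ``independently'': they are coupled through the stack. If the flipped $l_\gamma$ (now a closing symbol) pops an opening $t$, then the flipped $r_\gamma$ (now an opening) takes $t$'s place on the stack and inherits $t$'s former fate — it marries the $\up$ that closed $t$ in $\underline\la$, or becomes a ray exactly when $t$ was a ray; only when the stack at $l_\gamma$ is empty does $r_\gamma$ interact with the leftover $\up$'s to its right. Your appeal to ``admissible combinations'' papers over this coupling, but it is precisely the coupling that cuts the a priori product of cases down to the ten moves of the lemma, so it must be proved, not assumed. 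Both defects are repairable within your framework, but as written the key locality step has genuine holes.
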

\begin{proof}
This follows directly from the definitions.
\end{proof}

\begin{definition}
\label{Extquivdef}
The \emph{quiver associated with the block $\Lambda$} is the oriented graph $\mathcal{Q}(\Lambda)$ with vertex set the weights $\la\in\Lambda$ and arrows $\la\rightarrow \mu$ whenever $\la\leftrightarrow\mu$ in the sense of Definition~\ref{lapair}. (For an example we refer to Figure~\ref{fig:quiv}, where the arrows labelled $b$ correspond to the $\la$-pairs.).
\end{definition}

\begin{definition}
An \emph{undecorated cap diagram} (respectively {\emph (decorated) cap diagram} $c'$ is obtained by reflecting an undecorated (respectively decorated) cup diagram $c$ along the horizontal line $\mR_{\geq 0} \times \{0\}$ inside $R$. The image of a cup in such a diagram is called a \emph{cap}. It is \emph{dotted} if the cup was dotted.
\end{definition}

We will denote the reflection at the horizontal line by ${}^*$ and use the notation $c^*$ to denote the cap diagram that we obtain from a given cup diagram $c$. In particular, we obtain a cap diagram associated with $\la$, denoted $\ov{\la} = \underline{\la}^*$ for any $\la\in\Lambda$. Alternatively one can also define cap diagrams in the same way as cup diagrams by substituting $R^+$ for $R^-$ and $\mR_{\geq 0} \times \{1\}$ for $\mR_{\geq 0} \times \{-1\}$ in Definition~\ref{defarcs}.

\begin{definition}
A \emph{(decorated) circle diagram} $d=c c'$ is the union of the arcs contained in a decorated cup diagram $c$ and the ones in a decorated cap diagram $c'$, ie., visually we stack the cap diagram $c'$ on top of the cup diagram $c$.
The decorations of $c$ and $c'$ give the decoration for $d$.
\end{definition}

A \emph{ connected component} of a circle diagram $d$ with set of arcs $\{\gamma_1,\ldots,\gamma_r\}$ is a connected component of the subset $\bigcup_{1 \leq i \leq r} \gamma_i \subset R$; and is always a circle or a line.

\begin{definition}
A \emph{line} in a circle diagram $d$ is a connected component of $d$ that intersects the boundary of the strip $R$. A \emph{circle} in a circle diagram $d$ is a connected component of $d$ that does not intersect the boundary of the strip $R$. A line is \emph{propagating}, if one endpoint is at the top and one at the bottom of the diagram.
\end{definition}

\begin{remark}
\label{decorated}
{\rm
By definition, the following does not occur in a cup or cap diagram
\begin{itemize}
\item
In the lower half-plane:
\begin{itemize}
\item
a dotted cup nested inside another cup, or
\item a dotted cup or dotted ray to the right of a (dotted or undotted) ray.
\end{itemize}
\item In the upper half-plane:
\begin{itemize}
\item a dotted cap nested inside another cap, or
\item a dotted cap or dotted ray to the right of a (dotted or undotted) ray.
\end{itemize}
\end{itemize}
}
\end{remark}

\subsection{Orientations, gradings and the space $\D$}

An \emph{oriented cup diagram}, denoted $c\mu$ is obtained from a cup diagram $c \in C_\Lambda$ by putting a weight $\mu \in \Lambda$ on top of $c$ such that all arcs of $c$ are `oriented' in one of the ways displayed in Figure~\ref{oriented}.  We allow $P_\circ$ or $P_\times$ to be nonempty, but then we additionally require that the vertices in $\mu$ labelled $\circ$ agree with $P_\circ$ and the vertices labelled $\times$ agree with $P_\times$.

Similarly, an \emph{oriented cap diagram} $\mu c'$ is obtained by putting such a weight $\mu$ below a cap diagram with $k$ vertices such that all arcs of $c'$ become `oriented' in one of the following ways:
\begin{eqnarray}
\label{oriented}
\begin{tikzpicture}[thick,>=angle 60,scale=0.8]
\node at (-1,-1.2) {degree};
\draw [>->] (0,0) .. controls +(0,-1) and +(0,-1) .. +(1,0) node at
+(0.5,-1.2) {0};
\draw [<-<] (2,0) .. controls +(0,-1) and +(0,-1) .. +(1,0) node at
+(0.5,-1.2) {1};
\draw [<-<] (4,-0.7) .. controls +(0,1) and +(0,1) .. +(1,0) node at
+(0.5,-0.5) {0};
\draw [>->] (6,-0.7) .. controls +(0,1) and +(0,1) .. +(1,0)  node at
+(0.5,-0.5) {1};
\draw [>-] (8,0) -- +(0,-0.7) node at +(0,-1.2) {0};
\draw [->] (9,0) -- +(0,-0.7) node at +(0,-1.2) {0};

\node at (-1,-3.2) {degree};
\draw [<->] (0,-2) .. controls +(0,-1) and +(0,-1) .. +(1,0) node at
+(0.5,-1.2) {0};
\fill (0.5,-2.75) circle(2.5pt);
\draw [>-<] (2,-2) .. controls +(0,-1) and +(0,-1) .. +(1,0) node at
+(0.5,-1.2) {1};
\fill (2.5,-2.76) circle(2.5pt);
\draw [>-<] (4,-2.7) .. controls +(0,1) and +(0,1) .. +(1,0) node at
+(0.5,-0.5) {0};
\fill (4.5,-1.93) circle(2.5pt);
\draw [<->] (6,-2.7) .. controls +(0,1) and +(0,1) .. +(1,0)  node at
+(0.5,-0.5) {1};
\fill (6.5,-1.95) circle(2.5pt);
\draw [-<] (8,-2) -- +(0,-0.7) node at +(0,-1.2) {0};
\fill (8,-2.35) circle(2.5pt);
\draw [<-] (9,-2) -- +(0,-0.7) node at +(0,-1.2) {0};
\fill (9,-2.35) circle(2.5pt);
\end{tikzpicture}
\end{eqnarray}

Additionally, as shown in \eqref{oriented} we assign a degree to each arc of an oriented cup/cap diagram. The cups/caps of degree 1 are called {\it clockwise} and of degree 0 are called {\it anti-clockwise}. To make sense of the word orientation the "$\bullet$" on a cup/cap should be seen as an orientation reversing point.

\begin{definition}
The \emph{degree} of an oriented cup diagram $c\mu $ (resp. cap diagram $c'\mu$) is the sum of the degrees of all arcs contained in $\mu c$ (resp. $c'\mu$), ie., the number of clockwise oriented cups in $\mu c$ (resp. clockwise oriented caps in $c'\mu$).
\end{definition}

Note that $\underline{\la}$ (resp.  $\ov{\la}$) denotes the unique cup (resp. cap diagram) such that $\underline{\la}\la$ (resp. $\la\ov{\la}$) is an oriented cup/cap diagram of degree 0, ie. all its cups/caps are anti-clockwise. Conversely we have the following

\begin{lemma} \label{lem:orient}
Given a cup diagram $c$ with block sequence $\Theta$ one can find a unique weight $\la$ such that $c=\underline{\la}$. The corresponding parity is unique; and if $\mu$ is a weight with the same block sequence such that $\underline{\la}\mu$ is oriented then $\mu$ has the same parity, hence $\la$ and $\mu$ are in the same block $\Lambda$.
The parity of $\Lambda$ equals the number of dotted rays plus undotted cups in $c$ modulo two.
\end{lemma}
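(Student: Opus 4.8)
The plan is to reduce every assertion to a purely local inspection of the admissible orientations catalogued in Figure~\ref{oriented}, together with the greedy nature of the construction in Definition~\ref{decoratedcups}. The two ingredients I will extract from the figure are: how the labels at the endpoints of an arc are constrained by its shape and decoration, and how those labels change when an arc's orientation is flipped between degree $0$ and degree $1$.

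First I would settle uniqueness of $\la$ in part (a). By construction $\underline{\la}\la$ is the orientation of $c=\underline{\la}$ of degree $0$, i.e.\ the one in which every cup is anti-clockwise, and Figure~\ref{oriented} shows that this degree-$0$ orientation assigns a determined label to every vertex: the left and right endpoints of an anti-clockwise undotted cup are $\down,\up$, both endpoints of an anti-clockwise dotted cup are $\up,\up$, an undotted ray is $\down$, a dotted ray is $\up$, and the $\circ,\times$ positions are prescribed by the block sequence $\Theta$. Hence any weight $\nu$ with $\underline{\nu}=c$ is forced to agree with this reading, so there is at most one such $\la$, and its parity is thereby determined.

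For existence I would define $\la$ by reading off $c$ with the rule of the previous paragraph and then verify that running (C1)--(C5) on this $\la$ returns $c$. This is where the positional constraints recorded in Remark~\ref{decorated} are exactly what is needed: since no dotted cup is nested inside another cup and no dotted cup or dotted ray lies to the right of a ray, the $\up$-labels coming from dotted cups and the dotted ray sit strictly to the left of all undotted rays and are never enclosed by an undotted cup. Consequently the $\down\up$-matching of step (C1) can only pair the endpoints of the undotted cups of $c$; the unmatched $\down$'s are precisely the undotted rays produced in (C2); the unmatched $\up$'s are precisely the dotted-cup endpoints, paired left to right in (C3); and a final unpaired $\up$ is the dotted ray of (C4). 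I would make this precise by induction on the number of arcs, peeling off an innermost undotted cup (an adjacent $\down\up$ pair, hence matched first by the greedy bracket matching) or, when none exists, the leftmost feature. Having done this, the parities follow quickly. From Figure~\ref{oriented} a ray carries a unique orientation, so any weight $\mu$ orienting $c$ agrees with $\la$ at every ray vertex; an undotted cup has the two orientations $\down\up$ and $\up\down$, each with exactly one $\up$, while a dotted cup has the two orientations $\up\up$ and $\down\down$. Therefore passing from $\la$ to an arbitrary orienting $\mu$ only flips cups, changing $|P_\up|$ by $0$ at each undotted cup and by $\pm2$ at each dotted cup, so $|P_\up(\mu)|\equiv|P_\up(\la)|\pmod 2$ and $\mu$ lies in the same block as $\la$; this is part (b). Counting the $\up$'s of $\la$ directly gives
\[
|P_\up(\la)| = \#\{\text{undotted cups}\} + 2\,\#\{\text{dotted cups}\} + \#\{\text{dotted rays}\},
\]
whence $\overline{|P_\up(\la)|}$ is the number of undotted cups plus dotted rays modulo two, which is the final assertion.

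The main obstacle is the existence/reconstruction step: one must show that the greedy matching of (C1)--(C4) applied to the degree-$0$ reading of $c$ reproduces $c$, and this is precisely where the positional constraints of Remark~\ref{decorated} enter, for without them the $\up$-endpoints of a dotted cup could be captured by the matching in step (C1) and the algorithm would fail to recover $c$. A secondary point to pin down carefully is the claim that a ray admits no orientational freedom, which must be read off from Figure~\ref{oriented} with the understanding that its two ray pictures are the cup- and cap-shaped incarnations of a single admissible orientation; if rays could instead be oriented freely, the parity statement in part (b) would already fail for the cup diagram consisting of rays alone.
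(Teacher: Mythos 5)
Your proof is correct and follows essentially the same route as the paper's: take $\la$ to be the unique weight inducing the degree-zero orientation of $c$, observe that any other weight $\mu$ orienting $\underline{\la}$ differs only by flipping cups ($\down\up\to\up\down$ or $\up\up\to\down\down$), which preserves the parity of $|P_\up|$, and then count the $\up$'s to get the parity formula. The one place you go beyond the paper is the reconstruction step — verifying that (C1)--(C5) applied to the degree-zero reading actually returns $c$, using the positional constraints of Remark~\ref{decorated} — which the paper's proof leaves implicit; your treatment of that point is accurate.
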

\begin{proof}
We choose $\la$ to be the unique weight which induces the orientation on $c$ of degree zero. If $\underline{\la}\mu$ is oriented, then $\mu$ differs from $\la$ by swapping pairs of $\down\up$ to $\up\down$ or $\up\up$ to $\down\down$. In any case $\mu$ has the same  parity, hence is in the same block as $\la$. The number of $\up$'s, hence the parity equals the number of dotted rays plus undotted cups in $c$ modulo two.
\end{proof}

Write $\la\subset\mu$ if $\underline{\la}\mu$ is an oriented cup diagram, then our combinatorics is compatible with the Bruhat order in the following sense:
\begin{lemma}\label{lem:Bruhator} Fix a block $\Lambda$.
\begin{enumerate}
\item If $\la\subset\mu$ then $\la\leq \mu$ in the Bruhat order.
\item If $a \la b$ is an oriented circle diagram
then $a = \underline{\alpha}$ and $b = \overline{\beta}$ for unique weights
$\alpha,\beta$ with $\alpha \subset \la \supset \beta$.
\end{enumerate}
\end{lemma}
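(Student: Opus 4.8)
The plan is to reduce both statements to two facts that are already in hand: the uniqueness half of Lemma~\ref{lem:orient} (every cup diagram is $\underline{\la}$ for a unique $\la$), and the monotonicity of the elementary relabelling moves recorded in Lemma~\ref{lem:Bruhat}. No new combinatorics is needed; the substance is in matching the orientation of each arc with a Bruhat move and in handling the reflection that converts caps into cups.

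For (1), recall that $\la\subset\mu$ means exactly that $\underline{\la}\mu$ is an oriented cup diagram. I would start from the degree-zero orientation $\underline{\la}\la$, in which every cup is anti-clockwise and every ray carries the label it has in $\la$ (undotted rays sit at $\down$ and dotted rays at $\up$, by the construction in Definition~\ref{decoratedcups}). Comparing $\underline{\la}\mu$ with $\underline{\la}\la$, the two weights can differ only at the endpoints of those cups that are clockwise in $\underline{\la}\mu$, since rays are rigid and so $\mu$ and $\la$ agree on every ray vertex. This is precisely the description extracted in the proof of Lemma~\ref{lem:orient}: $\mu$ is obtained from $\la$ by changing $\down\up$ to $\up\down$ at each clockwise undotted cup and $\up\up$ to $\down\down$ at each clockwise dotted cup. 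Because the cups of $\underline{\la}$ have pairwise disjoint endpoints, I can perform these changes one cup at a time, and at every stage the two affected vertices still carry the labels they had in $\la$. Each single change is then an instance of the moves of Lemma~\ref{lem:Bruhat} read in the increasing direction ($\down\up\to\up\down$ and $\up\up\to\down\down$ each increase a weight), so the resulting chain $\la=\nu_0\le\nu_1\le\cdots\le\nu_m=\mu$ yields $\la\le\mu$.

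For (2), the key input is again the uniqueness half of Lemma~\ref{lem:orient}. In an oriented circle diagram $a\la b$ the bottom part $a$ is a decorated cup diagram with the block sequence of $\Lambda$, so Lemma~\ref{lem:orient} produces a unique weight $\alpha$ with $a=\underline{\alpha}$; since $a\la$ is oriented we have $\alpha\subset\la$, and the same lemma forces $\alpha$ and $\la$ into the common block $\Lambda$. For the cap part I would apply the reflection ${}^*$ at the horizontal axis: $b^{*}$ is a decorated cup diagram, so Lemma~\ref{lem:orient} gives a unique $\beta$ with $b^{*}=\underline{\beta}$, i.e. $b=\overline{\beta}$. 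Reflecting the oriented cap diagram $\la b=\la\overline{\beta}$ turns it into the oriented cup diagram $\underline{\beta}\la$, which says $\beta\subset\la$ and again places $\beta$ in $\Lambda$. This gives the desired $\alpha\subset\la\supset\beta$ with $\alpha,\beta$ unique.

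The only genuinely delicate point is the bookkeeping in (1): one must be sure that passing from the all-anti-clockwise orientation to an arbitrary one alters $\la$ solely through cup flips, and that each undotted (resp. dotted) cup contributes exactly a $\down\up\to\up\down$ (resp. $\up\up\to\down\down$) move in the increasing direction. The decorations are what force the appearance of the second, less familiar, type of move, and checking that rays cannot be reoriented is what keeps $\la$ the unique minimal weight orienting $\underline{\la}$; this rigidity is already implicit in Lemma~\ref{lem:orient}. Once it is granted, both parts are short.
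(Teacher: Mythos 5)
Your proposal is correct and follows essentially the same route as the paper: part (1) by writing $\mu$ as obtained from $\la$ through flipping anti-clockwise cups to clockwise ($\down\up\to\up\down$, $\up\up\to\down\down$) and invoking Lemma~\ref{lem:Bruhat}, and part (2) by the uniqueness of the weight inducing the degree-zero orientation as in Lemma~\ref{lem:orient}. You are merely more explicit than the paper in two harmless places, namely the one-cup-at-a-time chain (the paper flips all cups at once) and the reflection argument reducing the cap half to the cup half (the paper just says ``similarly for $\beta$'').
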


\begin{proof}
 For part (2) we more precisely claim: if $c\la$ is an oriented cup diagram then $c = \underline{\alpha}$ for a unique weight $\alpha$ with $\alpha \subset\la$; if $\la c'$ is an oriented cap diagram then $c'= \overline{\beta}$
for a unique weight $\beta$ with $\la \subset \beta$. Indeed, $\alpha$ is the unique weight obtained by reversing the labels at each clockwise cup into an anticlockwise cup. Clearly, $\underline{\alpha}=c$ and hence $\alpha\subset\la$. Similarly for $\beta$.
If $\la\subset\mu$ then $\underline{\la}\mu$ is oriented, hence $\mu$ is obtained from $\la$ by (possibly) changing some of the anti-clockwise cups into clockwise. This however means we either change a pair $\down\up$ into $\up\down$ or $\up\up$ into $\down\down$. In either case the weight gets bigger in the Bruhat order by Lemma~\ref{lem:Bruhat} and (1) holds.
\end{proof}

As in \cite{BS1} we call the number of cups in $\underline{\la}$ the {\it defect} or {\it degree of atypicality} of $\la$, denoted $\op{def}(\la)$, and the maximum of all the defects
of elements in a block the defect of a block, (which equals $\frac{k}{2}$ or $\frac{k-1}{2}$ depending on $k$ even or odd).
Note that in contrast to \cite{BS1} we work with finite blocks only, since infinite blocks would have automatically infinite defect. Lemma~\ref{lem:orient} implies that the defect counts the number of possible orientations of a cup diagram $\underline{\la}$, since each cup has precisely two orientations:
\begin{eqnarray}
\label{defectcount}
|\{\mu\mid \la\subset\mu\}|&=&2^{\op{def}(\la)}.
\end{eqnarray}

The following connects the notion of $\la$-pairs with the degree:

\begin{lemma}
\label{degone}
If $\la\rightarrow \mu$ then $\la$ and $\mu$ are in the same block. Moreover, $\la\rightarrow \mu$ if and only if $\underline{\la}\mu$ has degree $1$
and $\la<\mu$ in the Bruhat order.
\end{lemma}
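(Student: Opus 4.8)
The plan is to prove the three assertions in turn, with the main tool being the dictionary between weights $\mu$ satisfying $\la\subset\mu$ and orientations of the fixed cup diagram $\underline{\la}$. For the \emph{same block} claim I would first observe that $\la\to\mu$ means, by Definition~\ref{lapair}, that $\mu$ arises from $\la$ by flipping $\up\leftrightarrow\down$ at the two endpoints $l_\gamma<r_\gamma$ of a single cup $\gamma$ of $\underline{\la}$. By the construction in Definition~\ref{decoratedcups} this cup is either undotted, in which case $\la$ reads $\down\up$ at $(l_\gamma,r_\gamma)$ and $\mu$ reads $\up\down$, or dotted, in which case $\la$ reads $\up\up$ and $\mu$ reads $\down\down$. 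In both cases the number of $\up$'s is preserved modulo two and the positions of $\times$ and $\circ$ are untouched, so $\la$ and $\mu$ lie in the same block; alternatively this is already recorded in Lemma~\ref{lapairscup}.

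Next I would set up the key dictionary explicitly. Since $\underline{\la}\la$ is by definition the unique degree-$0$ orientation of $\underline{\la}$, and by the discussion preceding \eqref{defectcount} every $\mu$ with $\la\subset\mu$ is obtained by independently choosing each of the $\op{def}(\la)$ cups to be anti-clockwise (reproducing the labels of $\la$ at its endpoints) or clockwise (flipping both labels), the degree of $\underline{\la}\mu$ equals exactly the number of cups oriented clockwise. Rays carry the fixed labels of $\la$ and always have degree $0$, so they contribute neither to the orientational freedom nor to the degree.

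With this dictionary both directions are short. For the forward direction, if $\la\to\mu$ via $\gamma$ then $\mu$ differs from $\la$ precisely by the flip at $l_\gamma$ and $r_\gamma$, so $\underline{\la}\mu$ has $\gamma$ clockwise and every other arc anti-clockwise (these arcs see the same labels as in $\underline{\la}\la$); hence $\underline{\la}\mu$ is oriented of degree $1$, and the move $\down\up\mapsto\up\down$ or $\up\up\mapsto\down\down$ strictly raises the weight by Lemma~\ref{lem:Bruhat}, giving $\la<\mu$. Conversely, if $\underline{\la}\mu$ is oriented of degree $1$, the dictionary forces exactly one cup $\gamma$ to be clockwise and all others anti-clockwise, so $\mu$ coincides with $\la$ except that the labels at $l_\gamma$ and $r_\gamma$ are swapped; this is precisely the weight $\la'$ attached to the $\la$-pair of $\gamma$ in Definition~\ref{lapair}, whence $\la\to\mu$. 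The hypothesis $\la<\mu$ is then automatic (and is in any case forced by Lemma~\ref{lem:Bruhator}(1) together with $\mu\neq\la$), so it is harmless to carry along.

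The only genuinely delicate point, and the step I expect to require the most care, is the dictionary itself: one must match the explicit orientation patterns of Figure~\ref{oriented} with the cup construction of Definition~\ref{decoratedcups} to confirm that toggling a single cup between its degree-$0$ and degree-$1$ states is \emph{exactly} the $\up\leftrightarrow\down$ flip at its two endpoints, checking this in both the dotted and undotted cases, and to confirm that rays introduce no extra orientations and no degree. Once this bookkeeping is secured, all three assertions follow immediately.
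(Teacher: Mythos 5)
Your proof is correct and follows essentially the same route as the paper's: the forward direction turns the defining cup of the $\la$-pair clockwise while all other arcs keep their degree-$0$ orientation, and the converse reads off the $\la$-pair flip from the unique clockwise cup, with the orientation/cup-flip dictionary justified by Lemma~\ref{lem:orient} and \eqref{defectcount}. Your additional observation that the hypothesis $\la<\mu$ is redundant in the converse (by Lemma~\ref{lem:Bruhator}(1) and $\mu\neq\la$) is a valid minor refinement, not a different argument.
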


\begin{proof}
Assume $\la\rightarrow \mu$ then by definition of a $\la$-pair we have a cup $C$ connecting vertices $i$ and $j$ ($i<j$). If $C$ is
undotted (resp. dotted) the weight $\mu$ differs from $\la$ only at these places with an $\up$ at position $i$ and $\down$ at position
$j$ (resp. a $\down$ at position $i$ and $\down$ at position $j$). Hence in $\underline{\la}\mu$ the cup $C$ becomes
clockwise, while all other cups stay anti-clockwise and thus it has degree $1$ and $\mu>\la$.

Assuming $\underline{\la}\mu$ has degree $1$ and $\la<\mu$. If $C$ is the unique clockwise cup, then $\mu$ equals $\la$ except at the vertices of $C$ where $\down\up$ gets replaced by $\up\down$ or $\up\up$ gets turned into $\down\down$. Hence the diagrams $\underline{\la}$ and $\underline{\mu}$ look locally as in Lemma~\ref{lapairscup}.
\end{proof}

\section{The generalized Khovanov algebra of type $D$}

We introduce now a type $D$ analogue of the algebras studied in \cite{Str09}, \cite{BS1}.  It is a diagrammatical algebra which finally will describe as a special case the endomorphism ring of a minimal projective generator in the category of perverse sheaves on isotropic Grassmannians.

\subsection{The underlying graded vector space $\D$}
Let in the following $\Lambda$ be a fixed block with (using the notation from \eqref{defblock}) $P_\bullet(\Lambda)=\{i_1,\cdots, i_r\}$, $i_1<\ldots< i_r$. Set $S=\mC[X_{i_1},\ldots, X_{i_r}]/(X_{i_s}^2\mid 1\leq s\leq r)$. In the following we will not distinguish in notation between a polynomial $f$ and its canonical image in a quotient of a polynomial ring.
\begin{definition}
Denote by
\begin{eqnarray}
\label{DefB}
\B&:=&\left\{ \underline{\la}\nu\ov{\mu}\mid \la,\mu,\nu\in\Lambda,\quad\ov{\mu}\nu \text{ and $\underline{\la}\nu$ are
oriented}\right\},
\end{eqnarray}
the set of {\it oriented (decorated) circle diagrams} associated with $\Lambda$. The {\it degree} of an element in $\B$
is the sum of the degrees of the cups and caps as displayed in \eqref{oriented}. The degree of a connected component in an oriented
circle diagram is the sum of the degrees of the cups and caps contained in the connected component. We denote by $\D$ the graded complex vector space on the basis $\B$.
\end{definition}
Note that the $\underline{\la}\la\ov{\la}$ for $\la\in\Lambda$ span the degree zero part of $\D$.  For $\mu,\la\in \Lambda$ let ${}_\mu(\D)_\la{}$ be the subspace spanned by all $\underline{\la}\nu\ov{\mu}$, $\nu\in\Lambda$.

The most important special case will be the graded vector spaces $\mathbb{D}_k=\D$ associated with the principal blocks $\Lambda=\La_k^{\overline{0}}$ equipped with its distinct homogeneous basis $\mathbb{B}_k=\mathbb{B}_{\La_k^{\overline{0}}}$.

\begin{ex}
For $k=4$, the graded vector space $\mathbb{D}_k$ is of  dimension $66$ with graded Poincare polynomial $p_{\mD_4}(q)=8+20q+24q^2+12q^3+2q^4$, and Cartan matrix given by Figure~\ref{fig:CM}.
\end{ex}

The vector space  $\D$ has the following alternative description which will be useful to describe the multiplication and make the connection to geometry:
\begin{definition}
\label{relations}
For a cup or cap diagram $c$ and $i,j \in \{i_1,\ldots, i_r\}$ we write
\begin{enumerate}[i)]
\item $i \CupConnect j$ if $c$ contains a cup/cap connecting $i$ and $j$,
\item $i \DCupConnect j$ if $c$ contains a dotted cup/dotted cap connecting $i$ and $j$,
\item $i \RayConnect$ if $c$ contains a ray that ends at $i$,
\item $i \DRayConnect$ if $c$ contains a dotted ray that ends at $i$.
\end{enumerate}
\end{definition}

\begin{prop}
\label{coho}
Let $\la,\mu$ weights in $\Lambda$ and consider the circle diagram $\un\la\ov{\mu}$. Let $I\triangleleft S$ be the ideal generated by the relations $X_i+X_j$ if $i \CupConnect j$, and $X_i-X_j$ if $i \DCupConnect j$, and finally $X_i$ if $i \RayConnect$ or $i \DRayConnect$ in $\ov\mu$ or $\underline{\la}$.
\begin{enumerate}
\item There is an isomorphism of graded vector spaces
\begin{eqnarray*}
\Psi:\quad {}_\mu(\D)_\la{}&\cong&\cM(\un\la\ov{\mu}):=S/I\;\langle r\rangle\\
\underline{\la}\nu\ov{\mu}&\mapsto&\prod_{a\in J(\underline{\la}\nu\ov{\mu})}X_a
\end{eqnarray*}
where $J(\underline{\la}\nu\ov{\mu})$ is the set of rightmost vertices of the clockwise circles in $\underline{\la}\nu\ov{\mu}$. Here $\langle r\rangle$ denotes the grading shift up by $r=k-|J|$, see Section~\ref{graded}, and the $X_i$'s are of degree $2$.
\item The monomials $y_1\cdots y_k\in S/I$ where $y_s\in\{1,X_{i_s}\}$ are linearly independent and correspond under $\Psi$ to the standard basis vectors from $\B$ in ${}_\mu(\D)_\la{}$.
    \end{enumerate}
\end{prop}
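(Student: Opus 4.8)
The plan is to produce explicit $\mC$-bases on both sides and check that $\Psi$ is a degree-preserving bijection between them. By definition the left-hand space ${}_\mu(\D)_\la$ has basis the oriented diagrams $\un\la\nu\ov\mu$, i.e. the weights $\nu$ for which $\un\la\nu$ and $\nu\ov\mu$ are \emph{simultaneously} oriented; these are the ``standard basis vectors from $\B$'' of part (2). The right-hand space is a quotient of $S=(\mC[X]/(X^2))^{\otimes|P_\bullet|}$ by the \emph{linear} relations of Definition~\ref{relations}. Both objects are governed by the connected components of the circle diagram $\un\la\ov\mu$, so the whole argument is a componentwise bookkeeping, matching each circle with a free tensor factor $\mC[X]/(X^2)$ and each line with the scalars.

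First I would analyse $S/I$. The generators of $I$ identify, inside each connected component, all the variables up to sign: an undotted arc forces $X_i=-X_j$, a dotted arc forces $X_i=X_j$ (see $i\CupConnect j$, $i\DCupConnect j$ in Definition~\ref{relations}), and a ray forces $X_i=0$. Thus a line, which contains a ray, kills all its variables and contributes only scalars, whereas a circle contributes a single generator $X$ with $X^2=0$ \emph{provided} the sign relations close up consistently around it. That consistency is exactly the condition that the number of undotted arcs on the circle be even, which is in turn precisely the condition for the circle to admit an orientation: in the $\{\up,\down\}$-labelling of \eqref{oriented} an undotted cup/cap requires opposite labels at its endpoints while a dotted one requires equal labels, so orientability and sign-consistency are the same $\mZ/2$-condition. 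Since $\la,\mu$ lie in the same block, every circle of $\un\la\ov\mu$ is orientable (the two-sided analogue of \eqref{defectcount}, using Lemma~\ref{lem:orient} and the positional constraints of Remark~\ref{decorated} that forbid nested or misplaced dots), so each circle really contributes a free factor and $S/I\cong(\mC[X]/(X^2))^{\otimes c}$, where $c$ is the number of circles. Its canonical $\mC$-basis is the set of squarefree monomials $\prod_{a\in J}X_a$ where $J$ runs over subsets of the rightmost vertices of the circles; these are of the form $y_1\cdots y_k$ with $y_s\in\{1,X_{i_s}\}$, and being a basis they are linearly independent, which is part (2).

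Next I would match this with the orientations. Reading off \eqref{oriented}, a valid $\nu$ orients each circle in exactly two ways (clockwise or anti-clockwise) and each line in the single way forced by its rays, so there are exactly $2^{c}$ diagrams $\un\la\nu\ov\mu$ and the assignment $\nu\mapsto\{\text{clockwise circles of }\nu\}$ is a bijection onto subsets of circles. A clockwise circle contributes its rightmost vertex to $J(\un\la\nu\ov\mu)$ and an anti-clockwise one contributes nothing, so $\Psi$ carries the orientation basis bijectively onto the monomial basis of $S/I$ found above; hence $\Psi$ is a vector-space isomorphism, and reading the bijection backwards reproves part (2). For the grading I would note that reversing the orientation of one circle flips the degree ($0\leftrightarrow1$) of each of its arcs and leaves the rest untouched; a direct count (the type $D$ analogue of the computation in \cite{BS1}) shows the net change of $\deg(\un\la\nu\ov\mu)$ is exactly $2$, while $|J|$ increases by $1$. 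Therefore $\deg(\un\la\nu\ov\mu)-2\,|J(\un\la\nu\ov\mu)|$ is constant on the basis, equal to the degree of the all-anti-clockwise diagram; taking this constant as the shift $\langle r\rangle$ (the value recorded in the statement, cf. Section~\ref{graded}) and recalling $\DEG X_i=2$ makes $\Psi$ degree-preserving, proving part (1).

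The main obstacle is the sign-consistency step: one must exclude circles whose sign relations fail to close up, since such a circle would collapse its generator to $0$ and destroy the clean count $\dim S/I=2^{c}$. This is a genuinely type $D$ difficulty introduced by the dots — in type $A$ all arcs are undotted and consistency is automatic — and the way I would handle it is to identify sign-consistency with diagrammatic orientability and then invoke the structural restrictions of Remark~\ref{decorated} together with the same-block hypothesis. The grading computation, routine in spirit, likewise needs care: one must verify that the orientation-reversal of a single circle changes the degree by exactly $2$ \emph{in the presence of dots}, so that the shift is well defined and matches the stated $r$.
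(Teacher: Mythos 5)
Your overall strategy is essentially the paper's: decouple $S/I$ along connected components, identify sign-consistency of the relations around a circle with orientability (this is exactly the content of Lemma~\ref{signmove}), and then match the monomial basis of $S/I$ with the set of orientations, handling the grading by the fact that flipping one circle changes the degree by $2$ (Proposition~\ref{lem:stupidlemma}(2)). All of that is fine. But one step is genuinely false, and it is load-bearing: you claim that ``since $\la,\mu$ lie in the same block, every circle of $\un\la\ov{\mu}$ is orientable,'' citing Lemma~\ref{lem:orient}, \eqref{defectcount} and Remark~\ref{decorated}. None of these imply it, and it fails. Take $\la=\up\up\up\up$ and $\mu=\down\up\down\up$, both in $\Lambda_4^{\ov{0}}$: then $\underline{\la}$ consists of two dotted cups and $\overline{\mu}$ of two undotted caps, so $\underline{\la}\overline{\mu}$ is a union of two circles, each containing exactly one undotted arc, and neither circle admits any orientation --- the paper uses precisely this diagram in Example~\ref{triples} to conclude $BA=0=AB$. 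Lemma~\ref{lem:orient} is a one-sided statement (a cup diagram paired with a weight) and does not globalize: membership in a common block fixes the parity of the weights, not the parity of undotted arcs on each circle of $\underline{\la}\overline{\mu}$. Lines can likewise fail to be orientable, so your parallel claim for lines has the same defect.

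In the non-orientable case your argument collapses: there is no admissible $\nu$, so ${}_\mu(\D)_\la=\{0\}$, while $S/I$ still contains $\mC\cdot 1$ (over $\mC$ an odd circle merely forces its variables to vanish, via $X_i=-X_i$; it does not kill the quotient), so your dimension count $\dim S/I = 2^{c}$ and the bijection with orientations both break. The paper's proof avoids exactly this by conditioning: the relations only couple variables on the same component, and \emph{if} the diagram can be oriented, then Lemma~\ref{signmove} guarantees sign-consistency on every circle, whence the monomials attached to the basis vectors are linearly independent; for non-orientable diagrams the proposition is to be read with the convention, made explicit later for double circle diagrams in \eqref{MG}, that the space $\cM$ is $\{0\}$. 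To repair your proof, delete the false claim and split into two cases: if some component of $\un\la\ov{\mu}$ is non-orientable, both sides vanish under that convention; otherwise every circle has an even number of undotted arcs by Lemma~\ref{signmove}(1), and your componentwise count and grading argument go through verbatim.
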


The proof will be given at the end of the section.

\begin{ex}
\label{ps}
Consider the principal blocks $\Lap$ for $k=4$ with $P_\bullet(\Lap)=\{1,2,3,4\}$. The following circle diagrams $\un\la\ov{\mu}$ have always exactly two possible orientations, anticlockwise or clockwise which are sent respectively to the elements $1, X_4\in \cM(\un\la\ov{\mu})$ under the isomorphism $\Psi$. Similar for $\un\mu\ov{\la}$ with $\cM(\un\mu\ov{\la})=\cM(\un\la\ov{\mu})$.
\small
\begin{eqnarray*}
\label{1}
\begin{tikzpicture}[thick, scale=0.5]
\draw (0,-.1) .. controls +(0,1) and +(0,1) .. +(1,0);
\draw (2,-.1) .. controls +(0,1) and +(0,1) .. +(1,0);
\draw (1,-.1) .. controls +(0,-1) and +(0,-1) .. +(1,0);
\draw (0,-.1) .. controls +(0,-2.2) and +(0,-2.2) .. +(3,0);
\node at (14.5,-0.8) {$\rightsquigarrow \cM(\un\la\ov{\mu})=S/(X_1+X_2,X_2+X_3,X_3+X_4,X_1+X_4)\cong \mC[X_4]/(X_4^2)$};
\end{tikzpicture}\\
\label{2}
\begin{tikzpicture}[thick, scale=0.5]
\draw (0,-.1) .. controls +(0,1) and +(0,1) .. +(1,0);
\fill (0.5,.63) circle(4pt);
\draw (2,-.1) .. controls +(0,1) and +(0,1) .. +(1,0);
\fill (2.5,.63) circle(4pt);
\draw (1,-.1) .. controls +(0,-1) and +(0,-1) .. +(1,0);
\draw (0,-.1) .. controls +(0,-2.2) and +(0,-2.2) .. +(3,0);
\node at (14.5,-0.8){$\rightsquigarrow
\cM(\un\la\ov{\mu})=S/(X_1-X_2,X_2+X_3,X_3-X_4,X_1+X_4)\cong \mC[X_4]/(X_4^2)$};
\end{tikzpicture}\\
\label{3}
\begin{tikzpicture}[thick, scale=0.5]
\draw (0,-.1) .. controls +(0,1) and +(0,1) .. +(1,0);
\fill (0.5,.63) circle(4pt);
\draw (2,-.1) .. controls +(0,1) and +(0,1) .. +(1,0);
\draw (1,-.1) .. controls +(0,-1) and +(0,-1) .. +(1,0);
\draw (0,-.1) .. controls +(0,-2.2) and +(0,-2.2) .. +(3,0);
\fill (1.5,-1.75) circle(4pt);
\node at (14.5,-0.8){$\rightsquigarrow
\cM(\un\la\ov{\mu})=S/(X_1-X_2,X_2+X_3,X_3+X_4,X_1-X_4)\cong \mC[X_4]/(X_4^2)$};
\end{tikzpicture}\\
\label{4}
\begin{tikzpicture}[thick, scale=0.5]
\draw (0,-.1) .. controls +(0,1) and +(0,1) .. +(1,0);
\draw (2,-.1) .. controls +(0,1) and +(0,1) .. +(1,0);
\fill (2.5,.63) circle(4pt);
\draw (1,-.1) .. controls +(0,-1) and +(0,-1) .. +(1,0);
\draw (0,-.1) .. controls +(0,-2.2) and +(0,-2.2) .. +(3,0);
\fill (1.5,-1.75) circle(4pt);
\node at (14.5,-0.8){$\rightsquigarrow
\cM(\un\la\ov{\mu})=S/(X_1+X_2,X_2+X_3,X_3-X_4,X_1-X_4)\cong \mC[X_4]/(X_4^2)$};
\end{tikzpicture}
\end{eqnarray*}
\end{ex}
\normalsize

\begin{remark}
{\rm
In \cite{ES1}, the special case $\Lambda=\Lambda_k^{\ov{p}}$ was studied from a geometric point of view. Weights $\la$, $\mu$ are identified with fixed point of the natural $\mathbb{C}^*$-action on the (topological) Springer fibre of type $D_k$ for the principal nilpotent of type $A_k$, and the cup diagrams $\un{\la}$, $\un\mu$ are canonically identified with the cohomology of the closure of the corresponding attracting cells $\mathcal{A}_\la$, $\mathcal{A}_\mu$. The vector space $S/I$ is then the cohomology of the intersection $\mathcal{A}_\la\cap \mathcal{A}_\mu$.
}
\end{remark}

\subsection{Properties of oriented circle diagrams}

The following is a crucial technical result which allows us to mimic \cite{BS1}:
\begin{prop}
\label{lem:stupidlemma}
Fix a block $\Lambda$.
\begin{enumerate}
\item Each closed circle in an oriented (decorated) circle diagram has an even number of dots.
\item A closed circle in a circle diagram can either not be oriented at all (by inserting a weight from
    $\Lambda$) or allows precisely two different orientations. In that case the degrees of this component differ by $2$ and equal the number of its caps plus or minus $1$ respectively.
\suspend{enumerate}
{If the circle is oriented with the bigger degree we call it \emph{clockwise} or \emph{of type $X$} and \emph{anticlockwise} or \emph{of
type $1$} if it is oriented with the smaller degree.}
\resume{enumerate}
\item A circle is oriented anticlockwise if its rightmost label is $\up$ and clockwise if its rightmost label is $\down$.
\item A line in a circle diagram can be oriented in at most one way.
\end{enumerate}
\end{prop}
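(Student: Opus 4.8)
The plan is to reduce all four statements to a single bookkeeping device. At every vertex $v$ lying on a connected component the inserted weight assigns exactly one label $\mu_v\in\{\up,\down\}$, and this label is shared by the two arcs meeting at $v$. Reading off the orientation rules in \eqref{oriented}, an \emph{undotted} cup or cap can be oriented only if its two endpoints carry different labels, while a \emph{dotted} one requires equal labels; moreover flipping the labels at both endpoints of any arc switches its degree between $0$ and $1$. Finally, again from \eqref{oriented} (and consistently with the count \eqref{defectcount}, where rays contribute no factor of $2$), a ray admits a unique orientation, i.e.\ it forces the label at its endpoint.

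For (1), I would traverse the closed circle once. Since a circle alternates between cups and caps, it has equally many of each, so the total number of arcs is even. Walking around the loop, the label stays constant across a dotted arc and switches across an undotted arc; returning to the starting vertex forces the number of switches, i.e.\ the number of undotted arcs, to be even. As the total is even, the number of dotted arcs is even as well, which is exactly (1).

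For (2), orienting the circle amounts to solving the constraint ``equal across dotted arcs, different across undotted arcs'' on the cyclic sequence of its vertices. By the same parity count this is solvable precisely when the number of undotted arcs, equivalently the number of dots, is even; when it is, fixing the label at one vertex propagates uniquely around the loop, giving exactly the two solutions that differ by the global flip $\up\leftrightarrow\down$. Since the circle has an even number of vertices, this flip preserves the parity of the number of $\up$'s as well as the positions of $\circ$ and $\times$, so both solutions are genuine weights of $\Lambda$ (by Lemma~\ref{lem:orient}), and the labels off the circle being unconstrained, each extends to an orientation of the whole diagram. The two orientations have complementary degrees summing to twice the number $p$ of caps, because every arc is clockwise in exactly one of them; the remaining point, that the two degrees are exactly $p\pm1$ rather than merely congruent mod $2$, I would settle together with (3).

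For (3) and the $\pm1$ in (2), I would localize at the rightmost vertex $v$ of the circle, where both the cup and the cap incident to $v$ run to the left and the curve has a leftward cusp. A direct inspection of \eqref{oriented} at this cusp shows that the orientation is anticlockwise exactly when $\mu_v=\up$ and clockwise when $\mu_v=\down$, and a rotation-number count around the simple closed curve (as in \cite{BS1}) shows that the number of clockwise arcs exceeds the number of anticlockwise arcs by exactly two in one orientation and falls short by two in the other; hence the degrees are $p-1$ and $p+1$. Statement (4) is then immediate: a line meets the boundary of $R$, hence contains a ray, and a ray forces the label at its endpoint; propagating this forced label along the line, switching across undotted arcs and preserving it across dotted ones, determines every label, so there is at most one orientation, and none if a second ray imposes an incompatible label. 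The main obstacle I anticipate is not the parity arguments, which are clean, but pinning down the exact value $p\pm1$ of the degrees: this is the one place where the purely combinatorial ``same/different'' bookkeeping is insufficient and one must invoke the planar geometry of the circle (its rotation number, or the local picture at the rightmost cusp), carefully tracking how the dots, as orientation-reversing points, interact with that geometry.
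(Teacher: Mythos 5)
Your parity bookkeeping is correct and does cleanly dispose of (1), of (4), and of the ``zero or exactly two orientations'' half of (2): the constraint ``labels equal across dotted arcs, opposite across undotted ones'' is exactly what \eqref{oriented} encodes, and your flip argument for why both solutions lie in $\Lambda$ is the same one the paper uses (an even number of symbols gets swapped, so parity and the positions of $\circ,\times$ are preserved). The observation that the two degrees of an orientable circle sum to the number of its arcs, i.e.\ to $2p$ with $p$ the number of caps, is also right, and for \emph{undotted} circles your rotation-number argument at the rightmost cusp is a legitimate geometric alternative to the kink-removal induction of \cite[Lemma 2.1]{BS1}.

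The gap is the step you flag yourself, and it is the mathematical core of the proposition, not a loose end. A rotation-number count presupposes an honest traversal orientation of the closed curve; on a dotted circle the weight labels do not define one (across a dotted arc the two endpoint labels agree, so there is no consistent direction of travel), and your sketch supplies no mechanism for converting the decorated orientation into a genuine one. Moreover, no amount of further parity bookkeeping can close this, because the conclusion is \emph{false} without the positional restrictions on dots recorded in Remark~\ref{decorated}, which your argument never invokes: consider the single circle on four vertices with arcs an undotted cup $\{1,4\}$, a dotted cup $\{2,3\}$ (illegally nested inside the former), a dotted cap $\{1,2\}$ and an undotted cap $\{3,4\}$. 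It has an even number of dots and admits exactly the two orientations $\up\up\up\down$ and $\down\down\down\up$, but both have degree $2=p$ rather than $p\pm1$, so (2) and (3) fail for it. This is precisely why the paper's proof is built around those restrictions: it removes undotted kinks via \eqref{kink}, and when none exist it uses Remark~\ref{decorated} to locate a subdiagram of the form \eqref{snake}, deletes that pair of dots while reorienting the segment between them, checks case by case that the degree is unchanged and that the label at the rightmost vertex of the circle is never touched, and thereby reduces both the value $p\pm1$ and statement (3) to the undotted case. Some such reduction --- or any argument that genuinely exploits where dots are allowed to sit --- is what your proposal is missing.
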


\begin{proof}
The first statement is obvious, since each dot is a orientation reversing point. For the second statement note that choosing an arbitrary vertex on the circle, the orientation of a
circle is determined by the label ($\up$ or $\down$) at that vertex. Hence there are at most two orientations. Moreover, the number of
vertices involved in a circle is even. Swapping the orientation means that in the weight
$\la\in\Lambda$ we change certain $\up$'s into $\down$'s and vice versa, with an even total number of symbols changed. In
particular, the parity of the number of $\down$'s does not change and so the new sequence is again an element in $\Lambda$.
Therefore both orientations can be realized. In case the circle has no dots, the remaining part of the second statement follows by the
same arguments as in \cite[Lemma 2.1]{BS1}: Namely either the circle consists out of precisely one cup and one cap, ie.

\begin{center}
\begin{tikzpicture}[scale=.5,thick,>=angle 60]
\node at (-1,0) {$\deg\bigg($};
\draw (-.25,0) to +(2,0);
\draw [->] (0,0) .. controls +(0,-1) and +(0,-1) .. +(1.5,0);
\draw [<-] (0,0) .. controls +(0,1) and +(0,1) .. +(1.5,0);
\node at (2.7,0) {$\bigg) = 0$,};
\end{tikzpicture}
\hspace{1cm}
\begin{tikzpicture}[scale=.5,thick,>=angle 60]
\node at (-1,0) {$\deg\bigg($};
\draw (-.25,0) to +(2,0);
\draw [<-] (0,0) .. controls +(0,-1) and +(0,-1) .. +(1.5,0);
\draw [->] (0,0) .. controls +(0,1) and +(0,1) .. +(1.5,0);
\node at (2.7,0) {$\bigg) = 2$,};
\end{tikzpicture}
\end{center}
where the statement is clear, or it contains a kink which
can be removed using one of the following straightening rules:
\begin{equation}
\label{kink}
\begin{tikzpicture}[thick, snake=zigzag, line before snake = 2mm, line
after snake = 2mm]
\draw[thin] (-.25,0) -- +(1.5,0);
\begin{scope}[yscale=-1]
\draw (0,0) -- +(0,.3);
\draw[dotted, snake] (0,.3) -- +(0,.8);
\draw (0,0) .. controls +(0,-.5) and +(0,-.5) .. +(.5,0);
\draw (.5,0) .. controls +(0,.5) and +(0,.5) .. +(.5,0);
\draw (1,0) -- +(0,-.3);
\draw[dotted, snake] (1,-.3) -- +(0,-.8);
\end{scope}
\node at (0,-.1) {$\up$};
\node at (1.01,-.1) {$\up$};
\node at (.51,.08) {$\down$};

\draw [->,line join=round,decorate, decoration={zigzag,segment
length=4,amplitude=.9,post=lineto,post length=2pt}]  (1.5,0) -- +(.5,0);

\draw[thin] (2.25,0) -- +(.5,0);
\draw (2.5,0) -- +(0,.3);
\draw[dotted, snake] (2.5,.3) -- +(0,.8);
\draw (2.5,0) -- +(0,-.3);
\draw[dotted, snake] (2.5,-.3) -- +(0,-.8);
\node at (2.51,-.1) {$\up$};

\begin{scope}[xshift=5cm]
\draw[thin] (-.25,0) -- +(1.5,0);
\begin{scope}[yscale=-1]
\draw (0,0) -- +(0,.3);
\draw[dotted, snake] (0,.3) -- +(0,.8);
\draw (0,0) .. controls +(0,-.5) and +(0,-.5) .. +(.5,0);
\draw (.5,0) .. controls +(0,.5) and +(0,.5) .. +(.5,0);
\draw (1,0) -- +(0,-.3);
\draw[dotted, snake] (1,-.3) -- +(0,-.8);
\end{scope}
\node at (0,.1) {$\down$};
\node at (1.01,.1) {$\down$};
\node at (.51,-.1) {$\up$};

\draw [->,line join=round,decorate, decoration={zigzag,segment
length=4,amplitude=.9,post=lineto,post length=2pt}]  (1.5,0) -- +(.5,0);

\draw[thin] (2.25,0) -- +(.5,0);
\draw (2.5,0) -- +(0,.3);
\draw[dotted, snake] (2.5,.3) -- +(0,.8);
\draw (2.5,0) -- +(0,-.3);
\draw[dotted, snake] (2.5,-.3) -- +(0,-.8);
\node at (2.51,.1) {$\down$};
\end{scope}
\end{tikzpicture}
\end{equation}

The result is a new oriented circle diagram with two
fewer vertices than before. It is oriented in the same way as the
original circle,
but has either one clockwise cup and one anti-clockwise
cap less, or one anti-clockwise cup and one clockwise
cap less. Hence the claims follow inductively for undotted circles.

Assume now the circle has dots. The claim is obviously true for the
circles including two vertices:
\begin{center}
\begin{tikzpicture}[scale=.5,thick,>=angle 60]
\node at (-1,0) {$\deg\bigg($};
\draw (-.25,0) to +(2,0);
\draw [<->] (0,0) .. controls +(0,-1) and +(0,-1) .. +(1.5,0);
\fill (.75,-.76) circle(4pt);
\draw (0,0) .. controls +(0,1) and +(0,1) .. +(1.5,0);
\fill (.75,.76) circle(4pt);
\node at (2.7,0) {$\bigg) = 0$,};
\end{tikzpicture}
\hspace{1cm}
\begin{tikzpicture}[scale=.5,thick,>=angle 60]
\node at (-1,0) {$\deg\bigg($};
\draw (-.25,0) to +(2,0);
\draw (0,0) .. controls +(0,-1) and +(0,-1) .. +(1.5,0);
\fill (.75,-.76) circle(4pt);
\draw [<->] (0,0) .. controls +(0,1) and +(0,1) .. +(1.5,0);
\fill (.75,.76) circle(4pt);
\node at (2.7,0) {$\bigg) = 2$,};
\end{tikzpicture}
\end{center}

If there is a kink without dots as above then we remove it and argue by induction. We therefore assume there are no such kinks,
ie. each occurring kink has precisely one dot (either on the cap or the cup), since having a dot on both contradicts Remark
~\ref{decorated} if the component is a closed circle. Using Remark~\ref{decorated} again one easily verifies that a circle with more than
$2$ vertices and
no undotted kink contains at least one of the following diagrams as subdiagram
\begin{eqnarray}
\label{snake}
\usetikzlibrary{arrows}
\begin{tikzpicture}[thick,>=angle 60, scale=0.7]

\draw (0,0.1) to +(0,-0.8);
\draw [<->] (0,0) .. controls +(0,1) and +(0,1) .. +(1,0);
\fill (0.5,0.75) circle(2.5pt);
\draw (1,0) .. controls +(0,-1) and +(0,-1) .. +(1,0.1);
\draw [>-<] (2,0) .. controls +(0,1) and +(0,1) .. +(1,0);
\fill (2.5,0.77) circle(2.5pt);
\draw (3,0.1) to +(0,-0.8);
\begin{scope}[xshift=4cm]
\draw (0,0.1) to +(0,-0.8);
\draw [>-<] (0,0) .. controls +(0,1) and +(0,1) .. +(1,0);
\fill (0.5,0.75) circle(2.5pt);
\draw (1,0.1) .. controls +(0,-1) and +(0,-1) .. +(1,0.1);
\draw [<->] (2,0) .. controls +(0,1) and +(0,1) .. +(1,0);
\fill (2.5,0.77) circle(2.5pt);
\draw (3,0.1) to +(0,-0.8);
\end{scope}

\begin{scope}[xshift=8cm,y=-1cm]
\draw (0,0.1) to +(0,-0.8);
\draw [<->] (0,0) .. controls +(0,1) and +(0,1) .. +(1,0);
\fill (0.5,0.75) circle(2.5pt);
\draw (1,0) .. controls +(0,-1) and +(0,-1) .. +(1,0.1);
\draw [>-<] (2,0) .. controls +(0,1) and +(0,1) .. +(1,0);
\fill (2.5,0.77) circle(2.5pt);
\draw (3,0.1) to +(0,-0.8);

\begin{scope}[xshift=4cm]
\draw (0,0.1) to +(0,-0.8);
\draw [>-<] (0,0) .. controls +(0,1) and +(0,1) .. +(1,0);
\fill (0.5,0.75) circle(2.5pt);
\draw (1,0.1) .. controls +(0,-1) and +(0,-1) .. +(1,0.1);
\draw [<->] (2,0) .. controls +(0,1) and +(0,1) .. +(1,0);
\fill (2.5,0.77) circle(2.5pt);
\draw (3,0.1) to +(0,-0.8);
\end{scope}
\end{scope}
\end{tikzpicture}
\end{eqnarray}
We claim that the degree equals the degree of the circle obtained by removing the pair of dots and adjusting the orientations between
them.
In the first case we remove two dotted  caps, one of degree $1$ and one of degree $0$ and a cup of degree $0$. Removing the dots and
adjusting the orientations gives two caps of degree $0$ and a cup of degree $1$. If we take the opposite orientation then we remove two
dotted caps, one of degree $0$ and one of degree $1$ and a cup of degree $1$. Removing dots and adjusting orientations gives only one cup
of degree $0$ with two caps of degree $1$. In the third case we remove one undotted cap of degree $1$ and two dotted  cups, one of degree
$0$ and one of degree $1$. Removing the dots and adjusting the orientations gives two clockwise cups of degree $1$ and one anticlockwise
cap of degree zero. Swapping the orientation gives us one undotted cap of degree $0$ and two dotted  cups, one of degree $1$ and one of
degree $0$. Removing the dots and adjusting the orientations gives two anticlockwise cups of degree $0$ and one clockwise cap of degree
$1$. Hence, by induction, we transferred our statements to the undotted case where it is known by \cite[Lemma 2.1]{BS1}.

The third statement is clear for circles without dots by \cite[Lemma 2.1]{BS1}. For circles with dots we argue by the same induction as
above.
In case the circle has two vertices only, the statement is clear. Otherwise there is an undotted kink we remove it not changing the
orientations on the other parts. In case there is no such kink there is a subdiagram of the form \eqref{snake} in which case we remove the
dots on this part and adjust the two middle labels such that the circle becomes oriented. The statement follows then by induction noting that we never changed the label at the rightmost point of a circle.
For the last statement note that the orientation at the end of each line is determined by whether there is a dot or not.
\end{proof}

\begin{lemma}
\label{signmove}
Let $C$ be a circle in an oriented circle diagram $(a\la b)$.
\begin{enumerate}
\item The total number $m$ of undotted cups and caps in $C$ is even.
\item The relations from Proposition ~\ref{coho} involving $C$ are equivalent to
    \begin{eqnarray}
    \label{newrel}
    X_i&=&(-1)^rX_j
    \end{eqnarray}
    for any two vertices $i$ and $j$ on $C$, where $r$ denotes the total number of undotted cups and caps on any of the two halves of $C$ connecting $i$ with $j$.
\end{enumerate}
\end{lemma}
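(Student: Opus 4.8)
The plan is to deduce both statements from the cyclic combinatorial structure of a closed circle together with Proposition~\ref{lem:stupidlemma}(1).

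First I would record that a closed circle $C$ meets neither the top nor the bottom boundary of $R$, so at each of its vertices on the central line the arc below is a cup and the arc above is a cap; in particular no ray occurs along $C$. Traversing $C$ one therefore alternates cup, cap, cup, cap, $\ldots$, which forces the total number of arcs (equivalently, the number of vertices) to be even, with as many cups as caps. By Proposition~\ref{lem:stupidlemma}(1) the number of dots on $C$ is even, and since a dotted arc carries exactly one dot, the number of dotted arcs is even. Subtracting, the number of undotted arcs is even, which is part (1).

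For part (2) I would list the vertices of $C$ in cyclic order as $v_0,v_1,\ldots,v_{\ell-1}$, so that $v_t$ and $v_{t+1}$ are joined by a single arc (indices mod $\ell$). Since no ray meets $C$, the only relations of Proposition~\ref{coho} involving vertices of $C$ are the cup/cap relations, that is $X_{v_t}=-X_{v_{t+1}}$ when the connecting arc is undotted and $X_{v_t}=X_{v_{t+1}}$ when it is dotted. Reading this as the statement that each arc contributes a sign $+1$ if dotted and $-1$ if undotted, composing the relations along any path inside $C$ from a vertex $i$ to a vertex $j$ yields $X_i=(-1)^{r}X_j$, where $r$ is the number of undotted arcs traversed; this is exactly \eqref{newrel} for that half.

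The only thing left to check --- and the one point where care is needed --- is that the relation obtained is independent of which of the two halves of $C$ joining $i$ and $j$ we use. Writing $r_1$ and $r_2$ for the number of undotted arcs on the two halves, the two resulting relations agree precisely when $(-1)^{r_1}=(-1)^{r_2}$, that is when $r_1+r_2$ is even; but $r_1+r_2$ is the total number of undotted arcs on $C$, which is even by part (1). Conversely the adjacent relations $X_{v_t}=\pm X_{v_{t+1}}$ are the special cases of \eqref{newrel} for neighbouring vertices and generate the cup/cap relations, so the two sets of relations are equivalent. Thus the genuine content of part (2) is the consistency of the two halves, which is supplied by part (1); everything else is a routine propagation of signs around the circle.
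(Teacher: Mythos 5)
Your proof is correct, but it takes a genuinely different route from the paper's. For part (1) you argue by a direct parity count: a closed circle strictly alternates cups and caps, so its total number of arcs is even, and by Proposition~\ref{lem:stupidlemma}(1) the number of dots --- hence of dotted arcs, since each dotted arc carries exactly one dot --- is also even, whence the number of undotted arcs is even by subtraction. The paper instead runs an induction on the size of the circle, with the small circle (one cup, one cap) as base case and the reduction moves as inductive step: removing an undotted kink as in \eqref{kink}, or removing the two dots of a subdiagram as in \eqref{snake} and reorienting, each preserving the parity of $m$. For part (2) both arguments amount to propagating the signs $X_i=\mp X_j$ around the circle; the paper obtains the independence of the choice of half as a byproduct of its induction, whereas you deduce it directly from part (1), which is the cleaner logical dependency (and you correctly note the converse direction, that the adjacent-vertex instances of \eqref{newrel} recover the generating relations of Proposition~\ref{coho}, so the two sets of relations are genuinely equivalent). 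What each approach buys: yours is shorter, avoids re-running the case analysis of the reduction moves, and isolates the single combinatorial input needed (evenness of the number of dots on an oriented circle, which the paper proves in one line via orientation-reversal); the paper's induction is uniform with the proofs of Proposition~\ref{lem:stupidlemma} and the later degree computations, reusing machinery that is needed there anyway. One point worth making explicit in your write-up, though it is implicit and harmless: no ray relation of Proposition~\ref{coho} touches a vertex of $C$ precisely because $C$ is a closed component, which you do state when listing the relations.
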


\begin{proof}
If $C$ consists of precisely one cup and one cap then $m=0$ or $m=2$. Otherwise we can find an undotted kink, see \eqref{kink}, or a subdiagram as in \eqref{snake}. In the first case we remove the kink without changing the parity of $m$, in the second case we remove the two dots and reorient without changing the parity of $m$. Then the first part of the lemma follows by induction. Moreover it shows that the formula \eqref{newrel} does not depend on which half we chose. The rest follows from the definitions.
\end{proof}

\begin{proof}[Proof of Proposition~\ref{coho}]
The map is clearly well-defined. The relations from Definition~\ref{relations} only couple indices in the same component of the diagram, hence the $X_i$'s where the indices lie on different circles are linearly independent if they are not zero. If the diagram can be oriented, then the definition of the relations together with Lemma~\ref{signmove} imply that the $\prod_{a}X_a$ for the different basis vectors $(\underline{\la}\nu\ov{\mu})$ are linearly independent.
\end{proof}

\subsection{The multiplication over $\mathbb{F}_2$}
\label{multF2Def}
Generalizing the approach of \cite{BS1} we define now a multiplication
which turns the graded vector space $\D$ into a
positively graded associative algebra. To simplify the setup we first work over $\mathbb{F}_2$ to show the analogy to the type $A$ case. Let $\D^{{\mathbb F}_2}$ be the $\mathbb{F}_2$-vector space spanned by \eqref{DefB}. We want to define the product of two basis elements $(a \la b), (c \mu d)\in\D^{{\mathbb F}_2}$.

If $b^* \neq c$, we simply define the product
$( a \la b )( c \mu d )$ to be zero.
In case  $b^* = c$ draw the diagram $a \la b$ underneath the diagram
$c \mu d$ to obtain a new sort of diagram
with two sequences of weights separated by a symmetric middle section containing $b$ under $c$.
Then iterate the generalized {\em surgery procedure} for diagrams explained in the next paragraph
to convert this diagram into either zero or a disjoint union of diagrams none of which
have any cups or caps left in their
middle sections.
Finally, collapse each of the resulting diagrams by
identifying their two weight sequences
to obtain a disjoint union of some new oriented circle diagrams of the form $(a\nu d)$ for some weight sequence $\nu$.
The product $( a \la b ) ( c \mu d )$
is then defined to be the sum of the corresponding basis vectors of $\D$.

\subsection*{The (decorated) generalized surgery procedure}
Put the oriented circle-line diagram $(b^* \mu d)$  on top of $(a \la b)$ and stitch the $i$-th lower vertical ray of  $(b^*\la d)$ together
with the $i$-th upper vertical ray of $(a \la b)$ in case they have the same orientation. Otherwise the product is zero.
So let us assume the product is non-zero and let $(a \la b\mu d)$ be the diagram obtained by stitching and removing any dots on the vertical rays stitched together. (Note that dots get always removed in pairs, since either both rays or no ray in a pair contain a
dot as they are oriented in the same way. In particular, it is compatible with orientations).

Then pick the {\bf rightmost} symmetric pair of a
cup and a cap (possibly dotted) in the middle section of $( a \la b )( c \mu d )$
that can be connected without crossing rays or arcs. Forgetting orientations for a while, cut open the cup and the cap, then stitch the loose ends together
to form
a pair of vertical line segments (always without dots even if there were dots on the cup/cap pair before):
\begin{eqnarray} \label{surgery}
\begin{tikzpicture}[thick, snake=zigzag, line before snake = 2mm, line after snake = 2mm]
\draw[thin] (-.25,0) -- +(1,0);
\draw[thin] (-.25,1.5) -- +(1,0);

\draw[thin, dashed] (.25,.38) -- +(0,.74);

\draw (0,1.5) .. controls +(0,-.5) and +(0,-.5) .. +(.5,0);
\draw (0,0) .. controls +(0,.5) and +(0,.5) .. +(.5,0);
\draw[->, snake] (1,.75) -- +(1,0);
\draw[thin] (2.25,0) -- +(1,0);
\draw (2.5,0) -- +(0,1.5);
\draw (3,0) -- +(0,1.5);
\draw[thin] (2.25,1.5) -- +(1,0);

\begin{scope}[xshift=5.5cm]
\draw[thin] (-.25,0) -- +(1,0);
\draw[thin] (-.25,1.5) -- +(1,0);

\draw[thin, dashed] (.25,.38) -- +(0,.74);

\draw (0,1.5) .. controls +(0,-.5) and +(0,-.5) .. +(.5,0);
\fill (0.25,1.125) circle(2.5pt);
\draw (0,0) .. controls +(0,.5) and +(0,.5) .. +(.5,0);
\fill (0.25,.375) circle(2.5pt);
\draw[->, snake] (1,.75) -- +(1,0);
\draw[thin] (2.25,0) -- +(1,0);
\draw (2.5,0) -- +(0,1.5);
\draw (3,0) -- +(0,1.5);
\draw[thin] (2.25,1.5) -- +(1,0);
\end{scope}
\end{tikzpicture}
\end{eqnarray}

This surgery either
\begin{enumerate}[(i)]
\item merges two circles into one (with possibly removing two dots), or
\item splits one circle into two (with possible removing two dots), or
\item at least one of the involved cup and
cap belonged to a line, not a circle, before the cut was made.
\end{enumerate}
In the first two cases re-orient the new (decorated) circle(s) according to the orientation of
the old (decorated) circle(s) using the following rules (remembering
$1=\text{anti-clockwise}$, $x=\text{clockwise}$\footnote{The notion of anti-clockwise and clockwise circle makes sense by the same arguments as in Lemma~\ref{lem:stupidlemma} for any circle appearing in the diagram after a surgery procedure.})
\begin{align}
1 \otimes 1 \mapsto 1,\label{mult}
\qquad
1 \otimes x \mapsto x,
\qquad
&x \otimes 1 \mapsto x,
\qquad
x \otimes x \mapsto 0,\\
1 \mapsto 1 \otimes x + x \otimes 1,
\qquad
&x \mapsto x \otimes x,\label{comult}
\end{align}
to obtain a disjoint union of zero, one or two new oriented diagrams
replacing the old diagram.
For instance, the rule $1 \otimes 1 \mapsto
1$ here indicates that two anti-clockwise circles transform to one
anti-clockwise circle. The rule $1 \mapsto 1 \otimes x + x \otimes 1$ indicates
that one anti-clockwise circle transforms to give a disjoint union of two
different diagrams each having one anti-clockwise circle and one clockwise
circle. Finally the rule $x \otimes x \mapsto 0$ indicates that two clockwise
circles produce no diagram at all at the end of the surgery procedure. In contrast to \cite{BS1} it could happen that the resulting diagrams can not be oriented at all in which case we declare the result of the surgery procedure also to be zero.

Consider the third case where we have a circle and a line. Let $y$ denote a line and $1$ and $x$
denote anti-clockwise and clockwise circles as before then we use the rules
\begin{align}
\label{circline}
1 \otimes y &\mapsto y,\quad
x \otimes y \mapsto 0,\quad
y \mapsto y\otimes x.
\end{align}
So, the rule $1 \otimes y\mapsto y$ indicates that an anti-clockwise circle
and a line convert to a single line (as in the first two pictures below),
whereas a clockwise circle and a line produce zero (as displayed in the third and fourth picture below),
and the rule $y \mapsto x \otimes y$ indicates that a single line converts
to a line and a clockwise circle:

\begin{tikzpicture}[thick,>=angle 60, scale=0.65]

\draw[>-<] (0,-1.1) .. controls +(0,1) and +(0,1) .. +(1,0);
\fill (0.5,-.33) circle(2.5pt);
\draw (0,-1) .. controls +(0,-1) and +(0,-1) .. +(1,0);
\fill (0.5,-1.75) circle(2.5pt);
\draw[>-<] (0,-3.1) .. controls +(0,1) and +(0,1) .. +(1,0);
\fill (0.5,-2.33) circle(2.5pt);
\draw[->] (1.5,-2) -- +(.5,0);
\draw[>-<] (2.5,-2.1) .. controls +(0,1) and +(0,1) .. +(1,0);
\fill (3,-1.33) circle(2.5pt);

\draw[>-<] (5,-1.1) .. controls +(0,1) and +(0,1) .. +(1,0);
\fill (5.5,-.33) circle(2.5pt);
\draw (5,-1) .. controls +(0,-1) and +(0,-1) .. +(1,0);
\fill (5.5,-1.75) circle(2.5pt);
\draw[<->] (5,-3.1) .. controls +(0,1) and +(0,1) .. +(1,0);
\fill (5.5,-2.33) circle(2.5pt);
\draw[->] (6.5,-2) -- +(.5,0);
\draw[<->] (7.5,-2.1) .. controls +(0,1) and +(0,1) .. +(1,0);
\fill (8,-1.33) circle(2.5pt);

\draw[<->] (10,-1.1) .. controls +(0,1) and +(0,1) .. +(1,0);
\fill (10.5,-.33) circle(2.5pt);
\draw (10,-1) .. controls +(0,-1) and +(0,-1) .. +(1,0);
\fill (10.5,-1.75) circle(2.5pt);
\draw[>-<] (10,-3.1) .. controls +(0,1) and +(0,1) .. +(1,0);
\fill (10.5,-2.33) circle(2.5pt);
\draw[->] (11.5,-2) -- +(.5,0);
\node at (12.5,-2) {0};

\draw[<->] (14,-1.1) .. controls +(0,1) and +(0,1) .. +(1,0);
\fill (14.5,-.33) circle(2.5pt);
\draw (14,-1) .. controls +(0,-1) and +(0,-1) .. +(1,0);
\fill (14.5,-1.75) circle(2.5pt);
\draw[<->] (14,-3.1) .. controls +(0,1) and +(0,1) .. +(1,0);
\fill (14.5,-2.33) circle(2.5pt);
\draw[->] (15.5,-2) -- +(.5,0);
\node at (16.5,-2) {0};
\end{tikzpicture}

\begin{tikzpicture}[thick,>=angle 60, scale=0.65]
\draw (0,-.1) -- +(.0,.6);
\draw [>-<] (0,0) .. controls +(0,-1) and +(0,-1) .. +(1,0);
\fill (0.5,-.77) circle(2.5pt);
\draw (0,-1.9) -- +(.0,-.5);
\draw [<->] (0,-2) .. controls +(0,1) and +(0,1) .. +(1,0);
\fill (0.5,-1.23) circle(2.5pt);
\draw (1,-.1) .. controls +(0,1) and +(0,1) .. +(1,0);
\draw [>->] (2,-1.9) -- +(.0,1.9);
\draw (1,-1.8) .. controls +(0,-1) and +(0,-1) .. +(1,0);
\draw[->] (2.5,-1) -- +(.5,0);
\draw[->] (3.5,-.2) -- +(0,-.9);\draw (3.5,-.9) -- +(0,-.9);
\draw[>->] (4,-1.1) .. controls +(0,1) and +(0,1) .. +(1,0);
\draw (4,-1) .. controls +(0,-1) and +(0,-1) .. +(1,0);

\begin{scope}[xshift=6.5cm]
\draw (0,-.1) -- +(.0,.6);
\fill (0.01,.3) circle(2.5pt);
\draw [<->] (0,0) .. controls +(0,-1) and +(0,-1) .. +(1,0);
\fill (0.5,-.77) circle(2.5pt);
\draw (0,-1.9) -- +(.0,-.5);
\fill (0.01,-2.2) circle(2.5pt);
\draw [>-<] (0,-2) .. controls +(0,1) and +(0,1) .. +(1,0);
\fill (0.5,-1.23) circle(2.5pt);
\draw (1,-.1) .. controls +(0,1) and +(0,1) .. +(1,0);
\draw [<-<] (2,-2) -- +(.0,2);
\draw (1,-1.8) .. controls +(0,-1) and +(0,-1) .. +(1,0);
\draw[->] (2.5,-1) -- +(.5,0);
\draw[-<] (3.5,-.2) -- +(0,-.9);\draw (3.5,-.9) -- +(0,-.9);
\fill (3.5,-.4) circle(2.5pt);
\fill (3.5,-1.6) circle(2.5pt);
\draw[>->] (4,-1.11) .. controls +(0,1) and +(0,1) .. +(1,0);
\draw (4,-1) .. controls +(0,-1) and +(0,-1) .. +(1,0);
\end{scope}

\begin{scope}[xshift=13.5cm]
\draw (0,-.1) -- +(.0,.6);
\fill (0.01,.3) circle(2.5pt);
\draw [<-<] (0,0) .. controls +(0,-1) and +(0,-1) .. +(1,0);
\draw (0,-1.9) -- +(.0,-.5);
\fill (0.01,-2.2) circle(2.5pt);
\draw [>->] (0,-2) .. controls +(0,1) and +(0,1) .. +(1,0);
\draw (1,-.1) .. controls +(0,1) and +(0,1) .. +(1,0);
\draw [>->] (2,-2) -- +(.0,2);
\draw (1,-1.8) .. controls +(0,-1) and +(0,-1) .. +(1,0);
\draw[->] (2.5,-1) -- +(.5,0);
\draw[-<] (3.5,-.2) -- +(0,-.9);\draw (3.5,-.9) -- +(0,-.9);
\fill (3.5,-.4) circle(2.5pt);
\fill (3.5,-1.6) circle(2.5pt);
\draw[>->] (4,-1.11) .. controls +(0,1) and +(0,1) .. +(1,0);
\draw (4,-1) .. controls +(0,-1) and +(0,-1) .. +(1,0);
\end{scope}
\end{tikzpicture}

Finally we could have two lines, in which case
\begin{eqnarray}
\label{yuk}
y \otimes y &\mapsto&
\left\{
\begin{array}{ll}
y\otimes y&\text{if both lines are propagating and reconn\-ecting}\\
&\text{as in \eqref{surgery} gives an oriented diagram;}\\
0&\text{otherwise.}
\end{array}
\right.
\end{eqnarray}
Note that the cup-cap pair is never dotted in this case.


We execute this generalized surgery procedure until the middle part contains only vertical lines (without dots).
Then we remove this middle part and identify the two weight sequences to obtain a disjoint union of circle diagrams. The result of the
multiplication $( a \la b )( c \mu d )$ is then the sum of all these diagrams interpreted as elements of $\D$.

\begin{ex}
\label{examplemult}
Here is an example for the multiplication of two basis vectors, $\underline{\mu}\nu\overline{\la}$ and $\underline{\la}\nu\overline{\mu}$: The first surgery map merges two anticlockwise circles into one anticlockwise circle. The second surgery map is a split which sends the anticlockwise circle to the sum $(\underline{\la}\eta\overline{\la})+ (\underline{\la}\eta'\overline{\la})$ of the two possibilities of a pair of an anticlockwise circle and a clockwise circle as displayed:
\begin{equation}
\label{exmul}
\begin{tikzpicture}[thick,>=angle 60,scale=.4]

\begin{scope}[yshift=1cm]
\draw [<-] (0,-.1) .. controls +(0,2) and +(0,2) .. +(3,0);
\draw (0,0) .. controls +(0,-1) and +(0,-1) .. +(1,0);
\fill (0.5,-.77) circle(3.5pt);
\draw [<-] (1,-.1) .. controls +(0,1) and +(0,1) .. +(1,0);
\draw [<->] (2,0) .. controls +(0,-1) and +(0,-1) .. +(1,0);
\fill (2.5,-.77) circle(3.5pt);
\end{scope}

\begin{scope}[yshift=-1cm,yscale=-1]
\draw [->] (0,-.1) .. controls +(0,2) and +(0,2) .. +(3,0);
\draw [<->] (0,0) .. controls +(0,-1) and +(0,-1) .. +(1,0);
\fill (0.5,-.77) circle(3.5pt);
\draw [->] (1,-.1) .. controls +(0,1) and +(0,1) .. +(1,0);
\draw (2,0) .. controls +(0,-1) and +(0,-1) .. +(1,0);
\fill (2.5,-.77) circle(3.5pt);
\end{scope}
\draw[->, line join=round, decorate, decoration={zigzag,segment length=4,amplitude=.9,post=lineto,
    post length=2pt}] (3.5,0) -- (5,0);

\begin{scope}[xshift=5.5cm]
\begin{scope}[yshift=1cm]
\draw [<-] (0,-.1) .. controls +(0,2) and +(0,2) .. +(3,0);
\draw (0,0) .. controls +(0,-1) and +(0,-1) .. +(1,0);
\fill (0.5,-.77) circle(3.5pt);
\draw [<-] (1,-.1) .. controls +(0,1) and +(0,1) .. +(1,0);
\draw (2,-.1) -- +(0,-1);
\draw (3,-.1) -- +(0,-1);
\end{scope}

\begin{scope}[yshift=-1cm,yscale=-1]
\draw (0,-.1) .. controls +(0,2) and +(0,2) .. +(3,0);
\draw [<->] (0,0) .. controls +(0,-1) and +(0,-1) .. +(1,0);
\fill (0.5,-.77) circle(3.5pt);
\draw (1,-.1) .. controls +(0,1) and +(0,1) .. +(1,0);
\draw [->] (2,-.1) -- +(0,-1);
\draw [->] (3,-.1) -- +(0,-1);
\end{scope}
\draw[->, line join=round, decorate, decoration={zigzag,segment length=4,amplitude=.9,post=lineto,
    post length=2pt}] (3.5,0) -- (6,0);
\end{scope}

\begin{scope}[xshift=12cm]
\begin{scope}[yshift=1cm]
\draw (0,-.1) .. controls +(0,2) and +(0,2) .. +(3,0);
\draw [->] (0,-.1) -- +(0,-1);
\draw (1,-.1) -- +(0,-1);
\draw (1,-.1) .. controls +(0,1) and +(0,1) .. +(1,0);
\draw [->](2,-.1) -- +(0,-1);
\draw (3,-.1) -- +(0,-1);
\end{scope}

\begin{scope}[yshift=-1cm,yscale=-1]
\draw (0,-.1) .. controls +(0,2) and +(0,2) .. +(3,0);
\draw (0,-.1) -- +(0,-1);
\draw [->] (1,-.1) -- +(0,-1);
\draw (1,-.1) .. controls +(0,1) and +(0,1) .. +(1,0);
\draw (2,-.1) -- +(0,-1);
\draw [->] (3,-.1) -- +(0,-1);
\end{scope}
\node at (4,0) {${\large +}$};
\end{scope}

\begin{scope}[xshift=17cm]
\begin{scope}[yshift=1cm]
\draw (0,-.1) .. controls +(0,2) and +(0,2) .. +(3,0);
\draw (0,-.1) -- +(0,-1);
\draw [->] (1,-.1) -- +(0,-1);
\draw (1,-.1) .. controls +(0,1) and +(0,1) .. +(1,0);
\draw (2,-.1) -- +(0,-1);
\draw [->] (3,-.1) -- +(0,-1);
\end{scope}

\begin{scope}[yshift=-1cm,yscale=-1]
\draw (0,-.1) .. controls +(0,2) and +(0,2) .. +(3,0);
\draw [->] (0,-.1) -- +(0,-1);
\draw (1,-.1) -- +(0,-1);
\draw (1,-.1) .. controls +(0,1) and +(0,1) .. +(1,0);
\draw [->] (2,-.1) -- +(0,-1);
\draw (3,-.1) -- +(0,-1);
\end{scope}
\end{scope}

\end{tikzpicture}
\end{equation}
\end{ex}

\begin{remark}
{\rm
The maybe artificially looking choice of the rightmost dotted cup/cap pair for the surgery procedure becomes transparent and consistent with \cite{BS1} if we use the approach from \cite{LS}. Rewriting the dotted cup diagrams in terms of symmetric cup diagrams as in \cite{LS} might turn neighboured cups into nested cups and then our choice of ordering makes sure that we always chose an outer pair precisely as in \cite{BS1}.
}
\end{remark}

It is important to note that there is only ever one admissible choice for the orientation of vertices lying on lines; the orientations of the vertices at the ends of rays never change in the generalized surgery procedure; the same holds for the cup and cap diagram at the bottom and the top. Up to isotopy and mirror image the nonzero surgery moves of two lines are
\begin{eqnarray}
\label{twoprop}
\label{lineline}
\usetikzlibrary{arrows}
\usetikzlibrary{decorations.pathmorphing}
\begin{tikzpicture}[thick,>=angle 60,scale=.4]
\draw[thin] (-.5,2) -- +(6,0);
\draw[thin] (-.5,0) -- +(6,0);
\draw [<-] (0,2) -- +(0,1);
\draw [<-] (1,2) -- +(0,1);
\draw [->] (2,2) .. controls +(0,2) and +(0,2) .. +(3,0);
\draw [<-] (3,2) .. controls +(0,1) and +(0,1) .. +(1,0);
\draw [<-] (0,0) -- +(0,2);
\draw [<-] (3,0) -- +(0,2);
\draw [->] (4,0) -- +(0,2);
\draw [->] (1,2) .. controls +(0,-1) and +(0,-1) .. +(1,0);
\draw [<-] (5,0) -- +(0,2);
\draw [<-] (1,0) .. controls +(0,1) and +(0,1) .. +(1,0);
\draw (0,0) .. controls +(0,-1) and +(0,-1) .. +(1,0);
\fill (0.5,-.74) circle(4pt);
\draw [<-] (2,0) .. controls +(0,-1) and +(0,-1) .. +(1,0);
\draw [->] (4,-1) -- +(0,1);
\fill (4,-.55) circle(4pt);
\draw (5,-1) -- +(0,1);

\draw [->,line join=round,decorate, decoration={zigzag,segment length=8,amplitude=.9,post=lineto,post length=4pt}]  (5.5,1) -- +(2,0);

\begin{scope}[xshift=8cm]
\draw[thin] (-.5,2) -- +(6,0);
\draw[thin] (-.5,0) -- +(6,0);
\draw [<-] (0,2) -- +(0,1);
\draw [<-] (1,2) -- +(0,1);

\draw [<-] (0,0) -- +(0,2);
\draw [<-] (1,0) -- +(0,2);
\draw [->] (2,0) -- +(0,2);
\draw [<-] (3,0) -- +(0,2);
\draw [->] (4,0) -- +(0,2);
\draw [<-] (5,0) -- +(0,2);

\draw [->] (2,2) .. controls +(0,2) and +(0,2) .. +(3,0);
\draw [<-] (3,2) .. controls +(0,1) and +(0,1) .. +(1,0);
\draw (0,0) .. controls +(0,-1) and +(0,-1) .. +(1,0);
\fill (0.5,-.74) circle(4pt);
\draw [<-] (2,0) .. controls +(0,-1) and +(0,-1) .. +(1,0);
\draw [->] (4,-1) -- +(0,1);
\fill (4,-.55) circle(4pt);
\draw (5,-1) -- +(0,1);
\end{scope}

\begin{scope}[xshift=15.5cm]
\draw[thin] (-.5,2) -- +(6,0);
\draw[thin] (-.5,0) -- +(6,0);
\draw [<-] (0,2) -- +(0,1);
\draw [<-] (1,2) -- +(0,1);
\draw [->] (2,2) .. controls +(0,2) and +(0,2) .. +(3,0);
\draw [->] (3,2) .. controls +(0,1) and +(0,1) .. +(1,0);

\draw [<-] (0,0) -- +(0,2);
\draw [->] (3,0) -- +(0,2);
\draw [<-] (4,0) -- +(0,2);
\draw [->] (1,2) .. controls +(0,-1) and +(0,-1) .. +(1,0);
\draw [<-] (5,0) -- +(0,2);
\draw [<-] (1,0) .. controls +(0,1) and +(0,1) .. +(1,0);

\draw (0,0) .. controls +(0,-1) and +(0,-1) .. +(1,0);
\fill (0.5,-.74) circle(4pt);
\draw [<->] (2,0) .. controls +(0,-1) and +(0,-1) .. +(1,0);
\fill (2.5,-.74) circle(4pt);
\draw (4,-1) -- +(0,1);
\draw (5,-1) -- +(0,1);

\draw [->,line join=round,decorate, decoration={zigzag,segment length=8,amplitude=.9,post=lineto,post length=4pt}]  (5.5,1) -- +(2,0);

\begin{scope}[xshift=8cm]
\draw[thin] (-.5,2) -- +(6,0);
\draw[thin] (-.5,0) -- +(6,0);
\draw [<-] (0,2) -- +(0,1);
\draw [<-] (1,2) -- +(0,1);

\draw [<-] (0,0) -- +(0,2);
\draw [<-] (1,0) -- +(0,2);
\draw [->] (2,0) -- +(0,2);
\draw [->] (3,0) -- +(0,2);
\draw [<-] (4,0) -- +(0,2);
\draw [<-] (5,0) -- +(0,2);

\draw [->] (2,2) .. controls +(0,2) and +(0,2) .. +(3,0);
\draw [->] (3,2) .. controls +(0,1) and +(0,1) .. +(1,0);
\draw (0,0) .. controls +(0,-1) and +(0,-1) .. +(1,0);
\fill (0.5,-.74) circle(4pt);
\draw [<->] (2,0) .. controls +(0,-1) and +(0,-1) .. +(1,0);
\fill (2.5,-.74) circle(4pt);
\draw (4,-1) -- +(0,1);
\draw (5,-1) -- +(0,1);
\end{scope}
\end{scope}

\begin{scope}[yshift=-7cm]
\draw[thin] (-.5,2) -- +(6,0);
\draw[thin] (-.5,0) -- +(6,0);
\draw (0,2) -- +(0,1);
\fill (0,2.55) circle(4pt);
\draw [<-] (1,2) -- +(0,1);
\draw [->] (2,2) .. controls +(0,2) and +(0,2) .. +(3,0);
\draw [<-] (3,2) .. controls +(0,1) and +(0,1) .. +(1,0);
\draw [->] (0,0) -- +(0,2);
\draw [<-] (3,0) -- +(0,2);
\draw [->] (4,0) -- +(0,2);
\draw [->] (1,2) .. controls +(0,-1) and +(0,-1) .. +(1,0);
\draw [<-] (5,0) -- +(0,2);
\draw [<-] (1,0) .. controls +(0,1) and +(0,1) .. +(1,0);
\draw [<-] (0,0) .. controls +(0,-1) and +(0,-1) .. +(1,0);
\draw [<-] (2,0) .. controls +(0,-1) and +(0,-1) .. +(1,0);
\draw [->] (4,-1) -- +(0,1);
\fill (4,-.55) circle(4pt);
\draw (5,-1) -- +(0,1);

\draw [->,line join=round,decorate, decoration={zigzag,segment length=8,amplitude=.9,post=lineto,post length=4pt}]  (5.5,1) -- +(2,0);

\begin{scope}[xshift=8cm]
\draw[thin] (-.5,2) -- +(6,0);
\draw[thin] (-.5,0) -- +(6,0);
\draw (0,2) -- +(0,1);
\fill (0,2.55) circle(4pt);
\draw [<-] (1,2) -- +(0,1);
\draw [->] (0,0) -- +(0,2);
\draw [<-] (1,0) -- +(0,2);
\draw [->] (2,0) -- +(0,2);
\draw [<-] (3,0) -- +(0,2);
\draw [->] (4,0) -- +(0,2);
\draw [<-] (5,0) -- +(0,2);

\draw [->] (2,2) .. controls +(0,2) and +(0,2) .. +(3,0);
\draw [<-] (3,2) .. controls +(0,1) and +(0,1) .. +(1,0);
\draw [<-] (0,0) .. controls +(0,-1) and +(0,-1) .. +(1,0);
\draw [<-] (2,0) .. controls +(0,-1) and +(0,-1) .. +(1,0);
\draw [->] (4,-1) -- +(0,1);
\fill (4,-.55) circle(4pt);
\draw (5,-1) -- +(0,1);
\end{scope}
\end{scope}

\begin{scope}[xshift=15.5cm,yshift=-7cm]
\draw[thin] (-.5,2) -- +(6,0);
\draw[thin] (-.5,0) -- +(6,0);
\draw (0,2) -- +(0,1);
\fill (0,2.55) circle(4pt);
\draw [<-] (1,2) -- +(0,1);
\draw [->] (2,2) .. controls +(0,2) and +(0,2) .. +(3,0);
\draw [->] (3,2) .. controls +(0,1) and +(0,1) .. +(1,0);

\draw [->] (0,0) -- +(0,2);
\draw [->] (3,0) -- +(0,2);
\draw [<-] (4,0) -- +(0,2);
\draw [->] (1,2) .. controls +(0,-1) and +(0,-1) .. +(1,0);
\draw [<-] (5,0) -- +(0,2);
\draw [<-] (1,0) .. controls +(0,1) and +(0,1) .. +(1,0);

\draw [<-] (0,0) .. controls +(0,-1) and +(0,-1) .. +(1,0);
\draw [<->] (2,0) .. controls +(0,-1) and +(0,-1) .. +(1,0);
\fill (2.5,-.74) circle(4pt);
\draw (4,-1) -- +(0,1);
\draw (5,-1) -- +(0,1);

\draw [->,line join=round,decorate, decoration={zigzag,segment length=8,amplitude=.9,post=lineto,post length=4pt}]  (5.5,1) -- +(2,0);

\begin{scope}[xshift=8cm]
\draw[thin] (-.5,2) -- +(6,0);
\draw[thin] (-.5,0) -- +(6,0);
\draw (0,2) -- +(0,1);
\fill (0,2.55) circle(4pt);
\draw [<-] (1,2) -- +(0,1);

\draw [->] (0,0) -- +(0,2);
\draw [<-] (1,0) -- +(0,2);
\draw [->] (2,0) -- +(0,2);
\draw [->] (3,0) -- +(0,2);
\draw [<-] (4,0) -- +(0,2);
\draw [<-] (5,0) -- +(0,2);

\draw [->] (2,2) .. controls +(0,2) and +(0,2) .. +(3,0);
\draw [->] (3,2) .. controls +(0,1) and +(0,1) .. +(1,0);
\draw [<-] (0,0) .. controls +(0,-1) and +(0,-1) .. +(1,0);
\draw [<->] (2,0) .. controls +(0,-1) and +(0,-1) .. +(1,0);
\fill (2.5,-.74) circle(4pt);
\draw (4,-1) -- +(0,1);
\draw (5,-1) -- +(0,1);
\end{scope}
\end{scope}

\end{tikzpicture}
\end{eqnarray}
where a dot/no dot on a segment means an odd resp. even number of dots.

\begin{remark}{\rm
The surgery rules involving lines are in fact induced from surgery rules involving closed components only by the following {\it extension rule}: To compute $(\un\la\nu\ov\mu)(\un\mu\nu'\ov\eta)$ we extend all involved weights by $r$ $\up$'s to the right, where $r$ is the maximum of the occurring rays at the top and bottom and consider the corresponding extended circle diagram (which contains now only closed components). Then we apply the normal surgery rules with the additional condition that a resulting diagram is zero if one of the added $\up$'s gets changed into a $\down$, ie. a circle containing an added $\up$ is turned clockwise. To see this note that a non-propagating line to the right of a propagating line is not orientable, hence the surgery involving a propagating and a non-propagating line must be zero as claimed in \eqref{yuk}. Every propagating line gets closed by a direct connection to the next free added $\up$ and one easily verifies that the only non-zero products are the ones displayed in \eqref{twoprop} in agreement with \eqref{yuk}. Now assume we have two non-propagating lines. In case they are in the same component, the surgery is a split which creates clockwise circles and the result is zero. Otherwise we have two possible scenarios (and their mirror images):
\vspace{-0.5cm}
\begin{eqnarray*}
\begin{tikzpicture}[thick,>=angle 60,scale=.5]
\node at (-.75,1) {(i)};
\draw[thin] (-.5,0) -- +(8,0);
\draw[-] (0,2) .. controls +(0,-1) and +(0,-1) .. +(1,0);
\draw[dashed] (2,2) .. controls +(0,.75) and +(0,.75) .. +(1,0);
\draw[dashed, red] (0,2) .. controls +(0,2.25) and +(0,2.25) .. +(5,0);
\draw[dashed, red] (1,2) .. controls +(0,1.5) and +(0,1.5) .. +(3,0);
\draw[dashed,<-] (2,0) -- +(0,2);
\draw[dashed,<-] (3,0) -- +(0,2);
\draw[dashed,red,>->] (4,0) -- +(0,2);
\draw[dashed,red,>->] (5,0) -- +(0,2);
\draw[dashed, red] (2,0) .. controls +(0,-1.5) and +(0,-1.5) .. +(3,0);
\draw[dashed, red] (3,0) .. controls +(0,-.75) and +(0,-.75) .. +(1,0);
\draw[-] (0,0) .. controls +(0,1) and +(0,1) .. +(1,0);
\draw[dashed, red] (1,0) .. controls +(0,-2.25) and +(0,-2.25) .. +(5,0);
\draw[dashed, red] (0,0) .. controls +(0,-3) and +(0,-3) .. +(7,0);

\begin{scope}[xshift=11cm]
\node at (-.75,1) {(ii)};
\draw[thin] (-.5,2) -- +(8,0);
\draw[thin] (-.5,0) -- +(8,0);
\draw[dashed, red] (2,2) .. controls +(0,1.5) and +(0,1.5) .. +(3,0);
\draw[dashed, red] (3,2) .. controls +(0,.75) and +(0,.75) .. +(1,0);
\draw[dashed, red] (1,2) .. controls +(0,2.25) and +(0,2.25) .. +(5,0);
\draw[dashed, red] (0,2) .. controls +(0,3) and +(0,3) .. +(7,0);
\draw[-<] (0,2) .. controls +(0,-1) and +(0,-1) .. +(1,0);
\draw[-<] (0,0) .. controls +(0,1) and +(0,1) .. +(1,0);
\draw[-<] (2,0) -- +(0,2);
\draw[-<] (3,0) -- +(0,2);
\draw[dashed,red,>->] (4,0) -- +(0,2);
\draw[dashed,red,>->] (5,0) -- +(0,2);
\draw[dashed,red,>->] (6,0) -- +(0,2);
\draw[dashed,red,>->] (7,0) -- +(0,2);
\draw[-] (1,0) .. controls +(0,-.75) and +(0,-.75) .. +(1,0);
\draw[-] (0,0) .. controls +(0,-1.5) and +(0,-1.5) .. +(3,0);

\end{scope}
\end{tikzpicture}
\end{eqnarray*}

where in case (i) the black dashed part of the diagram can't have any dots and is therefore not orientable and in the (ii) the merge forces to reorient one of the lines, hence the result is again zero.
}
\end{remark}

The following will be a consequence of Theorem~\ref{algebra_structure} below
\begin{theorem}
The multiplication $((a\la b), (c\mu d))\mapsto (a\la b)(c\mu d)$ defines an associative graded algebra structure on $\D^{{\mathbb F}_2}$.
\end{theorem}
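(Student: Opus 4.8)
The plan is to mimic the type $A$ argument of \cite{BS1}, exploiting that over $\mathbb{F}_2$ no signs intervene, so that the surgery moves \eqref{mult}--\eqref{comult} are literally the structure maps of the commutative Frobenius algebra $R=\mathbb{F}_2[x]/(x^2)$, with multiplication $m$ and comultiplication $\Delta$. The product of two basis vectors is then the evaluation, on the cobordism swept out by the chosen sequence of surgeries, of the associated $2$-dimensional TQFT, and everything will follow once this evaluation is shown to be independent of the order of the elementary surgeries. I would first dispose of the grading: each surgery removes exactly one cup and one cap from a middle section, and by Proposition~\ref{lem:stupidlemma} the degree of a closed component equals its number of caps $\pm 1$; a finite local inspection of the moves \eqref{mult}, \eqref{comult}, \eqref{circline} and of the line moves \eqref{twoprop} then shows that the total degree (the sum over all components) is preserved by every move. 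Hence the product of homogeneous basis vectors is homogeneous of the sum of the degrees, the multiplication is graded, and together with the routine check that $\sum_{\la\in\Lambda}\underline{\la}\la\ov{\la}$ is a two-sided unit this reduces the theorem to associativity.

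Next I would reduce to closed diagrams. Using the extension rule of the preceding remark, I extend all occurring weights by $r$ additional $\up$'s on the right, where $r$ is the maximal number of rays encountered, thereby closing every ray into a circle, and I declare any resolved diagram to be zero as soon as one of the added $\up$'s is forced to become a $\down$. I would then verify, as that remark sketches, that this prescription reproduces exactly the line rules \eqref{circline}, \eqref{yuk} and the admissible moves \eqref{twoprop}; this is a finite case analysis distinguishing propagating from non-propagating lines. After this reduction every middle section consists only of cup--cap pairs between closed circles, and the dots — always removed in pairs by Remark~\ref{decorated} and Proposition~\ref{lem:stupidlemma}(1) — play no further role over $\mathbb{F}_2$.

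The core step is order-independence of the surgery procedure. Given three basis vectors I stack them into a single diagram with two symmetric middle sections; computing $\bigl((a\la b)(c\mu d)\bigr)(e\nu f)$ resolves the lower section before the upper, whereas $(a\la b)\bigl((c\mu d)(e\nu f)\bigr)$ resolves the upper first, and both are instances of resolving all cup--cap pairs of the combined diagram in some admissible order. It therefore suffices to show that the resulting disjoint union of oriented diagrams is independent of the order in which cup--cap pairs are chosen. Surgeries in disjoint regions commute trivially; when they interact they are governed by the Frobenius relations of $R$, namely associativity of $m$ (two merges), coassociativity of $\Delta$ (two splits), and the Frobenius relation $(m\otimes\operatorname{id})\circ(\operatorname{id}\otimes\Delta)=\Delta\circ m=(\operatorname{id}\otimes m)\circ(\Delta\otimes\operatorname{id})$ (a merge meeting a split). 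Each holds directly for $R=\mathbb{F}_2[x]/(x^2)$, while orientability of the intermediate and final diagrams is controlled throughout by Proposition~\ref{lem:stupidlemma}, non-orientable outcomes consistently yielding $0$.

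I expect the main obstacle to be precisely this order-independence in the planar, decorated setting. Over $\mathbb{Z}$ the signs obstruct a naive topological TQFT argument; even over $\mathbb{F}_2$, where that difficulty evaporates, one must still confirm that the combinatorial \emph{rightmost-pair} bookkeeping interacts correctly with the nesting of circles and with the pairwise cancellation of dots, so that the local Frobenius moves may genuinely be applied in any order and never produce a diagram that fails to be orientable except exactly when the honest product is $0$.
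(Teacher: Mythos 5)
Your plan correctly isolates the two ingredients (gradedness plus order-independence of the surgeries), but the core step has a genuine gap: even over $\mathbb{F}_2$ the multiplication on $\D^{\mathbb{F}_2}$ is \emph{not} the evaluation of the TQFT attached to $R=\mathbb{F}_2[x]/(x^2)$, so the Frobenius relations for $R$ do not by themselves give order-independence. The signs vanish mod $2$, but the decorations do not: the multiplication rule declares a surgery to be zero whenever its output cannot be oriented (see the sentence following \eqref{comult}, the rule \eqref{yuk}, and the clause ``$0$ if $G'$ can't be oriented'' in Definition~\ref{surg}). By Proposition~\ref{lem:stupidlemma} orientability is the parity condition that every closed component carry an even number of dots, and a split can distribute an even number of dots as odd--odd (this really occurs, e.g.\ configuration (D4) on the third diagram of \eqref{H} in the proof of Theorem~\ref{cellular}), producing $0$ where the naive Frobenius map $\Delta$ would not. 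This extra rule is invisible to the Frobenius algebra: to prove associativity you must show that if resolving one cup--cap pair first kills the diagram by non-orientability, then the other order also eventually gives $0$, and that when both orders survive, the dot parities along the connecting arcs force equal outputs. Moreover, whether a given cup--cap pair effects a merge or a split depends on which surgeries were performed before it, so the comparison is not a formal instance of $(\mathrm{id}\otimes m)\circ(\Delta\otimes\mathrm{id})=\Delta\circ m$ applied componentwise; it requires the planar connectivity analysis. Your closing paragraph concedes exactly this point (``one must still confirm\dots''), which is to say that the crucial step is flagged rather than proved. (The degree-preservation step is likewise not a purely local inspection -- the paper's Proposition~\ref{forgotgraded} needs rightmost-point arguments -- but that is a secondary issue.)

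For comparison, the paper does not argue over $\mathbb{F}_2$ directly at all: the theorem is stated as a consequence of Theorem~\ref{algebra_structure}, i.e.\ of the signed multiplication over $\mathbb{Z}$, whose reduction mod $2$ is the $\mathbb{F}_2$ multiplication since the signs become trivial. Associativity there is proved in Proposition~\ref{prop:multiplication_associative} by algebro-combinatorial means: oriented diagrams are identified with quotients of polynomial rings via Proposition~\ref{coho}, merges become ring surjections and splits become multiplication operators (Lemma~\ref{join}, Definition~\ref{surg}), and the heart of the proof is that the two orders of resolving the rightmost cup--cap pairs fit into a commuting rhombus, established by a case analysis on the number $C(i,j)\in\{0,2,4\}$ of connections between the two pairs, using planarity (``inner'' connections, Lemma~\ref{lem:exist_inner_connections}) and dot-parity arguments. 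A self-contained $\mathbb{F}_2$ proof along your lines would still have to reproduce essentially that case analysis to control the orientability rule; the shortcut through the TQFT axioms alone does not close the argument.
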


\subsection{The multiplication over $\mZ$ or $\mC$}
\label{F2}
We want to define the product of two basis elements $(a \la b), (c \mu d)\in\D$ and proceed exactly as in Section~\ref{multF2Def} except that the case (ii) and the last case in \eqref{circline} have to be adjusted in a subtle way with appropriate signs. For simplicity, we work here over $\mC$, but all statements are valid over $\mZ$. We start by identifying all the possible diagrams in the surgery procedure from the last section as elements of some complex vector space.

We identify
$(a \la b)$ and $(c \mu d)$ with basis vectors in $\mC[X_{1,1},\ldots, X_{1,k}]/I(a,b)$ and in $\mC[X_{-1,1},\ldots, X_{-1,k}]/I(c,d)$ respectively via Proposition~\ref{coho} (ignoring the grading for the moment). The indices $1$ and $-1$ indicate whether they belong to the top or bottom diagram in the surgery procedure.

Then the pair $(a \la b)$, $(c \mu d)$ corresponds to a basis vector $v$ in
\begin{eqnarray}
\cM:=\mC[X_{\mp1,1},\ldots, X_{\mp1,k}]/\left(\langle X_{\mp 1,i}^2\mid 1\leq i\leq k\rangle +I(a,b)+I(c,d)\right)
\end{eqnarray}
We identify then the diagram $(a \la b\mu d)$ with the image of $v$ in the quotient
\begin{eqnarray}
\cM/(X_{1,i}-X_{-1,i}\mid i\in I),
\end{eqnarray}
 where $I\subset \{1,\ldots ,k\}$ denotes the vertices where rays were stitched.

More generally, the diagrams appearing during the multiplication rule in Section~\ref{multF2Def} are all obtained from  $(a \la b\mu d)$ by applying some surgeries of the form \eqref{surgery} to the middle part. Given such a diagram we first ignore the orientation to obtain a {\it double circle diagram} $G$. To $G$ we assign the vector space $\cM(G)$, where $\cM(G)=\{0\}$ if $G$ cannot be equipped with an orientation and otherwise
\begin{eqnarray}
\label{MG}
\cM(G)=\mC[X_{\mp1,1},\ldots, X_{\mp1,k}]/\left(\langle X_{(\mp 1,i)}^2 \mid 1\leq i\leq k\rangle +I\right)
\end{eqnarray}
where the ideal $I$ is generated by the following relations with $t\in\{\mp1\}$ and $1\leq j\leq i\leq k$
\begin{eqnarray*}
\cM(G)= \left\lbrace
\begin{array}{cl}
 X_{(t,i)} + X_{(t,j)} & \text{ if $(t,i)$ connects $(t,j)$ by an undotted arc}\\
 X_{(t,i)} - X_{(t,j)} & \text{ if $(t,i)$ connects $(t,j)$ by a dotted arc}\\
 X_{(t,i)} & \text{ if $(t,i)$ is the endpoint of a ray in $a$ or $d$},\\
 X_{(-1,i)} - X_{(1,i)} & \text{ if $(-1,i)$ connects $(1,i)$ by a straight line.}
\end{array} \right\rbrace.
\end{eqnarray*}

By an {\it orientation} of a double circle diagram $G$ we mean a consistent labelling of the vertices $\{(1,i)\mid 1\leq i\leq \}$ and the vertices $\{(-1,i)\mid 1\leq i\leq \}$ by weights $\nu$ and $\nu'$ respectively such that the $i$th vertex is labelled by the $i$th label of $\nu$ and $\nu'$ respectively and all components are oriented (again $\bullet$'s should be seen as orientation reversing points), the diagrams to the left and in the middle of \eqref{exmul} are examples of oriented double circle diagrams, the ones to the right are two circle diagrams obtained from double circle diagrams after applying he collapse map from Lemma~\ref{lem:collapsing} below.


\begin{lemma}
We have $\cM(G)\not=\{0\}$ iff $G$ can be orientated.
\end{lemma}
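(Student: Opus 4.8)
The plan is to split the equivalence into its two implications and observe that one of them is built into the definition of $\cM(G)$. For the ``only if'' direction I do not need to argue at all: by construction we declared $\cM(G)=\{0\}$ precisely when $G$ admits no orientation, so the contrapositive statement ``$\cM(G)\neq\{0\}\Rightarrow G$ orientable'' is immediate. Thus all the content sits in the ``if'' direction, and the goal reduces to the following ring-theoretic claim: if $G$ can be oriented, then the quotient defining $\cM(G)$ is nonzero, i.e. the ideal $J:=\langle X_{(\mp1,i)}^2\mid 1\leq i\leq k\rangle+I$ is a \emph{proper} ideal of the polynomial ring $\mC[X_{\mp1,1},\ldots,X_{\mp1,k}]$.

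To prove properness I would use the augmentation. The key observation is that every listed generator of $J$ has zero constant term: the squares $X_{(t,i)}^2$, the undotted arc relations $X_{(t,i)}+X_{(t,j)}$, the dotted arc relations $X_{(t,i)}-X_{(t,j)}$, the ray relations $X_{(t,i)}$, and the straight-line relations $X_{(-1,i)}-X_{(1,i)}$ all lie in the augmentation (irrelevant) ideal $\mathfrak{m}:=\langle X_{(t,i)}\mid t\in\{\mp1\},\,1\leq i\leq k\rangle$. Since $\mathfrak{m}$ is itself an ideal, the ideal $J$ they generate satisfies $J\subseteq\mathfrak{m}\subsetneq\mC[X_{\mp1,\bullet}]$, so $1\notin J$. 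Concretely, the augmentation map $\epsilon:\mC[X_{\mp1,\bullet}]\to\mC$ sending every $X_{(t,i)}\mapsto 0$ kills all generators of $J$ and hence descends to a ring map $\cM(G)\twoheadrightarrow\mC$; as $\mC\neq 0$ this forces $\cM(G)\neq\{0\}$.

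Two comments on where the real subtlety lies. First, this argument in fact shows the quotient ring is nonzero \emph{regardless} of orientability; orientability enters only through the definitional convention used in the ``only if'' direction, and there is no circularity. Second, if one prefers a proof exhibiting an explicit nonzero element compatible with the intended basis, I would instead fix an orientation of $G$ and form, exactly as in Proposition~\ref{coho}, the monomial $\prod_{a}X_a$ over the rightmost vertices of the clockwise closed components, then check it survives in $\cM(G)$. The main obstacle in that route is bookkeeping signs and confirming that the relations close up consistently around each loop: this reduces, component by component, to the assertion that the arc relations on a single closed circle are equivalent to $X_i=(-1)^rX_j$, which is precisely Lemma~\ref{signmove}, whose part (1) (evenness of the number of undotted cups and caps on a closed circle) guarantees that going once around a loop yields $X_i=X_i$ rather than $X_i=-X_i$, so no generator is collapsed; consistency of the two orientations of each closed circle is in turn supplied by Lemma~\ref{lem:stupidlemma}. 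Because the augmentation argument sidesteps all of this sign bookkeeping, it is the route I would actually take.
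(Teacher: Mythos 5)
Your proof is correct, but it takes a genuinely different route from the paper's. The paper argues structurally: since the relations in \eqref{MG} only couple variables whose indices lie on a common component of $G$, variables from distinct components are independent, and within each (orientable) component Lemma~\ref{signmove}(2) (whose well-definedness rests on part (1)) shows the relations amount to the consistent identifications $X_i=(-1)^rX_j$, so nothing collapses. Your argument instead observes that every generator of the defining ideal lies in the augmentation ideal, hence the ideal is proper and $\cM(G)$ surjects onto $\mC$ --- shorter, entirely sign-free, and sufficient for the lemma as literally stated, since (as you correctly note) the implication ``$\cM(G)\neq\{0\}\Rightarrow G$ orientable'' is built into the piecewise definition of $\cM(G)$. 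What the paper's route buys, and yours does not, is the finer structure that is what actually gets used downstream (Lemma~\ref{lem:collapsing}, Lemma~\ref{join}, Definition~\ref{surg}): namely that the classes of the variables $X_{t,i}$ are themselves nonzero in $\cM(G)$, that variables on one component agree up to sign, and that monomials in variables from different components stay linearly independent. The augmentation cannot detect any of this --- indeed, on a non-orientable circle the loop relation forces $X_i=-X_i$, killing every variable on that circle, while the quotient ring remains nonzero (it still maps onto $\mC$); this is also exactly why the lemma cannot be read as asserting that the quotient presentation by itself detects orientability, and why the case distinction in the definition of $\cM(G)$ is genuinely needed. In short: yours is a valid, more elementary proof of the stated lemma; the paper's proof is really a proof of the stronger structural statement it needs later.
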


\begin{proof}
Since the defining relations \eqref{MG} of $\cM(G)$ only couple variables $X_{t,i}$, where the indices $(t,i)$ label vertices on the same component, the  $X_{t,i}$'s from different components are linearly independent or zero. Then apply the second part of Lemma~\ref{signmove}.
\end{proof}

%

\begin{lemma}
\label{lem:collapsing}
Assume $G$ is a double circle diagram obtained from $(a \la b \mu d)$ at the end of all surgeries (ie. all middle cups and caps in $b$ are turned into rays). Let $a=\ov{\alpha}$ and $d=\un\beta$.
Then there exists an isomorphism of graded vector spaces
\begin{eqnarray}
{\rm collapse}:\quad\cM(G) &\longrightarrow &{}_\alpha(\D)_\beta{},\nonumber\\
X_{\mp1,i}&\mapsto&X_i.
\end{eqnarray}
\end{lemma}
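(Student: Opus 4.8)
The plan is to exploit that at the end of the surgery procedure the middle strip of $G$ contains no cups or caps at all, only the vertical rays stitched together into straight lines. First I would spell out what this means for the defining ideal of $\cM(G)$ in \eqref{MG}: since each middle vertex $i$ now lies on a straight line joining $(-1,i)$ to $(1,i)$, the generator $X_{(-1,i)}-X_{(1,i)}$ belongs to the ideal for every relevant $i$. The only arcs of $G$ that survive are the cups of $a$, which carry the variables $X_{(-1,\bullet)}$, the caps of $d$, which carry the variables $X_{(1,\bullet)}$, together with the rays of $a$ and $d$.

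Next I would factor the assignment $X_{\mp1,i}\mapsto X_i$ through these relations. The algebra homomorphism $\mC[X_{\mp1,1},\ldots,X_{\mp1,k}]\to S$ sending both $X_{(1,i)}$ and $X_{(-1,i)}$ to $X_i$ is surjective and annihilates the squares $X_{(\mp1,i)}^2$ as well as the straight-line relations $X_{(-1,i)}-X_{(1,i)}$; hence it identifies $\mC[X_{\mp1,\bullet}]/(X_{(\mp1,i)}^2,\;X_{(-1,i)}-X_{(1,i)})$ with $S$. Under this identification an undotted (resp.\ dotted) cup of $a$ joining $i$ and $j$ maps to $X_i+X_j$ (resp.\ $X_i-X_j$), an undotted (resp.\ dotted) cap of $d$ joining $i$ and $j$ maps to $X_i+X_j$ (resp.\ $X_i-X_j$), and a ray of $a$ or $d$ ending at $i$ maps to $X_i$. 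These are exactly the generators of the ideal attached in Proposition~\ref{coho} to the collapsed circle diagram obtained from $G$ by deleting the middle strip and identifying the two weight lines, the cup relations coming from its cup diagram and the cap relations from its cap diagram. Therefore $X_{\mp1,i}\mapsto X_i$ descends to an isomorphism of $\cM(G)$ onto $\cM$ of that collapsed diagram, which by Proposition~\ref{coho} is precisely ${}_\alpha(\D)_\beta$.

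Finally I would check that collapse is homogeneous: it sends each degree-$2$ generator $X_{\mp1,i}$ to the degree-$2$ generator $X_i$, and since passing to the collapsed diagram neither creates nor destroys closed circles, the number $|J|$ of clockwise circles, and hence the grading shift $\langle r\rangle$ with $r=k-|J|$, is preserved. The one point that is not purely formal is that a single closed circle of the collapsed diagram may be assembled from several cups of $a$, several caps of $d$, and the intervening straight lines, so that its relations couple many of the $X_{(\pm1,\bullet)}$ at once; that these coupled relations are mutually consistent and collapse to the correct sign pattern $X_i=(-1)^rX_j$ around the circle is exactly the content of Lemma~\ref{signmove}, which I would invoke to finish. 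Everything else is the tautological observation that turning all middle arcs into rays converts the middle relations of \eqref{MG} into the identifications $X_{(-1,i)}=X_{(1,i)}$, so I expect no real obstacle beyond this bookkeeping.
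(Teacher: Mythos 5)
Your proof is correct and follows the same route as the paper, whose entire proof of Lemma~\ref{lem:collapsing} is the single line ``This follows directly from the definitions'': what you have written out is exactly that definitional unwinding, namely that at the end of the surgeries every middle vertex carries a straight line, so the relations $X_{(-1,i)}-X_{(1,i)}$ of \eqref{MG} identify the two sets of variables, and the surviving cup, cap and ray relations then coincide with the presentation of ${}_\alpha(\D)_\beta$ given in Proposition~\ref{coho}. Your additional remarks on the grading and the appeal to Lemma~\ref{signmove} for sign consistency around circles are harmless elaborations of the same bookkeeping, not a different argument.
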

\begin{proof}
This follows directly from the definitions.
\end{proof}

\begin{lemma}
\label{join}
Assume that $G$ and $G'$ are double circle diagrams obtained from  $(a \la b \mu d)$ such that $G'$ is obtained from $G$ by one additional surgery which is a merge. Then there is a canonical surjection of vector spaces
\begin{eqnarray}
\op{surj}:\quad\cM(G)&\longrightarrow&\cM(G')\\
X_{\mp1,i}&\longmapsto&X_{\mp1,i}
\end{eqnarray}
\end{lemma}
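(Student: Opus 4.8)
The plan is to realise both $\cM(G)$ and $\cM(G')$ as quotients of the \emph{same} ambient ring $\mathbb{C}[X_{\mp1,1},\ldots,X_{\mp1,k}]$ (with all $X_{(\mp1,i)}^2=0$) by ideals, say $I_G$ and $I_{G'}$, each generated by the arc relations listed in \eqref{MG}. Since the proposed map $\op{surj}$ is the one induced by the identity $X_{\mp1,i}\mapsto X_{\mp1,i}$ on generators, it is well defined precisely when $I_G\subseteq I_{G'}$; and once that inclusion is known, surjectivity is automatic and grading-preserving, because every monomial $y_1\cdots y_k$ representing a class in $\cM(G')$ is visibly the image of the same monomial in $\cM(G)$. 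So the entire statement reduces to proving $I_G\subseteq I_{G'}$.

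First I would compare the two lists of defining relations. Passing from $G$ to $G'$ by one merge surgery of the form \eqref{surgery} removes exactly one cup $u$ and one cap $v$ meeting the middle section in the same pair of columns, say $p<q$, and replaces them by two vertical straight lines through $p$ and $q$ (discarding any dots that were present). No other arc of the diagram is altered. Consequently every generator of $I_G$, \emph{except} the relation contributed by $u$ and the one contributed by $v$, is again literally a generator of $I_{G'}$; moreover $I_{G'}$ contains in addition the two straight-line relations $X_{(-1,p)}-X_{(1,p)}$ and $X_{(-1,q)}-X_{(1,q)}$. It therefore remains only to show that the $u$-relation and the $v$-relation already lie in $I_{G'}$.

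This is where the hypothesis that the surgery is a \textbf{merge} is used. By the case analysis of the surgery procedure, $u$ and $v$ lie on two \emph{distinct} circles $C_1\ni v$ and $C_2\ni u$ of $G$, and the surgery fuses them into a single circle $C$ of $G'$. Deleting $v$ from $C_1$ leaves an arc $A_1$ joining the two endpoints $(-1,p),(-1,q)$ of $v$, and every arc of $A_1$ survives \emph{unchanged} in $G'$; likewise deleting $u$ from $C_2$ leaves an arc $A_2\subseteq G'$ joining $(1,p),(1,q)$. Chaining the elementary arc relations along $A_1$, all of which belong to $I_{G'}$, yields $X_{(-1,p)}=(-1)^{r}X_{(-1,q)}$ with $r$ the number of undotted arcs on $A_1$. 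Applying Lemma~\ref{signmove}(2) to the circle $C_1$ of $G$ (exactly as that lemma was already used for closed circles of a double circle diagram in the proof that $\cM(G)\neq\{0\}$ iff $G$ is orientable) shows that this sign is precisely the one forced by the single removed arc $v$, i.e. it equals the relation $v$ contributes to $I_G$. Hence the $v$-relation lies in $I_{G'}$, and the identical argument with $A_2\subseteq C_2$ disposes of the $u$-relation. This establishes $I_G\subseteq I_{G'}$ and completes the proof.

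I expect the only genuine subtlety to be the sign bookkeeping in this last step: one must be certain that the sign produced by travelling the \emph{long} way around the merged circle (through $A_1$, resp. $A_2$) agrees with the sign of the arc actually cut out. That is exactly the path-independence assertion of Lemma~\ref{signmove}(2), so once that lemma is invoked the verification is purely formal. The remaining ingredients — recognising that a merge changes exactly one cup and one cap, that these sit on different circles, and that the complementary arcs $A_1,A_2$ persist verbatim in $G'$ — are routine bookkeeping with the move \eqref{surgery}.
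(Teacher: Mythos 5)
Your argument for the case you actually treat --- both the cup $u$ and the cap $v$ lying on \emph{closed circles} --- is correct, and it is essentially the paper's own proof of that case: all generators of $I_G$ other than the $u$- and $v$-relations are literally generators of $I_{G'}$, and the $u$- and $v$-relations are redundant because they are implied by the relations along the complementary arcs $A_1$, $A_2$, which survive unchanged in $G'$; the sign matches by the path-independence statement of Lemma~\ref{signmove}, whose part (1) (evenness of the number of undotted cups and caps on a circle) is exactly what guarantees that travelling the long way around produces the same sign as the removed arc. Given $I_G\subseteq I_{G'}$, well-definedness and surjectivity of the induced map are indeed automatic.

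The gap is your assertion that ``by the case analysis of the surgery procedure, $u$ and $v$ lie on two distinct circles.'' That is only one of the possibilities. A component-merging surgery can also involve \emph{lines}: one or both of the components through $u$ and $v$ may be lines (this is case (iii) of the surgery procedure, governed by the rules \eqref{circline} and \eqref{yuk}), and the paper's proof of this lemma explicitly covers these cases, since the surgery maps built from it in the associativity argument must be defined there too. Your complementary-arc argument fails for a line: removing the cap from a line disconnects it, so there is no path inside that component joining the cap's two endpoints, and Lemma~\ref{signmove} does not apply. The repair is different (and easier): every line in a double circle diagram terminates in rays of $a$ or $d$, so by the third type of relation in \eqref{MG} all variables sitting on a line already vanish in $\cM(G)$, and likewise in $\cM(G')$, where the merged component is again a line; hence the removed cup/cap relations lie in $I_{G'}$ because each of the variables involved does individually, and the surjection now genuinely annihilates the variables of the circle that got absorbed into the line --- exactly the phenomenon the paper records. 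When both components are lines one argues instead that either the reconnected diagram is non-orientable, so $\cM(G')=\{0\}$ and the zero map is the required surjection, or else $\cM(G)\cong\cM(G')$. Without these cases your proof establishes the lemma only for circle--circle merges, which is strictly less than what the paper proves and uses.
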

As an example take for $G$ the first diagram in \eqref{exmul} and for $G'$ the second. We call $\op{surj}$ the associated {\it surgery surjection map}.

\begin{proof}
Assume the surgery merges two circles with the relevant cup-cap pair connecting vertices $(\mp1,i)$ and $(\mp1,j)$. The pair implies the relations $X_{\mp,i}=-X_{\mp,j}$ if it is undotted and $X_{\mp,i}=X_{\mp,j}$ if it is dotted. However, the same relations are already given by the remaining parts of the circles which are not changed under the surgery move and moreover all other relations stay valid when applying the move. If the surgery merges a line and a circle then again the previous relations stay valid, but the move annihilates the variables $X_{\mp,i}$ corresponding to the circle. In case two lines get merged either $\cM(G')=\{0\}$ or $\cM(G)\cong \cM(G'$). The claim follows.
\end{proof}

\begin{definition}
\label{surg}
Assume that $G$ and $G'$ are double circle diagrams obtained from  $(a \la b \mu d)$ such that $G'$ is obtained from $G$ by one additional surgery which is a split. Assume that the cup-cap pair involved in the surgery connects vertex $(t,i)$ with vertex $(t,j)$ where $i<j$ and $t\in\{\mp1\}$. Then the {\it surgery map} is defined to be the linear map
\begin{eqnarray*}
\op{surg}:\quad\cM(G)&\longrightarrow&\cM(G')
\end{eqnarray*}
\begin{eqnarray*}
f&\longmapsto&
\begin{cases}
0&\text{if $G'$ can't be oriented and otherwise}\\
(-1)^{\op{pos}(i)}{(X_{t,j}-X_{t,i})f}&\text{if the cup-cap pair is undotted,}\\
(-1)^{\op{pos}(i)}{(X_{t,j}+X_{t,i})f}&\text{if the cup-cap pair is dotted}.
\end{cases}
\end{eqnarray*}
 where $f\in \cA=\mC[X_{\mp1,1},\ldots X_{\mp1,1}]$. Note that in the second and third case the choice of $t$ is irrelevant, since the two variables get identified in $\cM(G')$.
 In the following we will usually not distinguish in notation between $g\in \cA$ and its canonical image $g$ in the respective quotients.
\end{definition}

\subsubsection*{Signed surgery rules}
We summarize our signed surgery rules:\\

\begin{mdframed}
\begin{enumerate}[({\bf Surg}-1)]
\item Merge: the surgery surjection map $X_{\mp1,r}\longmapsto X_{\mp1,r}$ for all $r$.
\item
Split: \\
\begin{tikzpicture}[thick,scale=0.4]
\draw (0,0) node[above]{$i$} .. controls +(0,-1) and +(0,-1) ..
+(1.5,0) node[above]{$j$};
\draw (0,-2) .. controls +(0,1) and +(0,1) .. +(1.5,0);
\node at (13,-1) {multiplication with $(-1)^{\op{pos}(i)}(X_{j}-X_{i})$};
\end{tikzpicture}
\\
\begin{tikzpicture}[thick,scale=0.4]
\draw (0,0) node[above]{$i$} .. controls +(0,-1) and +(0,-1) ..
+(1.5,0) node[above]{$j$};
\fill (0.75,-.75) circle(4pt);
\draw (0,-2) .. controls +(0,1) and +(0,1) .. +(1.5,0);
\fill (0.75,-1.25) circle(4pt);
\node at (13,-1) {multiplication with
$(-1)^{\op{pos}(i)}(X_{j}+X_{i})$};
\end{tikzpicture}
\item the rules \eqref{circline} and \eqref{lineline} involving lines.
\end{enumerate}
\end{mdframed}

\begin{ex}
\label{signedmult}
Going back to Example~\ref{examplemult}. The product of the two basis vectors
$(\underline{\la}\nu\overline{\la})(\underline{\la}\nu\overline{\mu})$ corresponds to $(-1)^2(X_2+X_1)$ in $\cM( \overline{\la}\underline{\la})$ which equals $-X_3-X_4$ in $\cM(\overline{\la}\underline{\la})$. Under the identification with standard basis vectors it gets identified with the negative of the sum $(\underline{\la}\nu\overline{\la})+(\underline{\la}\nu'\overline{\la})$ of basis vectors displayed in Example~\ref{examplemult}.
\end{ex}

\begin{theorem}
\label{algebra_structure}
The generalized surgery procedure with the signed rules ({Surg}-1)-({Surg}-3) defines a graded associative unitary algebra structure on $\D$ with pairwise orthogonal primitive
idempotents $e_\la=(\underline{\la} \la \overline{\la})$ for $\la\in\La$ spanning the semisimple part of degree zero and unit $1=\sum_{\la\in \La} e_\la$.
\end{theorem}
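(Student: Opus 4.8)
The plan is to reduce the entire statement to the algebraic reformulation of the surgery procedure in terms of the spaces $\cM(G)$ and the maps $\op{surj}$, $\op{surg}$ introduced in Lemma~\ref{join} and Definition~\ref{surg}. Given two basis diagrams $(a\la b)$ and $(c\mu d)$ with $b^*=c$, I would encode the stitched diagram $(a\la b\mu d)$ as a distinguished basis vector of $\cM(G_0)$ via Proposition~\ref{coho}, where $G_0$ is the initial double circle diagram. Each elementary surgery then becomes a homogeneous linear map $\cM(G)\to\cM(G')$ (a canonical surjection for a merge, multiplication by $(-1)^{\pos(i)}(X_j\mp X_i)$ for a split, or the line rules), and the final diagram $G_{\mathrm{fin}}$ has only vertical lines in its middle, so $\cM(G_{\mathrm{fin}})\cong{}_\alpha(\D)_\beta$ by the collapse isomorphism of Lemma~\ref{lem:collapsing}. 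Thus the product is the image of the starting basis vector under the composite of all surgery maps followed by collapse. The first thing to establish is that this composite is independent of the order in which the surgeries are performed; granting this, the prescription ``rightmost pair'' becomes irrelevant and the multiplication is well defined on all of $\D$.

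The key lemma is therefore that distinct surgery maps commute. Here I would exploit that, after suppressing the canonical identifications produced by merges, every surgery map is multiplication by a fixed element of the commutative ring $\cA=\mC[X_{\mp1,1},\ldots,X_{\mp1,k}]$: a split at a cup-cap pair $(i,j)$ multiplies by $(-1)^{\pos(i)}(X_j\mp X_i)$, while a merge is a quotient map intertwining these multiplications (Lemma~\ref{join}). Since $\cA$ is commutative and the sign $(-1)^{\pos(i)}$ is attached to the intrinsic position of the left endpoint rather than to the order of operations, the operators attached to two different cup-cap pairs commute; the line rules \eqref{circline}, \eqref{lineline} are handled by the extension rule that closes propagating lines with added $\up$'s, reducing them to the closed-component case. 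This commutativity, together with the fact that $G_{\mathrm{fin}}$ depends only on $a,b,\mu,d$ and not on the surgery history, yields order-independence. Checking that the signs are genuinely consistent under reordering---in particular that a split performed before versus after a neighbouring merge or split yields the same signed operator, and that no hidden sign is introduced when a split is declared zero because $G'$ cannot be oriented---is the main obstacle and is precisely the place where the $\mathbb{F}_2$ argument of Section~\ref{multF2Def} must be upgraded.

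For associativity I would stack three diagrams $(a\la b)$, $(c\mu d)$, $(e\nu f)$ (nonzero only when $b^*=c$ and $d^*=e$) into a single diagram with two middle sections and form the associated $\cM(G_0)$. Both bracketings amount to performing all surgeries in the union of the two middle sections, merely in two prescribed orders; by the order-independence established above each bracketing computes the image of the starting vector under the same composite of commuting surgery operators, followed by collapse, so $((a\la b)(c\mu d))(e\nu f)=(a\la b)((c\mu d)(e\nu f))$. Gradedness is then immediate once one records that each surgery map shifts degree in a fixed way: the split multiplication operators raise polynomial degree by $2$ and the grading shifts $\langle r\rangle$ of Proposition~\ref{coho} bookkeep the changes in the number of circles, so the composite is homogeneous of the correct degree and the multiplication respects the grading on $\D$.

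Finally I would treat the unit and idempotents by direct computation. Writing $e_\la=(\underline\la\la\overline\la)\in{}_\la(\D)_\la$, a product $e_\la\cdot(\underline\la\eta\overline\nu)$ has middle section $\overline\la\underline\la$ consisting of anticlockwise lens circles of degree $0$; each surgery removing such a component acts, via the rules $1\otimes(-)\mapsto(-)$, $(-)\mapsto(-)\otimes x$ combined with the split signs, as the identity on the surviving diagram, so $e_\la$ is a left identity on ${}_\nu(\D)_\la$ and symmetrically a right identity on ${}_\la(\D)_\nu$; summing gives $1=\sum_{\la\in\La}e_\la$. Orthogonality $e_\la e_\mu=\delta_{\la\mu}e_\la$ follows from the matching condition $b^*=c$ (the product vanishes unless $\la=\mu$) together with idempotency $e_\la^2=e_\la$ from the unit computation. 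Each $e_\la$ is primitive because ${}_\la(\D)_\la$ has one-dimensional degree-zero part spanned by $e_\la$ (Proposition~\ref{coho}, together with the observation that the $\underline\la\la\overline\la$ span the degree-zero subspace), so $e_\la\D e_\la$ is local; these facts also identify the degree-zero part $\bigoplus_{\la\in\La}\mC e_\la$ as a semisimple commutative subalgebra, completing the proof.
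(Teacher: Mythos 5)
Your reduction to the algebraic model (encoding the stitched diagram in $\cM(G_0)$ via Proposition~\ref{coho}, composing surgery maps, collapsing via Lemma~\ref{lem:collapsing}), and your treatment of the unit, orthogonality and primitivity of the $e_\la$, all match the paper's strategy and are fine. The genuine gap is your key lemma: you claim that surgeries at distinct cup-cap pairs commute, hence full order-independence, because ``every surgery map is multiplication by a fixed element of the commutative ring $\cA$ with an intrinsic sign.'' This justification fails. Whether the surgery at a given cup-cap pair is a merge (a canonical surjection of Lemma~\ref{join}, with no factor) or a split (multiplication by $(-1)^{\pos(i)}(X_j\mp X_i)$ as in Definition~\ref{surg}) is \emph{not} intrinsic to the pair: it depends on the topology of the intermediate diagram, hence on the order in which earlier surgeries were performed. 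Consequently the two composites one wants to compare involve \emph{different} multiplication factors and different intermediate quotients $\cM(G)\to\cM(G')$, and the required equality holds only modulo the ideal of the common target, after sign bookkeeping that uses parities of positions and numbers of dotted/undotted arcs along connections. This is exactly the content of the paper's hard computation, e.g.\ the chain $(-1)^c(X_{d,0}-X_{c,0})=(-1)^a(X_{b,0}+X_{a,0})$ in $\cT(G^{i+1}_{j+1})$, which relies on the relations of the target and on the fact that the relevant connections are inner and undotted. The zero-if-not-orientable clause likewise depends on intermediate diagrams. The paper explicitly warns that the signs destroy locality, so no soft commutativity argument of the kind you invoke can work; you acknowledge this (``the main obstacle'') but then defer it, which is precisely to defer the entire proof.

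Note also that the paper does not prove, and does not need, the full order-independence you aim for. Its Proposition~\ref{prop:multiplication_associative} organizes both bracketings via triple circle diagrams $\cT(G^i_j)$ and reduces associativity to a single restricted commutativity: the square interchanging the surgery at the rightmost outer cup-cap pair of the \emph{top} middle section with that of the \emph{bottom} middle section commutes. Even this restricted statement costs a lengthy case analysis (cases $C(i,j)=0,2,4$, existence of inner connections, the constraint that dotted pairs cannot have their leftmost points on inner circles, and explicit quotient-ring computations); the prescribed ``rightmost'' order is used structurally in these arguments (e.g.\ no arcs to the right of the active pairs), so reordering within a middle section is not innocuous. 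Finally, gradedness is not ``immediate'': a split raises polynomial degree by $2$ while changing the number of components, and the paper needs the diagrammatic case analysis of Proposition~\ref{forgotgraded} to verify that every surgery move preserves the diagram degree. To repair your proposal you would need, at minimum, a proof of the restricted commuting-square lemma (or an honest proof of full order-independence, which is strictly harder), together with a degree check for each surgery type.
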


\begin{proof}
It is clear from the definitions that the only circle diagrams of degree zero in $\B$ are the diagrams of the form $e_\la$ and
\begin{eqnarray*}
e_\la ( a \mu b )=
\begin{cases}
( a \mu b )&\text{if $\underline{\la}=a$,}\\
0&\text{otherwise,}
\end{cases}
&\quad&
( a \mu b ) e_\la
=
\begin{cases}
( a \mu b ) &\text{if $b = \overline{\la}$,}\\
0&\text{otherwise,}
\end{cases}
\end{eqnarray*}
for $\la \in \La$ and any basis vector $(a \mu b)\in \D$.
This implies that the vectors
$\{e_\la\:|\:\la \in \La\}$ are mutually orthogonal
idempotents whose sum is the identity in  $\D$. The multiplication is graded by Proposition~\ref{forgotgraded} below. The highly non-trivial statement here is the associativity of the multiplication. The proof of this will occupy the next section; the claim follows then from Proposition~\ref{prop:multiplication_associative} below.
\end{proof}

\begin{corollary}
\label{atypicality}
The algebra $\D$ depends up to canonical isomorphism only on the atypicality or defect of the block, not on the block itself.
\end{corollary}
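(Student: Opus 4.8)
The plan is to produce the isomorphism explicitly, as a bijection of the homogeneous bases that manifestly preserves both the grading and the signed multiplication. The feature to exploit is that \emph{every} ingredient entering the definition of $\D$ --- the basis $\B$ of oriented circle diagrams, the degree of a diagram, and the signed surgery rules (Surg-1)--(Surg-3) together with the line rules \eqref{circline} and \eqref{lineline} --- is phrased purely in terms of the arcs on the bullet vertices $P_\bullet$ and of the statistic $\pos$. The positions of the symbols $\circ$ and $\times$ never enter, beyond the bookkeeping requirement that the middle weight carry $\circ$ and $\times$ exactly at $P_\circ$ and $P_\times$. So I would reduce the corollary to checking that a suitable combinatorial bijection of diagrams is compatible with these three pieces of data, and then compose a short chain of such bijections.

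\emph{First reduction: forgetting crosses and noughts.} Given a block $\Lambda$, let $\Lambda^{\mathrm{reg}}$ be the regular block with the same bullet set $P_\bullet$ and the same parity, obtained by relabelling every $\times$ as $\circ$. Deleting the $\times$'s from each oriented circle diagram gives a bijection $\B\to \mathbb{B}_{\Lambda^{\mathrm{reg}}}$: the crosses are inert spectators, contributing nothing to any arc, to the degree, or to the surgery moves, and since $\pos(p)$ only counts bullets $q\le p$ it is left unchanged. One then checks directly that orientability (Lemma~\ref{lem:stupidlemma}), the degrees in \eqref{oriented}, and each instruction in (Surg-1)--(Surg-3) are transported verbatim, so this is an isomorphism of graded algebras.

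\emph{Second reduction: normalising the bullet positions.} For a regular block the relevant data is just the set $P_\bullet$ together with a parity. The unique order-preserving bijection $P_\bullet\to\{1,\dots,m\}$, $m=|P_\bullet|$, induces a bijection of cup and cap diagrams, hence of $\B$, onto those of the principal block $\Lambda_m^{\ov{\epsilon}}$ of the same parity. Because it preserves the left-to-right order of the vertices it preserves nesting of arcs, the clockwise/anticlockwise type of each circle (Lemma~\ref{lem:stupidlemma}(3)), all degrees, and the value $\pos(i)$ at the cut vertex in Definition~\ref{surg}; hence it again intertwines the multiplication. At this stage $\D\cong\mathbb{D}_{\Lambda_m^{\ov{\epsilon}}}$, so the algebra is already seen to depend only on the pair $(m,\ov{\epsilon})$, that is, on the number $\lfloor m/2\rfloor=\op{def}(\Lambda)$ of cups available together with the parity.

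\emph{Third step and main obstacle: the parity switch.} It remains to identify the even and odd principal blocks of a common size, $\mathbb{D}_{\Lambda_m^{\ov{0}}}\cong\mathbb{D}_{\Lambda_m^{\ov{1}}}$, which both have defect $\lfloor m/2\rfloor$. The natural candidate is the parity-switch bijection of the Remark following Lemma~\ref{bijection_S_Lambda}: flip the label of the first bullet, equivalently toggle the decoration of the single arc meeting the leftmost vertex, and transport every diagram of $\B$ accordingly. I expect this to be the delicate point. One must verify that toggling the leftmost dot is degree-preserving on all of $\B$ and, crucially, that it commutes with the signs $(-1)^{\pos(i)}$ and with the dotted/undotted alternative in (Surg-2), as well as with the line rules, so that the map is an algebra homomorphism and not merely a linear isomorphism. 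The verification is local --- the toggle only ever affects the leftmost strand, whose $\pos$-value is $1$ --- but it interacts with the orientation-reversing role of dots encoded in Lemma~\ref{signmove}, and this is where the sign bookkeeping must be carried out with care. Granting this last compatibility, every block is canonically isomorphic to $\mathbb{D}_{\Lambda_m^{\ov{0}}}$ with $m=|P_\bullet|$, so that $\D$ is determined up to canonical isomorphism by the bullet number $m$; in the terminology of the corollary, by the (degree of) atypicality of the block.
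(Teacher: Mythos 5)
Your first two reductions, taken together, are precisely the paper's proof. The paper does them in one stroke: writing $P_\bullet(\Lambda)=\{i_1<\cdots<i_r\}$ and $P_\bullet(\Lambda')=\{i'_1<\cdots<i'_r\}$, the order-preserving identification $i_s\leftrightarrow i'_s$ matches up cup diagrams and basis vectors (the $\circ$'s and $\times$'s being inert spectators), and via Proposition~\ref{coho} the assignment $X_{i_s}\mapsto X_{i'_s}$ is declared to be the canonical isomorphism. Your observation that this transports degrees, nesting, circle types and the statistic $\pos$, hence the signed surgery rules, is exactly the (unstated) content of that one-line proof; so up to and including your second reduction you and the paper agree in both substance and route.

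The divergence is your third step, and that is where your proposal stops being a proof: you single out the identification $\mathbb{D}_{\Lambda_m^{\ov{0}}}\cong\mathbb{D}_{\Lambda_m^{\ov{1}}}$ as the main obstacle and then write ``granting this last compatibility'', i.e.\ you concede precisely the point at issue. The candidate map (toggle the decoration of the arc through the leftmost bullet vertex) is the right one -- compare the two lists in Example~\ref{exn4} -- and degree preservation is easy from \eqref{oriented}, but the compatibility with the signs $(-1)^{\pos(i)}$ and the dotted/undotted dichotomy in the split rule of Definition~\ref{surg}, and with the line rules \eqref{circline} and \eqref{lineline}, is genuine work that you do not carry out; so as a self-contained argument for the literal statement (under which $\Lambda_m^{\ov{0}}$ and $\Lambda_m^{\ov{1}}$ have equal atypicality and must be compared) your proof is incomplete. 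For what it is worth, the paper's own proof silently avoids this case: its assertion that the diagrams of two blocks of equal atypicality ``are the same up to some vertices labelled $\circ$ or $\times$'' holds only for blocks of equal parity, since across parities the decorations genuinely differ; so you have spotted a real subtlety, you just did not close it. One further caveat you inherit from the paper: what these reductions actually establish is dependence on $|P_\bullet(\Lambda)|$ (and, pending your step three, on the parity), not on the defect $\lfloor|P_\bullet(\Lambda)|/2\rfloor$ itself -- blocks with $2d$ and with $2d+1$ bullet vertices have equal defect but different numbers of weights, hence non-isomorphic algebras -- so ``atypicality'' in the statement must be read as the number of vertices in $P_\bullet$.
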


\begin{proof}
Given two blocks $\La$ and $\La'$ with the same atypicality then $|P_\bullet(\Lambda)|=|P_\bullet(\Lambda')|$ and hence the cup diagrams and basis vectors in $\D$ and $\mathbb{D}_{\La'}$ are the same up to some vertices labelled $\circ$ or $\times$. If $P_\bullet(\Lambda)=\{i_1<\ldots<i_r\}$ and  $P_\bullet(\Lambda')=\{i'_1<\ldots<i'_k\}$ then the identification $X_{i_s}\mapsto X_{i'_s}$ for $1\leq s\leq r$ defines the canonical isomorphism.
\end{proof}

\begin{prop}
\label{forgotgraded}
The multiplication from Theorem~\ref{algebra_structure} is of degree zero.
\end{prop}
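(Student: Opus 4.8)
The plan is to put a grading on each space $\cM(G)$ attached to a double circle diagram $G$ and to verify that every elementary map out of which the product $(a\la b)(c\mu d)$ is built is homogeneous of degree zero. By construction the product is a composite of the initial identification of $(a\la b)$, $(c\mu d)$ with $(a\la b\mu d)$, a sequence of merge surjections (Lemma~\ref{join}) and split surgery maps (Definition~\ref{surg}) together with the line rules \eqref{circline} and \eqref{yuk}, followed by the collapse isomorphism of Lemma~\ref{lem:collapsing}. Since collapse is already an isomorphism of graded vector spaces, it suffices to treat the elementary moves. Throughout, the signs $(-1)^{\pos(i)}$ are degree-zero scalars, so the grading computation is identical over $\mathbb{F}_2$, $\mZ$ and $\mC$.

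First I would fix the grading. Setting $\deg X_{t,i}=2$ as in Proposition~\ref{coho}, I choose the shift on $\cM(G)$ so that the standard basis vector attached to an orientation of $G$ has degree equal to its combinatorial degree, i.e. the number of clockwise cups and caps. Writing $C$ for the number of caps and $m$ for the number of closed circles of $G$, Proposition~\ref{lem:stupidlemma} gives that a clockwise circle with $c$ caps has degree $c+1$ and an anticlockwise one degree $c-1$; hence an orientation with $j$ clockwise circles, corresponding to a monomial of polynomial degree $2j$, has combinatorial degree $C-m+2j$. Thus the required shift equals $C-m$, which depends only on $G$ and not on the chosen orientation; this is exactly the normalization under which Lemma~\ref{lem:collapsing} and Proposition~\ref{coho} are graded.

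The heart of the argument is the bookkeeping of $C-m$ and of the polynomial degree under a single surgery, each of which deletes one cup--cap pair and so lowers $C$ by one. A merge fuses two circles into one, so $m$ drops by one as well and the shift $C-m$ is unchanged; as the merge surjection acts by $X_{\mp1,r}\mapsto X_{\mp1,r}$ it preserves polynomial degree, hence is homogeneous of degree zero, and the rules \eqref{mult} are consistent with this (for instance $(c_1-1)+(c_2+1)=(c_1+c_2-1)+1$, while $x\otimes x\mapsto 0$ is vacuous). A split breaks one circle into two, so $m$ increases by one and the shift $C-m$ drops by $2$; the surgery map multiplies by $X_{t,j}\pm X_{t,i}$ and so raises polynomial degree by exactly $2$, which the drop in shift compensates, making $\op{surg}$ homogeneous of degree zero, in agreement with \eqref{comult} (e.g. $(c+1)=(c_1+1)+(c_2+1)$). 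The line rules \eqref{circline} and \eqref{yuk} are checked the same way, using that a line is orientable in at most one way (Proposition~\ref{lem:stupidlemma}(4)) and so contributes no variable and no orientation ambiguity to the degree. Finally the initial identification only stitches rays of equal orientation, which are degree-zero arcs, so it adds the degrees of $(a\la b)$ and $(c\mu d)$.

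The step I expect to be the main obstacle is confirming that this single global normalization of the shift is simultaneously correct for every intermediate diagram $G$, so that the elementary maps are literally degree-preserving rather than preserving degree only up to a constant that drifts along the chain of surgeries; once the shift is pinned to $C-m$ this is precisely the additivity computation above. The presence of dots causes no extra trouble: since Proposition~\ref{lem:stupidlemma} records the degree of an arbitrary circle regardless of its dots, every dotted kink or subdiagram \eqref{snake} is removed in a degree-preserving manner, reducing all cases to the undotted count of \cite{BS1}.
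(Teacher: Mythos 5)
Your proof is correct in substance but follows a genuinely different route from the paper's. The paper establishes degree invariance by a direct case-by-case inspection of the diagrammatic surgery moves: for each shape of the component(s) containing the cup--cap pair (the configurations \eqref{H}, \eqref{kidneys}, \eqref{4cases}, \eqref{dottedCase1}, dotted and undotted) and each choice of orientations, it verifies by hand --- using ad hoc observations such as the presence of a dotted cap to the right of the surgery site --- that the number of clockwise cups and caps is unchanged. You instead transport the grading through the polynomial model of Proposition~\ref{coho}: after normalizing the shift on $\cM(G)$ to be the degree of the reference (all anticlockwise) orientation, everything reduces to two one-line computations --- a merge changes neither the shift nor the polynomial degree, while a split lowers the shift by $2$ and multiplies by the degree-$2$ element $X_{t,j}\pm X_{t,i}$. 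This buys uniformity: dotted and undotted pairs are treated identically (the sign in $X_{t,j}\pm X_{t,i}$ is invisible to the grading), and the re-orientation rules \eqref{mult}, \eqref{comult} never need to be inspected, whereas the paper must re-derive their degree behaviour case by case; the price is that your argument leans on the already-established Proposition~\ref{lem:stupidlemma}(2) and the graded statements of Proposition~\ref{coho} and Lemma~\ref{lem:collapsing}, so it is less self-contained diagrammatically.

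Two points of precision. First, your shift constant should be the degree of the reference orientation of the whole intermediate diagram, i.e.\ the sum of the fixed degrees of all lines plus $\sum_i(c_i-1)$ over the closed circles; writing it as ``caps minus circles'' is only literally correct when no caps lie on lines. This does not damage the argument: the constant still depends only on $G$, and the line summand is untouched by any circle--circle surgery. Second, the surgeries involving lines (\eqref{circline}, \eqref{yuk}) do require their own short verification --- your ``checked the same way'' is the one real gap --- but the paper's own proof silently restricts to circle--circle surgeries as well, so on this point your write-up is no less complete than the original.
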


\begin{proof}
It is enough to verify that the surgery moves are degree preserving. Assume first that we merge two circles, say $C$ and $D$ containing the cup respectively cap. If the involved cup-cap pair has no dots we have one of the following situation:
\begin{equation}
(I)\quad
\usetikzlibrary{arrows}
\begin{tikzpicture}[thick,>=angle 60, scale=0.8]
\draw [>->] (0,0) .. controls +(0,-1) and +(0,-1) .. +(1,0);
\draw [<-<] (0,-2) .. controls +(0,1) and +(0,1) .. +(1,0);
\end{tikzpicture}
\quad\quad
(II)
\quad
\usetikzlibrary{arrows}
\begin{tikzpicture}[thick,>=angle 60, scale=0.8]
\draw [<-<] (0,0) .. controls +(0,-1) and +(0,-1) .. +(1,0);
\draw [>->] (0,-2) .. controls +(0,1) and +(0,1) .. +(1,0);
\end{tikzpicture}
\quad\quad
(III)
\quad
\usetikzlibrary{arrows}
\begin{tikzpicture}[thick,>=angle 60, scale=0.8]
\draw [>->] (0,0) .. controls +(0,-1) and +(0,-1) .. +(1,0);
\draw [>->] (0,-2) .. controls +(0,1) and +(0,1) .. +(1,0);
\end{tikzpicture}
\quad\quad
(IV)
\quad
\usetikzlibrary{arrows}
\begin{tikzpicture}[thick,>=angle 60, scale=0.8]
\draw [<-<] (0,0) .. controls +(0,-1) and +(0,-1) .. +(1,0);
\draw [<-<] (0,-2) .. controls +(0,1) and +(0,1) .. +(1,0);
\end{tikzpicture}
\end{equation}
If $C$ and $D$ are clockwise, then nothing is to check.
Otherwise start with Case (I). If both are anticlockwise, then surgery sticks them together and the degrees are obviously preserved. If one of them is clockwise, say $C$, then it must satisfy the following property (R): it must involve a dotted cap above the upper weight line and to the right of the cup-cap pair.

Then $C$ contains all rightmost points and so again the circles are just stitched together; similar if $D$ is the clockwise circle, but then there is a dotted cup below the lower weight line to the right of our cup-cap pair. If in case (II) a circle is anticlockwise then it must satisfy property (R), hence not both of them can be anticlockwise. If one is anticlockwise, then surgery is the same as stitching the two circles together which looses a clockwise cup and a clockwise cap and then revert the orientation of the resulting circle which increases the degree by $2$ and we are done. If in case (III) both circles are anticlockwise then $C$ satisfies $(R)$ and hence applying surgery is the same as reverting the orientation of $D$ which adds $2$ to the degree and then stitching together which looses a clockwise cap and a newly created clockwise cup, hence in total the degree is preserved. If $C$ is clockwise and $D$ is anticlockwise then we again can first revert the orientation of $D$ and then stitch which keeps the degree. For $C$ anticlockwise and $D$ clockwise both circles must satisfy $(R)$ which is impossible. If in case (IV) both are anticlockwise then $D$ satisfies (R); if one circle is clockwise then it must be $D$. In either case we can first swap the orientation of $C$ and then stitch which first adds and then subtracts $2$ in the degree.

If the cup-cap pair is dotted, then we have the cases \eqref{4cases} and the arguments are similar and therefore omitted.

If the surgery splits two circles then the circle can have one of the shapes in \eqref{kidneys} or the rightmost in \eqref{H}. Let us start with \eqref{kidneys} on the right. Then by our decoration rules the circle is anticlockwise in Case (I) and gets split into the two combinations of one clockwise and one anticlockwise. One of these combinations is obtained by just stitching together hence the degree is preserved. In case (II) the circle is clockwise and created two clockwise circles, hence increases the degree by two, but two clockwise cups gets lost as well. Case (III) and (IV) are impossible. Now consider the left configuration of \eqref{kidneys} (the arguments for \eqref{H} are the same). In Case (I) the circle is anticlockwise if there are no dots and again we just stitch to get one of the two summands in the result. If there is a dot on the top and bottom, then the circle is clockwise and again we just stitch. In general we argue similarly but taking the number of dots modulo two. In Case (II) the circle is clockwise if there are no dots. Surgery is the same as stitching together and swapping the orientation of the inner circle; hence we loose the cup-cap pair of degree $2$ and increase afterwards the degree by $2$. The circle is anticlockwise if it contains a dot on the top and bottom. Then stitching produces two nested anticlockwise circles, swapping the orientation of one increases the degree by $2$, but we also lost the cup-cap pair of degree $2$. Again Cases (III) and (IV) are impossible. Again we omit the dotted cases \eqref{4cases}.
\end{proof}

\subsection{Proof of associativity}
To prove associativity we will need the notion of a \emph{triple circle diagram}. It is defined in the same way as a double circle diagram except that we put three circle diagrams on top of each other instead of two. The triple circle diagram associated to the three circle diagrams $ab$, $b^*c$, and $c^*d$ will be denoted by $(abb^*cc^*d)$ and as in the case of a double circle diagram we assume that neither $b$ nor $c$ have any dotted rays. To $G=(abb^*cc^*d)$ we associate the vector space
$$\cT(G) := \frac{\mC[X_{t,1},\ldots,X_{t,k}\mid t \in \{0, \pm 2\}]}{\left(\left\langle X^2_{t,i} \mid 1 \leq i \leq k, t \in \{0, \pm 2\}\right\rangle + I(G)\right)},$$
where the ideal $I(G)$ is generated, in analogy to the double circle diagram case, by the following relations with $t,s \in \{0,\pm 2\}$ and $1 \leq i, j \leq k$
$$
\left\lbrace
\begin{array}{cl}
X_{t,i} + X_{t,j} & \text{if } (t,i) \text{ is connected to } (t,j) \text{ by an undotted arc} \\
X_{t,i} - X_{t,j} & \text{if } (t,i) \text{ is connected to } (t,j) \text{ by a dotted arc} \\
X_{t,i} & \text{if } (t,i) \text{ is the endpoint of a ray in $a$ or $d$} \\
X_{t,i} - X_{s,i} & \text{if } (t,i) \text{ is connected to } (s,i) \text{ by a straight line}\\
& \text{in } (b^*b) \text{ or } (c^*c).
\end{array}
\right\rbrace.
$$
As in the case of double circle diagrams we have collapsing maps, but now of three different types, depending on whether $b$, $c$ or both only contain rays.
Let $G^{\rm up}$ be the double circle diagram $(acc^*d)$. If $b$ contains only rays then we have an isomorphism
\begin{eqnarray}
{\rm collapse}^{\rm up}:\quad\cT(G) &\longrightarrow & \cM(G^{\rm up}) \\
X_{2,i} & \mapsto & X_{1,i} \nonumber\\
X_{0,i} & \mapsto & X_{1,i} \nonumber\\
X_{-2,i} & \mapsto & X_{-1,i}.\nonumber
\end{eqnarray}
Similarly, given a double circle diagram $G^{\rm low} = (abb^*d)$. If $c$ contains only rays, we have a collapsing isomorphism ${\rm collapse}^{\rm low}$ from $\cT(G)$ to $\cM(G^{\rm low})$; and finally if both $b$ and $c$ contain only rays we have the total collapsing isomorphism
\begin{eqnarray}
{\rm collapse}:\cT(G) &\longrightarrow & {}_a(\D)_d \nonumber\\
X_{t,i} & \mapsto & X_i \text{ for } t \in \{0,\pm 2\}.
\end{eqnarray}

\begin{prop} \label{prop:multiplication_associative}
There is an equality of maps $\D \otimes \D \otimes \D \longrightarrow \D$:
$${\rm mult} \circ ({\rm mult} \otimes \Id)={\rm mult} \circ (\Id \otimes {\rm mult}).$$
\end{prop}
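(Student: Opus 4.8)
The plan is to compute both composites inside the single vector space $\cT(G)$ attached to the triple circle diagram $G=(abb^\ast cc^\ast d)$, and to show that they already agree before collapsing. The two middle sections of $G$ are the \emph{lower} section, consisting of the cup--cap pairs of $bb^\ast$, and the \emph{upper} section, consisting of those of $cc^\ast$. Extending the surgery maps of Lemma~\ref{join} (a merge, realized by the surjection $X_{\mp1,r}\mapsto X_{\mp1,r}$) and Definition~\ref{surg} (a split, realized by multiplication with $(-1)^{\pos(i)}(X_j\mp X_i)$) from double to triple circle diagrams, the product $\op{mult}\circ(\op{mult}\otimes\Id)$ is computed by first performing all surgeries of the lower section (turning $b$ into rays, after which ${\rm collapse}^{\rm up}$ identifies $\cT(G)$ with $\cM(G^{\rm up})$) and then those of the upper section, whereas $\op{mult}\circ(\Id\otimes\op{mult})$ performs the upper-section surgeries first (with ${\rm collapse}^{\rm low}$) and the lower-section ones second. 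Since the total collapse ${\rm collapse}:\cT(G)\to{}_a(\D)_d$ factors through either order, it suffices to prove that the composite of all lower-section surgery maps and the composite of all upper-section surgery maps, viewed as maps out of $\cT(G)$ into the common final space, give the same two-step composite.

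First I would reduce this to a single commutation statement for one lower move and one upper move. Each surgery map is either a surjection or multiplication by an element of the commutative ring $\cA$, so performing the lower moves in a fixed order and the upper moves in a fixed order is well defined; the full claim then follows once I show that, for any intermediate diagram, performing a single lower move followed by a single upper move and performing them in the opposite order yield the same map $\cT(G)\to\cT(G'')$ into the common target. Here the key input is that the defining relations of $\cT(G)$ couple only variables indexed by vertices on a common component (Proposition~\ref{coho} together with Lemma~\ref{signmove}): if the two cup--cap pairs lie on components sharing no vertex, the two maps act on disjoint sets of variables and commute trivially. Thus only the finitely many local configurations in which the two pairs sit on overlapping components need analysis, organized by whether each move is a merge or a split, and by whether a shared component is a circle threading both sections or is a line.

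The heart of the argument is the sign and orientation bookkeeping in these overlapping configurations. For two splits, the two maps are multiplication by $(-1)^{\pos(i)}(X_j\mp X_i)$ and by $(-1)^{\pos(i')}(X_{j'}\mp X_{i'})$; these operators commute in the commutative quotient ring, and since the signs $(-1)^{\pos(\cdot)}$ depend only on the positions of the involved vertices, never on the order in which the surgeries are carried out, the two composites coincide on the nose. For a merge followed by a split one checks that the surjection intertwines the relevant multiplication operator, using that the relation imposed by the merged cup--cap pair is already forced by the unchanged parts of its component (exactly as in the proof of Lemma~\ref{join}); the dotted cases are reduced to the undotted ones by rewriting $X_i=(-1)^rX_j$ via Lemma~\ref{signmove}. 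The line cases governed by \eqref{circline}, \eqref{lineline} and \eqref{yuk} are handled by the extension rule, closing off propagating rays with added $\up$'s so that every component becomes a circle and the preceding circle analysis applies, with the convention that any diagram in which an added $\up$ is reoriented is declared zero.

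The main obstacle I anticipate is precisely this decorated layer. In the type $A$ situation of \cite{BS1} the analogous statement is immediate from the fact that the surgery maps come from a commutative Frobenius algebra, so that spatially separated cobordisms automatically commute; in our setting the dots act as orientation-reversing points and the split maps carry the position-dependent signs $(-1)^{\pos(i)}$, so a split performed in one section can alter whether a component threading the other section is orientable at all, and the naive topological argument breaks down. Verifying, in each overlapping configuration, that both the orientability verdict and the accumulated sign are independent of the order of the two surgeries is the genuinely delicate point, and it is where essentially all of the casework will be concentrated.
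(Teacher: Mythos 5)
Your setup reproduces the paper's architecture: both composites are computed in the space $\cT(G)$ of a triple circle diagram, associativity is reduced to the commutativity of a single square in which one surgery from the lower middle section and one from the upper middle section are performed in either order, and the case where the two cup--cap pairs lie on components sharing no vertex is dismissed because relations and multiplication operators then involve disjoint sets of variables. Up to this point you and the paper (Figure~\ref{fig:huge} and the commuting-square lemma following it) agree.

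The gap is in your treatment of the overlapping case, and it is not a deferrable detail: it is where the proposition actually lives. You organize the analysis ``by whether each move is a merge or a split'', but in the critical configuration --- the paper's case $C(i,j)=4$, where all four vertices of the upper pair are connected to vertices of the lower pair --- the merge/split character of a cup--cap pair is not intrinsic: whichever surgery is performed first is a split (resp.\ merge) and the second one then has the opposite character. Consequently the two composites are multiplications by genuinely \emph{different} ring elements, not the same elements applied in a different order: in the paper's worked example one order yields multiplication by $(-1)^c(X_{d,0}-X_{c,0})$ and the other by $(-1)^a(X_{b,0}+X_{a,0})$ --- different variables, opposite dotted/undotted sign, and different position signs. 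Neither commutativity of the quotient ring, nor the intertwining property of the merge surjection (Lemma~\ref{join}), nor rewriting dots away via Lemma~\ref{signmove} produces their equality; it holds only in the final quotient $\cT(G^{i+1}_{j+1})$, and proving it requires geometric input your proposal lacks: one must show that connections between the two pairs are never diagonal and that two of the four connecting arcs are \emph{inner}, hence dot-free (Lemmas~\ref{lem:nodots_on_inner} and \ref{lem:exist_inner_connections}), and then count parities of cups and caps along these connections to deduce relations such as $X_{c,0}=-X_{b,2}$, $X_{d,0}=X_{a,2}$ and $(-1)^a=(-1)^c$. Your closing paragraph correctly predicts that orientability and sign bookkeeping are the delicate point, but the mechanisms you actually propose cannot verify it, so the argument as written fails precisely in this case.
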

\begin{proof}
We restrict ourself to a summand ${}_a(\D)_b \otimes_\mC {}_c(\D)_d \otimes_\mC {}_e(\D)_f$ in the triple tensor product of $\D$ with itself. If $b \neq c^*$ or $d \neq e^*$ then both maps are zero, so we can assume that $b=c^*$ and $d = e^*$. Let $b_0$ and $d_0$ denote the cup diagrams obtained from $b$ and $d$ by eliminating all dots on rays. Then we can form the double circle diagram $G^0_0 = (ab_0b_0^*d_0d_0^*f)$.

In analogy to the double circle case we have an embedding (again denoted by ${\rm glue}$) of ${}_a(\D)_b \otimes_\mC {}_{b^*}(\D)_d \otimes_\mC {}_{d^*}(\D)_f$ into $\cT(G_0^0)$ by identifying the triple tensor product with the space
$$\cT := \frac{\mC[X_{t,1},\ldots,X_{t,k}\mid t \in \{0, \pm 2\}]}{\left(\left\langle X^2_{t,i} \mid 1 \leq i \leq k, t \in \{0, \pm 2\}\right\rangle + I(a,b) + I(b^*,c) + I(c^*,d) \right)},$$
where we use the $X_{2,i}$'s as generators of $I(a,b)$, the $X_{0,i}$'s for $I(b^*,d)$, and finally the $X_{-2,i}$'s for $I(d^*,f)$.

Now let $G^i_j$ be the triple circle diagram obtained from $G^0_0$ by changing the first $i$ (counted from the left) cup-cap pairs of $b_0$  into pairs of rays and the first $j$ (counted from the right) cup-cap pairs of $c_0$ into pairs of rays.

The surgery maps from Lemma~\ref{join} and Definition~\ref{surg} can be applied to these vector spaces as well; for simplicity we always multiply with the $X_{0,i}$'s when a split occurs. Henceforth we have surgery maps
\begin{eqnarray}
\label{G1}
{\rm surg^i}:\quad\cT(G^i_j) \longrightarrow \cT(G^{i+1}_j),
\end{eqnarray}
and
\begin{eqnarray}
\label{G2}
{\rm surg_j}:\quad\cT(G^i_j) \longrightarrow \cT(G^i_{j+1}).
\end{eqnarray}
Let $n({\rm up})$ be the number of cup-cap pairs in $b_0$ and $n({\rm low})$ the number of cup-cap pairs in $d_0$. Then the above surgery maps fit into a diagram as in Figure~\ref{fig:huge}.

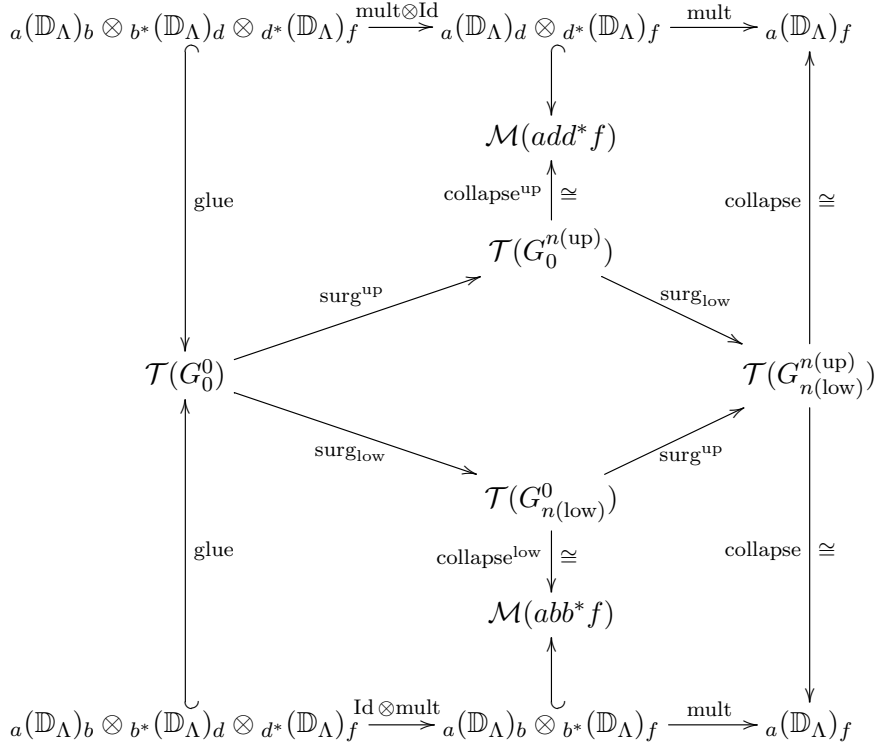
\begin{figure}
\begin{eqnarray*}
\begin{xy}
  \xymatrix{
{}_a(\D)_b \otimes {}_{b^*}(\D)_d \otimes {}_{d^*}(\D)_f \ar[r]^{\qquad{\rm mult} \otimes \Id} \ar@{^{(}->}[ddd]^{\rm glue} & {}_a(\D)_d \otimes {}_{d^*}(\D)_f \ar[r]^{\qquad{\rm mult}} \ar@{^{(}->}[d]& {}_{a}(\D)_f\\
& \cM(add^*f) & \\
& \cT(G_0^{n({\rm up})}) \ar[u]^{\rm collapse^{\rm up}}_\cong \ar[rd]^{\!\! {\rm surg}_{\rm low}}\\
\cT(G_0^0) \ar[ru]^{\!\! {\rm surg}^{\rm up}} \ar[rd]_{\!\! {\rm surg}_{\rm low}}& & \cT(G_{n({\rm low})}^{n({\rm up})}) \ar[ddd]_{\rm collapse}^\cong \ar[uuu]^{\rm collapse}_\cong\\
& \cT(G_{n({\rm low})}^0) \ar[d]_{\rm collapse^{\rm low}}^\cong \ar[ru]_{\!\!\! {\rm surg}^{\rm up}}\\
& \cM(abb^*f) & \\
{}_a(\D)_b \otimes {}_{b^*}(\D)_d \otimes {}_{d^*}(\D)_f \ar[r]^{\qquad \Id \otimes {\rm mult}} \ar@{_{(}->}[uuu]_{\rm glue}&  {}_a(\D)_b \otimes {}_{b^*}(\D)_f \ar[r]^{\qquad{\rm mult}} \ar@{_{(}->}[u]& {}_{a}(\D)_f
  }
\end{xy}
\end{eqnarray*}
\caption{The associativity}
\label{fig:huge}
\end{figure}

Here the four squares, the two at the top and the two at the bottom, commute by construction. The non-trivial claim is that the inner rhombus commutes which we will deduce by successive application of Lemma~\ref{lemlow} below. Thus the two maps from the proposition coincide.
\end{proof}

 It remains to show that the inner rhombus in Proposition~\ref{prop:multiplication_associative} commutes, which will require a more involved combinatorial and topological argument. We keep the setup from Proposition~\ref{prop:multiplication_associative} with the following additional notation.

Fix the diagram $G_j^i$ with rightmost outer cup-cap pair $\beta^{\rm up}$ in the top row and rightmost outer cup-cap pair $\beta^{\rm low}$ in the lower row. Let $a<b$ such that $\beta^{\rm up}$ connects $(a,2)$ and $(b,2)$ (and then also $(a,0)$ and $(b,0)$). Analogously let $c<d$ be such that $(c,0)$ and $(d,0)$, resp. $(c,-2)$ and $(d,-2)$, are connected by $\beta^{\rm low}$. Set $Q^{\rm up} = \{(a,2), (b,2), (a,0), (b,0)\}$ and $Q^{\rm low} = \{(c,-2), (d,-2), (c,0), (d,0)\}$, the set of vertices of the upper respective lower cup-cap pair, and let $Q = Q^{\rm up} \cup Q^{\rm low}$ be the union.
We say that two distinct vertices in $Q$ are \emph{connected} (outside $\beta^{\rm up}$ and $\beta^{\rm low}$) if there exists a sequence of arcs connecting them, not involving the arcs from our distinguished cup-cap pairs in $G^i_j$. In particular the connection does not contain any other element from $Q$.

\begin{lemma} \label{lem:nodots_on_inner}
\begin{enumerate}[1.)]
\item
A point of $Q^{\rm up}$ (resp. $Q^{\rm low}$) can not be connected to its diagonally opposite point (e.g. $(a,2)$ cannot be connected to $(b,0)$).
\label{rem:no_diagonal_connection}
\item Assume $C$ is an inner sequence of arcs that connects two points in $Q$. Then $C$ does not contain any dotted cups or caps.
\end{enumerate}
\end{lemma}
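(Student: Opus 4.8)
The plan is to argue entirely inside the planar strip $R$, using only that the arcs of a circle diagram are pairwise non-crossing, that every connected component is a circle or a line, and the decoration constraints recorded in Remark~\ref{decorated}. Throughout I picture the triple diagram $G^i_j$ as lying in the plane with the three weight lines at heights $2,0,-2$; the cup and cap of $\beta^{\mathrm{up}}$ bound a lens whose four corners are exactly the points of $Q^{\mathrm{up}}$, and likewise for $\beta^{\mathrm{low}}$ and $Q^{\mathrm{low}}$. The decisive structural input is that $\beta^{\mathrm{up}}$ (resp.\ $\beta^{\mathrm{low}}$) is the \emph{rightmost outer} cup-cap pair of its section: nothing nests it, and to the right of its right leg every arc of that section is a plain vertical line.

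For part (1) I would first reduce to the components carrying the cup and the cap of $\beta^{\mathrm{up}}$. If the cup and the cap lie in different components, then no inner path can join $(a,2)$ to $(b,0)$ at all, since such a path would merge the two components; so assume they lie in one component $Z$. Deleting the cup and the cap from $Z$ leaves two inner arcs pairing up the four corners of $Q^{\mathrm{up}}$, and the pairing is either the \emph{vertical} one $(a,2)\leftrightarrow(a,0)$, $(b,2)\leftrightarrow(b,0)$, or the \emph{diagonal} one $(a,2)\leftrightarrow(b,0)$, $(b,2)\leftrightarrow(a,0)$. Since the cup-cap pair can be joined by an arc through the lens meeting no other arc (this is exactly the condition under which $\beta^{\mathrm{up}}$ was selected for surgery, cf.\ \eqref{surgery}), I may contract the closed lens to a single point; the germs of the four inner arcs then emanate from that point in the cyclic order $(a,2),(b,2),(b,0),(a,0)$ inherited from the rectangle. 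Two disjoint arcs in the plane realising the diagonal pairing interleave in this cyclic order and hence must cross, contradicting planarity. Therefore only the vertical pairing survives, which is precisely the assertion of (1); the argument for $Q^{\mathrm{low}}$ is identical.

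For part (2), assume some inner sequence $C$ joining two points of $Q$ contains a dotted arc, say a dotted cup $\delta$ (dotted caps are handled by the top-bottom reflection). By Remark~\ref{decorated} a dotted cup is never nested inside another cup and never lies to the right of a ray; by the construction in Definition~\ref{decoratedcups} this forces all dotted cups of a section to sit side by side, unnested, and strictly to the left of every ray and undotted cup of that section. I would then combine this with part (1) and planarity to locate $\delta$: tracing $C$ from its endpoint in $Q$, every arc it uses is forced either to lie in the vertical-line region to the right of the relevant leg of $\beta$, or to be nested inside the cup/cap of $\beta^{\mathrm{up}}$ or $\beta^{\mathrm{low}}$. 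A dotted cup can occur in neither position --- the first because dotted cups lie left of all rays whereas the vertical-line region lies to the right of a ray, the second because a dotted cup cannot be nested --- and this contradiction eliminates $\delta$.

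The hard part will be the bookkeeping in part (2): one must show that an inner path emanating from the corners of the rightmost-outer pairs really cannot escape to the far-left region where the dotted cups live. This requires a careful planarity and case analysis, separating the sub-cases according to whether $C$ connects two corners of the same rectangle, or a corner of $Q^{\mathrm{up}}$ to one of $Q^{\mathrm{low}}$ (so that $C$ must cross the middle line at height $0$), together with the parallel treatment of non-closed components (lines) and of dotted caps. Part (1), by contrast, should be a single clean application of the interleaving principle once the lens has been contracted.
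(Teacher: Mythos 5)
Your part (1) is essentially the paper's argument in different packaging: the paper phrases it as a separation statement (the diagonal connection, together with the forbidden passage between the cup--cap pair, cuts $(a,0)$ off from every remaining point), while you contract the lens and invoke interleaving of the two diagonal pairs; both are the same planarity/Jordan-curve idea. One small gap in yours: the claim that deleting the cup and cap from $Z$ leaves \emph{two} arcs pairing up the four corners presupposes that $Z$ is a circle. If $Z$ is a line you get three pieces, only one corner-to-corner path, and two pieces running to the boundary of the strip; the interleaving argument then needs the extra observation that the strip boundary is connected and disjoint from your Jordan curve, so the corner whose germ lies on the wrong side cannot reach it. This is fixable in a line or two.

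Part (2), however, has a genuine gap. Your key structural claim --- that every arc of an inner connection is forced to lie either in the vertical-line region to the right of a leg of $\beta$ or nested inside the cup/cap of $\beta^{\rm up}$ or $\beta^{\rm low}$ --- is false, and you yourself flag it as unproven. Being \emph{inner} means being enclosed by arcs of $\widetilde{G_j^i}$, and in the situations where the notion is actually used (Lemma~\ref{lem:exist_inner_connections}) the enclosing barrier is formed by the \emph{other} connections $C_1,C_2$, not by the $\beta$ pairs; an inner connection may perfectly well run far to the left of $\beta^{\rm up}$ and $\beta^{\rm low}$, e.g.\ via an arc of the top cap diagram attached at $(b,2)$ that spans leftwards over the whole pair. (You also rely on the false statement that dotted cups sit left of all undotted cups: by Definition~\ref{decoratedcups} the dotted cups created in step (C3) lie to the \emph{right} of the undotted ones from step (C1), as for the weight $\down\up\up\up$; the prohibition in Remark~\ref{decorated} only concerns nesting and rays.) The paper's argument runs in the opposite direction and is short: by the very definition of a decoration (Remark~\ref{decorated}), a dotted cup or cap can be joined to the left boundary of the strip by a path meeting no other arc; such a path witnesses that any sequence containing this arc is \emph{not} inner, unless the path is forced through one of the $\beta$ channels --- and that cannot happen, since the dotted arc would then have to sit to the right of $\beta^{\rm up}$ or $\beta^{\rm low}$ inside its middle section, where by rightmost-outerness there are no cups or caps at all (and it cannot be nested inside the $\beta$ arcs, being dotted). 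In short: play the left-exposure of dotted arcs against the definition of inner, rather than trying to confine inner arcs to a dot-free zone.
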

\begin{proof}
Assume that such a diagonal connection exists, say from $(a,2)$ to $(b,0)$. Then $(a,0)$ cannot be connected to any remaining point without crossing this connection or passing between the cup-cap pair neither of which is allowed. The second part follows from the definitions noting that to the right of $\beta^{\rm up}$ and $\beta^{\rm low}$ there are no cups or caps.
\end{proof}

\begin{lemma}
The maps \eqref{G1} and \eqref{G2} fit into a commutative diagram
\begin{eqnarray*}
\begin{xy}
\xymatrix{
\cT(G_j^i) \ar[r]^{{\rm surg}^i} \ar[d]_{{\rm surg}_j}& \cT(G_j^{i+1}) \ar[d]^{{\rm surg}_j}\\
\cT(G_{j+1}^i) \ar[r]_{{\rm surg}^i} & \cT(G_{j+1}^{i+1}).
}
\end{xy}
\end{eqnarray*}
\end{lemma}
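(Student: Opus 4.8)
The plan is to exploit the fact that the two surgeries live in different strips of the triple diagram, meeting only along the middle weight line $t=0$: $\op{surg}^i$ cuts the rightmost outer cup--cap pair $\beta^{\rm up}$, whose vertices lie in rows $t\in\{2,0\}$, while $\op{surg}_j$ cuts $\beta^{\rm low}$, whose vertices lie in rows $t\in\{0,-2\}$. Replacing a cup--cap pair by two vertical lines in one strip does not touch the arcs of the other strip, so the underlying diagram $G^{i+1}_{j+1}$ obtained at the end is the \emph{same} regardless of order. Hence both composites are honest maps $\cT(G^i_j)\to\cT(G^{i+1}_{j+1})$ into one fixed space, and only the maps (their signs and multiplicative factors) need to be compared. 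Recall that by Lemma~\ref{join} a merge is the canonical quotient surjection (the identity on the generators $X_{t,r}$), whereas by Definition~\ref{surg} a split is multiplication by $(-1)^{\op{pos}(\alpha)}(X_\beta\mp X_\alpha)$, with $\alpha<\beta$ the cut columns and the sign determined by whether the pair is dotted.

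First I would dispose of the \emph{non-interacting} case, where the vertex set $Q^{\rm up}$ and the vertex set $Q^{\rm low}$ lie on disjoint components of $G^i_j$ (in particular the cut columns are disjoint). Then performing one surgery leaves untouched the component structure seen by the other, so the type (merge or split) of each surgery is the same along both paths. There are four combinations to check and all commute formally: if both are merges, both composites equal the canonical surjection onto $\cT(G^{i+1}_{j+1})$; if one is a merge and the other a split, the surjection is a quotient ring map that fixes the surviving linear factor of the split, so $\sigma(f\cdot v)=f\cdot\sigma(v)$; and if both are splits, both composites are multiplication by the product $f_{\rm up}f_{\rm low}$, which commutes because the polynomial ring is commutative and the factors lie in it as fixed elements. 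The line cases \eqref{circline}, \eqref{lineline}, \eqref{yuk} are subsumed here: an output is zero exactly when $G^{i+1}_{j+1}$ fails to be orientable (e.g.\ a non-propagating line ends up to the right of a propagating one), and this is a property of the final diagram alone, hence order-independent.

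The essential content is the \emph{interacting} case, in which $Q^{\rm up}$ and $Q^{\rm low}$ meet on a common component (possibly sharing a row-$0$ vertex). Here performing one surgery can flip the merge/split type of the other, since whether the up cup and up cap lie on one component may depend on the connectivity of the lower strip, and vice versa. My plan is to enumerate the finitely many ways the four points of $Q$ can be joined by inner arcs at row $0$, using Lemma~\ref{lem:nodots_on_inner}: part (1) rules out diagonal connections inside a single pair, and part (2) guarantees that every inner arc connecting two points of $Q$ is \emph{undotted}. The no-dots statement is exactly what lets me strip the decorations from all the interacting arcs and reduce each configuration to the corresponding undotted computation, which is the type-$D$ refinement of the associativity check in \cite[Sec.\ 2]{BS1}. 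In each pattern I would verify that the type flip is complementary between the two orders (one order gives merge-then-split, the other split-then-merge), and that both composites nevertheless realize the same map: the point is that in $\cT(G^{i+1}_{j+1})$ the relations imposed along the merged row-$0$ connection force the up split factor $(X_b-X_a)$ and the low split factor $(X_d-X_c)$ to become equal up to sign, so that ``multiply then surject'' and ``surject then multiply'' agree.

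The main obstacle is precisely this type-flipping interacting case, and within it the bookkeeping of the signs $(-1)^{\op{pos}}$ together with the dotted/undotted distinction. Lemma~\ref{lem:nodots_on_inner}(2) removes the dottedness ambiguity on the interacting arcs, so the factors are governed by the undotted rules; and the statistic $\op{pos}$ depends only on the columns $a<b$ and $c<d$ of the two cut pairs, which are the same in both orders, giving the required compatibility of signs. Thus the sign produced along path $A$ equals that along path $B$, and combined with the relation identifying the two split factors this yields the desired equality of composites, establishing commutativity of the square.
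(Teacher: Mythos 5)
Your overall strategy --- observing that both composites land in the same space $\cT(G_{j+1}^{i+1})$, formal commutation when the two distinguished pairs live on disjoint components, and identification of the split factors via relations in the target when they interact --- is the same skeleton as the paper's proof (its cases $C(i,j)=0,2,4$). But your handling of the interacting case has two genuine gaps, and they sit exactly where the type-$D$ difficulty lies. First, it is not true that Lemma~\ref{lem:nodots_on_inner}(2) lets you ``strip the decorations from all the interacting arcs and reduce each configuration to the corresponding undotted computation.'' Only \emph{inner} connections are guaranteed dot-free, and by Lemma~\ref{lem:exist_inner_connections} only two of the four connections can be taken to be inner; the remaining two connections, as well as the distinguished pairs $\beta^{\rm up}$, $\beta^{\rm low}$ themselves, can and do carry dots. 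In the paper's representative configuration $\beta^{\rm up}$ is dotted, and the non-inner connection $(a,2)\leftrightarrow(d,0)$ is forced by orientability to carry an odd number of dots; these dots are precisely what produce the relation $X_{d,0}=X_{a,2}$ (with a plus sign) needed to match the two composites. Since one split factor is $(-1)^{\op{pos}(c)}(X_{d,0}-X_{c,0})$ (undotted pair) and the other is $(-1)^{\op{pos}(a)}(X_{b,0}+X_{a,0})$ (dotted pair), with different internal signs, no reduction to the undotted type-$A$ computation is possible --- if it were, there would be essentially nothing left to prove in type $D$.

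Second, your sign argument fails. You assert that the $(-1)^{\op{pos}}$ signs agree because ``pos depends only on the columns $a<b$ and $c<d$ of the two cut pairs, which are the same in both orders.'' But in the type-flipping situation the split occurs at \emph{different} pairs along the two paths: one composite carries $(-1)^{\op{pos}(a)}$ (split at $\beta^{\rm up}$), the other $(-1)^{\op{pos}(c)}$ (split at $\beta^{\rm low}$), and $a\neq c$ in general, so nothing is ``the same in both orders.'' Their compatibility is a nontrivial parity statement, which the paper extracts from the geometry: the inner (hence undotted) connection between $(b,2)$ and $(c,0)$ consists of an odd number of cups and caps, forcing $X_{c,0}=-X_{b,2}$ and ultimately $a\equiv c\pmod 2$. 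Only with this input can one run the chain $(-1)^c(X_{d,0}-X_{c,0}) = (-1)^c(X_{d,0}+X_{b,2}) = (-1)^c(X_{a,2}+X_{b,2}) = (-1)^a(X_{b,2}+X_{a,2}) = (-1)^a(X_{b,0}+X_{a,0})$ in $\cT(G_{j+1}^{i+1})$, which is the entire content of the lemma in the hard case. As written, your proposal names the right target (identifying the two factors in the quotient) but supplies neither the dot bookkeeping nor the parity argument that make the identification true.
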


For the proof we distinguish three possible scenarios, depending on the number of points in $Q^{\rm up}$ that are connected to points in $Q^{\rm low}$, we denote the number of these connections by $C(i,j)$.

The case $C(i,j)=4$ will be most involved and requires some additional preparations. For this let $\widetilde{G_j^i} \in \mR_{\geq 0} \times \mR$ denote the diagram that we obtain by eliminating all arcs except for the ones that connect points in $Q$ either to each other or to infinity, ie. we are only left with those lines and circles that include one or more points from $Q$. A sequence $C$ of arcs that connect points in $Q$ is called \emph{inner} if it cannot be connected with the left boundary of $\mR_{\geq 0} \times \mR$ by a path not crossing any arc in $\widetilde{G_j^i} \in \mR_{\geq 0} \times \mR$ except by one that goes either through $\beta^{\rm up}$ or $\beta^{\rm low}$.

\begin{lemma} \label{lem:exist_inner_connections}
Assume $C(i,j)=4$ and denote the sequences of arcs that connect the four pairs of points by $C_1$, $C_2$, $C_3$ and $C_4$. Then two of these sequences of arcs are inner.
\end{lemma}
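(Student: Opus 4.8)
The plan is to reduce the statement to a purely planar question about the subdiagram $\widetilde{G_j^i}$ and then analyse its connected components by the Jordan curve theorem. First I would observe that, since $C(i,j)=4$, every one of the eight points of $Q=Q^{\rm up}\cup Q^{\rm low}$ is the endpoint of exactly one arc of a distinguished cup--cap pair (one of the two arcs of $\beta^{\rm up}$ or $\beta^{\rm low}$) together with exactly one of the connecting sequences $C_1,\dots,C_4$, and that no point of $Q$ is joined to infinity. Hence each point of $Q$ has degree two in $\widetilde{G_j^i}$, so $\widetilde{G_j^i}$ is a disjoint union of simple closed curves, each alternating between arcs of $\beta^{\rm up}/\beta^{\rm low}$ and connecting sequences. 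Counting the two arcs of each distinguished pair against the four sequences, the only combinatorial possibilities are a single octagonal circle through all of $Q$, or two quadrilateral circles, each using precisely one arc of $\beta^{\rm up}$ and one arc of $\beta^{\rm low}$.

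Next I would reformulate \emph{inner} as a nesting condition. Since an escape route to the left boundary is forbidden to cross a connecting sequence but is allowed to pass through $\beta^{\rm up}$ or $\beta^{\rm low}$, a sequence $C_m$ lying on a circle $\Gamma$ is inner exactly when $\Gamma$ is separated from the left boundary by another circle of $\widetilde{G_j^i}$: then every route reaching $C_m$ must cross that enclosing circle, and by the stated allowance such a crossing is only legal through one of the distinguished pairs. By the Jordan curve theorem the sequences on an outermost circle always see the left boundary on one side and are therefore \emph{not} inner, whereas the sequences on a circle nested strictly inside another are inner. Consequently the statement is equivalent to showing that the configuration is two \emph{nested} quadrilateral circles; the two connecting sequences lying on the inner circle are then the required inner ones.

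The heart of the argument, and the step I expect to be the main obstacle, is establishing this nesting while excluding both the single octagon and the two-side-by-side-circles configuration. Here I would use planarity of the four pairwise disjoint sequences together with two local facts: within each distinguished pair the cup and the cap are parallel, non-crossing arcs bounding the lens, and by Lemma~\ref{lem:nodots_on_inner}(1) no corner of a pair is joined to its diagonally opposite corner. Because both quadrilateral circles are forced to pass through arcs of the \emph{same} two lenses $\beta^{\rm up},\beta^{\rm low}$, they interleave at each lens, and planarity should turn this interleaving into a genuine nesting rather than a side-by-side placement; the same interleaving obstructs joining all eight points into one non-self-crossing octagon. The hypothesis that $\beta^{\rm up}$ and $\beta^{\rm low}$ are the \emph{rightmost outer} cup--cap pairs enters precisely to control the routing on the right: to the right of the two pairs the diagram consists only of rays, so no sequence can slip around the right-hand side to unlink the two circles. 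Once the nesting is in place, reading off the two connecting sequences on the inner circle finishes the proof.
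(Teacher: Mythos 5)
Your opening observations are sound: with $C(i,j)=4$ no point of $Q$ is joined to infinity, every point of $Q$ has degree two in $\widetilde{G_j^i}$, and the alternation between distinguished arcs and connecting sequences forces either a single octagon through all of $Q$ or two quadrilaterals, each using one arc of $\beta^{\rm up}$ and one of $\beta^{\rm low}$. The proof collapses at the reformulation step, however, and the error is fatal to the whole strategy. ``Inner'' is \emph{not} equivalent to lying on a circle of $\widetilde{G_j^i}$ that is nested inside another one: the definition of inner explicitly allows the escape path to pass through $\beta^{\rm up}$ or $\beta^{\rm low}$, so the four distinguished arcs are permeable, and the circles of $\widetilde{G_j^i}$ (which contain those arcs) are the wrong separating objects. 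The correct barriers are closed curves built from the \emph{connecting sequences alone}, closed up by auxiliary edges running through the lenses of the two pairs (crossing such an edge is exactly an allowed ``through $\beta$'' crossing); this is what the paper's proof constructs, and it applies the Jordan curve theorem to those auxiliary circles, not to the components of $\widetilde{G_j^i}$. The configuration pictured in the paper's proof of Lemma~\ref{lemlow} is a concrete counterexample to your equivalence: there the two quadrilaterals are \emph{not} nested (one lies above the other), and yet two sequences are inner, namely $(b,2)\leftrightarrow(c,0)$ and $(b,0)\leftrightarrow(c,-2)$ --- one on \emph{each} quadrilateral. In particular your final step, reading off the two sequences on ``the inner circle'', is also unavailable: the two inner sequences need not lie on a common circle at all.

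Consequently, your plan to rule out both the octagon and the side-by-side placement cannot succeed, because both genuinely occur. The side-by-side placement is the paper's own picture, and the octagon arises, for instance, for the connection pattern $(a,2)\leftrightarrow(c,0)$, $(b,2)\leftrightarrow(c,-2)$, $(a,0)\leftrightarrow(d,0)$, $(b,0)\leftrightarrow(d,-2)$, where tracing the arcs shows all eight points of $Q$ lie on a single closed curve. In either of these situations your criterion would declare that no sequence is inner, contradicting the statement you are trying to prove. The missing idea is precisely the paper's: take the two sequences emanating from $(a,2)$ and $(a,0)$, use Lemma~\ref{lem:nodots_on_inner}(1) to see that their lower endpoints are not diagonally opposite, close the union up into one circle $D$ by gate edges (at the lens of $\beta^{\rm up}$, and at the lens of $\beta^{\rm low}$ or along one of its arcs), and argue by planarity that the remaining four points are either all inside or all outside $D$; if they are outside one forms a second circle $D'$, and if neither circle encloses the other one performs the complementary reconnection \eqref{reconnect} to manufacture two nested auxiliary circles, whose inner one carries the two desired inner sequences. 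Any correct proof has to incorporate the permeability of $\beta^{\rm up}$ and $\beta^{\rm low}$ into the separating curves in this way, rather than treating the components of $\widetilde{G_j^i}$ as impassable.
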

\begin{proof}
We may assume that $C_1$ is the arc connecting $(a,2)$ with some point $p_1 \in Q^{\rm low}$ and $C_2$ the one that connects $(a,0)$ to $p_2$ (otherwise we relabel). Since $p_1$ and $p_2$ cannot be diagonally opposite of each other in $Q^{\rm low}$ (see Lemma~\ref{lem:nodots_on_inner}), they can be connected by the cup or cap from $\beta^{\rm low}$ or by a horizontal edge. Similarly we can connect $(a,2)$ and $(a,0)$ by a horizontal edge and obtain a circle $D$. Since arcs do not intersect there are two possible cases. Either the remaining four points are all in the interior of this circle $D$ or they are all outside of $D$. If they are in the interior then $C_3$ and $C_4$ are inner. If they are outside of the circle $D$ we can connect them to obtain a second circle $D'$. If $D$ lies in the interior of $D'$ then $C_1$ and $C_2$ are inner. Otherwise, if none of the circles lies in the interior of the other, we reconnect the points complementary, ie. a point that was formerly connected by a cup is now connected by a horizontal edge:
\begin{eqnarray} \label{reconnect}
\begin{tikzpicture}[thick, snake=zigzag, line before snake = 2mm, line after snake = 2mm, scale=0.7]
\node at (0,-.05) {$\bullet$};
\node at (.5,-.05) {$\bullet$};
\node at (0,1.45) {$\bullet$};
\node at (0.5,1.45) {$\bullet$};
\draw (0,1.5) .. controls +(0,-.5) and +(0,-.5) .. +(.5,0);
\draw (0,0) .. controls +(0,.5) and +(0,.5) .. +(.5,0);
\draw[->, snake] (1,.75) -- +(1,0);

\node at (2.5,-.05) {$\bullet$};
\node at (3,-.05) {$\bullet$};
\node at (2.5,1.45) {$\bullet$};
\node at (3,1.45) {$\bullet$};
\draw (2.5,0) -- +(0,1.5);
\draw (3,0) -- +(0,1.5);

\begin{scope}[xshift=5.5cm]
\node at (0.5,-.05) {$\bullet$};
\node at (1,-.05) {$\bullet$};
\node at (0.5,1.45) {$\bullet$};
\node at (1,1.45) {$\bullet$};

\draw (0.5,0) -- +(0,1.5);
\draw (1,0) -- +(0,1.5);
\draw[->, snake] (1.5,.75) -- +(1,0);

\node at (3,-.05) {$\bullet$};
\node at (3.5,-.05) {$\bullet$};
\node at (3,1.45) {$\bullet$};
\node at (3.5,1.45) {$\bullet$};
\draw (3,1.5) .. controls +(0,-.5) and +(0,-.5) .. +(.5,0);
\draw (3,0) .. controls +(0,.5) and +(0,.5) .. +(.5,0);

\end{scope}
\end{tikzpicture}
\end{eqnarray}

This will create two new circles nested inside each other. We can find $k,l$ such that $C_k$ and $C_l$ lie on the inner circle, then obviously $C_k$ and $C_l$ are inner.
\end{proof}

\begin{proof}[Proof of Lemma~\ref{lemlow}]
We will use notation as above for the rightmost cup-cap pairs in $G_j^i$ and the corresponding points that they connect.

\textbf{Case $C(i,j)=0$:} Here the two cup-cap pairs are completely independent of each other since the points of $Q^{\rm up}$ and $Q^{\rm low}$ lie on distinct circles and lines. The surgery moves are always the same no matter in which order they are performed, thus the diagram commutes.

\textbf{Case $C(i,j)=2$:} For this situation let $p_1$ and $p_2$ be the two points in $Q^{\rm up}$ (resp. $Q^{\rm low}$) that are connected to each other. If they are directly connected via $\beta^{\rm up}$ (resp. $\beta^{\rm low}$) then they lie on a circle that does not contain any of the other six points. The surgery at $\beta^{\rm up}$ (resp. $\beta^{\rm low}$) will always merge two circles and thus the map is induced from the identity on the polynomial ring in three sets of variables. If on the other hand $p_1$ and $p_2$ are not directly connected, then they need to be directly above each other by Remark~\ref{rem:no_diagonal_connection}, hence connected by a vertical edge after the surgery has been performed. In this case the surgery at this pair will always split a circle into two and thus the map is induced from the multiplication with the same element that only depends on $a$ and $b$ (resp. $c$ and $d$). Therefore, the diagram commutes in this case as well.

\textbf{Case $C(i,j)=4$:} This is the most involved case and a lengthy case by case argument. The number of possibilities that need to be checked is quite large, but the arguments are always very similar. We will therefore just give the full proof for one of these.
\begin{enumerate}
\item We first examine the possibilities how the points of $Q^{\rm up}$ can be connected to points in $Q^{\rm low}$.  The number of possible configurations is very limited due to the fact that both $\beta^{\rm up}$ and $\beta^{\rm low}$ are not contained in another cup-cap pair and no arcs intersect. This yields the followings four possibilities (with the arrows depicting which points are connected):
\begin{equation*}
\begin{array}{cllll}
1.) & (a,2) \leftrightarrow (d,0) & (b,2) \leftrightarrow (c,0)
& (a,0) \leftrightarrow (d,-2) & (b,0) \leftrightarrow (c,-2) \\
2.) & (a,2) \leftrightarrow (c,0) & (b,2) \leftrightarrow (c,-2)
& (a,0) \leftrightarrow (d,0), & (b,0) \leftrightarrow (d,-2) \\
3.) & (a,2) \leftrightarrow (d,-2) & (b,2) \leftrightarrow (c,0)
& (a,0) \leftrightarrow (c,-2) & (b,0) \leftrightarrow (d,0) \\
4.) & (a,2) \leftrightarrow (c,-2) & (b,2) \leftrightarrow (d,-2)
& (a,0) \leftrightarrow (c,0), & (b,0) \leftrightarrow (d,0)
\end{array}
\end{equation*}
In the first two cases all points are contained in a single circle and the first surgery move will be a split (no matter which of the two is performed) while the second will be a merge. In the latter two cases we start with two distinct circles which get merged by the first surgery move and split again by the second surgery move.

\item In each of these cases we need to pick two pairs of connected points and assume that their connecting sequences of arcs $C_1$ and $C_2$ are inner. One can convince oneself that points from $Q^{\rm up}$, resp. $Q^{\rm low}$, of $C_1$ and $C_2$ cannot be diagonally opposite of each other, which reduces the number of possible choices of two inner connections to four for each case.

\item Finally we have to consider all possible decorations on $\beta^{\rm up}$ and $\beta^{\rm low}$. Since they could be dotted or undotted this would give in principle an additional number of four cases each, but one can check that if the two leftmost points on a cup-cap pair lie on an inner circle, then the cup-cap pair cannot be dotted.
\end{enumerate}

We consider now case (i):
$$\begin{array}{ll}
(a,2) \leftrightarrow (d,0), & (b,2) \leftrightarrow (c,0), \\
(a,0) \leftrightarrow (d,-2), & (b,0) \leftrightarrow (c,-2)
\end{array}$$
and assume that $(b,2) \leftrightarrow (c,0)$ and $(b,0) \leftrightarrow (c,-2)$ are inner. Moreover, we assume that $\beta^{\rm up}$ is dotted. (Remember as mentioned in part (3) above that $\beta^{\rm low}$ cannot be dotted since $(c,0)$ and $(c,-2)$ lie on inner connections.) A possible picture for such a configuration may look as follows (although we will not use its exact shape):
\begin{eqnarray*}
\begin{tikzpicture}[thick, scale=0.7]

\node at (.75,1) {$\beta^{\rm up}$};
\node at (2.75,-1) {$\beta^{\rm low}$};

\draw (0,2) -- (3.5,2);
\draw (0,0) -- (3.5,0);
\draw (0,-2) -- (3.5,-2);

\draw (1,2) .. controls +(0,.5) and +(0,.5) .. +(.5,0);
\draw (.5,2) .. controls +(0,1) and +(0,1) .. +(2.5,0);
\draw (.5,0) .. controls +(0,.5) and +(0,.5) .. +(.5,0);
\draw (2,0) .. controls +(0,.5) and +(0,.5) .. +(.5,0);
\draw (2.5,-2) .. controls +(0,.5) and +(0,.5) .. +(.5,0);

\draw (2.5,0) .. controls +(0,-.5) and +(0,-.5) .. +(.5,0);
\draw (1.5,0) .. controls +(0,-.5) and +(0,-.5) .. +(.5,0);
\draw (.5,2) .. controls +(0,-.5) and +(0,-.5) .. +(.5,0);

\draw (1,-2) .. controls +(0,-.5) and +(0,-.5) .. +(1.5,0);
\draw (.5,-2) .. controls +(0,-1) and +(0,-1) .. +(2.5,0);

\draw (1.5,2) -- (1.5,0);
\draw (3,2) -- (3,0);

\draw (.5,0) -- (.5,-2);
\draw (1,0) -- (1,-2);

\fill (.75,1.65) circle(2.5pt);
\fill (.75,.35) circle(2.5pt);
\fill (1.75,2.75) circle(2.5pt);
\fill (1.75,-2.75) circle(2.5pt);

\draw[dashed,thin] (.5,-3) -- (.5,-3.3);
\draw[dashed,thin] (1,-3) -- (1,-3.3);
\draw[dashed,thin] (2.5,-3) -- (2.5,-3.3);
\draw[dashed,thin] (3,-3) -- (3,-3.3);
\node at (.5,-3.5) {$a$};
\node at (1,-3.5) {$b$};
\node at (2.5,-3.5) {$c$};
\node at (3,-3.5) {$d$};

\end{tikzpicture}
\end{eqnarray*}

Applying surgery first at $\beta^{\rm up}$ (a merge) and then at $\beta^{\rm low}$ (a split) yields the map induced from the multiplication with $(-1)^c(X_{d,0}-X_{c,0})$. Performing the surgeries in the opposite order yields the map induced by multiplication with $(-1)^a(X_{b,0}+X_{a,0})$, since we split at a dotted cup. In the quotient $\cT(G_{j+1}^{i+1})$ we have have following equalities
\begin{eqnarray*}
&(-1)^c(X_{d,0}-X_{c,0}) \overset{(1)}{=} (-1)^c(X_{d,0}+X_{b,2})\overset{(2)}{=} (-1)^c(X_{a,2}+X_{b,2})&\\
&\overset{(3)}{=} (-1)^a(X_{b,2}+X_{a,2})\overset{(4)}{=} (-1)^a(X_{b,0}+X_{a,0}).&\\
\end{eqnarray*}
To see this note that for (1) the positions of $(c,0)$ and $(b,2)$ imply that they are connected by an odd number of cups/caps. Since the connection is inner it contains no dots and hence in $\cT(G_{j+1}^{i+1})$ we have $X_{c,0}=-X_{b,2}$. For (2) consider the circle containing the connection $(a,2) \leftrightarrow (d,0)$. Since $\beta^{\rm up}$ is dotted whereas $\beta^{\rm low}$ is not dotted we know that the connection $(a,2) \leftrightarrow (d,0)$ must contain an odd number of dotted cups/caps for the circle to be oriented and again due to the positions of the points it must contain an odd number of cups/caps. Thus in $\cT(G_{j+1}^{i+1})$ we have $X_{d,0}=X_{a,2}$. Since by the above arguments $(c,0)$ and $(a,2)$ were connected by an even number of cups/caps we know that $a$ and $c$ must have the same parity, thus (3) follows. Finally (4) is just one of the relations that hold in $\cT(G_{j+1}^{i+1})$ since we performed the surgery at positions $a$ and $b$.

Very similar arguments apply to all the other cases and are therefore omitted. The square commutes.
\end{proof}

\section{Cellularity and decomposition numbers diagrammatically}
\label{sec:cell}
The following theorem is an analogue of \cite[Theorem 3.1]{BS1} and the proof follows in principle the proof there. However, because of non-locality and signs the arguments are slightly more involved.
\subsection{Cellularity}
We still fix a block $\Lambda$.
\begin{theorem}\label{cellular}
Let $( a \la b )$ and
$( c \mu d )$ be basis vectors of $\D$.
Then,
$$
( a \la b )
( c \mu d ) =
\left\{
\begin{array}{ll}
0&\text{if $b \neq c^*$,}\\
s_{a \la b}(\mu) ( a \mu d ) + (\dagger)&\text{if $b = c^*$
and $a \mu$ is oriented,}\\
(\dagger)&\text{otherwise,}
\end{array}
\right.
$$
where
\begin{enumerate}[(i)]
\item
$(\dagger)$ denotes a linear combination of basis vectors from $\B$ of the form $( a \nu d)$
for $\nu > \mu$;
\item
the scalar $s_{a \la b}(\mu) \in \{0,1,-1\}$ depends only on
$a \la b$ and $\mu$ (but not on $d$).
\end{enumerate}
\end{theorem}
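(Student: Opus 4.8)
The plan is to analyze the generalized surgery procedure of Sections~\ref{multF2Def} and \ref{F2} directly, separating the combinatorial (Bruhat-order) content from the sign content. If $b\neq c^*$ the product is zero by the very definition of the multiplication, so assume $b=c^*$. By Lemma~\ref{lem:Bruhator}(2) I may write $a=\underline{\alpha}$ and $d=\overline{\delta}$ for unique weights $\alpha,\delta$ with $\alpha\subset\la$ and $\mu\subset\delta$ (the latter because $\mu d=\mu\overline{\delta}$ is oriented). Every diagram produced by the surgery procedure then collapses to some $(a\nu d)=\underline{\alpha}\nu\overline{\delta}$ with $\alpha\subset\nu\subset\delta$. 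In particular $(a\mu d)$ is a legal basis vector of $\B$ precisely when $\alpha\subset\mu$, i.e. when $a\mu$ is oriented, which is exactly the case distinction in the statement; this records that the only extra constraint for the putative leading term is orientability of $a\mu$.

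For the Bruhat bound I would first establish that the \emph{set} of weights $\nu$ occurring is field-independent. Forgetting the signs introduced over $\mZ$ or $\mC$, the surgery reduces to the unoriented move \eqref{surgery} together with the reorientation rules \eqref{mult}, \eqref{comult}, \eqref{circline}, \eqref{lineline}, and this is literally the procedure of \cite{BS1} decorated by dots. The book-keeping of \cite[Thm.~3.1]{BS1} then applies: at each merge \eqref{mult} a single diagram is produced, while at each split \eqref{comult} one obtains two diagrams, exactly one of which turns a circle clockwise; by Proposition~\ref{lem:stupidlemma}(3) a clockwise circle carries $\down$ at its rightmost vertex, so by Lemma~\ref{lem:Bruhat} this strictly raises the resulting weight in the Bruhat order, whereas the line rules \eqref{circline}, \eqref{lineline} never lower it. Hence the unique Bruhat-minimal output is $(a\mu d)$ when $a\mu$ is oriented, and every remaining $(a\nu d)$ has $\nu>\mu$; this yields the shape $s_{a\la b}(\mu)(a\mu d)+(\dagger)$ with $(\dagger)$ supported on $\nu>\mu$. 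Since the signs over $\mZ$ or $\mC$ only rescale coefficients and cannot create or destroy diagrams (the vanishing cases $x\otimes x\mapsto 0$ and the non-orientable reconnections are field-independent), the same support statement holds over $\mZ$ and $\mC$.

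It remains to identify the scalar $s_{a\la b}(\mu)$ as the coefficient of $(a\mu d)$. Following the distinguished branch singled out above, each split contributes a factor $(-1)^{\op{pos}(i)}(X_j\mp X_i)$ by Definition~\ref{surg} and each merge the surjection of Lemma~\ref{join}, and the whole product is computed inside $\cM(\underline{\alpha}\overline{\delta})$ as in Proposition~\ref{coho}. Extracting the coefficient of the monomial attached to $(a\mu d)$ then gives a product of signs $(-1)^{\op{pos}(i)}$ times $\pm1$, or $0$ if the branch meets $x\otimes x\mapsto 0$ in \eqref{mult}, $x\otimes y\mapsto 0$ in \eqref{circline}, or a non-orientable reconnection; in all cases $s_{a\la b}(\mu)\in\{0,1,-1\}$, and the multiplication being degree preserving (Proposition~\ref{forgotgraded}) is consistent with this. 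The genuinely delicate point, and the one where the argument departs from the locality available in \cite{BS1}, is the claim that $s_{a\la b}(\mu)$ is independent of $d$: a priori the line rules \eqref{circline}, \eqref{lineline} involve the cap diagram $d$, and the dot-decorations make the surgery nonlocal.

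To dispose of this last obstacle I would invoke the extension rule (the Remark following \eqref{lineline}): extend all occurring weights by a common number of $\up$'s on the right, closing every line into a circle, and run the surgeries with the closed-component rules alone. In this picture the splits along the $\mu$-preserving branch, the positions $\op{pos}(i)$ at which they occur, and hence the accumulated signs are dictated entirely by the symmetric middle section $b=c^*$ together with $\la$ and $\mu$; the cap diagram $d$ enters only through the orientability of the final diagram. Since that orientability reduces to the single condition that $a\mu$ be oriented, which does not involve $d$, the scalar is computed purely from $a\la b$ and $\mu$, giving the desired independence. I expect reconciling the nonlocal dotted surgery with this $d$-independence, and checking that the sign bookkeeping is uniform across the various configurations of dotted and undotted cup--cap pairs, to be the main technical burden of the proof.
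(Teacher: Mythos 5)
Your overall strategy is the same as the paper's: reduce to a single surgery step, show the top weight never decreases in the Bruhat order, and count the branches that preserve it (the paper's conditions (Cell1)/(Cell2)). However, there is a genuine gap at the heart of your Bruhat-order argument, namely the assertion that, signs aside, the procedure ``is literally the procedure of \cite{BS1} decorated by dots'' so that the bookkeeping of \cite[Theorem 3.1]{BS1} applies, with each split producing exactly one output that turns a circle clockwise and hence strictly raises the weight. This transfer is precisely what fails in type $D$ and is the actual content of the paper's proof. Dots are orientation-reversing points, so which circles are clockwise, and what a reorientation does to the top weight, cannot be read off locally as in type $A$: a split along a \emph{dotted} cup--cap pair of an anticlockwise circle (the first configuration in \eqref{dottedCase1}) raises the top weight in \emph{both} summands, so no output keeps $\tau$; splits and merges can produce zero because the resulting diagram is not orientable at all (a phenomenon absent in \cite{BS1}); and nested circles and dotted merges need the separate arguments of Lemma~\ref{lem:correct} and Lemma~\ref{lem:dottedsurgery}. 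The statement that the number of outputs with top weight $\tau$ is $0$ or $1$ is therefore not a corollary of the type $A$ bookkeeping but requires the case-by-case analysis (cap clockwise; both anticlockwise; cap anticlockwise and cup clockwise, each with all dot configurations \eqref{4cases}, \eqref{dottedCase1}) that occupies the paper's proof. Your proposal explicitly defers this as ``the main technical burden,'' which means the key counting claim is left unproven.

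Your route to the $d$-independence of $s_{a\la b}(\mu)$ also does not go through as stated. Whether a given surgery step is a merge or a split --- and hence whether a sign $(-1)^{\pos(i)}$ and a factor $(X_{j}\mp X_{i})$ enters at all --- is governed by whether the chosen cup and cap lie on the same connected component, and components pass through both $a$ and $d$; closing all lines into circles via the extension rule does not remove this dependence, it merely re-encodes $d$ inside the closed diagram, so your claim that the splits along the $\mu$-preserving branch ``are dictated entirely by the symmetric middle section together with $\la$ and $\mu$'' is unsubstantiated. The paper obtains the independence differently: it shows at each single surgery that the number of weight-preserving outputs is a function of the orientations of the middle cup--cap pair alone, and those orientations are read off from the current top and bottom weights (ultimately from $\la$, $\mu$ and the middle section $b=c^{*}$), never from $d$; the ambient geometry enters only ``up to homeomorphism'' in a way that is uniform in $d$.
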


We first state an eays fact and deduce a few consequences of the theorem:

\begin{lemma}
\label{antiaut}
The assignment $a\la b\mapsto (a \la b)^*:= b^* \la a^*$, on elements in $\B$
defines a graded algebra anti-isomorphism of $\D$.
\end{lemma}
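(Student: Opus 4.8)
The plan is to show that the map $\phi\colon a\la b\mapsto b^*\la a^*$ is a degree-preserving linear involution of $\D$ which reverses products, and to conclude that it is a graded anti-isomorphism. That $\phi$ is a well-defined involution is immediate: reflection in the axis $\mR_{\geq 0}\times\{0\}$ interchanges cup diagrams and cap diagrams and fixes the middle weight pointwise, so $b^*\la a^*$ is again an oriented circle diagram, $\phi$ permutes $\B$, and $c^{**}=c$ for any cup/cap diagram makes $\phi$ an involution, hence bijective. For the grading I would argue component by component. A closed circle $C$ of $\underline\la\nu\ov\mu$ is carried by the reflection to a circle $\phi(C)$ whose rightmost vertex lies on the fixed axis and therefore keeps its label; by Proposition~\ref{lem:stupidlemma}(3) the two circles have the same type, and since reflection only interchanges the cups and caps of $C$ without altering their number, Proposition~\ref{lem:stupidlemma}(2) gives them equal degree. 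For lines, whose orientation is forced by Proposition~\ref{lem:stupidlemma}(4), the equality of degrees is read off directly from \eqref{oriented}. Thus $\phi$ preserves degrees.

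Next I would reduce anti-multiplicativity to a reflection of the surgery computation. Fix basis vectors $x=a\la b$ and $y=c\mu d$. Both $xy$ and $y^*x^*=(d^*\mu c^*)(b^*\la a^*)$ vanish unless $b=c^*$, so assume $b=c^*$. The product $xy$ is computed from the stacked diagram $D$ with $a\la b$ at the bottom and $c\mu d$ on top; I would reflect $D$ in its horizontal centre line, the line separating the caps of $b$ from the cups of $c$. This reflection swaps the two halves, replaces $a,b,c,d$ by $a^*,b^*,c^*,d^*$, and carries the lower weight $\la$ and the upper weight $\mu$ to the upper and lower positions respectively; the resulting diagram is exactly the stacked diagram computing $y^*x^*=(d^*\mu c^*)(b^*\la a^*)$, whose middle section consists of the caps of $c^*$ and the cups of $b^*$.

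It then remains to check that the centre reflection is compatible with the surgery procedure. Since it fixes all horizontal coordinates, it matches the rightmost admissible cup–cap pair of $D$ with that of its reflection and, applied after each step, identifies the entire sequence of surgeries producing $xy$ with the one producing $y^*x^*$. On the algebraic side this reflection is the isomorphism $X_{\pm1,i}\mapsto X_{\mp1,i}$ between the spaces $\cM(G)$ of Section~\ref{F2}; it sends merges to merges and splits to splits, preserves the dotted/undotted type of the distinguished pair, and preserves every split sign $(-1)^{\pos(i)}$ because $\pos$ depends only on the block and the horizontal position is unchanged (the choice of $t\in\{\mp1\}$ being irrelevant in a split). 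The one point that needs care is that the clockwise/anticlockwise type of each intermediate circle is preserved: although the two weight lines are exchanged, the rightmost vertex of a circle keeps its label, since the label is carried along with the line on which it sits, so Proposition~\ref{lem:stupidlemma}(3) still yields the same type and a non-orientable intermediate diagram reflects to a non-orientable one. Hence reflection intertwines all the surgery and collapse maps, giving $(xy)^*=y^*x^*$.

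Finally, $\phi(e_\la)=(\ov\la)^*\la(\underline\la)^*=\underline\la\la\ov\la=e_\la$, so $\phi$ fixes the unit $1=\sum_\la e_\la$; being a degree-preserving linear bijection that reverses products, $\phi$ is the desired graded algebra anti-isomorphism of $\D$. I expect the main obstacle to be the bookkeeping in the previous paragraph, namely tracking the split signs and the types of the intermediate circles through the reflected surgeries, rather than any conceptual difficulty.
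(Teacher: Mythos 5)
Your proof is correct and follows essentially the same route as the paper's: the paper likewise observes that $(a\la b)^*$ is the reflection of $a\la b$ in the horizontal axis with the weight kept fixed, and that the surgery computation of $(d^*\mu c^*)(b^*\la a^*)$ is precisely the mirror image of the one computing $(a\la b)(c\mu d)$. The paper states this in two sentences and calls the rest obvious; your additional bookkeeping (degree preservation via the symmetric cup/cap degree tables, matching of the rightmost surgery pair, preservation of split signs and circle types under $X_{\pm1,i}\mapsto X_{\mp1,i}$) just makes explicit what the paper leaves implicit.
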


\begin{proof}
Note that $b^* \la a^*$ is the diagram $a\la b$ reflected in the real line, but with the weight $\la$ fixed. Then the surgery procedures which compute the product $(d^* \la c^*)(b^* \la a^*)$ produce precisely the mirror picture of the once produced when computing $(a^* \la b^*)(c^* \la d^*)$ except that weights are kept, ie. not reflected. Then the claim is obvious.
\end{proof}

\begin{corollary}\label{ideal}
The product
$( a \la b )( c \mu d )$ of two basis vectors of $\D$
is a linear combination of vectors of the form
$( a \nu d)\in\B$ with $\la \leq \nu \geq \mu$.
\end{corollary}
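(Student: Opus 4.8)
The plan is to derive this purely formally from Theorem~\ref{cellular} together with the anti-automorphism of Lemma~\ref{antiaut}; all the genuine combinatorial work is already contained in those two results, so the corollary is merely an exploitation of the reflection symmetry.

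First I would read off the upper estimate directly. By Theorem~\ref{cellular} the product $(a\la b)(c\mu d)$ vanishes unless $b=c^*$, and in the nonzero case it equals $s_{a\la b}(\mu)(a\mu d)+(\dagger)$ (or just $(\dagger)$ when $a\mu$ is not oriented), where $(\dagger)$ is a linear combination of basis vectors $(a\nu d)$ with $\nu>\mu$. In every case, therefore, the product is a linear combination of basis vectors of the form $(a\nu d)\in\B$ with $\nu\geq\mu$. This establishes the inequality $\nu\geq\mu$.

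Next I would obtain the symmetric inequality $\nu\geq\la$ by transporting the previous estimate through the anti-automorphism $*$ of Lemma~\ref{antiaut}. Applying $*$ to the product and using that it reverses the order of multiplication gives
\[
\big((a\la b)(c\mu d)\big)^* = (c\mu d)^*(a\la b)^* = (d^*\mu c^*)(b^*\la a^*).
\]
Theorem~\ref{cellular} applies to this product as well: its first factor has middle weight $\mu$ and cap part $c^*$, its second factor has cup part $b^*$ and middle weight $\la$, and the nonvanishing condition $c^*=(b^*)^*=b$ is exactly our standing hypothesis $b=c^*$. Hence the theorem (now with the roles of $\la$ and $\mu$ interchanged) shows that $\big((a\la b)(c\mu d)\big)^*$ is a linear combination of basis vectors $(d^*\nu a^*)$ with $\nu\geq\la$.

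Finally I would apply $*$ once more. Since $*$ is a graded anti-isomorphism and an involution on the basis $\B$ (indeed $(a\la b)^{**}=a\la b$), it is linear, degree-preserving, and permutes $\B$; applying it to the previous identity recovers $(a\la b)(c\mu d)$ as a linear combination of $(d^*\nu a^*)^*=(a\nu d)$ with $\nu\geq\la$. As $\B$ is a basis, this expansion must coincide with the one found in the first step, so every occurring $\nu$ satisfies both $\nu\geq\mu$ and $\nu\geq\la$, that is $\la\leq\nu\geq\mu$, as claimed. The only point requiring care is the bookkeeping of which weight plays the role of ``$\mu$'' in each invocation of Theorem~\ref{cellular}; there is no genuine obstacle, since the statement is a formal consequence of the cellular multiplication rule and the reflection symmetry.
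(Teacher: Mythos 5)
Your proof is correct and follows essentially the same route as the paper: the bound $\nu\geq\mu$ comes directly from Theorem~\ref{cellular}(i), and the bound $\nu\geq\la$ comes from applying Theorem~\ref{cellular}(i) to the mirrored product $(d^*\mu c^*)(b^*\la a^*)$ and transporting back via the anti-automorphism $*$ of Lemma~\ref{antiaut}. The only cosmetic difference is that you apply $*$ twice (starring the product and then starring back) whereas the paper states the cellular estimate for the mirrored product first and applies $*$ once; since $*$ is an involution these are the same argument.
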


\begin{proof}
By Theorem~\ref{cellular}(i),
$( a \la b ) ( c \mu d )$ is a linear combination
of $( a \nu d)$'s for various $\nu \geq \mu$
and
$( d^* \mu c^* ) ( b^* \la a^* )$
is a linear combination of $( d^* \nu a^*)$'s for various
$\nu \geq \la$.
Applying the anti-automorphism $*$ from Lemma~\ref{antiaut}
to the latter statement gives
that
$( a \la b ) ( c \mu d )$ is a linear combination
of $( a \nu d)$'s for various $\nu \geq \la$ too.
\end{proof}

\begin{corollary}\label{iscell}
The algebra $\D$ is a cellular algebra in the sense of Graham and
Lehrer \cite{GL} with cell datum $(\La, M, C, *)$ where
\begin{itemize}
\item[(i)]
$M(\la)$ denotes $\left\{\alpha \in \La\:|\:\alpha
\subset \la\right\}$
for each $\la \in \La$;
\item[(ii)]
$C$ is defined by setting
$C^\la_{\alpha,\beta}:=( \underline{\alpha} \la \overline{\beta} )$
for $\la \in \La$ and $\alpha,\beta \in M(\la)$;
\item[(iii)] $*$ is the anti-automorphism from (\ref{antiaut}).
\end{itemize}
\end{corollary}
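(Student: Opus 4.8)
The plan is to check directly that the datum $(\La, M, C, *)$ satisfies the axioms of a cellular algebra in the sense of \cite{GL}. First I would equip the index set $\La$ with the \emph{reversed} Bruhat order, declaring $\nu\prec\mu$ precisely when $\nu>\mu$ in the Bruhat order, so that the higher terms $\nu>\mu$ occurring in Theorem~\ref{cellular} and Corollary~\ref{ideal} become strictly smaller in the cellular order. With this choice Corollary~\ref{ideal} shows that for each $\mu$ the span $\D(\prec\mu):=\langle(\underline{\alpha}\nu\overline{\beta})\mid \nu>\mu\rangle$ is a two-sided ideal, which is exactly the ideal against which the straightening axiom is read. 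Each $M(\la)=\{\alpha\in\La\mid\alpha\subset\la\}$ is finite since the block $\La$ is finite, and by Lemma~\ref{lem:Bruhator}(2) the assignment $(\alpha,\beta)\mapsto C^\la_{\alpha,\beta}=(\underline{\alpha}\la\overline{\beta})$ is a well-defined bijection from $\coprod_{\la}M(\la)\times M(\la)$ onto the basis $\B$ of $\D$: every oriented circle diagram has a unique middle weight $\la$ and uniquely determined $\alpha\subset\la\supset\beta$. Hence $C$ is an injection whose image is a basis, as required.

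Next I would verify the behaviour of the involution. By Lemma~\ref{antiaut} the map $*$ is a graded algebra anti-isomorphism, and it is an involution because reflecting a diagram in the real line twice returns the original diagram. On a cell basis vector one computes, using $(\overline{\beta})^*=\underline{\beta}$ and $(\underline{\alpha})^*=\overline{\alpha}$,
\begin{equation*}
(C^\la_{\alpha,\beta})^*=(\underline{\alpha}\la\overline{\beta})^*=(\overline{\beta})^*\la(\underline{\alpha})^*=\underline{\beta}\la\overline{\alpha}=C^\la_{\beta,\alpha},
\end{equation*}
which is precisely the compatibility of $C$ with $*$ demanded by the cell datum.

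The only substantive point is the straightening axiom. By linearity it suffices to treat left multiplication by a basis vector $a=(\underline{\alpha'}\lambda'\overline{\beta'})$ against a cell basis vector $C^\mu_{\gamma,\delta}=(\underline{\gamma}\mu\overline{\delta})$. Applying Theorem~\ref{cellular} with $b=\overline{\beta'}$ and $c=\underline{\gamma}$, the product vanishes unless $\overline{\beta'}=(\underline{\gamma})^*=\overline{\gamma}$, i.e. unless $\gamma=\beta'$; and when $\gamma=\beta'$ with $\underline{\alpha'}\mu$ oriented it equals $s_{a}(\mu)\,(\underline{\alpha'}\mu\overline{\delta})+(\dagger)=s_a(\mu)\,C^\mu_{\alpha',\delta}+(\dagger)$, while otherwise it equals $(\dagger)$. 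In every case $(\dagger)$ is a combination of vectors $(\underline{\alpha'}\nu\overline{\delta})=C^\nu_{\alpha',\delta}$ with $\nu>\mu$ in the Bruhat order, hence lies in $\D(\prec\mu)$. Therefore
\begin{equation*}
a\,C^\mu_{\gamma,\delta}\equiv r_a(\alpha',\gamma)\,C^\mu_{\alpha',\delta}\pmod{\D(\prec\mu)},
\end{equation*}
where $r_a(\alpha',\gamma)=s_a(\mu)$ if $\gamma=\beta'$ and $\alpha'\subset\mu$, and $r_a(\alpha',\gamma)=0$ otherwise. The crucial point is that by Theorem~\ref{cellular}(ii) the scalar $s_{a}(\mu)=s_{\underline{\alpha'}\lambda'\overline{\beta'}}(\mu)$ does not depend on $d=\overline{\delta}$, so $r_a(\alpha',\gamma)$ is independent of the column index $\delta$, exactly as \cite{GL} requires; the right-handed version follows by applying $*$. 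The main obstacle here is conceptual bookkeeping rather than computation: one must fix the order convention so that the error terms land in a genuine two-sided ideal, and must read off from Theorem~\ref{cellular} that only the single row index $\alpha'$ survives, with a coefficient that is independent of $\delta$. All of this follows formally from Theorem~\ref{cellular} and Corollary~\ref{ideal}, completing the verification of the cell datum and hence the cellularity of $\D$.
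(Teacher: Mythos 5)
Your proof is correct and follows essentially the same route as the paper: conditions (1)--(3) are read off from the finiteness of the block, the definition of $\B$ (your appeal to Lemma~\ref{lem:Bruhator}(2) makes this explicit), and Lemma~\ref{antiaut}, while the straightening axiom (4) is deduced from Theorem~\ref{cellular}(i)--(ii) with the coefficient $s_{\underline{\alpha}\la\overline{\beta}}(\mu)$ independent of $\delta$, exactly as in the paper. The only difference is cosmetic bookkeeping: you reverse the Bruhat order so that the error terms of Theorem~\ref{cellular} land in the span of strictly smaller cell indices as in Graham--Lehrer's original convention, whereas the paper keeps the Bruhat order and phrases axiom (4) modulo $\D(>\mu)$; the two formulations are equivalent.
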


Before we prove the corollary let us first recall the relevant definitions from
\cite{GL}. A {\em cellular algebra} means an associative unital algebra $H$
together with a {\em cell datum} $(\La, M, C, *)$ such that
\begin{itemize}
\item[(1)]
$\La$ is a partially ordered set and $M(\la)$
is a finite set
for each $\la \in \La$;
\item[(2)]
$C:\dot\bigcup_{\la \in \La} M(\la)
\times M(\la) \rightarrow H,
(\alpha,\beta) \mapsto C^\la_{\alpha,\beta}$ is an injective map
whose image is a basis for $H$;
\item[(3)]
the map $*:H \rightarrow H$
is an algebra anti-automorphism such that
$(C^\la_{\alpha,\beta})^* = C_{\beta,\alpha}^\la$
for all $\la \in \La$ and $\alpha, \beta \in M(\la)$;
\item[(4)]
if $\mu \in \La$ and $\gamma, \delta \in M(\la)$
then for any $x \in H$ we have that
$$
x C_{\gamma,\delta}^\mu \equiv \sum_{\gamma' \in M(\mu)} r_x(\gamma',\gamma) C_{\gamma',\delta}^\mu
\pmod{H(> \mu)}$$
where the scalar $r_x(\gamma',\gamma)$ is independent of $\delta$ and
$H(> \mu)$ denotes the subspace of $H$
generated by $\{C_{\gamma'',\delta''}^\nu\:|\:\nu > \mu,
\gamma'',\delta'' \in M(\nu)\}$.
\end{itemize}

\begin{proof}[Proof of Corollary~\ref{iscell}]
Condition (1) is clear as $\La$ itself is a finite set;
Condition (2) is a consequence of the definition of $\B$; and since $*$ is an anti-automorphism we get (3).
Finally to verify (4) it suffices to consider the case that $x = C^\la_{\alpha,\beta}$ for some $\la \in \La$ and
$\alpha, \beta \in M(\la)$. If $\beta = \gamma$ and $\alpha \subset \mu$, then
Theorem~\ref{cellular}(i)--(ii) shows that
$$
C_{\alpha,\beta}^\la C_{\gamma,\delta}^\mu \equiv
s_{\underline{\alpha} \la \overline{\beta}}(\mu) C_{\alpha, \delta}^\mu
\pmod{{\D}{(> \mu)}}
$$
where $s_{\underline{\alpha} \la \overline{\beta}}(\mu)$
is independent of $\delta$;
otherwise, we have that
$$
C_{\alpha, \beta}^\la C_{\gamma, \delta}^\mu \equiv 0
\pmod{\D(> \mu)}.
$$
Taking
$$
r_x(\gamma', \gamma)
:=
\left\{
\begin{array}{ll}
s_{\underline{\alpha}\la\overline{\beta}}(\mu)&
\text{if $\gamma' = \alpha, \beta = \gamma$ and $\alpha \subset \mu$,}\\
0&\text{otherwise,}
\end{array}\right.
$$
we deduce that (4) holds.
\end{proof}

\begin{remark}
{\rm
Corollary~\ref{iscell} together with the definition of the grading directly implies that our cellular basis is in fact a graded cellular basis in the sense of \cite{MH}.}
\end{remark}

The following algebra is the analogue of the Khovanov algebra from \cite{BS1} and appears in the context of Springer fibres and resolutions of singularities, see \cite{ES1}.

\begin{corollary}
\label{Khovalg}
Let $\Lambda$ be a block and $e:=\sum e_\la$, where $\la$ runs over all weights $\la\in\La$ such that the associated cup diagram has $\op{def}(\La)$ cups. Then the algebra $\mathbb{H}_\La=e\D e$ is again cellular. It is the endomorphism ring of the sum of all indecomposable projective-injective $\mathbb{D}_\Lambda$-modules.
\end{corollary}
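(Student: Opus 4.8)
The statement comprises two claims: that $\mathbb{H}_\Lambda=e\D e$ is cellular, and that it is the endomorphism ring of the indecomposable projective--injective modules. The plan is to obtain cellularity by an idempotent--truncation argument and then read off the endomorphism--ring description.

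For cellularity I would first record that $e$ is a homogeneous degree-zero idempotent fixed by the anti-automorphism $*$ of Lemma~\ref{antiaut}: each summand $e_\la=(\underline{\la}\la\overline{\la})=C^\la_{\la,\la}$ satisfies $e_\la^*=e_\la$, and $e$ runs over the $*$-stable set of maximal-defect weights, so $e^*=e$. Next, using the degree-zero action from the proof of Theorem~\ref{algebra_structure} (whereby $e_\delta(\underline{\alpha}\la\overline{\beta})$ equals the diagram if $\delta=\alpha$ and is $0$ otherwise, and symmetrically on the right), I would compute, for any cellular basis vector $C^\la_{\alpha,\beta}=(\underline{\alpha}\la\overline{\beta})$ from Corollary~\ref{iscell},
\begin{equation*}
e\,C^\la_{\alpha,\beta}\,e=
\begin{cases}
C^\la_{\alpha,\beta}&\text{if }\op{def}(\alpha)=\op{def}(\beta)=\op{def}(\Lambda),\\
0&\text{otherwise,}
\end{cases}
\end{equation*}
so that the surviving basis vectors are exactly those with $\alpha,\beta$ of maximal defect.

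I would then verify that $(\Lambda_e,M_e,C,*)$, with $M_e(\la)=\{\alpha\in M(\la)\mid\op{def}(\alpha)=\op{def}(\Lambda)\}$ and $\Lambda_e=\{\la\mid M_e(\la)\neq\emptyset\}$, is a graded cell datum. Axioms (1)--(3) are immediate, since $*$ restricts to $\mathbb{H}_\Lambda$ and interchanges the two indices while preserving $M_e(\la)$. For the multiplicativity axiom (4) I would apply $e(-)e$ to the cellular congruence of Corollary~\ref{iscell}: as $\delta\in M_e(\mu)$, right multiplication by $e$ fixes every term, while left multiplication by $e$ keeps the summands with $\gamma'\in M_e(\mu)$, annihilates the rest, and identifies $\D(>\mu)$ with $\mathbb{H}_\Lambda(>\mu)=e\,\D(>\mu)\,e$; the structure constants $r_x$ are inherited verbatim. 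Homogeneity of $e$ upgrades this to a graded cell datum in the sense of \cite{MH}.

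For the endomorphism-ring description I would use that the $e_\la$ are primitive orthogonal idempotents with $\sum_\la e_\la=1$ (Theorem~\ref{algebra_structure}), so the $P(\la):=\D e_\la$ are the indecomposable projectives and $\mathbb{H}_\Lambda=e\D e\cong\op{End}_{\D}(\D e)$ with $\D e\cong\bigoplus_{\op{def}(\la)=\op{def}(\Lambda)}P(\la)$. It then remains to identify the maximal-defect projectives with the indecomposable projective--injectives. Here I would pass to the contravariant duality on $\D\MOD$ induced by $*$ and combine it with the highest weight structure coming from the cell datum (its cell modules playing the role of standard modules) to show that $P(\la)$ is injective (equivalently, a tilting module) precisely when $\op{def}(\la)=\op{def}(\Lambda)$. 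I expect this final step to be the main obstacle, being the only genuinely representation-theoretic input; the plan is to run it exactly along the lines of the type $A$ analysis in \cite{BS1}.
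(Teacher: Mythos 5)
The cellularity half of your proposal is correct and is in substance the paper's argument: the paper simply cites the stability of cellularity under idempotent truncation by a $*$-invariant idempotent (\cite[Proposition 4.3]{KoenigXi}), whereas you verify the cell-datum axioms for $e\D e$ by hand; both amount to the same observation, and your explicit check (including $e^*=e$, the identification $e\,\D(>\mu)\,e=\mathbb{H}_\Lambda(>\mu)$, and the gradedness in the sense of \cite{MH}) is sound. Likewise the identification $e\D e\cong\op{End}_{\D}(\D e)$ with $\D e\cong\bigoplus_{\op{def}(\la)=\op{def}(\La)}P(\la)$ is exactly what the paper means when it says the algebra is "by definition" the endomorphism ring of the sum of the maximal-defect projectives.

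The genuine gap is the step you yourself flag as the main obstacle and then never carry out: proving that the indecomposable projective-injective $\D$-modules are precisely the $P(\la)$ with $\op{def}(\la)=\op{def}(\La)$. Announcing that this should work "along the lines of \cite{BS1}" is not a proof, and it is not a formal consequence of the cell datum plus the duality $\circledast$: in a general graded highest weight category with duality there is no reason the self-dual (equivalently injective) projectives are those of maximal defect, so this is a statement about the specific algebra $\D$, and in type $D$ the decorations and signs make the BS1-style socle analysis of cell modules a nontrivial computation that neither you nor the paper performs. The paper instead proves this step by an entirely different, non-diagrammatic route (Lemma~\ref{projinj}): first reduce to principal blocks via Corollary~\ref{atypicality}, then invoke the equivalence with parabolic category $\cO_0^\p(\mathfrak{so}(2k))$ from Theorem~\ref{thm:main} (itself only established later, via Braden's algebra), then apply Irving's characterization \cite[4.3]{Irvingselfdual} of projective-injectives in parabolic category $\cO$ in terms of Kazhdan-Lusztig right cells, and finally use the type $D$ Temperley-Lieb combinatorics of \cite{LS} to identify that cell with the set of maximal-defect cup diagrams. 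So the one assertion in the corollary with real content is, in your proposal, left as a hoped-for analogy rather than established; to close it you would either have to reproduce the paper's category-$\cO$/Irving argument or actually carry out the type $D$ analogue of the BS1 self-duality computation, neither of which is sketched in enough detail to be checked.
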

\begin{proof}
The first part follows from the cellularity of $\D$, since cellularity behaves well under idempotent truncation, see e.g. \cite[Proposition 4.3]{KoenigXi}. By definition, the algebra is the endomorphism ring of the sum of projective modules which are of maximal Loewy-length. By Lemma~\ref{atypicality} it is enough to consider the principal blocks. Then it will follow from Corollary~\ref{projinj}.
\end{proof}

The proof of Theorem~\ref{cellular} uses several times the following statements

\begin{lemma}
\label{lem:anticlock}
Let $C$ be an anticlockwise circle appearing in an oriented circle diagram (resp. in an oriented double circle diagram after applying a finite number of surgery moves in our multiplication rule). Let $\nu$ be the sequence of labels attached to the intersection points of $C$ with the weight line (resp. with the top weight line).  If $C$ was anticlockwise then swapping the orientation changes $\nu$ into a sequence which is bigger in the Bruhat order (in the sense of Lemma~\ref{lem:Bruhat}).
\end{lemma}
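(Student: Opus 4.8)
The plan is to work one connected component at a time, so I would first reduce to a single circle $C$: the assertion only involves the labels of $\nu$ at the vertices of $C$, and swapping the orientation of $C$ changes $\nu$ only there (the same reduction covers the double circle diagram case, since the argument below uses only the relevant weight line and purely topological features of an embedded circle, both of which survive the surgeries). I would record the two local facts extracted from the proof of Proposition~\ref{lem:stupidlemma}: at the endpoints of an undotted cup or cap the two labels are opposite, while at the endpoints of a dotted cup or cap they agree; and by Proposition~\ref{lem:stupidlemma}(3), $C$ is anticlockwise exactly when its rightmost label is $\up$. Writing the vertices of $C$ left to right as $p_1<\dots<p_{2m}$, swapping the orientation reverses every label on $C$, so the new sequence $\nu'$ is obtained from $\nu$ by flipping $\up\leftrightarrow\down$ at each $p_i$.

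The undotted case is clean and I would do it first. Here each label equals the geometric crossing direction of the embedded circle, and for a simple closed curve consecutive crossings with the weight line alternate direction (between two consecutive crossings the curve stays on one side). Together with ``rightmost label $=\up$'' this forces the label sequence on $C$ to be $\down\up\down\up\cdots\down\up$. Its flip $\up\down\up\down\cdots\up\down$ is reached from $\nu$ by applying the increasing move $\down\up\to\up\down$ of Lemma~\ref{lem:Bruhat} to the disjoint consecutive pairs $(p_1,p_2),(p_3,p_4),\dots,(p_{2m-1},p_{2m})$; each strictly raises the weight and they act on disjoint positions, so $\nu'>\nu$.

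For the general case the alternation is destroyed by dots, and crucially neither of the two obvious pairings works: the consecutive pairing no longer consists of $\down\up$ pairs, and the ``flip each cup'' pairing (the $m$ cups of $C$ do partition its $2m$ vertices, so this does describe the flip) fails too, since a nested cup has endpoint labels $\up\down$ whose flip $\up\down\to\down\up$ is a \emph{decreasing} move. I would therefore induct on the number of dots on $C$, using the snake configuration \eqref{snake} from the proof of Proposition~\ref{lem:stupidlemma}: on a circle with more than two vertices and no undotted kink such a subdiagram always occurs, and removing its pair of dots while reorienting the two enclosed labels yields, on the \emph{same} vertex set, an anticlockwise circle with two fewer dots. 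Since this alters $\nu$ (and $\nu'$) only in two coupled positions, the inductive hypothesis for the de-dotted circle should transfer once one tracks how those two positions interact with the front move $\up\up\to\down\down$; an undotted kink, when present, is absorbed by the alternation argument applied to the local triple $\up\down\up$.

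The hard part is exactly this last bookkeeping. With dots the reorientation must be realised as a chain of Lemma~\ref{lem:Bruhat}-moves that unavoidably mixes both move types: for instance the anticlockwise four-vertex circle $\up\down\up\up$ has flip $\down\up\down\down$, reached by $\up\down\up\up\to\up\up\down\up\to\up\up\up\down\to\down\up\down\down$, i.e.\ two swaps followed by a front move, so endpoints cannot simply be paired. Verifying that the snake reduction always produces such an increasing chain---equivalently, that deleting the pair of dots changes neither the comparability nor the direction of the flip---is where the genuine work lies, and I expect most of the proof to consist of checking the finitely many local decoration patterns around the two enclosed vertices.
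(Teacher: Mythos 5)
Your reduction to a single circle and your treatment of the undotted case are correct, and the latter is a genuinely different argument from the paper's: you use the topological fact that an embedded oriented curve crosses a fixed horizontal line with alternating directions, which together with ``rightmost label $=\up$'' forces the pattern $\down\up\down\up\cdots\down\up$ and lets you realise the flip by disjoint increasing moves $\down\up\to\up\down$. The problem is that the dotted case --- which is the actual content of the lemma, since the undotted case is already in \cite[Lemma 2.1]{BS1} --- is not proved: you set up an induction on the number of dots via the snake configuration \eqref{snake} and then explicitly defer the key verification (``deleting the pair of dots changes neither the comparability nor the direction of the flip''). That deferred step is not mere bookkeeping. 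Removing a snake's two dots \emph{reorients the two enclosed labels}, so the inductive hypothesis applies to a different weight $\tilde\nu$ and its flip $\tilde\nu'$; knowing $\tilde\nu<\tilde\nu'$ gives no formal control over the comparison of $\nu$ with $\nu'$, because the two altered positions interact in the Bruhat order with all the other vertices of the circle. So the induction as proposed does not close.

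The paper's proof sidesteps exactly this difficulty by choosing a decomposition that never changes any label. In the proof it introduces local rules \eqref{kinktocircle}: any kink of the circle --- undotted in cases (i)--(ii), carrying one dot in cases (iii)--(iv) --- is replaced by a straight strand (which inherits the dot, if there was one) together with a small \emph{undotted anticlockwise} circle sitting on the same two vertices with the same two labels. Iterating, the anticlockwise circle $C$ is split into a disjoint collection of small anticlockwise circles occupying precisely the vertices of $C$ and carrying precisely the labels of $\nu$. Swapping the orientation of $C$ is then the same as swapping the orientation of every one of these small circles simultaneously, and each such swap changes a pair of labels by $\down\up\to\up\down$ (undotted small circle) or $\up\up\to\down\down$ (dotted small circle); both moves are increasing by Lemma~\ref{lem:Bruhat}, and they act on pairwise disjoint positions, so $\nu$ strictly increases. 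If you want to salvage your approach, the fix is to replace the label-changing snake reduction \eqref{snake} by such a label-preserving splitting; with your alternation argument this would even re-prove the undotted small-circle case from scratch rather than quoting \cite{BS1}.
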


\begin{definition}
\label{smallcirc}
{\rm
A circle appearing in an oriented circle diagram or during a surgery procedure is called {\it small} if it only involves one cup and one cap.
}
\end{definition}
\begin{proof}[Proof of Lemma~\ref{lem:anticlock}]
The statement is clear if the circle is small. Indeed, the weight moves from $\down\up$ to $\up\down$ in the undotted case and from $\up\up$ to $\down\down$ in the dotted case. Otherwise we first remove all undotted kinks on the cost of an extra small anticlockwise circle according to the rules \eqref{kinktocircle} (i)-(ii) below not changing the labels of the weight. Since the circle was anti-clockwise, the result contains either only small circles or at least one kink of the form \eqref{kinktocircle} (iii)-(iv). Again we remove these kinks using one of the rules below.
 \begin{equation}
 \label{kinktocircle}
 \begin{tikzpicture}[thick, snake=zigzag, line before snake = 2mm, line after snake = 2mm]
\node at (-.5,0) {i)};
\draw (0,0) -- +(0,.3);
\draw[dotted, snake] (0,.3) -- +(0,.8);
\draw (0,0) .. controls +(0,-.5) and +(0,-.5) .. +(.5,0);
\draw (.5,0) .. controls +(0,.5) and +(0,.5) .. +(.5,0);
\draw (1,0) -- +(0,-.3);
\draw[dotted, snake] (1,-.3) -- +(0,-.8);
\node at (.01,-.05) {$\up$};
\node at (1.01,-.05) {$\up$};
\node at (.51,0) {$\down$};

\draw[->] (1.5,-.2) -- +(1,0);

\draw (3,0) -- +(0,.3);
\draw[dotted, snake] (3,.3) -- +(0,.8);
\draw (3.5,0) .. controls +(0,-.5) and +(0,-.5) .. +(.5,0);
\draw (3.5,0) .. controls +(0,.5) and +(0,.5) .. +(.5,0);
\draw (3,0) -- +(0,-.3);
\draw[dotted, snake] (3,-.3) -- +(0,-.8);
\node at (3.01,-.05) {$\up$};
\node at (4.01,-.05) {$\up$};
\node at (3.51,0) {$\down$};

\begin{scope}[xshift=6cm]
\node at (-.5,0) {ii)};
\draw (0,0) -- +(0,.3);
\draw[dotted, snake] (0,.3) -- +(0,.8);
\draw (0,0) .. controls +(0,-.5) and +(0,-.5) .. +(.5,0);
\draw (.5,0) .. controls +(0,.5) and +(0,.5) .. +(.5,0);
\draw (1,0) -- +(0,-.3);
\draw[dotted, snake] (1,-.3) -- +(0,-.8);
\node at (.01,0) {$\down$};
\node at (1.01,0) {$\down$};
\node at (.51,-.05) {$\up$};

\draw[->] (1.5,-.2) -- +(1,0);

\draw (4,0) -- +(0,.3);
\draw[dotted, snake] (4,.3) -- +(0,.8);
\draw (3,0) .. controls +(0,-.5) and +(0,-.5) .. +(.5,0);
\draw (3,0) .. controls +(0,.5) and +(0,.5) .. +(.5,0);
\draw (4,0) -- +(0,-.3);
\draw[dotted, snake] (4,-.3) -- +(0,-.8);
\node at (3.01,0) {$\down$};
\node at (4.01,0) {$\down$};
\node at (3.51,-.05) {$\up$};

\end{scope}

\begin{scope}[yshift=-3cm]
\node at (-.5,0) {iii)};
\draw (0,0) -- +(0,.3);
\draw[dotted, snake] (0,.3) -- +(0,.8);
\draw (0,0) .. controls +(0,-.5) and +(0,-.5) .. +(.5,0);
\draw (.5,0) .. controls +(0,.5) and +(0,.5) .. +(.5,0);
\fill (.75,.365) circle(2.5pt);
\draw (1,0) -- +(0,-.3);
\draw[dotted, snake] (1,-.3) -- +(0,-.8);
\node at (.01,0) {$\down$};
\node at (1.01,-.05) {$\up$};
\node at (.51,-.05) {$\up$};

\draw[->] (1.5,-.2) -- +(1,0);

\draw (4,0) -- +(0,.3);
\draw[dotted, snake] (4,.3) -- +(0,.8);
\fill (4,.365) circle(2.5pt);
\draw (3,0) .. controls +(0,-.5) and +(0,-.5) .. +(.5,0);
\draw (3,0) .. controls +(0,.5) and +(0,.5) .. +(.5,0);
\draw (4,0) -- +(0,-.3);
\draw[dotted, snake] (4,-.3) -- +(0,-.8);
\node at (3.01,0) {$\down$};
\node at (4.01,-.05) {$\up$};
\node at (3.51,-.05) {$\up$};

\end{scope}

\begin{scope}[xshift=6cm, yshift=-3cm]
\node at (-.5,0) {iv)};
\draw (0,0) -- +(0,.3);
\draw[dotted, snake] (0,.3) -- +(0,.8);
\fill (.25,-.365) circle(2.5pt);
\draw (0,0) .. controls +(0,-.5) and +(0,-.5) .. +(.5,0);
\draw (.5,0) .. controls +(0,.5) and +(0,.5) .. +(.5,0);
\draw (1,0) -- +(0,-.3);
\draw[dotted, snake] (1,-.3) -- +(0,-.8);
\node at (.01,0) {$\down$};
\node at (1.01,0) {$\down$};
\node at (.51,-.05) {$\up$};

\draw[->] (1.5,-.2) -- +(1,0);

\draw (3,0) -- +(0,.3);
\draw[dotted, snake] (3,.3) -- +(0,.8);
\fill (3,-.365) circle(2.5pt);
\draw (3.5,0) .. controls +(0,-.5) and +(0,-.5) .. +(.5,0);
\draw (3.5,0) .. controls +(0,.5) and +(0,.5) .. +(.5,0);
\draw (3,0) -- +(0,-.3);
\draw[dotted, snake] (3,-.3) -- +(0,-.8);
\node at (3.01,0) {$\down$};
\node at (4.01,0) {$\down$};
\node at (3.51,-.05) {$\up$};

\end{scope}
\end{tikzpicture}
 \end{equation}
 The result is a collection of small anticlockwise circles. Swapping the orientation of $C$ means swapping the labels in the weight $\nu$ or equivalently swapping the orientation of all the small circles we created by removing the kinks. But this obviously increases the weight.
\end{proof}

\begin{lemma}
\label{lem:correct}
Let $C$ and $D$ be oriented circles inside the diagram appearing after a finite number (possibly zero) of the surgery moves applied to an oriented circle diagram. Assume $D$ is nested inside $C$. Let $\nu$ be the sequence of labels attached to the intersection points of $C$ and $D$ with the top weight line. If $C$ was anticlockwise then swapping the orientation of both circles changes $\nu$ into a sequence which is bigger in the Bruhat order (in the sense of Lemma~\ref{lem:Bruhat}).
\end{lemma}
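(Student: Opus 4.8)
The plan is to convert ``bigger in the Bruhat order'' into a pointwise numerical inequality that behaves additively under the two orientation swaps, and then to isolate the one genuinely new phenomenon---an \emph{inner clockwise} circle---which must be tamed using the nesting hypothesis.

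For a weight $\nu$ and a position $\ell$ set $f(\nu,\ell)=\#\{j\ge\ell:\nu_j=\down\}-\#\{j\ge\ell:\nu_j=\up\}$. A direct check on the two generators of Lemma~\ref{lem:Bruhat} shows that the moves $\down\up\to\up\down$ and $\up\up\to\down\down$ each raise every $f(\nu,\ell)$ weakly; conversely, for two weights in a fixed block the condition $f(\nu',\ell)\ge f(\nu,\ell)$ for all $\ell$ is equivalent to $\nu'\ge\nu$ (the Grassmannian dominance criterion, transported to type $D$ via the identification of a block with a $W$-orbit in Section~\ref{typeD}; one checks it directly on the eight weights of $\Lambda_4^{\ov{0}}$, say). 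Since $C$ and $D$ meet the weight line in disjoint vertex sets, the change in $f$ is additive: for a circle $E$ put $f_E(\ell)=\#\{\text{up-vertices of }E\text{ at positions}\ge\ell\}-\#\{\text{down-vertices of }E\text{ at positions}\ge\ell\}$, so that swapping $E$ adds $2f_E(\ell)$ to $f(\cdot,\ell)$. In this language Lemma~\ref{lem:anticlock} says precisely that $f_C(\ell)\ge0$ for all $\ell$ when $C$ is anticlockwise, and the same argument applied to the reversed circle gives $f_D(\ell)\le0$ when $D$ is clockwise. Thus the whole statement reduces to the single inequality $f_C(\ell)+f_D(\ell)\ge0$ for all $\ell$.

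If $D$ is anticlockwise this is immediate, since then $f_D(\ell)\ge0$ as well (equivalently, swap $C$ and then $D$, invoking Lemma~\ref{lem:anticlock} twice on disjoint vertices). The substance is the case where $D$ is clockwise, and here I would first straighten both circles into disjoint unions of small circles, using the undotted move \eqref{kink} and, where a dot blocks it, the dotted reductions \eqref{snake} and \eqref{kinktocircle}; by construction none of these alters $\nu$, the nesting of $D$ inside $C$, or the two orientations. After straightening, $f_C=\sum_i f_{C_i}$ and $f_D=\sum_j f_{D_j}$, where each $f_{C_i}$ is a nonnegative step function supported on the interval spanned by the small circle $C_i$, and each $f_{D_j}$ a nonpositive one; for undotted small circles these take values in $\{0,1\}$ and $\{0,-1\}$ respectively. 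The key geometric input is that, $D$ being nested inside $C$, every vertex of $D$ lies in an ``inside-$C$'' segment of the weight line, so that any $\ell$ with $f_D(\ell)<0$ is strictly enclosed by a cup--cap pair of $C$ contributing a matching positive amount. I would package this as an induction on the number of vertices, peeling off the rightmost vertex (which lies on $C$ and, $C$ being anticlockwise, is labelled $\up$) together with its partner.

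The main obstacle is exactly this domination estimate in the presence of dots. In the undotted world every circle alternates $\down\up\down\up\cdots$, the step functions are bounded by $1$ in absolute value, and the inequality follows from an interval-containment argument essentially as in \cite[Lemma 2.1]{BS1}. A dotted circle, however, need not alternate: its step function can jump by $2$, and balance can be restored only by invoking Remark~\ref{decorated} (a dotted cup may be neither nested inside another cup nor placed to the right of a ray) to pin down where the dots---hence the jumps---can occur. Showing that the enclosing anticlockwise circle always carries enough positive weight to absorb these jumps is the delicate point, and is precisely what the case analysis built into the straightening rules \eqref{snake} and \eqref{kinktocircle} is designed to handle.
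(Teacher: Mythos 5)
Your reduction of the statement to the pointwise inequality $f_C(\ell)+f_D(\ell)\ge 0$ is sound as a reformulation: within a fixed block all weights carry $\circ$ and $\times$ at the same positions, so tail-downs plus tail-ups is the same for any two weights being compared, and your difference criterion is therefore equivalent to containment of the associated Young diagrams, hence to the Bruhat order via the poset isomorphism established after Lemma~\ref{lem:Bruhat} (this converse direction needs that argument, not a spot-check for $k=4$, but it is true). The additivity under swapping a circle, the deduction $f_C\ge 0$ for anticlockwise $C$ (resp.\ $f_D\le 0$ for clockwise $D$) from Lemma~\ref{lem:anticlock}, and the case where $D$ is anticlockwise are all fine.

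The problem is that the case where $D$ is clockwise --- which is the entire content of the lemma --- is never actually proved. You outline straightening plus an interval-containment estimate and then explicitly defer ``the delicate point'' (that the enclosing anticlockwise circle absorbs the jumps caused by dots) to a case analysis you do not carry out; that deferred point \emph{is} the lemma. The idea that closes the gap, and on which the paper's proof rests, is that the nesting hypothesis kills the dots on $D$: by Remark~\ref{decorated} a dotted cup (or cap) can never be nested inside another cup (or cap), so $D$, being nested inside $C$, carries \emph{no dots at all}. This makes your worry about the inner circle's step function jumping vacuous, and --- since $C$ is anticlockwise and $D$ is undotted --- it is exactly what allows every kink to be removed by the $\nu$-preserving moves \eqref{kinktocircle}, reducing to a collection of small circles of which precisely one is clockwise and still nested inside an anticlockwise one; there the claim is the immediate check $\down\up\down\up\mapsto\up\down\up\down$ (undotted $C$), resp.\ $\up\up\down\up\mapsto\down\down\up\down$ ($C$ carrying two dots), each step being a move of Lemma~\ref{lem:Bruhat} run backwards. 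Note also a technical slip in your straightening step: the moves \eqref{kink} and \eqref{snake} do change $\nu$ (the first deletes two vertices, the second reorients part of the circle), so the claim that ``none of these alters $\nu$'' is false as stated; only the \eqref{kinktocircle}-type moves may be used when $\nu$ must be held fixed.
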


\begin{proof}
If $C$ and $D$ are both anticlockwise, the statement follows directly from Lemma~\ref{lem:anticlock}. So assume $D$ is clockwise. In case $C$ and $D$ are small we have $\nu=\down\up\down\up$ if the circles are undotted, and $\nu=\up\up\down\up$ in case $C$ has two dots. (Note that since $D$ is nested inside $C$ it can't have any dots at all.) In either case the weight $\nu$ becomes bigger if we swap the orientations of $C$ and $D$ (hence the labels of $\nu$). If the circles are not small, we remove all kinks using the moves \eqref{kinktocircle}. (Note that this is possible, since $C$ is anticlockwise and $D$ has no dots). The result is a collection of small circles of which exactly one is clockwise. This clockwise circle is still nested in at least one anticlockwise circle. Now swapping the labels at these two circles behaves like the case considered at the beginning of the proof, whereas swapping the labels at the remaining anticlockwise circles obviously increases the weight.
\end{proof}

\begin{lemma}
\label{lem:dottedsurgery}
Let $C$ and $D$ be oriented circles appearing in the diagram after a finite number (possibly zero) of the surgery moves applied to an oriented circle diagram. Let $\nu$ be the sequence of labels attached to the intersection points of $C$ and $D$ with the top weight line. Assume that the next surgery move joins the two circles and the relevant cup-cap pair is dotted. Then $\nu$ either doesn't change or gets strictly bigger when applying the surgery move.
\end{lemma}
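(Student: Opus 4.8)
The plan is to classify the merge according to the \emph{types} (clockwise $=x$ or anticlockwise $=1$) of the two circles $C$ and $D$, and to show that a nonzero merge never turns a clockwise circle anticlockwise: the only reorientation it can produce flips an anticlockwise circle (possibly together with a nested clockwise one), and by Lemma~\ref{lem:anticlock} (resp. Lemma~\ref{lem:correct}) such a flip can only enlarge $\nu$. First I would recall from Lemma~\ref{lem:stupidlemma}(3) that the type of any circle occurring during the surgery procedure is read off from its rightmost label ($\up$ gives anticlockwise, $\down$ gives clockwise). Writing $t_C,t_D\in\{1,x\}$ for the types of $C$ and $D$, the surgery re-orients the merged circle $E$ with type $t_C\cdot t_D$ according to the rules \eqref{mult}. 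The case $t_C=t_D=x$ yields the zero diagram, so there is nothing to prove there.

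Next I would dispose of the case $t_C=t_D=1$. Here the merge simply stitches the two anticlockwise circles together without reorienting anything, exactly as observed in the proof of Proposition~\ref{forgotgraded}; the only thing to verify is that replacing the dotted cup--cap pair by a pair of undotted vertical segments does not alter the labels at the top weight line. This is a purely local check using \eqref{oriented}: an anticlockwise dotted cup carries the labels $\up\up$ at its two feet, an anticlockwise dotted cap likewise carries $\up\up$, and an undotted propagating segment transports a label unchanged between the two weight lines. Hence the two affected top vertices keep the label $\up$ and $\nu$ does not change.

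The mixed case, say $t_C=1$ and $t_D=x$, is where the increase happens. Comparing the rightmost label of the clockwise circle $E$ (which must be $\down$) with those of $C$ and $D$ forces the globally rightmost vertex to lie on the clockwise circle $D$; consequently $D$ retains its orientation while the anticlockwise circle $C$ is the one whose orientation is reversed in passing to $E$. By Lemma~\ref{lem:anticlock} this reversal replaces the labels of $C$ on the top line by a strictly larger sequence in the Bruhat order, while the labels of $D$ remain fixed; since the enlargement is realized by the elementary moves of Lemma~\ref{lem:Bruhat} carried out only at the positions of $C$, the full sequence $\nu$ gets strictly bigger.

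The subtlety I expect to be the main obstacle is the possibility that the clockwise circle $D$ is \emph{nested} inside the anticlockwise circle $C$ (or vice versa). In that configuration flipping $C$ alone need not yield a consistently oriented $E$, and the merge in fact reverses the orientations of both circles simultaneously; this is precisely the situation addressed by Lemma~\ref{lem:correct}, which again guarantees that $\nu$ becomes strictly bigger. After reducing arbitrary circles to unions of small circles via the kink-removal moves already employed in the proofs of Lemmas~\ref{lem:anticlock} and~\ref{lem:correct}, every label computation collapses to the two-vertex cases treated above, and the claim follows: $\nu$ is unchanged when $t_C=t_D=1$ and strictly bigger in the mixed cases.
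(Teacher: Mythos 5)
There is a genuine gap. Your entire case analysis is indexed only by the \emph{types} $t_C,t_D\in\{1,x\}$ of the two circles, but the behaviour of a dotted surgery is not determined by these types: it depends on the orientations of the particular dotted cup and dotted cap being cut, and a circle's type (read off from its rightmost label, Lemma~\ref{lem:stupidlemma}) does \emph{not} determine the orientation of an individual arc inside it --- an anticlockwise circle may well contain a clockwise dotted cup. This is exactly why the paper's proof runs over the four orientation patterns \eqref{4cases} of the cup-cap pair, crossed with the types of $C$ and $D$ and the location of the rightmost point $p$ of the merged circle. Your claim that ``$t_C=t_D=1$ implies the merge is a mere stitching and $\nu$ is unchanged'' is false: take the pattern where the dotted cup carries $\down\down$ on the top weight line (clockwise arc) while the dotted cap carries $\up\up$ on the bottom line (anticlockwise arc) --- Case (II) of \eqref{4cases} --- with both ambient circles anticlockwise. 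Removing the dots and replacing the pair by undotted vertical segments then forces a label mismatch along the new lines, so one of the two circles must be reoriented, and the paper shows via Lemma~\ref{lem:anticlock} that $\nu$ \emph{strictly increases} in this situation. Your ``purely local check'' silently assumes that the dotted cup and cap are both of degree $0$ whenever the circles are anticlockwise, which is only one of the possible configurations.

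The mixed case is also argued incorrectly. The rightmost vertex of the merged circle is not forced to lie on the clockwise circle $D$: both $p\in C$ and $p\in D$ occur, and which half of the merged circle keeps its labels depends on this location together with the orientation pattern of the cup-cap pair. For instance, in Case (I) of \eqref{4cases} with $C$ anticlockwise, $D$ clockwise and $p\in D$, the surgery changes nothing at all ($\nu$ is unchanged), contradicting your assertion that the anticlockwise circle is always flipped and $\nu$ always strictly increases there. Your final conclusions happen to stay inside the disjunction asserted by the lemma, but the intermediate claims supporting them are false, so the argument does not constitute a proof. To repair it you would have to redo the analysis as the paper does: fix the orientation of the dotted cup and cap (four cases), then for each combination of circle types decide, using the position of $p$ and Remark~\ref{decorated} to exclude impossible configurations, whether the merge is label-preserving or forces a reorientation handled by Lemma~\ref{lem:anticlock}.
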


\begin{proof}
Let $C$ (resp. $D$) be the circle involving the cup (resp. cap) relevant for the surgery move and let $E$ be the circle obtained by applying the surgery move to $C$ an $D$. Let $p$ be a rightmost point amongst the intersection points of $E$ with the two number lines containing the labels for the orientation. (Hence the label at $p$ determines the type of orientation of $E$ by Lemma~\ref{lem:stupidlemma}). Write $p\in C$ or $p\in D$ in case $p$ is lying on $C$ or $D$ respectively.
We consider the four possible orientations if the relevant cup/cap pair is dotted.
\begin{equation}
\label{4cases}
(I)\quad
\usetikzlibrary{arrows}
\begin{tikzpicture}[thick,>=angle 60, scale=0.8]
\draw [>-<] (0,0) .. controls +(0,-1) and +(0,-1) .. +(1,0);
\fill (0.5,-0.77) circle(2.5pt);
\draw [<->] (0,-2) .. controls +(0,1) and +(0,1) .. +(1,0);
\fill (0.5,-1.25) circle(2.5pt);
\end{tikzpicture}
\quad\quad
(II)
\quad
\usetikzlibrary{arrows}
\begin{tikzpicture}[thick,>=angle 60,scale=0.8]
\draw [>-<] (0,0) .. controls +(0,-1) and +(0,-1) .. +(1,0);
\fill (0.5,-0.77) circle(2.5pt);
\draw [>-<] (0,-2) .. controls +(0,1) and +(0,1) .. +(1,0);
\fill (0.5,-1.22) circle(2.5pt);
\end{tikzpicture}
\quad\quad
(III)
\quad
\usetikzlibrary{arrows}
\begin{tikzpicture}[thick,>=angle 60,scale=0.8]
\draw [<->] (0,0) .. controls +(0,-1) and +(0,-1) .. +(1,0);
\fill (0.5,-0.76) circle(2.5pt);
\draw [>-<] (0,-2) .. controls +(0,1) and +(0,1) .. +(1,0);
\fill (0.5,-1.22) circle(2.5pt);
\end{tikzpicture}
\quad\quad
(IV)
\quad
\usetikzlibrary{arrows}
\begin{tikzpicture}[thick,>=angle 60,scale=0.8]
\draw [<->] (0,0) .. controls +(0,-1) and +(0,-1) .. +(1,0);
\fill (0.5,-0.76) circle(2.5pt);
\draw [<->] (0,-2) .. controls +(0,1) and +(0,1) .. +(1,0);
\fill (0.5,-1.24) circle(2.5pt);
\end{tikzpicture}
\end{equation}
Consider Case (I). If both circles are anticlockwise, then we just replace the cup/cap pair by two straight lines and $\nu$ does not change.  Assume $C$ is anticlockwise and $D$ is clockwise. If $p\in C$ then we replace the cup/cap pair by two straight lines to obtain an anticlockwise circle which then has to be reoriented clockwise. Lemma~~\ref{lem:anticlock} implies that $\nu$ strictly increases. If $p\in D$ then we just replace the cup/cap pair by two straight lines and $\nu$ does not change.
Assume $C$ is clockwise and $D$ is anti-clockwise. If $p\in C$ then we just replace the cup/cap pair by two straight lines without changing $\nu$. If $p\in D$ then we replace the cup/cap pair by two straight lines to obtain an anticlockwise circle which then has to be reoriented clockwise. Again, Lemma~\ref{lem:anticlock} implies that $\nu$ strictly increases. In case both, $C$ and $D$, are clockwise, the result is zero and there is nothing to check.

Consider Case (II).  If both circles are anticlockwise and $p\in C$ then the bottom circle gets swapped and hence its intersection with $\nu$, and therefore $\nu$ itself, gets bigger by Lemma~\ref{lem:anticlock}; if instead $p\in D$ then the top circle gets swapped and again $\nu$ increases by Lemma~\ref{lem:anticlock}. If $C$ is anticlockwise and $D$ is clockwise then swapping first the orientation of $C$ and then replacing the cup-cap pair by two straight lines turns $E$ into a clockwise circle as required, independent on where $p$ lies. Again, $\nu$ increases by Lemma~\ref{lem:anticlock}.
If $C$ is clockwise and $D$ is anticlockwise then we can just swap the orientation of $D$ (which increases the part of $\nu$ intersecting $D$) and then join the two circles, hence $\nu$ gets bigger. If $C$ and $D$ are clockwise there is nothing to check.

Consider Case (III). If both circles are anticlockwise then we can just replace the cup/cap pair by two straight lines and $\nu$ doesn't change. If $C$ is anticlockwise, $D$ clockwise and $p\in C$ then we can join the circles and swap the orientation. Thus $\nu$ increases by Lemma~\ref{lem:anticlock}. On the other hand $p\in D$ is impossible, since this would imply in $D$ the existence of some dotted cup or cap to the right of the dotted cup involved in the surgery and attached to the top number line. By definition of the surgery procedure there is no such cup. But if there is such a cap, then it is impossible to close it to a circle $D$. If $C$ is clockwise and $D$ is anticlockwise then $\nu$ does not change if $p\in C$. If instead $p\in D$ then we first replace the cup/cap pair by two straight lines and then reorient so that $\nu$ increases by Lemma~\ref{lem:anticlock}. If $C$ and $D$ are clockwise there is nothing to check.

Consider Case (IV). If both circles are anticlockwise then we just swap the orientation of $D$ (resp. $C$) if $p\in D$ (resp. $p\in C$) before joining. Hence $\nu$ increases by Lemma~\ref{lem:anticlock}. If $C$ is anticlockwise, $D$ clockwise and $p\in C$ or $p\in D$, then we can swap the orientation of $C$ and join the circles. Hence $\nu$ increases by Lemma~\ref{lem:anticlock}.
If $C$ is clockwise and $D$ is anticlockwise then we can just swap the orientation of $D$ and join the circles. If $C$ and $D$ are clockwise there is nothing to check.
\end{proof}

\begin{proof}[Proof of Theorem~\ref{cellular}]
The proof is analogous to \cite{BS1}, but more involved. We already know that $( a \la b ) ( c \mu d ) = 0$ if $b \neq c^*$, so assume
$b=c^*$ from now on. Consider a single iteration of the surgery procedure in the
algorithm computing $( a \la b ) ( c \mu d )$. Let $\tau$ be the {top
weight} (the weight on the top number line) of this diagram
at the start of the
surgery procedure. It is enough to show the following:
\begin{enumerate}[(Cell1)]
\item
the top weight of each diagram obtained at the end of
the surgery procedure is greater than or equal to $\tau$ in the Bruhat order;
\item
the total number of
diagrams produced with top weight equal to $\tau$
is either zero or one, independent of the cap diagram $d$.
\end{enumerate}
Indeed, by applying (Cell1) repeatedly, starting with $\tau = \mu$ at the first step,
it follows that
$(a \la b) (c \mu d)$ is a linear combination of
$(a \nu d)$'s for $\nu \geq \mu$.
Assuming $a \mu$ is oriented,
(Cell2) applied repeatedly
implies that the coefficient
$s_{a \la b}(\mu)$ of the basis vector
$(a \mu d)$ in the product is zero or $\mp1$
independent of the cap diagram $d$.
This proves (i) and (ii).

To verify (Cell1) and (Cell2), we analyse three different situations
depending on the orientations of the cup-cap pair defining the surgery
(both of which clearly do not depend on $d$). We go through the same cases as in \cite{BS1}, but have to incorporate the decorations.
Let us first assume that the diagrams involved do not contain any lines or rays and the relevant cup-cap pair has no dots.

\noindent
{\em Case one: the cap to be cut is clockwise.}
If the circle containing the cap to be cut is anti-clockwise and undotted, we are in one of the three basic situations from \cite{BS1}:
\begin{eqnarray}
\label{H}
\begin{tikzpicture}[thick,>=angle 60,scale=.5]
\draw[thin] (1.5,2) -- +(4,0);
\draw[thin] (1.5,-.5) -- +(4,0);
\draw [<-] (2,2) .. controls +(0,2) and +(0,2) .. +(3,0);
\draw [<-] (3,2) .. controls +(0,1) and +(0,1) .. +(1,0);

\draw [<-] (2,-.5) -- +(0,2.5);
\draw [->] (3,2) .. controls +(0,-1) and +(0,-1) .. +(1,0);
\draw[dotted] (3.55,.3) -- +(0,.95);
\draw [->] (3,-.5) .. controls +(0,1) and +(0,1) .. +(1,0);
\draw [->] (5,-.5) -- +(0,2.5);

\draw [->] (2,-.5) .. controls +(0,-1) and +(0,-1) .. +(1,0);
\draw [->] (4,-.5) .. controls +(0,-1) and +(0,-1) .. +(1,0);

\begin{scope}[xshift=6cm]
\draw[thin] (1.5,2) -- +(4,0);
\draw[thin] (1.5,-.5) -- +(4,0);
\draw [<-] (2,2) .. controls +(0,2) and +(0,2) .. +(3,0);
\draw [->] (3,2) .. controls +(0,1) and +(0,1) .. +(1,0);

\draw [<-] (2,-.5) -- +(0,2.5);
\draw [<-] (3,2) .. controls +(0,-1) and +(0,-1) .. +(1,0);
\draw[dotted] (3.55,.3) -- +(0,.95);
\draw [->] (3,-.5) .. controls +(0,1) and +(0,1) .. +(1,0);
\draw [->] (5,-.5) -- +(0,2.5);

\draw [->] (2,-.5) .. controls +(0,-1) and +(0,-1) .. +(1,0);
\draw [->] (4,-.5) .. controls +(0,-1) and +(0,-1) .. +(1,0);
\end{scope}

\begin{scope}[xshift=12cm]
\draw[thin] (1.5,2) -- +(4,0);
\draw[thin] (1.5,-.5) -- +(4,0);

\draw [<-] (2,2) .. controls +(0,1) and +(0,1) .. +(1,0);
\draw [<-] (4,2) .. controls +(0,1) and +(0,1) .. +(1,0);

\draw [<-] (2,-.5) -- +(0,2.5);
\draw [<-] (3,2) .. controls +(0,-1) and +(0,-1) .. +(1,0);
\draw[dotted] (3.55,.3) -- +(0,.95);
\draw [->] (3,-.5) .. controls +(0,1) and +(0,1) .. +(1,0);
\draw [->] (5,-.5) -- +(0,2.5);

\draw [->] (2,-.5) .. controls +(0,-1) and +(0,-1) .. +(1,0);
\draw [->] (4,-.5) .. controls +(0,-1) and +(0,-1) .. +(1,0);
\end{scope}

\end{tikzpicture}
\end{eqnarray}

with maybe some dots on cups and caps not involved in our pair. In the first two basic diagrams the following configurations are possible
 \begin{enumerate}[(D1)]
 \item none, or
 \item one on the top cap and one on the left bottom cup, or
 \item one on the top cap and one on the right bottom cup, or
 \item one on either of the two bottom cups.
 \end{enumerate}
 In the first diagram, the surgery procedure joins two anti-clockwise circles in cases (D1)-(D2) and one anti-clockwise and one clockwise in cases (D3) and (D4). As a result all the labels of the inner circle get swapped in the surgery procedure. Since the circle was originally anticlockwise we are done with Lemma~\ref{lem:anticlock}: the top weight gets strictly larger in the Bruhat order. In the second diagram we get zero in the cases (D3) and (D4). In the cases (D1) and (D2) the labels of both circles are swapped and we are done by Lemma~\ref{lem:correct}, since the outer circle was anti-clockwise and the inner was clockwise. For the third diagram we get zero except when the dot configurations are of the form (D1)-(D3). In cases (D1) and (D2) we split an anticlockwise circle into two. The reorientation of the surgery procedure swaps one anticlockwise circle into a clockwise and we are done again by Lemma~\ref{lem:anticlock}. In case (D3) we split a clockwise circle into two. The reorientation swaps again an anticlockwise circle into a clockwise and we are done by Lemma~\ref{lem:anticlock}.
Now the above diagrams (and all subsequent pictures) should be interpreted only up to homeomorphism; in particular the circles represented in the pictures may well
cross both number lines many more times than indicated and might have many dots. For the above diagrams on can argue as in the basic cases, by considering the associated segments of the circle instead of just cups and caps and specifying the parity of the number of dots. Since the label on the drawn rightmost vertical line determines the orientation of the circle (because of Lemma~\ref{lem:stupidlemma} and the fact that there are only undotted cups to the right), the arguments are verbatim the same.

Hence in all situations considered so far the top weight of each diagram
obtained at the end of the surgery procedure is strictly greater than $\tau$,
from which (Cell1)--(Cell3) follow immediately.

If instead, the circle containing the cap to be cut is clockwise, then there are four basic situations:
\begin{eqnarray*}
\begin{tikzpicture}[thick,>=angle 60,scale=.5]
\draw[thin] (2.5,2) -- +(2,0);
\draw[thin] (2.5,-.5) -- +(2,0);
\draw [<-] (3,2) .. controls +(0,1) and +(0,1) .. +(1,0);

\draw [->] (3,2) .. controls +(0,-1) and +(0,-1) .. +(1,0);
\draw[dotted] (3.55,.3) -- +(0,.95);
\draw [->] (3,-.5) .. controls +(0,1) and +(0,1) .. +(1,0);

\draw [<-] (3,-.5) .. controls +(0,-1) and +(0,-1) .. +(1,0);

\begin{scope}[xshift=5cm]
\draw[thin] (1.5,2) -- +(4,0);
\draw[thin] (1.5,-.5) -- +(4,0);
\draw [<-] (2,2) .. controls +(0,1) and +(0,1) .. +(1,0);
\draw [<-] (4,2) .. controls +(0,1) and +(0,1) .. +(1,0);

\draw [<-] (2,-.5) -- +(0,2.5);
\draw [<-] (3,2) .. controls +(0,-1) and +(0,-1) .. +(1,0);
\draw[dotted] (3.55,.3) -- +(0,.95);
\draw [->] (3,-.5) .. controls +(0,1) and +(0,1) .. +(1,0);
\draw [->] (5,-.5) -- +(0,2.5);

\draw [->] (2,-.5) .. controls +(0,-2) and +(0,-2) .. +(3,0);
\draw [<-] (3,-.5) .. controls +(0,-1) and +(0,-1) .. +(1,0);
\end{scope}

\begin{scope}[xshift=11cm]
\draw[thin] (1.5,2) -- +(4,0);
\draw[thin] (1.5,-.5) -- +(4,0);
\draw [->] (2,2) .. controls +(0,1) and +(0,1) .. +(1,0);
\draw [->] (4,2) .. controls +(0,1) and +(0,1) .. +(1,0);

\draw [->] (2,-.5) -- +(0,2.5);
\draw [->] (3,2) .. controls +(0,-1) and +(0,-1) .. +(1,0);
\draw[dotted] (3.55,.3) -- +(0,.95);
\draw [->] (3,-.5) .. controls +(0,1) and +(0,1) .. +(1,0);
\draw [<-] (5,-.5) -- +(0,2.5);

\draw [<-] (2,-.5) .. controls +(0,-2) and +(0,-2) .. +(3,0);
\draw [<-] (3,-.5) .. controls +(0,-1) and +(0,-1) .. +(1,0);
\end{scope}

\begin{scope}[xshift=16cm]
\draw[thin] (2.5,2) -- +(2,0);
\draw[thin] (2.5,-.5) -- +(2,0);
\draw [->] (3,2) .. controls +(0,1) and +(0,1) .. +(1,0);

\draw [<-] (3,2) .. controls +(0,-1) and +(0,-1) .. +(1,0);
\draw[dotted] (3.55,.3) -- +(0,.95);
\draw [->] (3,-.5) .. controls +(0,1) and +(0,1) .. +(1,0);

\draw [<-] (3,-.5) .. controls +(0,-1) and +(0,-1) .. +(1,0);
\end{scope}
\end{tikzpicture}
\end{eqnarray*}

In the first two of these, the orientation of
every vertex lying on the anti-clockwise circle containing
the cup to be cut
gets switched, so the top weight gets strictly bigger.
The last two involve the rule $x \otimes x \mapsto 0$,
so there is nothing to check. This works also for all dot configurations which do not  change the type of the circles.

In the first and last case we could arrange dots changing the orientation of the circle (in case they are not small). The surgery procedure joins two circles. If they are both anticlockwise, the top weight does not change, appears with multiplicity one and no other weight appears. If the top circle is anticlockwise and the bottom circle clockwise we apply again Lemma~\ref{lem:anticlock} and deduce that the weight gets strictly bigger. In case the top circle is clockwise and the bottom circle anticlockwise, then the joint circle is clockwise. If its rightmost point is on the top circle, the top weight obviously does not change. Let it be on the bottom circle. Then there is necessarily a dotted cap to the right of it. If the cup involved in the surgery is anticlockwise then there has to be a dotted cup to the right of it. This is a contradiction, since we would have a dotted cup/cap pair which we should have used earlier in the surgery procedure. If the cup involved in the surgery is clockwise then we naively join the circles and obtain an anticlockwise circle. We have to reorient it and hence the weight strictly increases by Lemma~\ref{lem:anticlock}.

Finally let the cup-cap-pair involved in the surgery be dotted. Thanks to Remark~\ref{decorated} we only have to deal with the following basic configurations
\begin{equation}
\label{dottedCase1}
\begin{tikzpicture}[thick]
\node at (-.5,-.5) {};
\draw (0,0) .. controls +(0,1) and +(0,1) .. +(1.5,0);
\draw (0,0) .. controls +(0,-.5) and +(0,-.5) .. +(.5,0);
\fill (.25,-.365) circle(2.5pt);
\draw (.5,0) .. controls +(0,.5) and +(0,.5) .. +(.5,0);
\draw (1,0) -- +(0,-1);
\draw (1.5,0) -- +(0,-1);
\draw (0,-1) .. controls +(0,.5) and +(0,.5) .. +(.5,0);
\fill (.25,-.635) circle(2.5pt);
\draw (.5,-1) .. controls +(0,-.5) and +(0,-.5) .. +(.5,0);
\draw (0,-1) .. controls +(0,-1) and +(0,-1) .. +(1.5,0);
\node at (.01,0) {$\down$};
\node at (.51,0) {$\down$};
\node at (1.01,-.05) {$\up$};
\node at (1.51,-.05) {$\up$};
\node at (.01,-1) {$\down$};
\node at (.51,-1) {$\down$};
\node at (1.01,-1.05) {$\up$};
\node at (1.51,-1.05) {$\up$};

\begin{scope}[xshift=5cm]
\node at (-.5,-.5){};
\draw (0,0) .. controls +(0,1) and +(0,1) .. +(1.5,0);
\fill (.75,.74) circle(2.5pt);
\draw (0,0) .. controls +(0,-.5) and +(0,-.5) .. +(.5,0);
\fill (.25,-.365) circle(2.5pt);
\draw (.5,0) .. controls +(0,.5) and +(0,.5) .. +(.5,0);
\draw (1,0) -- +(0,-1);
\draw (1.5,0) -- +(0,-1);
\draw (0,-1) .. controls +(0,.5) and +(0,.5) .. +(.5,0);
\fill (.25,-.635) circle(2.5pt);
\draw (.5,-1) .. controls +(0,-.5) and +(0,-.5) .. +(.5,0);
\draw (0,-1) .. controls +(0,-1) and +(0,-1) .. +(1.5,0);
\fill (.75,-1.74) circle(2.5pt);
\node at (.01,0) {$\down$};
\node at (.51,0) {$\down$};
\node at (1.01,-.05) {$\up$};
\node at (1.51,0) {$\down$};
\node at (.01,-1) {$\down$};
\node at (.51,-1) {$\down$};
\node at (1.01,-1.05) {$\up$};
\node at (1.51,-1) {$\down$};
\end{scope}
\end{tikzpicture}
\end{equation}
In the first case we split an anticlockwise circle into two circles and we are done with Lemma~\ref{lem:anticlock}. The top weight strictly increases. In the second case surgery produces a sum of two diagrams, one with the same top weight, the other with a strictly bigger top weight.

\noindent
{\em Case two: both the cap and the cup to be cut are anti-clockwise.}
first assume the cup/cap pair is undotted. There are five basic configurations to consider:
\begin{eqnarray}
\label{Case2partone}
\usetikzlibrary{arrows}
\begin{tikzpicture}[thick,>=angle 60,scale=.5]
\draw[thin] (1.5,2) -- +(4,0);
\draw[thin] (1.5,-.5) -- +(4,0);
\draw [->] (2,2) .. controls +(0,2) and +(0,2) .. +(3,0);
\draw [<-] (3,2) .. controls +(0,1) and +(0,1) .. +(1,0);

\draw [->] (2,-.5) -- +(0,2.5);
\draw [->] (3,2) .. controls +(0,-1) and +(0,-1) .. +(1,0);
\draw[dotted] (3.55,.3) -- +(0,.95);
\draw [<-] (3,-.5) .. controls +(0,1) and +(0,1) .. +(1,0);
\draw [<-] (5,-.5) -- +(0,2.5);

\draw [<-] (2,-.5) .. controls +(0,-1) and +(0,-1) .. +(1,0);
\draw [<-] (4,-.5) .. controls +(0,-1) and +(0,-1) .. +(1,0);

\begin{scope}[xshift=6cm]
\draw[thin] (2.5,2) -- +(2,0);
\draw[thin] (2.5,-.5) -- +(2,0);
\draw [<-] (3,2) .. controls +(0,1) and +(0,1) .. +(1,0);

\draw [->] (3,2) .. controls +(0,-1) and +(0,-1) .. +(1,0);
\draw[dotted] (3.55,.3) -- +(0,.95);
\draw [<-] (3,-.5) .. controls +(0,1) and +(0,1) .. +(1,0);

\draw [->] (3,-.5) .. controls +(0,-1) and +(0,-1) .. +(1,0);
\end{scope}

\begin{scope}[xshift=11cm]
\draw[thin] (1.5,2) -- +(4,0);
\draw[thin] (1.5,-.5) -- +(4,0);
\draw [->] (2,2) .. controls +(0,1) and +(0,1) .. +(1,0);
\draw [->] (4,2) .. controls +(0,1) and +(0,1) .. +(1,0);

\draw [->] (2,-.5) -- +(0,2.5);
\draw [->] (3,2) .. controls +(0,-1) and +(0,-1) .. +(1,0);
\draw[dotted] (3.55,.3) -- +(0,.95);
\draw [<-] (3,-.5) .. controls +(0,1) and +(0,1) .. +(1,0);
\draw [<-] (5,-.5) -- +(0,2.5);

\draw [<-] (2,-.5) .. controls +(0,-2) and +(0,-2) .. +(3,0);
\draw [->] (3,-.5) .. controls +(0,-1) and +(0,-1) .. +(1,0);
\end{scope}
\end{tikzpicture}
\end{eqnarray}
\begin{eqnarray}
\label{kidneys}
\usetikzlibrary{arrows}
\begin{tikzpicture}[thick,>=angle 60,scale=.5]
\draw[thin] (1.5,2) -- +(4,0);
\draw[thin] (1.5,-.5) -- +(4,0);
\draw [<-] (2,2) .. controls +(0,2) and +(0,2) .. +(3,0);
\draw [->] (3,2) .. controls +(0,1) and +(0,1) .. +(1,0);

\draw [<-] (4,-.5) -- +(0,2.5);
\draw [->] (2,2) .. controls +(0,-
1) and +(0,-1) .. +(1,0);
\draw[dotted] (2.55,.3) -- +(0,.95);
\draw [<-] (2,-.5) .. controls +(0,1) and +(0,1) .. +(1,0);
\draw [->] (5,-.5) -- +(0,2.5);

\draw [->] (2,-.5) .. controls +(0,-2) and +(0,-2) .. +(3,0);
\draw [<-] (3,-.5) .. controls +(0,-1) and +(0,-1) .. +(1,0);

\begin{scope}[xshift=14cm,xscale=-1]
\draw[thin] (1.5,2) -- +(4,0);
\draw[thin] (1.5,-.5) -- +(4,0);
\draw [->] (2,2) .. controls +(0,2) and +(0,2) .. +(3,0);
\draw [<-] (3,2) .. controls +(0,1) and +(0,1) .. +(1,0);

\draw [->] (4,-.5) -- +(0,2.5);
\draw [<-] (2,2) .. controls +(0,-
1) and +(0,-1) .. +(1,0);
\draw[dotted] (2.55,.3) -- +(0,.95);
\draw [->] (2,-.5) .. controls +(0,1) and +(0,1) .. +(1,0);
\draw [<-] (5,-.5) -- +(0,2.5);

\draw [<-] (2,-.5) .. controls +(0,-2) and +(0,-2) .. +(3,0);
\draw [->] (3,-.5) .. controls +(0,-1) and +(0,-1) .. +(1,0);
\end{scope}

\end{tikzpicture}
\end{eqnarray}
In the first two diagrams we always join two circles (both anticlockwise in case we have no dots or in case the bottom cups are dotted and one clockwise and one anticlockwise otherwise) and one easily checks that the top weight gets not changed. In the third diagram we again join two circles (an anticlockwise with a clockwise in case we have no dots or in case the top caps are dotted, and two anticlockwise otherwise) and again the top weight is unchanged. The fourth diagram produces in the undotted case the sum of the two possibilities of two nested circles with opposite orientation. Hence one summand keeps the weight, whereas the other increases it be Lemma~\ref{lem:anticlock}. If the top cap and lower cup are dotted then we split a clockwise circle into two nested clockwise (in which case the top weight stays the same) or split an anticlockwise circle (in which case we get two summands, one keeps the top weight and the other swaps the outer anticlockwise circle into a clockwise circle, hence the weight gets bigger by Lemma~\ref{lem:correct}.

Now consider the case where the cup-cap pair is dotted. Thanks to Remark~\ref{decorated} we only have to consider the basic configurations from \eqref{dottedCase1} but with the orientation reversed. In the first diagram we create two nested clockwise circles and the weight is kept. In the second diagram we create the two possibilities of two nested circles oriented oppositely. One summand keeps the weight, whereas the other increases the weight by Lemma~\ref{lem:correct}.

\noindent
{\it Case three: the cap to be cut is anti-clockwise but the cup to be cut is clockwise.}
By our decoration rules the cup-cap pair is automatically undotted, and then the following basic situations are possible:
\begin{eqnarray*}
\usetikzlibrary{arrows}
\begin{tikzpicture}[thick,>=angle 60,scale=.5]
\draw[thin] (1.5,2) -- +(4,0);
\draw[thin] (1.5,-.5) -- +(4,0);
\draw [->] (2,2) .. controls +(0,2) and +(0,2) .. +(3,0);
\draw [->] (3,2) .. controls +(0,1) and +(0,1) .. +(1,0);

\draw [->] (2,-.5) -- +(0,2.5);
\draw [<-] (3,2) .. controls +(0,-1) and +(0,-1) .. +(1,0);
\draw[dotted] (3.55,.3) -- +(0,.95);
\draw [<-] (3,-.5) .. controls +(0,1) and +(0,1) .. +(1,0);
\draw [<-] (5,-.5) -- +(0,2.5);

\draw [<-] (2,-.5) .. controls +(0,-1) and +(0,-1) .. +(1,0);
\draw [<-] (4,-.5) .. controls +(0,-1) and +(0,-1) .. +(1,0);

\begin{scope}[xshift=6cm]
\draw[thin] (2.5,2) -- +(2,0);
\draw[thin] (2.5,-.5) -- +(2,0);
\draw [->] (3,2) .. controls +(0,1) and +(0,1) .. +(1,0);

\draw [<-] (3,2) .. controls +(0,-1) and +(0,-1) .. +(1,0);
\draw[dotted] (3.55,.3) -- +(0,.95);
\draw [<-] (3,-.5) .. controls +(0,1) and +(0,1) .. +(1,0);

\draw [->] (3,-.5) .. controls +(0,-1) and +(0,-1) .. +(1,0);
\end{scope}

\begin{scope}[xshift=12cm]
\draw[thin] (1.5,2) -- +(4,0);
\draw[thin] (1.5,-.5) -- +(4,0);
\draw [<-] (2,2) .. controls +(0,1) and +(0,1) .. +(1,0);
\draw [<-] (4,2) .. controls +(0,1) and +(0,1) .. +(1,0);

\draw [<-] (2,-.5) -- +(0,2.5);
\draw [<-] (3,2) .. controls +(0,-1) and +(0,-1) .. +(1,0);
\draw[dotted] (3.55,.3) -- +(0,.95);
\draw [<-] (3,-.5) .. controls +(0,1) and +(0,1) .. +(1,0);
\draw [->] (5,-.5) -- +(0,2.5);

\draw [->] (2,-.5) .. controls +(0,-2) and +(0,-2) .. +(3,0);
\draw [->] (3,-.5) .. controls +(0,-1) and +(0,-1) .. +(1,0);
\end{scope}

\end{tikzpicture}
\end{eqnarray*}
The first diagram can carry decorations as in (D1)-(D4). Note that (D1) and (D2) is a situation when $x \otimes x \mapsto 0$, hence there is nothing to do. In cases (D3) and (D4) the weight strictly increases.
For the second diagram either both circles are clockwise and there is nothing to do, or
both circles are anticlockwise then the top weight is kept or the two circles have not the same type in which case the weight increases. In the third diagram the inner circle is anticlockwise and hence the top weight either increases or stays the same under the surgery move depending on the type of the outer circle.
\end{proof}

\section{The quasi-hereditary structure of $\D$}\label{sdecomposition}

We briefly describe the representation theory of the algebras $\D$ for any block $\La$, their cell modules and explicit closed formulae for $q$-decomposition numbers
which simultaneously describe  composition multiplicities of
cell modules and  cell filtration
multiplicities of projective indecomposable modules.
We will deduce that the category ${\D}\MOD$
of finite dimensional graded $\D$-modules is a graded highest weight
category.

\subsection{Graded modules, projectives and irreducibles}
\label{graded}
If $A$ is a $\mZ$-graded finite dimensional algebra
and $M=\bigoplus_{j \in \mZ} M_j$ is a {\em graded} $A$-module,
ie. $A_i M_j \subseteq M_{i+j}$,
then we write $M\langle j \rangle$ for the same module but with new grading defined by
$M\langle j \rangle_i := M_{i-j}$. For graded modules $M$ and $N$, we define
\begin{equation}\label{homdef}
\hom_{A}(M, N) := \bigoplus_{j \in \mZ} \hom_{A}(M,N)_j
\end{equation}
where $\hom_{A}(M,N)_j$ denotes all homogeneous $A$-module
homomorphisms of degree $j$, meaning that they map $M_i$ into $N_{i+j}$ for
each $i \in \mZ$.
Later we might also work with (not necessarily unital) $\mZ$-graded algebras with many idempotents (which means they come with a given system
$\{e_\la\:|\:\la\in \La'\}$ of mutually orthogonal idempotents
such that
$
A = \bigoplus_{\la,\mu \in \La'} e_\la  A e_\mu.)
$
By an {\em $A$-module} we mean then a left $A$-module
$M$ such that $
M = \bigoplus_{\la \in \La'} e_\la M;
$
and the notions about graded modules generalize. (Note that if $A$ is finite dimensional, our definitions of $A$-modules agree).

Let $A\gMOD$ be the category of all finite dimensional graded $A$-modules $M =
\bigoplus_{j \in \mZ} M_j$. Together with degree zero morphisms this is an abelian category.

Now fix an arbitrary block $\La$ and consider the graded algebra $A=\D$. Let ${\D}^{> 0}$ be the sum of all
components of strictly positive degree, so
\begin{equation}\label{degzero}
\D / {\D}^{> 0} \cong \bigoplus_{\la \in \La} \mC
\end{equation}
as an algebra, with a basis given by the images of
all the idempotents $e_\la$, $\la\in\La$. The image of $e_\la$
spans a one dimensional graded $\D$-module which we denote by $L(\la)$.
Thus, $L(\la)$ is a copy of the field concentrated in degree $0$, and $(a \mu
b)\in \D$ acts on $L(\la)$ as multiplication by $1$ if $a \mu b =
\underline{\la} \la \overline{\la}$, or as zero otherwise. The modules
\begin{equation}\label{simples}
\{L(\la)\langle j \rangle\:|\:\la \in \La, j\in \mZ\}
\end{equation}
give a complete set of isomorphism classes of simple modules in $\D\gMOD$.
For any graded $\D$-module $M$ we let
${M}^\circledast$ denote its graded dual, which means that
$({M}^\circledast)_{j} := \hom_{\mC}(M_{-j},\mC)$ and $x \in \D$ acts on $f
\in {M}^\circledast$ by $(xf)(m) := f(x^* m)$, where $*$ is the antiautomorphism from Lemma~\ref{antiaut}.
Clearly we have for each $\la \in \La$
\begin{equation}
\label{simplesselfdual}
{L(\la)}^\circledast
 \cong L(\la).
\end{equation}

For $\la \in \La$, let $P(\la) := \D e_\la$. This is a graded $\D$-module
with basis $$\left\{(\underline{\nu} \mu \overline\la)\:\big|\:\text{for all
}\nu,\mu \in \La \text{ such that }\nu \subset \mu \supset \la\right\}$$ and is with the natural surjection a
projective cover of $L(\la)$. The modules
\begin{equation}\label{pims}
\{P(\la)\langle j \rangle\:|\:\la \in \La, j\in \mZ\}
\end{equation}
give a full set of indecomposable projective objects in $\D\gMOD$.

\subsection{Grothendieck groups}
The {\it Grothendieck group} of $\D\gMOD$, denoted $K_0(\D)$, is the free $\mZ$-module on isomorphism classes $[M]$ of objects $M$ in $\D\gMOD$ modulo the relation $[M]=[M']+[M'']$ whenever there is a short exact sequence in $\D\gMOD$ with $M$ as middle and $M'$, $M''$ as outer terms; then $K_0(\D)$ has a basis given by the $\{L(\la)\langle j \rangle\:|\:\la \in \La, j\in \mZ\}$. Let $[\op{proj}({\D})]$ be the subgroup generated by the classes of the projective
indecomposable modules from \eqref{pims}. They both carry a free $\mZ[q,q^{-1}]$-module structure by setting
$$
q^j [M] := [M\langle j
\rangle].
$$
In particular, there are $c_{\la,\mu}(q)\in\mZ[q,q^{-1}]$ such that
\begin{equation}\label{decm}
[P(\mu)] = \sum_{\la \in \La} c_{\la,\mu}(q) [L(\la)].
\end{equation}
The  matrix $C_\La(q) = (c_{\la,\mu}(q))_{\la,\mu \in\La}$ is called the {\em $q$-Cartan matrix} of $\D$.

\begin{lemma}
The entries in the Cartan matrix of $\D$ are explicitly given as follows:
\begin{equation}\label{exp}
c_{\la,\mu}(q) =
\sum_{\la \subset \nu \supset \mu}
q^{\deg(\underline{\la}\nu\overline{\mu})}.
\end{equation}
\end{lemma}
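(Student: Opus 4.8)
The plan is to read off the graded composition multiplicities of $P(\mu)$ directly from its explicit homogeneous basis, exploiting that every simple $\D$-module is one-dimensional. First I would recall the structural input: $P(\mu)=\D e_\mu$, and by Theorem~\ref{algebra_structure} the $\D$ is positively graded with degree-zero part $\D_0=\bigoplus_{\la}\mC e_\la$ semisimple, the $e_\la$ forming a complete set of pairwise orthogonal primitive idempotents whose simple tops are the one-dimensional modules $L(\la)$ concentrated in degree zero. The crucial elementary observation is that the functor $M\mapsto e_\la M$ on $\D\gMOD$ is exact, being multiplication by an idempotent, and sends $L(\nu)\langle j'\rangle$ to a copy of $\mC$ in degree $j'$ when $\nu=\la$ and to $0$ otherwise.

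Passing to the graded Grothendieck group $K_0(\D)$, this exactness yields for every finite-dimensional graded module $M$ the identity $[M:L(\la)\langle j\rangle]=\dim_\mC (e_\la M)_j$, since applying $\dim_\mC(e_\la -)_j$ to a composition series detects exactly the factors isomorphic to $L(\la)\langle j\rangle$. Applying this with $M=P(\mu)$, and recalling that the definition \eqref{decm} together with the $\mZ[q,q^{-1}]$-module structure $q^j[M]=[M\langle j\rangle]$ gives $c_{\la,\mu}(q)=\sum_j[P(\mu):L(\la)\langle j\rangle]\,q^j$, I reduce the claim to computing the graded dimension of $e_\la P(\mu)=e_\la\D e_\mu={}_\mu(\D)_\la$. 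By the explicit basis of $P(\mu)=\D e_\mu$ and the left action of $e_\la$ (which keeps a basis vector $(\underline{\nu}\eta\overline{\mu})$ precisely when its bottom cup diagram equals $\underline{\la}$, i.e. $\nu=\la$ by the uniqueness in Lemma~\ref{lem:orient}), this space has homogeneous basis $\{(\underline{\la}\nu\overline{\mu})\mid \la\subset\nu\supset\mu\}$, each vector of degree $\deg(\underline{\la}\nu\overline{\mu})$. Summing $q^{\deg}$ over this basis produces exactly the right-hand side of \eqref{exp}.

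I do not anticipate a serious obstacle: once the structural facts from Theorem~\ref{algebra_structure} and the basis of $P(\mu)$ are invoked, the result is a formal consequence of the one-dimensionality of the simples and the exactness of idempotent truncation. The only point meriting a line of care is the Grothendieck-group step, namely justifying that graded composition multiplicities may be read off via $\dim_\mC(e_\la -)_j$; this is the standard behaviour of positively graded algebras with semisimple degree-zero part and one-dimensional irreducibles, and I would state it as such rather than belabour it.
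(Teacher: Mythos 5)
Your proposal is correct and follows essentially the same route as the paper: the paper's proof notes that $c_{\la,\mu}(q)=\sum_j q^j\dim\hom_{\D}(P(\la),P(\mu))_j$ and then identifies $\hom_{\D}(\D e_\la,\D e_\mu)=e_\la\D e_\mu$ with its explicit homogeneous basis $\{(\underline{\la}\nu\overline{\mu})\mid\la\subset\nu\supset\mu\}$, which is exactly your computation of $e_\la P(\mu)$ (the two are canonically the same space, since $\hom_{\D}(\D e_\la,M)\cong e_\la M$). The only difference is cosmetic: you justify the identification of graded composition multiplicities with graded dimensions of $e_\la(-)$ via exactness of idempotent truncation and one-dimensionality of the simples, whereas the paper takes this standard fact as given in its displayed equation.
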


\begin{proof}
To determine its entries note first that
\begin{equation}\label{cmat}
c_{\la,\mu}(q) = \sum_{j \in \mZ} q^j \dim \hom_{\D}(P(\la),P(\mu))_j.
\end{equation}
Since $\hom_{\D}(P(\la), P(\mu)) = \hom_{\D} (\D e_\la, \D e_\mu)= e_\la
\D e_\mu
$ and $e_\la \D e_\mu$ has basis
$\left\{(\underline{\la}\nu\overline{\mu})\:\big|\:
\nu \in \La \text{ such that }\la \subset\nu\supset\mu\right\}$ the claim follows.
\end{proof}

Note that $c_{\la,\mu}(q)$ is in fact a polynomial in $\mathbb{N}[q]$ with constant coefficient equal to $1$ if $\la = \mu$ and equal to $0$
otherwise, see Figure~\ref{fig:CM} for an example.


Now we introduce {\em cell modules} in the sense of \cite{GL}. The construction is totally analogous to \cite{BS1}, hence we just recall the results.

\begin{definition}
\label{cellmods}
For $\mu\in \La$, define $V(\mu)$ to be the vector space on homogeneous basis
\begin{equation}
\left\{
(c \mu | \:\big|\:
\text{for all oriented cup diagrams $c \mu$}\right\},
\end{equation}
where the degree of the vector $(c \mu |$ is the degree $\deg(c\mu)$ of the oriented cup diagram.
We make $V(\mu)$ into a graded $\D$-module by declaring
for any basis vector $(a \la b)$ of $\D$ that
\begin{equation}\label{Actby}
(a \la b) (c \mu| :=
\left\{
\begin{array}{ll}
s_{a \la b}(\mu)  (a \mu|
&\text{if $b^* = c$ and $a \mu$ is oriented,}\\
0&\text{otherwise.}
\end{array}\right.
\end{equation}
where $s_{a \la b}(\mu) \in \{0,\mp 1\}$ is the scalar
from Theorem~\ref{cellular}; hence the action is well-defined. We call $V(\mu)$ the {\emph cell module of highest weight $\mu$}.
\end{definition}

\begin{theorem}[Cell module filtration of projectives]
\label{qh1}
For $\la \in \La$,
enumerate the $2^{\op{def}(\la)}$ distinct elements of the set
$\{\mu \in \La\:|\:\mu \supset \la\}$
as $\mu_1,\mu_2,\dots,\mu_n = \la$
so that $\mu_i > \mu_j$ implies $i < j$.
Let $M(0) := \{0\}$ and for $i=1,\dots,n$ define
$M(i)$ to be the subspace of $P(\la)$
generated by $M(i-1)$ and the vectors
$$
\left\{
(c \mu_i \overline{\la} ) \:\big|\:
\text{for all oriented cup diagrams $c \mu_i$}\right\}.
$$
Then
$
\{0\} = M(0) \subset M(1) \subset\cdots\subset M(n) = P(\la)
$
is a filtration of $P(\la)$ as a $\D$-module such that for each $i=1,\dots, n$:
$$
M(i) / M(i-1) \cong V(\mu_i) \langle \deg(\mu_i
\overline{\la})\rangle.
$$
\end{theorem}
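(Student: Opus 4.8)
The plan is to deduce the statement formally from the cellular structure of Corollary~\ref{iscell}, following the template of \cite{BS1} (and \cite{GL}); all the genuine combinatorics involving the decorations has already been packaged into Theorem~\ref{cellular}, so the remaining work is bookkeeping with the Bruhat order. First I would record the vector space basis of $P(\la)=\D e_\la$: by Lemma~\ref{lem:Bruhator} a vector $(a\,\nu\,\overline{\la})$ is a nonzero basis element precisely when $a=\underline{\alpha}$ for some $\alpha\subset\nu$ and $\nu\supset\la$, so $P(\la)$ has basis $\{(\underline{\alpha}\,\mu\,\overline{\la})\mid \alpha\subset\mu\supset\la\}$. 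For fixed $\mu\supset\la$ I call the span of the $(\underline{\alpha}\,\mu\,\overline{\la})$ with $\alpha\subset\mu$ the \emph{$\mu$-layer}; the weights $\mu_1,\dots,\mu_n$ index the layers.

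Next I would prove by induction on $i$ that $M(i)$ equals, as a graded vector space, the sum of the layers $\mu_1,\dots,\mu_i$, and is a $\D$-submodule. The layer-$i$ generators $(\underline{\alpha}\,\mu_i\,\overline{\la})$ obviously lie in this sum. Conversely, acting on such a vector by a basis element $(a\,\la'\,b)$ of $\D$ gives, by Theorem~\ref{cellular}, a scalar multiple of $(a\,\mu_i\,\overline{\la})$ plus a combination of vectors $(a\,\nu\,\overline{\la})$ with $\nu>\mu_i$. The leading term lies in the $\mu_i$-layer, while each higher term is nonzero only when $\nu\supset\la$, forcing $\nu=\mu_j$ with $\mu_j>\mu_i$, hence $j<i$ by the defining property of the enumeration; thus these terms lie in $M(i-1)$. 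Therefore $\D\cdot(\mu_i\text{-layer})\subseteq M(i-1)+(\mu_i\text{-layer})$, so $M(i)=M(i-1)\oplus(\mu_i\text{-layer})$ is a submodule, completing the induction. Since the $\mu_j$ exhaust $\{\mu\mid\mu\supset\la\}$ we get $M(n)=P(\la)$, and $M(0)=\{0\}$, so we have a filtration of $P(\la)$ by $\D$-submodules.

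Finally I would identify the subquotients. Define $\phi_i\colon V(\mu_i)\to M(i)/M(i-1)$ on basis vectors by $\phi_i\big((c\,\mu_i|\big)=(c\,\mu_i\,\overline{\la})+M(i-1)$, where $c$ runs over the cup diagrams with $c\mu_i$ oriented, i.e.\ $c=\underline{\alpha}$ with $\alpha\subset\mu_i$. Its images are exactly the basis of the $\mu_i$-layer modulo $M(i-1)$, so $\phi_i$ is a vector space isomorphism, and since $\deg(\underline{\alpha}\,\mu_i\,\overline{\la})=\deg(\underline{\alpha}\,\mu_i)+\deg(\mu_i\,\overline{\la})=\deg(c\,\mu_i|)+\deg(\mu_i\,\overline{\la})$, it is homogeneous of degree zero after the shift, giving an isomorphism $V(\mu_i)\langle\deg(\mu_i\,\overline{\la})\rangle\cong M(i)/M(i-1)$ of graded vector spaces. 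To see $\D$-linearity, apply $(a\,\la'\,b)$ to $(c\,\mu_i\,\overline{\la})$: by Theorem~\ref{cellular} this is $s_{a\la' b}(\mu_i)(a\,\mu_i\,\overline{\la})$ plus terms supported on strictly larger weights, all of which lie in $M(i-1)$ by the previous paragraph. Hence modulo $M(i-1)$ the action agrees term for term with the cell-module action \eqref{Actby}, using crucially that the scalar $s_{a\la' b}(\mu_i)$ is independent of the cap diagram $d=\overline{\la}$ by Theorem~\ref{cellular}(ii). Thus $\phi_i$ is an isomorphism of graded $\D$-modules.

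I expect essentially all of the mathematical content to be absorbed into Theorem~\ref{cellular}; the one point that genuinely needs care here is the claim that the error terms $(\dagger)$ produced by the multiplication land in $M(i-1)$. This is exactly where the compatibility of the chosen linear extension $\mu_1,\dots,\mu_n$ with the Bruhat order enters (via ``$\mu_i>\mu_j\Rightarrow i<j$''), and also where the orientability constraint $\nu\supset\la$ is needed to convert the Bruhat inequality into a statement about the layer index. Everything else is the standard formal passage from a graded cellular basis to a cell-filtration of the indecomposable projectives.
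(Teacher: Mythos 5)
Your proof is correct and is essentially the argument the paper invokes: the paper's own proof simply says ``apply the same arguments as in \cite[Theorem 5.1]{BS1}'', and what you have written out — layer decomposition of the basis of $P(\la)=\D e_\la$, submodule property via the error terms $(\dagger)$ of Theorem~\ref{cellular} landing in earlier layers through the Bruhat-compatible enumeration, and identification of subquotients with cell modules using the independence of $s_{a\la b}(\mu)$ from the cap diagram — is precisely that argument adapted to the decorated setting. No gaps.
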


\begin{proof}
Just apply the same arguments as in \cite[Theorem 5.1]{BS1}.
\end{proof}

\begin{theorem}[Composition factors of cell modules]
\label{qh2}
For $\mu \in \La$, let $N(j)$ be the submodule of $V(\mu)$ spanned by all
graded pieces of degree $\geq j$. Then we have a filtration
$$
V(\mu) = N(0) \supseteq N(1) \supseteq N(2) \supseteq \cdots
$$
as a $\D$-module with
$N(j) / N(j+1) \cong \displaystyle\bigoplus_{\substack{\la  \subset \mu\text{\,with}\\
\deg(\underline{\la} \mu) = j}}
  L(\la) \langle j \rangle
$ for each $j \geq 0$.
\end{theorem}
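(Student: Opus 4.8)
The plan is to exploit the positivity of the grading on $\D$ throughout. First I would observe that $N(j)$ is nothing but the sum $\bigoplus_{i\geq j}V(\mu)_i$ of homogeneous components of degree at least $j$, and that it is a graded $\D$-submodule: since $\D$ is non-negatively graded (Proposition~\ref{forgotgraded} shows the multiplication has degree zero, and every basis diagram has non-negative degree), acting by a homogeneous element of degree $d\geq 0$ on a homogeneous vector of degree $\geq j$ produces a vector of degree $\geq j$. This immediately yields the descending chain $V(\mu)=N(0)\supseteq N(1)\supseteq\cdots$, and identifies $N(j)/N(j+1)\cong V(\mu)_j$ as graded vector spaces, concentrated in degree $j$.

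Next I would pin down the $\D$-action on the subquotient $N(j)/N(j+1)$. If $x\in\D$ is homogeneous of strictly positive degree and $v\in N(j)$, then $xv$ has degree strictly larger than $j$ and hence lies in $N(j+1)$; thus elements of positive degree act as zero, and the action factors through the semisimple quotient $\D/\D^{>0}\cong\bigoplus_{\la\in\La}\mC e_\la$ of \eqref{degzero}. Consequently $N(j)/N(j+1)$ is a module over this commutative semisimple algebra, so it automatically splits as a direct sum of one-dimensional pieces indexed by a homogeneous basis of $V(\mu)_j$.

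By Lemma~\ref{lem:Bruhator}(2) the oriented cup diagrams $c\mu$ are precisely the $\underline{\alpha}\mu$ for weights $\alpha\subset\mu$, so a homogeneous basis of $V(\mu)_j$ is given by the vectors $(\underline{\alpha}\mu|$ with $\alpha\subset\mu$ and $\deg(\underline{\alpha}\mu)=j$. It then remains to compute how each idempotent $e_\la=(\underline{\la}\la\overline{\la})$ acts on such a vector. Applying the action formula \eqref{Actby}, the product $e_\la(\underline{\alpha}\mu|$ is nonzero only when $\overline{\la}^*=\underline{\la}$ equals $\underline{\alpha}$, i.e. when $\la=\alpha$, in which case $\underline{\la}\mu$ is oriented and the scalar $s_{\underline{\la}\la\overline{\la}}(\mu)$ equals $1$ (by Theorem~\ref{cellular} together with the fact that $e_\la$ acts as the identity on basis vectors beginning with $\underline{\la}$). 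Hence $e_\la(\underline{\alpha}\mu|=\delta_{\la,\alpha}(\underline{\alpha}\mu|$, so each line $\mC(\underline{\alpha}\mu|$ is a $\D$-submodule isomorphic to $L(\alpha)$; as it sits in degree $j$ it is $L(\alpha)\langle j\rangle$. Summing over the basis gives the asserted isomorphism $N(j)/N(j+1)\cong\bigoplus_{\alpha\subset\mu,\ \deg(\underline{\alpha}\mu)=j}L(\alpha)\langle j\rangle$.

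I expect the only genuinely delicate point to be the bookkeeping showing that the action on the subquotient truly collapses to the diagonal idempotent action — that is, confirming via \eqref{Actby} and Theorem~\ref{cellular} that no off-diagonal or higher-degree contributions survive — together with keeping the grading shift $\langle j\rangle$ consistent with the convention $M\langle j\rangle_i=M_{i-j}$. Everything else is forced by positivity of the grading and the semisimplicity of the degree-zero part of $\D$.
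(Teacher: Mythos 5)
Your proof is correct and is essentially the argument the paper invokes: the paper's own proof is just the citation ``Analogously to \cite[Theorem 5.2]{BS1}'', and the BS1 argument is exactly what you have written — positivity of the grading makes each $N(j)$ a submodule, strictly positive degrees annihilate $N(j)/N(j+1)$ so the action factors through $\D/\D^{>0}\cong\bigoplus_{\la}\mC e_\la$ of \eqref{degzero}, and the idempotent computation via \eqref{Actby} and Lemma~\ref{lem:Bruhator}(2) identifies each basis line $\mC(\underline{\alpha}\mu|$ with $L(\alpha)\langle j\rangle$. No gaps; the delicate points you flag (only $e_\la$'s in degree zero, the scalar $s_{\underline{\la}\la\overline{\la}}(\mu)=1$, and the shift convention) are all handled correctly.
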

\begin{proof}
Analogously to \cite[Theorem 5.2]{BS1}.
\end{proof}


These polynomials and the resulting {\em $q$-decomposition matrix}
\begin{equation}\label{decmat}
M_\La(q) = (d_{\la,\mu}(q))_{\la,\mu \in \La}
\end{equation}
encode by Theorems~\ref{qh1} and~\ref{qh2} the multiplicities of cell modules in projectives and of irreducibles in cell modules; we have
\begin{align}
[V(\mu)] &= \sum_{\la \in \La} d_{\la,\mu}(q) [L(\la)],\label{dmat}&[P(\la)] &= \sum_{\mu \in \La} d_{\la,\mu}(q) [V(\mu)],
\end{align}
in the Grothendieck group $K_0(\D)$.

The $q$-decomposition matrix $M_\La(q)$ is upper
unitriangular when rows and columns are ordered in some way refining the Bruhat order since by Lemma~\ref{lem:Bruhator} $\la \subset \mu$ implies $\la \leq \mu$.
Note that $d_{\la,\mu}(q)=q^d$ if $\underline{\la}\mu$ is oriented of degree $d$ and $d_{\la,\mu}(q)=0$ if $\underline{\la}\mu$ is not oriented.

\begin{ex}
The decomposition matrix for the principal block of type $D_4$ is given as follows (where we again omit the $\circ$'s in the weight diagrams):
\begin{equation}\label{qdec}
\begin{array}{c|cccccccc}
&
\scriptstyle{\down\down\down\down}&
\scriptstyle{\up\up\down\down}&
\scriptstyle{\up\down\up\down}&
\scriptstyle{\down\up\up\down}&
\scriptstyle{\up\down\down\up}&
\scriptstyle{\down\up\down\up}&
\scriptstyle{\down\down\up\up}&
\scriptstyle{\up\up\up\up}\\
\hline
\scriptstyle{\down\down\down\down}&1&q&0&0&0&0&0&q^2\\
\scriptstyle{\up\up\down\down}&0&1&q&0&0&0&q^2&q\\
\scriptstyle{\up\down\up\down}&0&0&1&q&q&q^2&q&0\\
\scriptstyle{\down\up\up\down}&0&0&0&1&0&q&0&0\\
\scriptstyle{\up\down\down\up}&0&0&0&0&1&q&0&0\\
\scriptstyle{\down\up\down\up}&0&0&0&0&0&1&q&0\\
\scriptstyle{\down\down\up\up}&0&0&0&0&0&0&1&q\\
\scriptstyle{\up\up\up\up}&0&0&0&0&0&0&0&1
\end{array}
\end{equation}
\end{ex}

We can restrict ourself to study the principal blocks:
\begin{lemma}
\label{lem:KLpolys}
\begin{enumerate}
\item The q-decomposition matrix $M_\La(q)$ depends (up to relabelling rows and columns) only on the atypicality of the block.
\item In case $\La=\Lap$, the entries of ${M}_\La(q)$ are the parabolic Kazhdan-Lusztig polynomials, denoted $n_{x,y}(q)$ in \cite{SoergelKL} for $x,y\in W^{\ov{p}}$ from Section~\ref{typeD}.
\end{enumerate}
\end{lemma}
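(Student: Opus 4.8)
The plan is to exploit the fact that, by the explicit description of $d_{\la,\mu}(q)$ recorded just above (namely $d_{\la,\mu}(q)=q^{\deg(\underline{\la}\mu)}$ if $\underline{\la}\mu$ is orientable and $d_{\la,\mu}(q)=0$ otherwise), the decomposition numbers are already completely combinatorial. For part (1) this gives the result almost immediately: whether $\underline{\la}\mu$ admits an orientation and, if so, its degree depend only on the relative configuration of cups, caps and labels at the vertices of $P_\bullet$, while the vertices labelled $\circ$ or $\times$ are inert. Hence, if $\Lambda$ and $\Lambda'$ have the same atypicality, the canonical bijection $X_{i_s}\mapsto X_{i'_s}$ from Corollary~\ref{atypicality}, which matches the basis vectors of $\D$ and $\mathbb{D}_{\Lambda'}$, carries $\underline{\la}\mu$ to an oriented diagram of the same degree, and therefore identifies $d_{\la,\mu}(q)$ with $d_{\la',\mu'}(q)$. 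So the two $q$-decomposition matrices coincide after relabelling rows and columns along this bijection.

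For part (2), by part (1) I would first reduce to the principal block $\La=\Lap$. Using the poset isomorphism $\Phi^{\ov p}\colon W^{\ov p}\to\Lap$ from Section~\ref{typeD} (Bruhat-order compatible by Lemma~\ref{lem:Bruhator}), write $\la=\Phi^{\ov p}(x)$ and $\mu=\Phi^{\ov p}(y)$; the goal becomes $d_{\la,\mu}(q)=n_{x,y}(q)$. The strategy is to realize both sides as the change-of-basis coefficients between the standard and the canonical (Kazhdan--Lusztig) basis of the parabolic Hecke module $\cN^{\ov p}$ attached to the pair $(W,W_{\ov p})$. Let $\{N_w\}_{w\in W^{\ov p}}$ be the standard basis and $\{\underline N_y\}$ the canonical basis, normalised as in \cite{SoergelKL} so that $\underline N_y=\sum_x n_{x,y}(q)N_x$ with $n_{y,y}=1$ and $n_{x,y}\in q\mZ[q]$ for $x<y$. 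Transporting along $\Phi^{\ov p}$, I would define candidate vectors $b_\mu:=\sum_{\la\subset\mu}q^{\deg(\underline{\la}\mu)}N_\la=\sum_\la d_{\la,\mu}(q)N_\la$. By the explicit formula and Lemma~\ref{lem:Bruhator}, $\la\subsetneq\mu$ forces $\deg(\underline{\la}\mu)\ge 1$, so $b_\mu\in N_\mu+\sum_{\la<\mu}q\mZ[q]\,N_\la$; thus the $b_\mu$ are upper unitriangular with respect to the standard basis.

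By the uniqueness characterisation of the canonical basis it then suffices to prove that each $b_\mu$ is bar invariant, and this is the heart of the argument. Here I would follow the strategy of \cite{BS1}: express the bar involution on $\cN^{\ov p}$ through the right action of the generators $s_0,\dots,s_{k-1}$ on weights from Section~\ref{typeD}, and track how the orientation degrees $\deg(\underline{\la}\mu)$ transform under these moves, showing generator by generator that applying $s_i$ either preserves the relevant orientation data or exchanges a clockwise with an anticlockwise cup, in agreement with the action of the Kazhdan--Lusztig generators on the canonical basis. The genuinely new feature compared to type $A$ is the behaviour at the first two vertices together with the decorations: the generator $s_0$ (turning $\up\up$ into $\down\down$) and the dotted cups must be handled separately, with Lemma~\ref{lem:stupidlemma} and Lemma~\ref{lem:orient} ensuring that the parity bookkeeping of dots is consistent with orientability.

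The main obstacle is precisely this bar-invariance verification, where the decorations and the special combinatorics at positions $1$ and $2$ destroy the locality available in the type $A$ calculation of \cite{BS1}. Concretely, one must show that the combinatorially defined polynomials $q^{\deg(\underline{\la}\mu)}$ satisfy the Kazhdan--Lusztig--Deodhar recursion for $W^{\ov p}$; equivalently, that the lattice spanned by the $b_\mu$ is bar stable. As an alternative route avoiding the full generator-by-generator check, I would instead invoke a known closed-form evaluation of the Hermitian symmetric type $(D_k,A_{k-1})$ parabolic Kazhdan--Lusztig polynomials and match the resulting exponent with $\deg(\underline{\la}\mu)$; but in either approach the crux remains the identification of the diagrammatic degree with the canonical-basis coefficient.
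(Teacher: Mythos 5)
Your part (1) is correct and is essentially the paper's own argument: both rest on Corollary~\ref{atypicality}, whose canonical isomorphism $X_{i_s}\mapsto X_{i'_s}$ matches the distinguished (cellular) bases of the two algebras, hence carries oriented circle diagrams to oriented circle diagrams of the same degree and identifies the two $q$-decomposition matrices after relabelling.

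Part (2), however, has a genuine gap. The paper does not prove this identification; its proof consists of the single sentence that the statement \emph{is} the main result of \cite{LS}, where the equality of the diagrammatic entries $q^{\deg(\underline{\la}\mu)}$ with the parabolic Kazhdan--Lusztig polynomials $n_{x,y}(q)$ of type $(D_k,A_{k-1})$ is established. Your proposal correctly reduces the claim to showing that the vectors $b_\mu=\sum_{\la\subset\mu}q^{\deg(\underline{\la}\mu)}N_\la$ form the canonical basis, i.e.\ to unitriangularity (which you do verify, via Lemma~\ref{lem:Bruhator}) plus bar invariance --- and then you leave precisely the bar-invariance step unexecuted, saying you ``would follow the strategy of \cite{BS1}'' and check generator by generator how orientation degrees transform, with the dotted cups and the generator $s_0$ treated separately. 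That unperformed verification is the entire mathematical content of the lemma, and it is not a routine adaptation of the type $A$ computation: as the paper stresses repeatedly, the decorations destroy locality, and in \cite{LS} the identification is in fact obtained by rewriting decorated cup diagrams as symmetric type $A$ diagrams (a folding/unfolding argument) rather than by a direct Deodhar-style recursion for the decorated calculus. Your proposed ``alternative route'' --- invoking a known closed-form evaluation of the Hermitian symmetric $(D_k,A_{k-1})$ polynomials and matching exponents --- is in substance the paper's citation of \cite{LS}, but as written it names no specific result and supplies no matching argument, so it is a placeholder rather than a proof. To close the gap you must either carry out the bar-invariance/recursion check for the decorated diagrams in full (a substantial piece of work) or cite the precise theorem of \cite{LS} performing this identification, which is what the paper does.
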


\begin{proof}
By Corollary~\ref{atypicality} the algebras are isomorphic and hence the decomposition numbers are of course the same. The second statement is the main result of \cite{LS}.
\end{proof}

As in \cite[Theorem 5.3.]{BS1} we deduce
\begin{theorem}\label{ghw}
The category ${\D}\MOD$ is a positively
graded highest weight category with duality in the sense of Cline, Parshall and
Scott. For $\la \in \La$ (the weight poset), the irreducible, standard, costandard,
indecomposable projective and indecomposable injective objects are respectively the modules
$L(\la), V(\la), {V(\la)}^\circledast$, $P(\la)$ and ${P(\la)}^\circledast$. In particular, the algebra $\D$ is a
positively graded quasi-hereditary algebra.
\end{theorem}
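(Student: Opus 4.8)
The plan is to verify directly the axioms of a positively graded highest weight category in the sense of Cline--Parshall--Scott, using as black boxes the cellular structure (Corollary~\ref{iscell}) together with the two filtration results already established. All the data are in place: the graded simples $L(\la)\langle j\rangle$, their projective covers $P(\la)$, the cell modules $V(\la)$, and the graded duality $\circledast$ induced by the antiautomorphism $*$ of Lemma~\ref{antiaut}, which fixes every simple by \eqref{simplesselfdual}. So the argument will be essentially formal, mirroring \cite[Theorem 5.3]{BS1}, once one reads off the relevant structure from Theorems~\ref{qh1} and~\ref{qh2}.

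First I would record the standard-module axiom. By Theorem~\ref{qh2} the graded composition factors of $V(\mu)$ are the $L(\la)\langle j\rangle$ with $\la\subset\mu$; the degree-zero layer $N(0)/N(1)$ is exactly $L(\mu)$, so $V(\mu)$ has simple head $L(\mu)$, and by Lemma~\ref{lem:Bruhator} every other factor has $\la<\mu$ and lives in strictly positive degree. Next I would record the standard-filtration axiom: Theorem~\ref{qh1} exhibits a filtration of $P(\la)$ whose sections are the shifted cell modules $V(\mu_i)\langle\deg(\mu_i\ov{\la})\rangle$, with $V(\la)$ occurring once as the top quotient in degree zero (since $\deg(\la\ov{\la})=0$) and all remaining sections being $V(\mu_i)$ with $\mu_i>\la$. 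These two statements are precisely the defining properties of standard modules and of a standard flag on each projective.

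I would then define the costandard modules to be $V(\la)^\circledast$ and the indecomposable injectives to be $P(\la)^\circledast$, and apply $\circledast$ to the filtrations above to obtain the dual statements; since $\circledast$ reverses the grading and fixes simples, the socle of each $V(\la)^\circledast$ sits in degree zero and equals $L(\la)$, as required. Quasi-heredity then follows from the cellular picture: each cell module has a nonzero simple head, so the number of simples equals the number of cells and the Graham--Lehrer criterion applies; equivalently, the $q$-decomposition matrix $M_\La(q)$ is upper unitriangular for any refinement of the Bruhat order, which gives the required triangularity. Positivity of the grading comes from \eqref{degzero}, that is $\D/\D^{>0}$ is semisimple, together with the non-negativity of degrees on cells and on the standard flags.

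The genuinely hard inputs are Theorems~\ref{qh1} and~\ref{qh2}, which in turn rest on the signed cellular multiplication formula of Theorem~\ref{cellular}; once these are available the deduction of the CPS axioms carries no new difficulty. The only point requiring care is the compatibility of the grading with the duality $\circledast$: one must check that $\circledast$ is a \emph{graded} duality carrying $V(\la)$ to a genuine costandard object with head and socle concentrated in degree zero, and matching $P(\la)$ to $P(\la)^\circledast$ compatibly with the positive grading. This is immediate because $\circledast$ negates degrees and fixes the degree-zero simples, so no obstruction beyond those already overcome in the sign analysis of Theorem~\ref{cellular} arises.
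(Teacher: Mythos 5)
Your proposal is correct and follows essentially the same route as the paper: the paper's proof is just the citation ``as in \cite[Theorem 5.3]{BS1}'', and that argument is precisely the formal deduction you spell out, reading the highest weight axioms off Theorems~\ref{qh1} and~\ref{qh2}, using the duality $\circledast$ from Lemma~\ref{antiaut} to produce costandards and injectives, and getting quasi-heredity and positivity from the unitriangular decomposition matrix and the semisimple degree-zero quotient \eqref{degzero}.
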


We also like to state the following observation
\begin{corollary}
\label{lem:endo_commutative}
Let $\La$ be a block and $\la\in\La$.
The endomorphism ring $\op{End}_{\La}(P(\la))$ of $P(\lambda)$ is a non-negatively graded commutative algebra.
\end{corollary}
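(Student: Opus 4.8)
The plan is to identify the endomorphism ring with an idempotent truncation of $\D$ and then transport its product to the explicit commutative model furnished by Proposition~\ref{coho}. First I would use the standard isomorphism $\op{End}_\La(P(\la)) = \op{End}_\D(\D e_\la) \cong (e_\la \D e_\la)^{\op{op}}$, $\phi \mapsto \phi(e_\la)$. Since commutativity is invariant under passing to the opposite algebra, it suffices to prove that the corner algebra $e_\la\D e_\la = {}_\la(\D)_\la$ is commutative. Non-negativity of the grading is then immediate: $\D$ is non-negatively graded with degree-zero part $\bigoplus_{\mu}\mC e_\mu$, so all homogeneous components of $e_\la\D e_\la$ sit in non-negative degrees and its degree-zero part is the one-dimensional space $\mC e_\la$ spanned by the unit $e_\la = \underline\la\la\ov\la$.

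Next I would invoke Proposition~\ref{coho} in the case $\mu=\la$, which gives a graded vector space isomorphism $\Psi\colon {}_\la(\D)_\la \xrightarrow{\sim} \cM(\underline\la\ov\la) = S/I$. The target is a quotient of the polynomial ring $S$, hence a commutative, non-negatively graded algebra generated by the classes of the $X_i$ in degree $2$. In fact, since the diagram $\underline\la\ov\la$ consists only of small circles (one per cup of $\underline\la$, each on a pair of vertices $i_s<j_s$) together with propagating lines (one per ray), the relations of Lemma~\ref{signmove} collapse $S/I$ to $\bigotimes_{s=1}^{\op{def}(\la)} \mC[X_{j_s}]/(X_{j_s}^2)$. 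The entire statement therefore reduces to showing that $\Psi$ is an isomorphism of algebras, i.e. that it intertwines the surgery-defined product on ${}_\la(\D)_\la$ with the ordinary product of $S/I$.

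To check multiplicativity I would compute the product of two basis vectors $(\underline\la\, p\, \ov\la)(\underline\la\, q\, \ov\la)$ directly through the generalized surgery procedure. The middle section of this stacked diagram again consists solely of small circles and lines sitting over pairwise disjoint sets of vertices, so the successive surgeries never interact across distinct circles $C_s$; consequently the product factorizes as a tensor product over $s$ of the corresponding one-circle computations. For a single small circle, the merge rule (Surg-1) and the split rule (Surg-2) are precisely the multiplication and comultiplication of the two-dimensional Frobenius algebra $\mC[X]/(X^2)$ under the dictionary $1 = \text{anticlockwise}$, $x = \text{clockwise}$, so on each factor the induced product is the commutative product of $\mC[X_{j_s}]/(X_{j_s}^2)$. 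Assembling the factors shows that $\Psi$ is multiplicative, hence an algebra isomorphism, and the corollary follows. By Corollary~\ref{atypicality} one may moreover assume $\La$ principal, which streamlines the combinatorics but is not essential.

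The main obstacle is the sign bookkeeping. Unlike the type $A$ situation, the split rule (Surg-2) carries the factor $(-1)^{\op{pos}(i)}$ and the relations defining $\cM(G)$ involve signs governed by the parity counts of Lemma~\ref{signmove}; I must verify that these signs are exactly the ones already absorbed into the definition of $\Psi$ through the choice of rightmost vertices $J(\underline\la\nu\ov\la)$, so that they cancel and obstruct neither the factorization over circles nor the symmetry $\xi_s\xi_t=\xi_t\xi_s$ of the degree-two generators $\xi_s$ (the orientation with $C_s$ alone clockwise). Establishing this sign compatibility, using the signed surgery maps of Definition~\ref{surg} together with the componentwise independence recorded in Lemma~\ref{lem:collapsing}, is where the real work lies.
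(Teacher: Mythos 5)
Your reduction to the corner algebra $e_\la\D e_\la\cong{}_\la(\D)_\la$ and the identification with the commutative ring $S/I$ via Proposition~\ref{coho} is exactly the route the paper takes, and your grading argument is fine. The one observation you are missing --- and it is precisely the point that makes your deferred ``real work'' on sign bookkeeping evaporate --- is that in the product of two elements of ${}_\la(\D)_\la$ \emph{every} surgery is a merge and never a split: the components of $\underline{\la}\,\ov{\la}$ are small circles sitting over pairwise disjoint vertex pairs $(i_s,j_s)$, together with propagating lines at the rays (which are stitched at the outset and need no surgery), so in the stacked diagram the cup--cap pair at $(i_s,j_s)$ always joins the bottom small circle at those vertices to the still-untouched top small circle at the same vertices. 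Two distinct surgeries never involve a common component, so the cup and cap of a surgery pair can never lie on one component, which is what a split would require. Consequently (Surg-2) and its sign $(-1)^{\op{pos}(i)}$ never enter the computation, and no comultiplication of the Frobenius algebra is needed either; the only maps used are the sign-free surjections of Lemma~\ref{join} followed by the collapse of Lemma~\ref{lem:collapsing}, under which the product of two basis vectors is literally the product of the corresponding monomials in $S/I$, manifestly commutative. This is the paper's own one-line proof (``the multiplication is just a composition of merges''), with positivity of the grading holding because $e_\la\D e_\la$ is a graded corner of the positively graded algebra $\D$.
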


\begin{proof}
As a summand of a positively graded algebra its positively graded. By definition of the multiplication it is commutative (the multiplication is just a composition of merges).
\end{proof}

\section{The categories $\Perv$ and $\cO_0^\p(\mathfrak{so}(2k))$ diagrammatically}
Let $\Perv=\Perv_k$ be the category of perverse sheaves on $Y_k$ constructible with respect to the Schubert stratification, see
\cite{Braden} for details. By the localization theorem and Riemann-Hilbert correspondence this category is equivalent to the principal
block  $\cO_0^\p(\mg)$ of the parabolic category $\cO^\p(\mg)$ for the semisimple Lie algebra $\mg=\mathfrak{so}(2k)$ of type $D_k$, where
$\p$ is one of our maximal parabolic subalgebra associated with $W(1)$ or $W(0)$ respectively. Let us stick to $W(1)$ in the following.
The simple objects in either category are naturally labelled by $W^1$: attached to $w\in W^1$ we have the simple intersection cohomology
complex $\mathcal{I}_w$ corresponding to the Schubert variety labelled $w$ via \eqref{Young} and the simple module $L(w)$ with highest
weight $w\cdot0$. (As usual $W$ acts on weights via the `dot-action' $w\cdot\la=w(\la+\rho)-\rho$). Let $P(w)$ be the projective cover of
$L(w)$. Then $P:=\bigoplus_{w\in W^1} P(w)$ is a minimal projective generator of $\cO_0^\p(\mg)$.

\subsection{The isomorphism theorem}
Then our main theorem is the
following

\begin{theorem}
\label{thm:main}
Let $k\geq 4$. There is an isomorphism of algebras $\End_\mg(P)\cong\mathbb{D}_{\Lambda_k^{\ov{0}}}$, hence equivalences of categories
\begin{align}
\cO_0^\p(\mathfrak{so}(2k))\equiv\Perv_k\equiv\mathbb{D}
_{\Lambda_k^{\ov{0}}}\Mod
\end{align}
which identify the simple objects $L(w)$, $\mathcal{I}_w$ and $\mathcal{L}(w)$ and their projective covers respectively.
\end{theorem}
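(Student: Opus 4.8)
The plan is to reduce the two claimed category equivalences to a single statement of algebra isomorphism and then to construct that isomorphism explicitly. The equivalence $\Perv_k\equiv\cO_0^\p(\mathfrak{so}(2k))$ is already available from the Riemann--Hilbert correspondence together with the localization theorem, and under it the intersection cohomology complexes $\mathcal{I}_w$ correspond to the simple highest weight modules $L(w)$. Since $P=\bigoplus_w P(w)$ is a minimal projective generator of $\cO_0^\p$, standard Morita theory gives $\cO_0^\p\equiv \End_\mg(P)^{\mathrm{op}}\text{-mod}$, and the graded anti-automorphism $*$ of $\mathbb{D}_{\Lambda_k^{\ov{0}}}$ from Lemma~\ref{antiaut} identifies $\mathbb{D}_{\Lambda_k^{\ov{0}}}$ with its opposite. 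Hence everything comes down to producing a graded algebra isomorphism $\Phi_k\colon \End_\mg(P)\liso \mathbb{D}_{\Lambda_k^{\ov{0}}}$ matching idempotents with the $e_\la$; the identification of the simple objects and their projective covers is then automatic from the labelling bijections of Section~\ref{typeD}, using the parity switch (Corollary~\ref{atypicality}) to pass between the parabolic associated with $W^1$ and the even block $\Lambda_k^{\ov{0}}$.

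To build $\Phi_k$ I would start from Braden's presentation of $A:=\End_\mg(P)$ by a quiver with relations from \cite{Braden}. The first step is to match the underlying quiver of $A$ with $\mathcal{Q}(\Lambda_k^{\ov{0}})$ (Definition~\ref{Extquivdef}, Figure~\ref{fig:quiv}): the arrows of both are governed by $\la$-pairs, so the vertex sets are identified via $w\mapsto \la(w)$ and each of Braden's arrows is matched with a cup $C$ of $\underline{\la}$ as in Definition~\ref{lapair} and Lemma~\ref{degone}. I would then define $\Phi_k$ on generators by sending $e_w\mapsto e_{\la(w)}$ and each arrow to a distinguished element of the corresponding space $e_\mu\mathbb{D}_{\Lambda_k^{\ov{0}}} e_\la$, built from the unique degree-one oriented diagram $\underline\la\nu\overline\mu$ attached to the $\la$-pair. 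As flagged in the introduction, the precise images must be allowed to be \emph{inhomogeneous}: Braden's generators correspond under $\Phi_k$ not to the bare degree-one diagrams but to exponential/logarithmic corrections of them, and pinning down these correction terms is part of the construction.

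The heart of the argument, and the step I expect to be the main obstacle, is to verify that $\Phi_k$ is a well-defined homomorphism, i.e. that the images satisfy all of Braden's relations. This is a direct but delicate computation in $\mathbb{D}_{\Lambda_k^{\ov{0}}}$ using the signed surgery rules (Surg-1)--(Surg-3) and the resulting associative product (Theorem~\ref{algebra_structure}); the subtlety lies precisely in tracking the decorations and the signs $(-1)^{\op{pos}(i)}$ occurring in the split moves, which is exactly why a naive homogeneous assignment fails and the exponential correction is forced. The cellular multiplication formula of Theorem~\ref{cellular}, which controls leading terms modulo higher weights, is the main tool for organizing these relation checks.

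Finally I would upgrade the well-defined homomorphism $\Phi_k$ to an isomorphism by a dimension count. By construction the images generate, so $\Phi_k$ is surjective, and it suffices to show that the two graded Cartan matrices agree. On the diagrammatic side the graded Cartan matrix is computed in \eqref{exp}--\eqref{dmat} and factors as $M_{\La}(q)^{t}M_{\La}(q)$, where by Lemma~\ref{lem:KLpolys} the entries of $M_{\La}(q)$ are the parabolic Kazhdan--Lusztig polynomials of type $(D_k,A_{k-1})$. On the geometric side $\cO_0^\p(\mathfrak{so}(2k))$ is Koszul with the same parabolic Kazhdan--Lusztig decomposition numbers (\cite{BGS}, \cite{SoergelKL}), so its graded Cartan matrix has the identical factorization. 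Comparing total dimensions (setting $q=1$) then forces a surjection of equal finite dimension to be bijective, and comparing the Koszul gradings shows that $\Phi_k$ is an isomorphism of \emph{graded} algebras. The asserted chain of equivalences and the identification of $L(w)$, $\mathcal{I}_w$, $\mathcal{L}(w)$ and their projective covers then follow for all $k\geq 4$.
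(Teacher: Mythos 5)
Your proposal follows essentially the same route as the paper: reduce via Braden's Theorem~\ref{thmBraden} and the localization theorem/Riemann--Hilbert correspondence to constructing an algebra isomorphism $A(D_k)\cong\mathbb{D}_{\Lambda_k^{\ov{0}}}$, define it on Braden's generators by inhomogeneous (``logarithmically corrected'') diagrammatic elements, verify Braden's relations by the signed surgery calculus (the paper's Proposition~\ref{welldefined} and Lemmas~\ref{stupidcalcc}--\ref{stupidcalca}), and conclude bijectivity from surjectivity together with equality of Cartan matrices, both factoring as $M^{t}M$ with parabolic Kazhdan--Lusztig entries (via \cite{BGS} geometrically and Lemma~\ref{lem:KLpolys} diagrammatically, as in Corollary~\ref{CorCartan}). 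One small terminological caveat: since the generators map to inhomogeneous elements, $\Phi_k$ is not a morphism of graded algebras for any pre-existing grading on $\End_\mg(P)$; rather it transports the diagrammatic grading onto Braden's algebra, which suffices for the ungraded dimension count actually needed.
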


\begin{remark}
{\rm Note that, although the theorem only deals with the principal block $\Lambda_k^{\ov{0}}$, by Corollary~\ref{atypicality} it gives in fact a geometric description for all blocks $\La$.
}
\end{remark}

\begin{lemma}
\label{projinj}
Let $\La$ be a principal block then the algebra $\mathbb{H}_\La$ from Lemma~\ref{Khovalg} is the endomorphism algebra of the sum of all indecomposable projective-injective $\D$-modules.
\end{lemma}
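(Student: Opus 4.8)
The plan is to identify the indecomposable projective–injective $\D$-modules as exactly the $P(\la)$ with $\op{def}(\la)=\op{def}(\La)$. Since $e=\sum_{\op{def}(\la)=\op{def}(\La)}e_\la$ by definition of $\mathbb{H}_\La$ in Corollary~\ref{Khovalg}, we have $\mathbb{H}_\La=e\D e=\END_{\D}\big(\bigoplus_{\op{def}(\la)=\op{def}(\La)}P(\la)\big)$, so this identification is precisely the assertion of the lemma. By Theorem~\ref{ghw} the graded duality $\circledast$ fixes the simples $L(\la)$, hence the indecomposable injectives are the $P(\mu)^\circledast$ and $P(\la)$ is injective iff $P(\la)\cong P(\mu)^\circledast$ for some $\mu\in\La$. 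As $\D$ is positively graded with every $P(\mu)$ generated in degree $0$ (Theorem~\ref{algebra_structure}, Proposition~\ref{forgotgraded}), any such isomorphism must in fact read $P(\la)\cong P(\mu)^\circledast\langle\ell(\mu)\rangle$, where $\ell(\nu)$ denotes the top nonzero degree of $P(\nu)$; comparing socles then forces $\op{soc}P(\la)\cong L(\mu)\langle\ell(\mu)\rangle$, concentrated in the top degree, so that $\ell(\la)=\ell(\mu)$.

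The preliminary computation I would carry out is $\ell(\la)=\op{def}(\la)+\op{def}(\La)$. Using the basis $\{(\underline{\nu}\rho\overline{\la})\mid \nu\subset\rho\supset\la\}$ of $P(\la)=\D e_\la$, every basis diagram has degree $\deg(\underline{\nu}\rho)+\deg(\rho\overline{\la})\le\op{def}(\nu)+\op{def}(\la)\le\op{def}(\La)+\op{def}(\la)$, which is the upper bound; the matching lower bound comes from choosing $\rho=\mu_{\max}$ (the largest weight with $\rho\supset\la$, making all caps of $\overline\la$ clockwise) together with a weight $\nu$ of maximal defect whose cups are all clockwise for this $\rho$, a short explicit construction on cup/cap diagrams.

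With this in hand the implication \emph{injective $\Rightarrow$ maximal defect} is immediate. If $P(\la)\cong P(\mu)^\circledast\langle\ell(\mu)\rangle$, then $L(\mu)$ occurs in $P(\la)$ in degree $\ell(\la)$, so by \eqref{exp} the polynomial $c_{\mu,\la}(q)=\sum_{\mu\subset\nu\supset\la}q^{\deg(\underline{\mu}\nu\overline{\la})}$ contains the monomial $q^{\ell(\la)}=q^{\op{def}(\la)+\op{def}(\La)}$; since every exponent there is $\le\op{def}(\mu)+\op{def}(\la)$, this forces $\op{def}(\mu)\geq\op{def}(\La)$, hence $\op{def}(\mu)=\op{def}(\La)$ and $\ell(\mu)=2\op{def}(\La)$. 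Then $\ell(\la)=\ell(\mu)=2\op{def}(\La)$ gives $\op{def}(\la)=\op{def}(\La)$ as well.

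The main obstacle is the converse: for $\op{def}(\la)=\op{def}(\La)$ one must prove $P(\la)$ is injective. I would establish $P(\la)\cong P(\la)^\circledast\langle 2\op{def}(\La)\rangle$, i.e. that $P(\la)$ is self-dual up to shift. Concretely one checks that the top-degree component $P(\la)_{2\op{def}(\La)}$ is one-dimensional and generates a submodule isomorphic to $L(\la)\langle 2\op{def}(\La)\rangle$ which is the entire socle; since the commutative graded local ring $\op{End}_{\D}(P(\la))=e_\la\D e_\la$ (Corollary~\ref{lem:endo_commutative}) then has one-dimensional socle in degree $2\op{def}(\La)$, the associated pairing on $P(\la)$ is perfect, and together with $\op{top}P(\la)=L(\la)$ and the duality this yields the desired self-duality, whence $P(\la)\cong P(\la)^\circledast\langle 2\op{def}(\La)\rangle$ is injective. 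This step is parallel to the characterization of projective–injectives in \cite{BS1}, the only genuinely new difficulty being the control of the signs and decorations produced by the surgery multiplication, handled exactly as in the proof of Theorem~\ref{cellular}; alternatively it can be deduced from the description of the projective–injective objects of $\cO_0^{\p}(\mathfrak{so}(2k))$ transported through the equivalence of Theorem~\ref{thm:main}. Combining the two implications shows that the indecomposable projective–injective $\D$-modules are exactly the $P(\la)$ with $\op{def}(\la)=\op{def}(\La)$, and hence that $\mathbb{H}_\La$ is their joint endomorphism algebra.
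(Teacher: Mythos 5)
Your reduction of the lemma to the claim that the indecomposable projective--injective $\D$-modules are exactly the $P(\la)$ with $\op{def}(\la)=\op{def}(\La)$ is the right reading of the statement, and this is a genuinely different route from the paper: you argue internally in $\D$ with the grading, the duality $\circledast$ of Theorem~\ref{ghw} and the Cartan matrix, whereas the paper transports the whole question through the equivalence of Theorem~\ref{thm:main} and quotes Irving's characterization of projective--injectives in parabolic category $\cO$ (membership of $w$ in the Kazhdan--Lusztig right cell of $w_0^1$), then translates that cell condition into ``maximal defect'' using the Temperley--Lieb action of \cite{LS}. Your first implication (injective $\Rightarrow$ maximal defect) is essentially sound: the upper bound $\deg c_{\mu,\la}(q)\le \op{def}(\mu)+\op{def}(\la)$ is immediate from the basis of $e_\mu\D e_\la$, and the lower bound $\ell(\la)\ge\op{def}(\la)+\op{def}(\La)$ can indeed be realized by an explicit $\nu$ (keep all cups of $\underline{\la}$, which $\rho$ orients clockwise, and pair the ray positions of $\underline{\la}$ into consecutive new cups, the first undotted if $\underline{\la}$ has a dotted ray and the rest dotted; since no ray of the new diagram lies to their left this is a legal cup diagram, and $\rho$ orients all its cups clockwise), although you only gesture at this construction.

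The genuine gap is in the converse, which is the heart of the lemma. Your argument rests on the assertion that for $\op{def}(\la)=\op{def}(\La)$ the socle of $P(\la)$ is simple and concentrated in top degree, equivalently that the multiplication pairing $e_\nu\D e_\la\times e_\la\D e_\nu\to (e_\la\D e_\la)_{2\op{def}(\La)}\cong\mC$ is perfect for every $\nu$. You do not prove this; you say it is ``handled exactly as in the proof of Theorem~\ref{cellular}'', but Theorem~\ref{cellular} only controls the leading term of a product modulo strictly larger weights $(\dagger)$, and says nothing about non-degeneracy of products landing in the top degree -- a priori all such products could vanish. In the type $A$ setting the corresponding non-degeneracy statement is a theorem of \cite{BS1} with its own substantial diagrammatic proof, and here it would additionally have to cope with the signs introduced by the rules (Surg-1)--(Surg-3), so it cannot be dismissed as routine; note also that even your preliminary claim that $P(\la)_{2\op{def}(\La)}$ is one-dimensional (uniqueness of the maximal-defect $\nu$ all of whose cups are clockwise under $\rho$) is left unproved. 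Your fallback, ``deduce it from the description of the projective--injective objects of $\cO_0^\p(\mathfrak{so}(2k))$ transported through Theorem~\ref{thm:main}'', is exactly the paper's proof, but as you state it it is only a pointer: it needs Irving's right-cell criterion \cite{Irvingselfdual} and the identification, via \cite{LS}, of the right cell of $w_0^1$ with the set of maximal-defect cup diagrams, neither of which appears in your argument. So as written the implication ``maximal defect $\Rightarrow$ injective'' is not established.
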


\begin{proof}
Let $\la\in\La$ be of maximal defect with indecomposable projective module $P(\la)$. Under the equivalence of Theorem~\ref{thm:main}
it corresponds to an indecomposable projective module $P(\la)$ in $\cO_0^\p(\mg)$. Let $\la=w\cdot\mu$ with $\mu$ the dominant weight in the same block and $w\in W^1$. By Irving's characterization of projective-injective modules in parabolic category $\cO$, \cite[4.3]{Irvingselfdual} $P(\la)$ is projective-injective, if and only if $w$ is in the same Kazhdan-Lusztig right cell as the longest element $w_0^1$ in $W^1$. Hence the indecomposable projective-injective modules $P(x\cdot\mu)$ are precisely those appearing as summand in $P(w_0^1\cdot\mu)$ under the Hecke algebra action given by translation functors. By
\cite[Theorem 3.10]{LS} the action of the Hecke algebra factors through the type $D$ Temperley-Lieb algebra and we can identify the Kazhdan-Lusztig basis with our cup diagrams and the action of the Temperley-Lieb algebra purely diagrammatically. Note that $P(w_0^1\cdot\mu)$ corresponds to the cup diagram $C$ of maximal defect with all cups dotted and we ask which cup diagrams can be obtained from it by acting with the Temperley-Lieb algebra. It is obvious that the defect can't decrease. On the other hand one can easily verify that every cup diagram of maximal defect can be obtained, see \cite[Remark 5.24]{LS}. Then the lemma follows.
\end{proof}

\subsection{Braden's algebra $A(D_k)$}
To prove Theorem~\ref{thm:main} we will identify the algebra $\mathbb{D}_{\Lambda_k^{\ov{0}}}$ with Braden's algebra $A=A(D_k)$ defined below using the following result from \cite[Theorem 1.7.2]{Braden}:
\begin{theorem}
\label{thmBraden}
Let $k\geq 4$. There is an equivalence of categories $$A(D_k)\Mod\cong\Perv_k$$ which identifies $A(D_k)$ with the endomorphism ring of a minimal projective generator of $\Perv_k$.
\end{theorem}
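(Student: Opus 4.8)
The statement is a theorem of Braden, so the plan is to outline the strategy by which such an equivalence is established rather than to reprove the underlying geometry from scratch. The first step is the abstract reduction of $\Perv_k$ to a module category. Since the Schubert stratification of $Y_k$ has only finitely many strata, each isomorphic to an affine cell, $\Perv_k$ is a finite-length abelian category whose simple objects are the intersection cohomology complexes $\mathcal{I}_w$ indexed by $w\in W^1$ (equivalently by the symmetric Young diagrams in $\Omega_k$ from \eqref{Young}), and it admits a minimal projective generator $P$, as is standard for perverse sheaves constructible along an affine even stratification. The functor $\Hom(P,-)$ then realizes an equivalence $\Perv_k\cong A(D_k)\Mod$ once we set $A(D_k):=\End(P)$; this part is formal and needs only that $\Perv_k$ be equivalent to finite-dimensional modules over some finite-dimensional algebra.

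The substance of the theorem is therefore a concrete presentation of $\End(P)$ by generators and relations, which is exactly what Braden's algebra $A(D_k)$ provides. I would compute the underlying quiver first: its vertices are the strata, labelled by $W^1$, and the number of arrows $\mathcal{I}_x\to\mathcal{I}_y$ is $\dim\Ext^1(\mathcal{I}_x,\mathcal{I}_y)$, computed in the constructible derived category as $\Hom(\mathcal{I}_x,\mathcal{I}_y[1])$. These spaces are governed by the codimension-one incidences of closures of Schubert cells inside $Y_k$, which by the combinatorics of the reversed Bruhat order on $W^1$ occur precisely for the pairs recorded by the $\la$-pairs of Definition~\ref{lapair}; this reproduces the quiver displayed in Figure~\ref{fig:quiv}, together with the generators $a_i$ and $b_i$.

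The core geometric input, and the main obstacle, is the determination of the relations among these arrows. I would obtain them by analysing the local structure of $Y_k$ transverse to the strata, using hyperbolic localization with respect to a suitable $\mathbb{C}^*$-action to reduce the computation of the relevant $\Ext$-groups and of the composition maps to smaller, explicitly understood pieces, with the decomposition theorem controlling the $\mathcal{I}_w$-stalks and hence the radical filtrations of the projectives $P(w)$. Assembling this data produces the quadratic relations defining $A(D_k)$. The delicate point is that one must track not merely the dimensions of the morphism spaces but the exact composition structure, so that no relation is missed and none is spurious; this is where the hyperbolic-localization computation does the real work. Once the generators and relations are pinned down, comparing the resulting presentation with the abstract endomorphism algebra of $P$ identifies $A(D_k)\cong\End(P)$ and completes the proof.
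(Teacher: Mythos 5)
This statement is not proved in the paper at all: it is imported verbatim from Braden (cited as Theorem 1.7.2 of \cite{Braden}), and the paper's only task afterwards is to identify Braden's algebra $A(D_k)$ with the diagram algebra $\mathbb{D}_{\Lambda_k^{\ov{0}}}$ via the explicit map $\Phi$ of Theorem~\ref{mainthm}. So there is no in-paper argument to compare yours against; the relevant comparison is with Braden's original proof, and your outline is broadly faithful to it — Braden does realize $\Perv_k$ as modules over the endomorphism ring of a projective generator (using that the Schubert stratification is affine and even, so the category has enough projectives), and he does extract the generators and relations by hyperbolic localization with respect to torus actions transverse to the strata.

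Two caveats, though. First, your sketch defers exactly the part that constitutes the theorem: the determination of the relations. Saying "assembling this data produces the quadratic relations defining $A(D_k)$" is not a proof step, and it also mischaracterizes the target. Braden's presentation (reproduced as Definition~\ref{Bradenalgebra}) is \emph{not} a quiver algebra with quadratic relations: besides the idempotents $e_\la$ and the arrows $p(\la,\mu)$, it has an extra family of invertible generators $t_{\alpha,\la}$, and the key relations (R6)--(R8) relating $\op{m}(\la,\la')=e_\la+p(\la,\la')p(\la',\la)$ to products of $t$'s are multiplicative, not quadratic. This is not cosmetic — the paper stresses that its own (graded, quadratic-looking) generators arise from Braden's only after a ``formal logarithm,'' i.e.\ $t_{\alpha,\la}\mapsto {}_\la\mathbbm{1}_\la\pm X_{\alpha,\la}$, and that the positive grading is invisible in Braden's presentation. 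Second, your claim that $\dim\Ext^1(\mathcal{I}_x,\mathcal{I}_y)$ is governed simply by codimension-one closure incidences needs the specific geometry: for isotropic Grassmannians the arrows correspond to $\la$-pairs (Definition~\ref{lapair}), which include the ``dotted'' pairs $(-\alpha,\beta)$ coming from pairs of Schubert cells interchanged by the folding, not only literal codimension-one degenerations in a type-$A$ sense; that identification is itself part of the content (cf.\ Lemma~\ref{lem:comparelapairs}). As an outline of Braden's strategy your proposal is reasonable; as a proof it has the essential computation missing.
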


\begin{definition}
Let $\Lambda$ be a block and $\la \in \Lambda$ be a weight. Given a $\la$-pair $(\alpha,\beta)$ corresponding to a cup $C$ (see Definition~\ref{lapair}), we say it {\emph has a parent}, if $C$ is nested in another (dotted or undotted) cup or if otherwise there is a dotted cup to the right of $C$. We call then the minimal cup containing $C$ respectively the leftmost dotted cup to the right of $C$ and its associated $\la$-pair the {\it parent} of $C$; we denote them by $C'$ and $(\alpha', \beta')$ respectively.
All the possible examples are shown in \eqref{parent1}-\eqref{parent4}.
\end{definition}

\begin{definition}
\begin{enumerate}
\item A {\it diamond} in $\Lambda_k^{\ov{0}}$ is a quadruple $(\lambda,\lambda',\lambda'',\lambda''')$ of distinct elements in $\Lambda_k^{\ov{0}}$ such that $\lambda \leftrightarrow \lambda' \leftrightarrow \lambda''
\leftrightarrow \lambda''' \leftrightarrow \lambda$ and all elements are pairwise distinct. We depict them as follows
\begin{eqnarray*}
\begin{tikzpicture}[thick,scale=0.6]
\draw[<->] (1,-1) node[above]{\quad $\lambda$} -- +(-1,-1);
\draw[<->] (1.5,-1) -- +(1,-1);
\draw[<->] (0,-2.7) node[above]{$\lambda'$} -- +(1,-1) node[below]{\quad $\lambda''$};
\draw[<->] (2.5,-2.7) node[above]{$\lambda'''$} -- +(-1,-1);
\end{tikzpicture}
\end{eqnarray*} We say that two diamonds are {\it equivalent} if they are obtained one from the other by a sequence of cyclic rotations and
interchanging $\lambda'$ and $\lambda'''$.
\item For fixed $k$, a triple $\la_1 \leftrightarrow \la_2 \leftrightarrow \la_3$ in $\Lambda_{k}^{\ov{0}}$ {\it cannot be extended to a diamond} if there is no diamond $(\la_1,\la_2,\la_3,\la_4)$ in $\Lambda_{k}^{\ov{0}}$.
\item For fixed $k$, a triple $\la_1 \leftrightarrow \la_2 \leftrightarrow \la_3$ in $\Lambda_{k}^{\ov{0}}$ {\it can be enlarged} to a diamond if it cannot be extended but, after extending the weights $\la_i$ to $\tilde{\la}_i$ by adding some fixed number of $\down$'s to the left and $\up$'s to the right, such that the resulting triple can be extended to a diamond $(\tilde{\la}_1,\tilde{\la}_2,\tilde{\la}_3,\tilde{\la}_4)$ in $\Lambda_{m}^{\ov{0}}$ for some $m>k$.
\end{enumerate}
\end{definition}

\begin{ex}
\label{extended}
The triple $(\down\down\down\down\up,\up\up\down\down\up,\up\down\up\down\up)$ can not be extended to a diamond in $\Lambda_{5}^{\ov{0}}$, but it can be enlarged to a diamond $(\down\down\down\down\up\up,\up\up\down\down\up\up,\up\down\up\down\up\up,
\down\down\up\down\up\down)\in\Lambda_{6}^{\ov{0}}$. In other words, the corresponding Young diagrams
\begin{eqnarray}
{\tiny\Yvcentermath1\yng(4,4,4,4)\quad\quad\yng(4,4,2,2)\quad\quad\yng(4,3,2,1)
\quad\quad\yng(6,5,4,4,2,1)}
\end{eqnarray}
fit into a $6\times 6$-box, but only three of them into a $5\times 5$-box.
\end{ex}

\begin{definition}
\label{Bradenalgebra}
The algebra $A=A(D_k)$ is the unitary associative $\mC$-algebra with generators
$$\{e_\la\mid\la\in \Lambda_k^{\ov{0}} \}\cup\{p(\la,\la')\mid \la,\la'\in \Lambda_k^{\ov{0}}, \la\leftrightarrow\la'\}\cup
\{t_{\alpha,\la}\mid \la\in \Lambda_k^{\ov{0}} ,\alpha\in\mZ\}$$
and relations for $\lambda, \mu, \nu \in \Lambda_k^{\ov{0}}$ and $\alpha, \beta \in \mZ$:
\begin{enumerate}[(\text{R}1)]
\item \label{rel1} The $e_\la$'s are pairwise orthogonal idempotents, ie., $e_\la e_\nu=0$ if $\la\not=\nu$ and $e_\la e_\la=e_\la$;
\item \label{rel2} $\sum_{\la\in \Lambda_k^{\ov{0}}} e_\la=1$;
\item \label{rel3a} $e_{\nu}p(\la,\mu)=\delta_{\nu,\la}p(\la,\mu)$ and \\$p(\la,\mu)e_{\nu}=\delta_{\mu,\nu}p(\la,\mu)$;
\item \label{rel3b} $e_{\nu}t_{\la,\alpha}=\delta_{\nu,\la}=t_{\la,\alpha}e_{\nu}$;
\item \label{rel4a} The $t$'s commute with each other;
\item \label{rel4b} $p(\la,\mu)t_{\alpha,\mu} = t_{\alpha,\la}p(\la,\mu)$;
\item \label{rel5}
\begin{enumerate}[(i)]
\item $t_{\alpha,\la}t_{-\alpha,\la} = 1$;
\item $t_{\alpha,\la}=e_\la$ if $\alpha<-k$ or $\alpha>k$;
\item $t_{\alpha,\la}t_{\beta,\la}=e_\la$ if $(\alpha,\beta)$ is a $\la$-pair;
\end{enumerate}
\item \label{rel6} Suppose $\la\stackrel{(\alpha,\beta)}\longrightarrow\la'$. Then
  \begin{eqnarray*}
  \op{m}(\la',\la)^{(-1)^\beta}&=&t_{\alpha,\la'}t_{\zeta,\la'};\\
  \op{m}(\la,\la')^{(-1)^\beta}&=&t_{\alpha,\la}t_{\zeta,\la};
  \end{eqnarray*}
where $\op{m}(\la,\la')=e_\la+p(\la,\la')p(\la',\la)$ and
$$ \zeta = \left\lbrace \begin{array}{ll} -\alpha', & \text{if } \alpha < -\beta' < \beta < -\alpha' \\ \beta' & \text{otherwise },
\end{array} \right.$$
in case the parent $(\alpha',\beta')$ of $(\alpha,\beta)$ exists and $t_{\zeta,\la}=1$ otherwise.

\item \label{rel7}
\begin{enumerate}[(i)]
\item If $(\la_1,\la_2,\la_3,\la_4)$ is a diamond in $\Lambda_{k}^{\ov{0}}$ then
  \begin{eqnarray*}
    p(\la_3,\la_2)p(\la_2,\la_1)&=&p(\la_3,\la_4)p(\la_4,\la_1).
  \end{eqnarray*}
\item
  Given a triple $\la_1 \leftrightarrow \la_2 \leftrightarrow \la_3$ in $\Lambda_{k}^{\ov{0}}$ which cannot be extended but can be enlarged to a diamond then
 \begin{eqnarray}
\label{zero}
    p(\la_3,\la_2)p(\la_2,\la_1)&=0=&p(\la_1,\la_2)p(\la_2,\la_3).
  \end{eqnarray}
\item Given a triple $\la_1 \stackrel{(\alpha,\beta)}{\rightarrow} \la_2 \stackrel{(\gamma,\delta)}{\rightarrow} \la_3$ in $\Lambda_{k}^{\ov{0}}$ such that
    \begin{itemize}
    \item $\alpha<0$ (hence $(\alpha,\beta)$ corresponds to a dotted cup),
    \item $(\gamma,\delta)$ is not a $\la_1$-pair, and
    \item the triple cannot be extended to a diamond, then
    \end{itemize}
      \begin{eqnarray*}
    p(\la_3,\la_2)p(\la_2,\la_1)&=0=&p(\la_1,\la_2)p(\la_2,\la_3).
  \end{eqnarray*}
\end{enumerate}
\end{enumerate}
\end{definition}

\begin{ex}
\label{triples}
{\rm The triple $(\la_1,\la_2,\la_3)=(\up\up\up\up,\down\down\up\up,\down\up\down\up)$ satisfies all conditions from
(R9) (iii), whereas the triple $(\up\up\up\up,\down\down\up\up,\up\down\up\down)$ is of the form  $\mu_1 \rightarrow \mu_2 \stackrel{(\alpha,\beta)}{\rightarrow} \mu_3$ such that $(\alpha,\beta)$ is not a $\mu_1$-pair, but can be extended to the diamond $(\mu_1,\mu_2,\mu_3,\mu_4)=(\up\up\up\up,\down\down\up\up,\up\down\up\down, \up\up\down\down)$.
The corresponding degree one morphisms in $\mathbb{D}_4$ are he following diagrams equipped with a degree one orientation:
\begin{equation*}
\begin{tikzpicture}[thick,yscale=-1, scale=0.5]
\node at (0,-1){$A:=\underline{\la_1}\overline{\la_2}
=\underline{\mu_1}\mu_2\overline{\mu_2}=$};
\begin{scope}[xshift=4.3cm, yshift=-0.5cm]
\draw (0,-.1) .. controls +(0,1) and +(0,1) .. +(1,0);
\fill (0.5,.63) circle(3.5pt);
\draw (2,-.1) .. controls +(0,1) and +(0,1) .. +(1,0);
\fill (2.5,.63) circle(3.5pt);
\draw (1,-.1) .. controls +(0,-1) and +(0,-1) .. +(1,0);
\draw (0,-.1) .. controls +(0,-2.2) and +(0,-2.2) .. +(3,0);
\end{scope}
\node at (10,-1){$B:=\underline{\la_2}\overline{\la_3}=$};
\begin{scope}[xshift=12.6cm,yshift=-1.5cm, yscale=-1]
\draw (0,-.1) .. controls +(0,1) and +(0,1) .. +(1,0);
\draw (2,-.1) .. controls +(0,1) and +(0,1) .. +(1,0);
\draw (1,-.1) .. controls +(0,-1) and +(0,-1) .. +(1,0);
\draw (0,-.1) .. controls +(0,-2.2) and +(0,-2.2) .. +(3,0);
\end{scope}
\node at (17,-1){$\underline{\la_1}\overline{\la_3}=$};
\begin{scope}[xshift=18.3cm,yshift=-1cm]
\draw (0,-.1) .. controls +(0,1) and +(0,1) .. +(1,0);
\fill (0.5,.63) circle(3.5pt);
\draw (1.5,-.1) .. controls +(0,1) and +(0,1) .. +(1,0);
\fill (2,.63) circle(3.5pt);
\draw (0,-.1) .. controls +(0,-1) and +(0,-1) .. +(1,0);
\draw (1.5,-.1) .. controls +(0,-1) and +(0,-1) .. +(1,0);
\end{scope}
\end{tikzpicture}
\end{equation*}
\begin{equation*}
\begin{tikzpicture}[thick,scale=0.5]
\node at (0,-1){$C:=\underline{\mu_2}\overline{\mu_3}=$};
\begin{scope}[xshift=2.7cm, yshift=-0.5cm]
\draw (0,-.1) -- +(0,.7);
\draw (1,-.1) .. controls +(0,1) and +(0,1) .. +(1,0);
\draw (3,-.1) -- +(0,.7);
\fill (0,.13) circle(3.5pt);
\draw (1,-.1) .. controls +(0,-1) and +(0,-1) .. +(1,0);
\draw (0,-.1) .. controls +(0,-2.2) and +(0,-2.2) .. +(3,0);
\end{scope}
\node at (8.6,-1){$D:=\underline{\mu_1}\mu_4\overline{\mu_4}=$};
\begin{scope}[xshift=11.3cm, yshift=-1cm, yscale=-1]
\draw (0,-.1) .. controls +(0,1) and +(0,1) .. +(1,0);
\fill (.5,.63) circle(3.5pt);
\draw (1.5,-.1) .. controls +(0,1) and +(0,1) .. +(1,0);
\fill (2,.63) circle(3.5pt);
\draw (0,-.1) .. controls +(0,-1) and +(0,-1) .. +(1,0);
\fill (.5,-.83) circle(3.5pt);
\draw (1.5,-.8) -- +(0,.7);
\draw (2.5,-.8) -- +(0,.7);
\end{scope}
\node at (16.7,-1){$E=\underline{\mu_4}\mu_4\overline{\mu_3}=$};
\begin{scope}[xshift=19.3cm, yshift=-1cm, yscale=-1]
\draw (1,-.1) .. controls +(0,-1) and +(0,-1) .. +(1,0);
\fill (0,-.43) circle(3.5pt);
\draw (0,-.8) -- +(0,.8);
\draw (2.5,-.8) -- +(0,.8);
\draw (0,-.1) .. controls +(0,1) and +(0,1) .. +(1,0);
\fill (.5,.63) circle(3.5pt);
\draw (2,-.1) -- +(0,.8);
\draw (2.5,-.1) -- +(0,.8);
\end{scope}
\end{tikzpicture}
\end{equation*}
Since $\underline{\la_1}\overline{\la_3}$ cannot be oriented at all we must have $BA=0=AB$ whereas $\underline{\mu_1}\overline{\mu_3}$ has a unique orientation $\underline{\mu_1}\tau\overline{\mu_3}$ with $\tau=\up\up\down\down$ and in fact $CA=\underline{\mu_1}\tau\overline{\mu_3}=ED$ and $AC=\underline{\mu_3}\tau\overline{\mu_1}=DE$.
}
\end{ex}

\subsection{$\la$-pairs and diamonds}
To match Definition~\ref{Bradenalgebra} with Braden's algebra \cite[1.7]{Braden} we need the following key technical lemma which compares our notion of $\la$-pair with Braden's. Given a weight $\la\in \Lambda_k^{\ov{0}}$ recall from Lemma~\ref{bijection_S_Lambda} the associated antisymmetric sequence $s = (s_i)_{-k \leq i \leq k}$ associated to $\la$ and extend it by infinitely many $-$'s
to the left and infinitely many $+$'s to the right to get an infinite sequence $\tilde{\la}$ indexed by half-integers such that the anti-symmetry line $L$ passes between -$\frac{1}{2}$ and $\frac{1}{2}$, in formulas $\tilde{\la}=(X_i)_{i \in\mathbb{Z}+\frac{1}{2}}$, where
$$
X_{i-\frac{1}{2}}= \left\lbrace \begin{array}{ll}
- & \text{if } i < -k, \\
s_i & \text{if } -k \leq i \leq k, \\
+ & \text{if } i > k.
\end{array} \right.
$$

\begin{lemma}
\label{lem:comparelapairs}
Let $\la,\la' \in \Lambda_k^{\ov{0}}$ and $-k\leq i,j\leq k$ then
\begin{eqnarray}
\centerline{$\la\stackrel{(i,j)}{\longrightarrow}\la'$ if and only if $\tilde{\la}\stackrel{(i',j')}{\longrightarrow}\tilde{\la}'$}
\end{eqnarray}
with the right hand side in the terminology of \cite{Braden} where $i'=i-\frac{1}{2}$, $j'=j-\frac{1}{2}$.
\end{lemma}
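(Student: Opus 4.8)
The plan is to unwind both notions on the infinite antisymmetric sequence $\tilde\la$ and to compare them arc by arc, using the \emph{folding} dictionary of \cite{LS}. First I would recall Braden's combinatorial description of a $\la$-pair from \cite[1.7]{Braden}: it is a pair of half-integer positions $i'<j'$ with $X_{i'}=-$ and $X_{j'}=+$ that are joined by an arc of the symmetric cup diagram canonically attached to $\tilde\la$, the move $\tilde\la\to\tilde\la'$ being the antisymmetric flip of the labels at $i',j'$ (hence automatically also at their mirror images $-i',-j'$ across the antisymmetry line $L$). The structural facts I would record are: $\tilde\la$ restricted to the positive half-line is exactly $\la$ read as a $\pm$-sequence, its restriction to the negative half-line is the sign-reversed mirror, and $\tilde\la$ is all $-$ far to the left and all $+$ far to the right. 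Consequently the canonical symmetric cup diagram of $\tilde\la$ has two sorts of arcs: symmetric pairs of arcs avoiding $L$, and self-mirror arcs crossing $L$; under the dictionary $p\leftrightarrow p-\tfrac12$ (so a negated position $-p\mapsto -p-\tfrac12$) these are precisely the images of the undotted, respectively dotted, cups produced by (C1)--(C4) of Definition~\ref{decoratedcups}.

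Second, I would treat the undotted case $\alpha=\pos(l_\gamma)>0$. Here the cup $\gamma$ joins $l_\gamma<r_\gamma$ with $\la_{l_\gamma}=\down$, $\la_{r_\gamma}=\up$, and by (C1) the labels strictly between are balanced with no smaller cup separating them. Since $\tilde\la$ agrees with $\la$ on the positive side, the arc $(l_\gamma-\tfrac12,\,r_\gamma-\tfrac12)$ carries $-,+$ at its ends and is a (non-crossing) arc of the symmetric diagram, i.e. a Braden $\la$-pair; conversely every non-crossing arc of $\tilde\la$ with $0<i'<j'\le k-\tfrac12$ arises this way. The flip from $\down\up$ to $\up\down$ is exactly the antisymmetric flip at $(i',j')$, so $\la'$ extends to $\tilde{\la'}$. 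This gives the equivalence with $i=i'+\tfrac12=\alpha$ and $j=j'+\tfrac12=\beta$.

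Third comes the dotted case $\alpha=-\pos(l_\gamma)<0$, where the real work lies. A dotted cup is produced by (C3) pairing the \emph{excess} $\up$'s left-to-right; under the fold these excess $\up$'s are precisely the $+$'s on the positive side whose matching arc crosses $L$, and by antisymmetry each such crossing arc is self-mirror and the crossing arcs are nested. I would show that the left-to-right pairing of (C3) realizes the reconnection of \cite{LS} of consecutive nested crossing arcs, and that the pair $(-\pos(l_\gamma),\pos(r_\gamma))$ singles out the \emph{outer} arc of such a reconnected pair, namely $(-\pos(l_\gamma)-\tfrac12,\ \pos(r_\gamma)-\tfrac12)$; using $X_{-p}=\overline{X_p}$ one checks its endpoints carry $-,+$. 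The point to verify is that this is exactly Braden's list of crossing $\la$-pairs, and that the \emph{inner} partner arcs, the possible leftover self-mirror arc coming from an unpaired $\up$ (step (C4), a dotted ray), and the central arc through $L$ (which would force the excluded value $i=0$) are \emph{not} $\la$-pairs on either side. Finally the antisymmetric flip at $(i',j')$ turns $\up\up$ into $\down\down$ at $l_\gamma,r_\gamma$, matching the definition of $\la'$.

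The hard part is precisely this third case: reconciling the purely local, algorithmic pairing rule (C3) with Braden's global arc condition on $\tilde\la$ requires controlling the nesting and parity of the self-mirror crossing arcs and pinning down which of them are $\la$-pairs. I expect the cleanest route is to bypass the global matching by invoking Lemma~\ref{lapairscup}: one translates each of its local moves $\underline\mu\leftarrow\underline\la$ through the fold into a local modification of $\tilde\la$ and checks it against Braden's corresponding move, so that the combinatorial bookkeeping is confined to the fixed finite list of pictures there, with careful attention to the edge configurations near $L$ and near position $k$.
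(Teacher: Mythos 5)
Your skeleton is the same as the paper's: both arguments rest on the folding dictionary of \cite{LS} between symmetric cup diagrams attached to $\tilde\la$ and the decorated cup diagrams of Definition~\ref{decoratedcups}, and your undotted case agrees with the paper's treatment (which indeed dismisses it as immediate). The genuine gap sits exactly where you locate the difficulty, and it is not bookkeeping: your stated input --- that a Braden $\la$-pair is a pair of half-integer positions $i'<j'$ with $X_{i'}=-$, $X_{j'}=+$ joined by an arc of the symmetric diagram --- is false in the crossing case, which is the only case with content. As quoted in the paper's proof, a crossing pair $(\alpha,\beta)$ with $\alpha<0<\beta$ in \cite[1.7]{Braden} satisfies $X_\alpha=X_\beta=+$ (both labels are $+$), the parity condition that $\alpha+\frac{1}{2}$ is even, balance conditions $b(-,\gamma,\gamma')=b(+,\gamma,\gamma')$ for $(\gamma,\gamma')\in\{(-\alpha,\alpha),(-\beta,\beta)\}$, and the imbalance $b(+,\alpha,\beta+1)=b(-,\alpha,\beta+1)-1$. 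In particular $(\alpha,\beta)$ is not an arc of the diagram at all: the balance conditions identify $\{\alpha,-\alpha\}$ and $\{\beta,-\beta\}$ as two distinct nested arcs crossing the antisymmetry line $L$, so Braden's pair consists of endpoints of two different self-mirror arcs, and your check that ``its endpoints carry $-,+$'' cannot go through. The paper's entire proof is the recognition of these counting conditions as ``two cups crossing $L$'' together with the observation that the evenness of $\alpha+\frac{1}{2}$ is what makes the rules of \cite[5.2]{LS} fold such a nested pair into a single dotted cup of type (C3); this is precisely the step your proposal postulates instead of proving.

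Your fallback via Lemma~\ref{lapairscup} does not close this gap. To check a local move of Lemma~\ref{lapairscup} against ``Braden's corresponding move'' you must first know what Braden's moves are, i.e.\ unwind the counting conditions above, so nothing is actually bypassed; and even granting that, the check only yields one inclusion (each cup of $\underline{\la}$ produces a Braden pair). The converse --- that every Braden pair of $\tilde\la$ whose flip lands again in $\Lambda_k^{\ov{0}}$ arises from a cup of $\underline{\la}$, so that no extra arrows appear --- is left unaddressed; recall that $\tilde\la$ has infinitely many Braden pairs, and the remark following the lemma in the paper makes exactly this point. As it stands, your proposal would therefore not establish the ``if and only if''.
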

\begin{proof}
For a weight $\la$ with associated sequence $X_i$ and $a,b\in\mathbb{Z}+\frac{1}{2}$ we set $b(\mp,i,j)=|\{r\mid i< r< j, X_r=\mp\}|$.
A $\la$-pair in the sense of \cite{Braden} is a pair $(\alpha,\beta)\in(\mathbb{Z}+\frac{1}{2})^2$ such that
\begin{enumerate}[(i)]
\item $0<\alpha<\beta$ $X_\alpha=-$ and $X_\beta=+$ and $b(-,\alpha,\beta)=b(+,\alpha,\beta)$, or
\item $0<-\alpha<0<\beta$ with $\alpha+\frac{1}{2}$ even and $X_\alpha=X_\beta=+$ (and then automatically $X_{-\alpha}=X_{-\beta}=-$) and  $b(-,\gamma,\gamma')=b(\mp,\gamma,\gamma')$ for the pairs $(\gamma,\gamma')\in\{(-\alpha, \alpha),(-\beta,\beta)\}$ and $b(+,\alpha,\beta+1)=b(-,\alpha,\beta+1)-1$.
\end{enumerate}
For  $\la,\la' \in \Lambda_k^{\ov{0}}$, the case (i) obviously corresponds precisely to cups of type (C1), hence to our $\la$ pairs $(i,j)$ for $0<i<j$. We claim case (ii) corresponds to cups of type (C3), hence to our $\la$ pairs $(-i,j)$ for $0<-i<j$. The conditions on the pairs $(-\alpha, \alpha),(-\beta,\beta)$ means they correspond to two cups crossing the middle line $L$ in the symmetric cup diagrams from \cite{LS}. Since $\alpha+\frac{1}{2}$ is even, these two cups get turned into a dotted cup using the rules from \cite[5.2]{LS}.
\end{proof}
\begin{remark}
{\rm
One should note that Braden has an infinite number of $\la$-pairs for a single non-truncated weight $\lambda$, but only a finite number with $\la'\in \Lambda_k^{\ov{0}}$ as well. Our definition of $\la$-pairs produces only these relevant pairs.
}
\end{remark}
\begin{corollary}
\label{CorCartan}
Let $k\geq 4$ and $\La={\Lambda_k^{\ov{0}}}$. The algebra $A=A(D_k)$ agrees with the algebra defined in \cite[1.7]{Braden}. The Cartan matrix of $A(D_k)$ agrees with the Cartan matrix of $\mathbb{D}_{\Lambda}.$
\end{corollary}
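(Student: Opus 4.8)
The corollary has two parts, and I would treat them in turn. For the first assertion the plan is to exhibit Definition~\ref{Bradenalgebra} as a verbatim reformulation of Braden's presentation in \cite[1.7]{Braden}, the only difference being that we index everything by truncated weights $\la\in\Lambda_k^{\ov 0}$ rather than by the infinite sequences $\tilde\la$. Concretely, I would set up the dictionary on generators---matching $e_\la$ with Braden's idempotent attached to $\tilde\la$, the arrows $p(\la,\la')$ with his path generators for $\la\leftrightarrow\la'$, and $t_{\alpha,\la}$ with his torus generator after the index shift $\alpha\mapsto\alpha-\tfrac12$---and then check that our relations (R1)--(R9) are exactly the translations of Braden's relations under this dictionary. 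The central input is Lemma~\ref{lem:comparelapairs}, which guarantees that our notion of $\la$-pair coincides with Braden's under $\la\leftrightarrow\tilde\la$ and $i\mapsto i-\tfrac12$, so that every relation phrased in terms of $\la$-pairs transports correctly; the remark following that lemma disposes of the discrepancy that Braden has infinitely many $\la$-pairs for a single non-truncated weight, while only the finitely many with $\la'\in\Lambda_k^{\ov 0}$ survive the truncation.

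The relations (R1)--(R5) are purely formal (orthogonality of idempotents, decomposition of the unit, the idempotent--path and idempotent--torus compatibilities, and the basic torus relations), and they match Braden's on inspection. The first genuine point is relation (R6): here one must check that the element $\zeta$ defined via the \emph{parent} $(\alpha',\beta')$ of the $\la$-pair $(\alpha,\beta)$ reproduces Braden's prescription for the product $t_{\alpha,\la}t_{\zeta,\la}$. I would verify this by going through the possible parent configurations, using Lemma~\ref{lem:comparelapairs} to translate the inequalities $\alpha<-\beta'<\beta<-\alpha'$ and their negations into Braden's nesting conditions, while tracking the parity $(-1)^\beta$ against his sign convention. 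The remaining and hardest work is relation (R7): one must show that our diamonds, together with the two degenerate situations (a triple that cannot be extended but can be enlarged, and the dotted-cup triple of (R7)(iii)), correspond exactly to Braden's diamond relations and his vanishing relations. This reduces to comparing the conditions \emph{cannot be extended} and \emph{can be enlarged} with Braden's conditions governing when a length-two path factors through a diamond; Example~\ref{triples} already illustrates the mechanism. I expect this to be the main obstacle, as it requires a careful finite case analysis of how cups and their parents interact under the truncation, the very point where passing from the infinite to the finite combinatorics is delicate.

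For the second assertion I would \emph{not} invoke any isomorphism between $\mathbb{D}_{\Lambda}$ and $A(D_k)$ (this is only established later, in Theorem~\ref{thm:main}), but instead compare both Cartan matrices with one and the same matrix of parabolic Kazhdan--Lusztig polynomials. Combining the first part with Theorem~\ref{thmBraden}, the algebra $A(D_k)$ is the endomorphism ring of a minimal projective generator of $\Perv_k\cong\cO_0^\p(\mathfrak{so}(2k))$, so its graded Cartan matrix is the standard $D^{t}D$, where $D$ is the parabolic decomposition matrix of $\cO_0^\p$ whose entries are the parabolic Kazhdan--Lusztig polynomials of type $(D_k,A_{k-1})$. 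On the diagrammatic side, the quasi-hereditary structure of $\mathbb{D}_{\Lambda}$ from Theorems~\ref{qh1} and~\ref{qh2} yields, via \eqref{dmat}, the factorization $C_{\Lambda}(q)=M_{\Lambda}(q)^{t}M_{\Lambda}(q)$ of its Cartan matrix through the $q$-decomposition matrix $M_{\Lambda}(q)$, whose closed form is recorded in \eqref{exp}. By Lemma~\ref{lem:KLpolys} the matrix $M_{\Lambda}(q)$ is exactly the same matrix of parabolic Kazhdan--Lusztig polynomials, i.e. $M_{\Lambda}(q)=D$. Hence both Cartan matrices equal $D^{t}D$ and therefore agree, which completes the proof.
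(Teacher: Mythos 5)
Your proposal is correct and follows essentially the same route as the paper: the first assertion is obtained by translating Braden's presentation through the dictionary of Lemma~\ref{lem:comparelapairs} (the paper compresses your relation-by-relation check into ``follows from the definitions''), and the second by observing that both Cartan matrices factor as $D^tD$ with $D$ the matrix of parabolic Kazhdan--Lusztig polynomials of type $(D_k,A_{k-1})$ --- on Braden's side via the identification with $\cO_0^\p$ (the content of the \cite[Theorem 3.11.4 (i)]{BGS} citation in the paper) and on the diagrammatic side via Lemma~\ref{lem:KLpolys}. Your explicit remark that the argument must not invoke the isomorphism of Theorem~\ref{mainthm}, since that result relies on this corollary, is exactly the right precaution and is implicit in the paper's proof.
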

\begin{proof}
The first part follows from Lemma~\ref{lem:comparelapairs} and the definitions.\footnote{We tried to clarify the misleading formulation of the analogue of part (R9) (iii) in \cite{Braden}.} The second statement follows directly from the fact that both Cartan matrices are of the form $C=D_\La^tD_\La$, where the entries of the decomposition matrix $M_\La$ are parabolic Kazhdan-Lusztig polynomials of type $(D_k,A_{k-1})$, see \cite[Theorem 3.11.4 (i)]{BGS} and Lemma~\ref{lem:KLpolys} respectively.
\end{proof}
In particular, the graph underlying its Ext-quiver is $\mathcal{Q}(\Lambda)$.

\begin{prop}
\label{diamondsclass}
Up to equivalence the following local configurations are the only possible diamonds where the relevant parts do not contain any rays.

\begin{eqnarray}
\label{diamonds2}
\begin{tikzpicture}[thick,scale=0.4]
\draw (2,0) .. controls +(0,-.5) and +(0,-.5) .. +(.5,0);
\draw (3,0) .. controls +(0,-.5) and +(0,-.5) .. +(.5,0);
\draw (4,0) .. controls +(0,-.5) and +(0,-.5) .. +(.5,0);

\draw[<->] (2.5,-1) -- +(-1,-1);
\draw[<->] (4,-1) -- +(1,-1);

\draw (0,-2.5) .. controls +(0,-1) and +(0,-1) .. +(1.5,0);
\draw (.5,-2.5) .. controls +(0,-.5) and +(0,-.5) .. +(.5,0);
\draw (2,-2.5) .. controls +(0,-.5) and +(0,-.5) .. +(.5,0);

\draw (4,-2.5) .. controls +(0,-1.25) and +(0,-1.25) .. +(2.5,0);
\draw (4.5,-2.5) .. controls +(0,-1) and +(0,-1) .. +(1.5,0);
\draw (5,-2.5) .. controls +(0,-.5) and +(0,-.5) .. +(.5,0);

\draw[<->] (2.5,-4.5) -- +(-1,1);
\draw[<->] (4,-4.5) -- +(1,1);

\draw (2,-5) .. controls +(0,-1) and +(0,-1) .. +(2.5,0);
\draw (2.5,-5) .. controls +(0,-.5) and +(0,-.5) .. +(.5,0);
\draw (3.5,-5) .. controls +(0,-.5) and +(0,-.5) .. +(.5,0);

\begin{scope}[xshift=8cm]
\draw (2,0) .. controls +(0,-.5) and +(0,-.5) .. +(.5,0);
\draw (3,0) .. controls +(0,-.5) and +(0,-.5) .. +(.5,0);
\draw (4,0) .. controls +(0,-.5) and +(0,-.5) .. +(.5,0);

\draw[<->] (2.5,-1) -- +(-1,-1);
\draw[<->] (4,-1) -- +(1,-1);

\draw (1,-2.5) .. controls +(0,-1) and +(0,-1) .. +(1.5,0);
\draw (0,-2.5) .. controls +(0,-.5) and +(0,-.5) .. +(.5,0);
\draw (1.5,-2.5) .. controls +(0,-.5) and +(0,-.5) .. +(.5,0);

\draw (4,-2.5) .. controls +(0,-1.25) and +(0,-1.25) .. +(2.5,0);
\draw (4.5,-2.5) .. controls +(0,-1) and +(0,-1) .. +(1.5,0);
\draw (5,-2.5) .. controls +(0,-.5) and +(0,-.5) .. +(.5,0);

\draw[<->] (2.5,-4.5) -- +(-1,1);
\draw[<->] (4,-4.5) -- +(1,1);

\draw (2,-5) .. controls +(0,-1) and +(0,-1) .. +(2.5,0);
\draw (2.5,-5) .. controls +(0,-.5) and +(0,-.5) .. +(.5,0);
\draw (3.5,-5) .. controls +(0,-.5) and +(0,-.5) .. +(.5,0);
\end{scope}

\begin{scope}[xshift=16cm]
\draw (2,0) .. controls +(0,-.5) and +(0,-.5) .. +(.5,0);
\draw (3,0) .. controls +(0,-.5) and +(0,-.5) .. +(.5,0);
\draw (4,0) .. controls +(0,-.5) and +(0,-.5) .. +(.5,0);

\draw[<->] (2.5,-1) -- +(-1,-1);
\draw[<->] (4,-1) -- +(1,-1);

\draw (0,-2.5) .. controls +(0,-1) and +(0,-1) .. +(1.5,0);
\draw (.5,-2.5) .. controls +(0,-.5) and +(0,-.5) .. +(.5,0);
\draw (2,-2.5) .. controls +(0,-.5) and +(0,-.5) .. +(.5,0);

\draw (4,-2.5) .. controls +(0,-.5) and +(0,-.5) .. +(.5,0);
\draw (5,-2.5) .. controls +(0,-1) and +(0,-1) .. +(1.5,0);
\draw (5.5,-2.5) .. controls +(0,-.5) and +(0,-.5) .. +(.5,0);

\draw[<->] (2.5,-4.5) -- +(-1,1);
\draw[<->] (4,-4.5) -- +(1,1);

\draw (2,-5) .. controls +(0,-1) and +(0,-1) .. +(2.5,0);
\draw (2.5,-5) .. controls +(0,-.5) and +(0,-.5) .. +(.5,0);
\draw (3.5,-5) .. controls +(0,-.5) and +(0,-.5) .. +(.5,0);
\end{scope}
\end{tikzpicture}
\end{eqnarray}
\begin{eqnarray}
\label{diamonds1}
\begin{tikzpicture}[thick,scale=0.4]
\draw (2.5,0) .. controls +(0,-.5) and +(0,-.5) .. +(.5,0);
\draw (3.5,0) .. controls +(0,-.5) and +(0,-.5) .. +(.5,0);
\draw (4.5,0) .. controls +(0,-.5) and +(0,-.5) .. +(.5,0);
\draw (5.5,0) .. controls +(0,-.5) and +(0,-.5) .. +(.5,0);

\draw[<->] (3,-1) -- +(-1,-1);
\draw[<->] (5.5,-1) -- +(1,-1);

\draw (0,-2.5) .. controls +(0,-1) and +(0,-1) .. +(1.5,0);
\draw (.5,-2.5) .. controls +(0,-.5) and +(0,-.5) .. +(.5,0);
\draw (2,-2.5) .. controls +(0,-.5) and +(0,-.5) .. +(.5,0);
\draw (3,-2.5) .. controls +(0,-.5) and +(0,-.5) .. +(.5,0);

\draw (5,-2.5) .. controls +(0,-.5) and +(0,-.5) .. +(.5,0);
\draw (6,-2.5) .. controls +(0,-.5) and +(0,-.5) .. +(.5,0);
\draw (7,-2.5) .. controls +(0,-1) and +(0,-1) .. +(1.5,0);
\draw (7.5,-2.5) .. controls +(0,-.5) and +(0,-.5) .. +(.5,0);

\draw[<->] (3,-4.5) -- +(-1,1);
\draw[<->] (5.5,-4.5) -- +(1,1);

\draw (2.5,-5) .. controls +(0,-1) and +(0,-1) .. +(1.5,0);
\draw (3,-5) .. controls +(0,-.5) and +(0,-.5) .. +(.5,0);
\draw (4.5,-5) .. controls +(0,-1) and +(0,-1) .. +(1.5,0);
\draw (5,-5) .. controls +(0,-.5) and +(0,-.5) .. +(.5,0);

\begin{scope}[xshift=10cm]
\draw (2.5,0) .. controls +(0,-.5) and +(0,-.5) .. +(.5,0);
\draw (3.5,0) .. controls +(0,-.5) and +(0,-.5) .. +(.5,0);
\draw (4.5,0) .. controls +(0,-.5) and +(0,-.5) .. +(.5,0);
\draw (5.5,0) .. controls +(0,-.5) and +(0,-.5) .. +(.5,0);

\draw[<->] (3,-1) -- +(-1,-1);
\draw[<->] (5.5,-1) -- +(1,-1);

\draw (0,-2.5) .. controls +(0,-1.25) and +(0,-1.25) .. +(3.5,0);
\draw (0.5,-2.5) .. controls +(0,-1) and +(0,-1) .. +(2.5,0);
\draw (1,-2.5) .. controls +(0,-.5) and +(0,-.5) .. +(.5,0);
\draw (2,-2.5) .. controls +(0,-.5) and +(0,-.5) .. +(.5,0);

\draw (5,-2.5) .. controls +(0,-.5) and +(0,-.5) .. +(.5,0);
\draw (6,-2.5) .. controls +(0,-1) and +(0,-1) .. +(1.5,0);
\draw (6.5,-2.5) .. controls +(0,-.5) and +(0,-.5) .. +(.5,0);
\draw (8,-2.5) .. controls +(0,-.5) and +(0,-.5) .. +(.5,0);

\draw[<->] (3,-4.5) -- +(-1,1);
\draw[<->] (5.5,-4.5) -- +(1,1);

\draw (2.5,-5) .. controls +(0,-1.5) and +(0,-1.5) .. +(3.5,0);
\draw (3,-5) .. controls +(0,-1.25) and +(0,-1.25) .. +(2.5,0);
\draw (3.5,-5) .. controls +(0,-1) and +(0,-1) .. +(1.5,0);
\draw (4,-5) .. controls +(0,-.5) and +(0,-.5) .. +(.5,0);
\end{scope}

\begin{scope}[xshift=20cm]
\draw (2.5,0) .. controls +(0,-.5) and +(0,-.5) .. +(.5,0);
\draw (3.5,0) .. controls +(0,-1) and +(0,-1) .. +(1.5,0);
\draw (4,0) .. controls +(0,-.5) and +(0,-.5) .. +(.5,0);
\draw (5.5,0) .. controls +(0,-.5) and +(0,-.5) .. +(.5,0);

\draw[<->] (3,-1) -- +(-1,-1);
\draw[<->] (5.5,-1) -- +(1,-1);

\draw (0,-2.5) .. controls +(0,-1) and +(0,-1) .. +(2.5,0);
\draw (0.5,-2.5) .. controls +(0,-.5) and +(0,-.5) .. +(.5,0);
\draw (1.5,-2.5) .. controls +(0,-.5) and +(0,-.5) .. +(.5,0);
\draw (3,-2.5) .. controls +(0,-.5) and +(0,-.5) .. +(.5,0);

\draw (6,-2.5) .. controls +(0,-1) and +(0,-1) .. +(2.5,0);
\draw (5,-2.5) .. controls +(0,-.5) and +(0,-.5) .. +(.5,0);
\draw (6.5,-2.5) .. controls +(0,-.5) and +(0,-.5) .. +(.5,0);
\draw (7.5,-2.5) .. controls +(0,-.5) and +(0,-.5) .. +(.5,0);

\draw[<->] (3,-4.5) -- +(-1,1);
\draw[<->] (5.5,-4.5) -- +(1,1);

\draw (2.5,-5) .. controls +(0,-1) and +(0,-1) .. +(3.5,0);
\draw (3,-5) .. controls +(0,-.5) and +(0,-.5) .. +(.5,0);
\draw (4,-5) .. controls +(0,-.5) and +(0,-.5) .. +(.5,0);
\draw (5,-5) .. controls +(0,-.5) and +(0,-.5) .. +(.5,0);
\end{scope}

\begin{scope}[yshift=-7cm]
\draw (2.5,0) .. controls +(0,-.5) and +(0,-.5) .. +(.5,0);
\draw (3.5,0) .. controls +(0,-.5) and +(0,-.5) .. +(.5,0);
\draw (4.5,0) .. controls +(0,-1) and +(0,-1) .. +(1.5,0);
\draw (5,0) .. controls +(0,-.5) and +(0,-.5) .. +(.5,0);

\draw[<->] (3,-1) -- +(-1,-1);
\draw[<->] (5.5,-1) -- +(1,-1);

\draw (0,-2.5) .. controls +(0,-1.25) and +(0,-1.25) .. +(3.5,0);
\draw (0.5,-2.5) .. controls +(0,-1) and +(0,-1) .. +(1.5,0);
\draw (1,-2.5) .. controls +(0,-.5) and +(0,-.5) .. +(.5,0);
\draw (2.5,-2.5) .. controls +(0,-.5) and +(0,-.5) .. +(.5,0);

\draw (5,-2.5) .. controls +(0,-.5) and +(0,-.5) .. +(.5,0);
\draw (6,-2.5) .. controls +(0,-1) and +(0,-1) .. +(2.5,0);
\draw (6.5,-2.5) .. controls +(0,-.5) and +(0,-.5) .. +(.5,0);
\draw (7.5,-2.5) .. controls +(0,-.5) and +(0,-.5) .. +(.5,0);

\draw[<->] (3,-4.5) -- +(-1,1);
\draw[<->] (5.5,-4.5) -- +(1,1);

\draw (2.5,-5) .. controls +(0,-1) and +(0,-1) .. +(3.5,0);
\draw (3,-5) .. controls +(0,-.5) and +(0,-.5) .. +(.5,0);
\draw (4,-5) .. controls +(0,-.5) and +(0,-.5) .. +(.5,0);
\draw (5,-5) .. controls +(0,-.5) and +(0,-.5) .. +(.5,0);
\end{scope}

\begin{scope}[xshift=10cm,yshift=-7cm]
\draw (4.5,0) .. controls +(0,-.5) and +(0,-.5) .. +(.5,0);
\draw (5.5,0) .. controls +(0,-.5) and +(0,-.5) .. +(.5,0);
\draw (2.5,0) .. controls +(0,-1) and +(0,-1) .. +(1.5,0);
\draw (3,0) .. controls +(0,-.5) and +(0,-.5) .. +(.5,0);

\draw[<->] (3,-1) -- +(-1,-1);
\draw[<->] (5.5,-1) -- +(1,-1);

\draw (0,-2.5) .. controls +(0,-1.25) and +(0,-1.25) .. +(3.5,0);
\draw (1.5,-2.5) .. controls +(0,-1) and +(0,-1) .. +(1.5,0);
\draw (2,-2.5) .. controls +(0,-.5) and +(0,-.5) .. +(.5,0);
\draw (0.5,-2.5) .. controls +(0,-.5) and +(0,-.5) .. +(.5,0);

\draw (5.5,-2.5) .. controls +(0,-.5) and +(0,-.5) .. +(.5,0);
\draw (5,-2.5) .. controls +(0,-1) and +(0,-1) .. +(2.5,0);
\draw (6.5,-2.5) .. controls +(0,-.5) and +(0,-.5) .. +(.5,0);
\draw (8,-2.5) .. controls +(0,-.5) and +(0,-.5) .. +(.5,0);

\draw[<->] (3,-4.5) -- +(-1,1);
\draw[<->] (5.5,-4.5) -- +(1,1);

\draw (2.5,-5) .. controls +(0,-1) and +(0,-1) .. +(3.5,0);
\draw (3,-5) .. controls +(0,-.5) and +(0,-.5) .. +(.5,0);
\draw (4,-5) .. controls +(0,-.5) and +(0,-.5) .. +(.5,0);
\draw (5,-5) .. controls +(0,-.5) and +(0,-.5) .. +(.5,0);
\end{scope}

\begin{scope}[xshift=20cm,yshift=-7cm]
\draw (2.5,0) .. controls +(0,-.5) and +(0,-.5) .. +(.5,0);
\draw (3.5,0) .. controls +(0,-1) and +(0,-1) .. +(2.5,0);
\draw (4,0) .. controls +(0,-.5) and +(0,-.5) .. +(.5,0);
\draw (5,0) .. controls +(0,-.5) and +(0,-.5) .. +(.5,0);

\draw[<->] (3,-1) -- +(-1,-1);
\draw[<->] (5.5,-1) -- +(1,-1);

\draw (0,-2.5) .. controls +(0,-1) and +(0,-1) .. +(3.5,0);
\draw (0.5,-2.5) .. controls +(0,-.5) and +(0,-.5) .. +(.5,0);
\draw (1.5,-2.5) .. controls +(0,-.5) and +(0,-.5) .. +(.5,0);
\draw (2.5,-2.5) .. controls +(0,-.5) and +(0,-.5) .. +(.5,0);

\draw (5,-2.5) .. controls +(0,-.5) and +(0,-.5) .. +(.5,0);
\draw (6,-2.5) .. controls +(0,-1.25) and +(0,-1.25) .. +(2.5,0);
\draw (6.5,-2.5) .. controls +(0,-1) and +(0,-1) .. +(1.5,0);
\draw (7,-2.5) .. controls +(0,-.5) and +(0,-.5) .. +(.5,0);

\draw[<->] (3,-4.5) -- +(-1,1);
\draw[<->] (5.5,-4.5) -- +(1,1);

\draw (2.5,-5) .. controls +(0,-1.25) and +(0,-1.25) .. +(3.5,0);
\draw (3,-5) .. controls +(0,-.5) and +(0,-.5) .. +(.5,0);
\draw (4,-5) .. controls +(0,-1) and +(0,-1) .. +(1.5,0);
\draw (4.5,-5) .. controls +(0,-.5) and +(0,-.5) .. +(.5,0);
\end{scope}

\begin{scope}[xshift=10cm,yshift=-14cm]
\draw (3,0) .. controls +(0,-.5) and +(0,-.5) .. +(.5,0);
\draw (2.5,0) .. controls +(0,-1) and +(0,-1) .. +(2.5,0);
\draw (4,0) .. controls +(0,-.5) and +(0,-.5) .. +(.5,0);
\draw (5.5,0) .. controls +(0,-.5) and +(0,-.5) .. +(.5,0);

\draw[<->] (3,-1) -- +(-1,-1);

\draw[<->] (5.5,-1) -- +(1,-1);

\draw (0,-2.5) .. controls +(0,-1) and +(0,-1) .. +(3.5,0);
\draw (0.5,-2.5) .. controls +(0,-.5) and +(0,-.5) .. +(.5,0);
\draw (1.5,-2.5) .. controls +(0,-.5) and +(0,-.5) .. +(.5,0);
\draw (2.5,-2.5) .. controls +(0,-.5) and +(0,-.5) .. +(.5,0);

\draw (8,-2.5) .. controls +(0,-.5) and +(0,-.5) .. +(.5,0);
\draw (5,-2.5) .. controls +(0,-1.25) and +(0,-1.25) .. +(2.5,0);
\draw (5.5,-2.5) .. controls +(0,-1) and +(0,-1) .. +(1.5,0);
\draw (6,-2.5) .. controls +(0,-.5) and +(0,-.5) .. +(.5,0);

\draw[<->] (3,-4.5) -- +(-1,1);
\draw[<->] (5.5,-4.5) -- +(1,1);

\draw (2.5,-5) .. controls +(0,-1.25) and +(0,-1.25) .. +(3.5,0);
\draw (3.5,-5) .. controls +(0,-.5) and +(0,-.5) .. +(.5,0);
\draw (3,-5) .. controls +(0,-1) and +(0,-1) .. +(1.5,0);
\draw (5,-5) .. controls +(0,-.5) and +(0,-.5) .. +(.5,0);
\end{scope}
\end{tikzpicture}
\end{eqnarray}

The possible diamonds with rays are obtained from these by allowing only vertices in a fixed smaller interval and forgetting cups connecting not allowed vertices only and turn cups which connect an allowed with a not allowed vertex into dotted or undotted rays depending if the vertex which gets removed is to the left or to the right of the allowed interval.
\end{prop}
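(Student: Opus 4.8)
The plan is to translate the classification of diamonds entirely into the combinatorics of (decorated) cup diagrams and then carry out a finite case analysis. By Corollary~\ref{atypicality} it suffices to treat a principal block $\Lambda_m^{\ov 0}$, and since each relation $\la\leftrightarrow\mu$ is, by Lemma~\ref{degone} together with Lemma~\ref{lapairscup}, realized by one of the explicit local moves on cup diagrams (flipping a single cup $C$, increasing the weight in the Bruhat order along the arrow), a diamond is nothing but a $4$-cycle in the quiver $\mathcal{Q}(\Lambda)$, i.e. a commuting square built from two such local moves. First I would fix a vertex of the cycle, most conveniently a Bruhat-minimal one $\la$; both cycle-edges at $\la$ then point upward, hence are given by two distinct cups $C_1,C_2$ of $\underline{\la}$ (two $\la$-pairs in the sense of Definition~\ref{lapair}). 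The whole diamond is thereby encoded by the relative position of $C_1$ and $C_2$ inside $\underline\la$ together with their decorations.

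Next I would record how $\underline\la$ reorganizes under a flip, using Lemma~\ref{lapairscup}: flipping $C_1$ need not leave $C_2$ as a cup, and the second edge of each of the two length-two paths is governed by the cup that appears \emph{after} the reorganization. This is exactly the phenomenon visible in Example~\ref{triples}, where $\underline{\down\down\up\up}$ carries the nested cups $(1,4),(2,3)$ rather than $(1,2),(3,4)$, so that flipping the ``parent'' cup produces a genuinely new diagram. The diamond condition is that the two paths reconverge at a common fourth weight $\rho$ with all four weights distinct; by Lemma~\ref{lem:Bruhator} this forces $C_1$ and $C_2$ into one of finitely many relative positions, essentially two sibling cups (adjacent, possibly nested under a common cup) or a cup together with its parent in the sense of the definition preceding \eqref{parent1}. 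Remark~\ref{decorated} (no dotted cup nested inside another cup, and no dotted arc to the right of a ray) then severely restricts which of these configurations may be decorated, and cuts the list down to finitely many types.

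I would then simply enumerate these finitely many local configurations: for each admissible relative position of $C_1,C_2$ and each admissible decoration I draw the four cup diagrams $\underline\la,\underline{\mu_1},\underline{\mu_2},\underline\rho$, check that the four weights are genuinely distinct (discarding the degenerate squares in which two corners collapse), and match the resulting picture against the configurations displayed in \eqref{diamonds2} and \eqref{diamonds1}; conversely each displayed configuration is checked directly to be a genuine $4$-cycle. Finally the diamonds containing rays are obtained from the ray-free ones by the truncation described after the statement: restricting to a subinterval of vertices, deleting cups joining two forbidden vertices, and converting cups with exactly one forbidden endpoint into (dotted or undotted) rays according to the side on which the deleted vertex sits. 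That this operation sends diamonds to diamonds follows from the ray versions of the moves in Lemma~\ref{lapairscup}.

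The hard part will be the reorganization analysis and, above all, completeness: controlling how the cup diagram (with its dots) is rebuilt after each flip, and arguing that every $4$-cycle really does reduce to one of the listed ``sibling'' or ``parent--child'' configurations, with none overlooked. The distinctness check and the careful tracking of which orientations are forbidden by Remark~\ref{decorated} are where the genuine work lies; the remainder is a routine, if lengthy, verification on the finite list.
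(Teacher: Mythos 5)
Your proposal follows essentially the same route as the paper's proof: anchor the diamond at its Bruhat-minimal vertex so that both edges are $\la$-pairs given by two cups of $\underline{\la}$, classify the finitely many relative positions of these two cups ignoring decorations first (the paper organizes this by whether the two moves share a cup, i.e. its cases $|\gamma\cap\delta|\in\{0,1\}$ with $\gamma=\delta$ excluded as degenerate), then add all admissible dot configurations subject to Remark~\ref{decorated}, verify the listed configurations are genuine diamonds, and obtain the ray cases by restricting to an interval. The only difference is organizational emphasis (your "sibling/parent" framing versus the paper's tables), not mathematical substance.
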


\begin{proof}
One easily verifies that \eqref{diamonds1} and \eqref{diamonds2} give indeed diamonds and also when we restrict the vertices to an interval. To see that these are all let us first
consider the case where the relevant pieces are cups only, hence the arrow correspond to $\la$-pairs of the form
\begin{eqnarray}
\begin{tikzpicture}[thick,scale=0.7]
\draw (0,0) node[above]{} .. controls +(0,-.5) and +(0,-.5) .. +(.5,0) node[above]{};
\draw (1,0) node[above]{} .. controls +(0,-.5) and +(0,-.5) .. +(.5,0) node[above]{};
\draw[<->] (1.75,-.2) -- +(1,0);
\draw (3,0) node[above]{} .. controls +(0,-1) and +(0,-1) .. +(1.5,0) node[above]{};
\draw (3.5,0) node[above]{} .. controls +(0,-.5) and +(0,-.5) .. +(.5,0) node[above]{};
\end{tikzpicture}
\end{eqnarray}
with all possible configurations of dots. To simplify arguments we first ignore all dots and consider the possible configurations
\begin{eqnarray*}
\begin{tikzpicture}[thick,scale=0.7]
\node (A) at (0,-1.25) {$\gamma = \{ \gamma_1,\gamma_2\}$};
\node (B) at (4.5,-1.25) {$\delta = \{ \delta_1,\delta_2\}$};
\draw[<->] (2,-1) node[above]{\quad $\lambda$} -- +(-1,-1) node[below]{$\lambda'$};
\draw[<->] (2.5,-1) -- +(1,-1) node[below]{\quad $\lambda'''$};
\end{tikzpicture}
\end{eqnarray*}
where we number, from left to right according to their endpoints, the cups in $\underline{\lambda}$ involved in either of the two moves and indicate by the pairs $\gamma$ and $\delta$ the two pairs of vertices where the labels get changed. Then there are the following cases:
\begin{enumerate}
\item $\gamma=\delta$: This would imply $\lambda' = \lambda'''$ which is not allowed.
\item $|\gamma \cap \delta | = 1$: Then one can verify directly that the only possible configurations are the ones listed in the Table
~\ref{table_conf_1} below.
\item $\gamma \cap \delta  = \emptyset$: Then all possible configurations are listed in the Table~\ref{table_conf_2} below.
\end{enumerate}
The first claim follows then by adding all possible configurations of dots to the diamonds. The second claim follows from the definitions and an easy case by case argument which is left to the reader.
\end{proof}

\begin{table}[h]
\caption{Configurations for $|\gamma \cap \delta | = 1$}
\label{table_conf_1}
\begin{tabular}{|c|c|c|c|c|c|c|}
\hline $\lambda$ &
\begin{tikzpicture}[thick,scale=0.4]
\node (B) at (0,.1) {$\quad$};

\draw (0,0) .. controls +(0,-.5) and +(0,-.5) .. +(.5,0);
\draw (1,0) .. controls +(0,-.5) and +(0,-.5) .. +(.5,0);
\draw (2,0) .. controls +(0,-.5) and +(0,-.5) .. +(.5,0);

\node (A) at (0,-1.15) {$\quad$};
\end{tikzpicture}
&
\begin{tikzpicture}[thick,scale=0.4]
\node (B) at (0,.1) {$\quad$};

\draw (0,0) .. controls +(0,-.5) and +(0,-.5) .. +(.5,0);
\draw (1,0) .. controls +(0,-.5) and +(0,-.5) .. +(.5,0);
\draw (2,0) .. controls +(0,-.5) and +(0,-.5) .. +(.5,0);

\node (A) at (0,-1.15) {$\quad$};
\end{tikzpicture}
&
\begin{tikzpicture}[thick,scale=0.4]
\node (B) at (0,.1) {$\quad$};

\draw (0,0) .. controls +(0,-.5) and +(0,-.5) .. +(.5,0);
\draw (1,0) .. controls +(0,-.5) and +(0,-.5) .. +(.5,0);
\draw (2,0) .. controls +(0,-.5) and +(0,-.5) .. +(.5,0);

\node (A) at (0,-1.15) {$\quad$};
\end{tikzpicture}
&
\begin{tikzpicture}[thick,scale=0.4]
\node (B) at (0,.1) {$\quad$};

\draw (0,0) .. controls +(0,-1) and +(0,-1) .. +(2.5,0);
\draw (0.5,0) .. controls +(0,-.5) and +(0,-.5) .. +(.5,0);
\draw (1.5,0) .. controls +(0,-.5) and +(0,-.5) .. +(.5,0);

\node (A) at (0,-1.15) {$\quad$};
\end{tikzpicture}
&
\begin{tikzpicture}[thick,scale=0.4]
\node (B) at (0,.1) {$\quad$};

\draw (0,0) .. controls +(0,-1) and +(0,-1) .. +(2.5,0);
\draw (0.5,0) .. controls +(0,-.5) and +(0,-.5) .. +(.5,0);
\draw (1.5,0) .. controls +(0,-.5) and +(0,-.5) .. +(.5,0);

\node (A) at (0,-1.15) {$\quad$};
\end{tikzpicture}
&
\begin{tikzpicture}[thick,scale=0.4]
\node (B) at (0,.1) {$\quad$};

\draw (0,0) .. controls +(0,-1) and +(0,-1) .. +(2.5,0);
\draw (0.5,0) .. controls +(0,-.5) and +(0,-.5) .. +(.5,0);
\draw (1.5,0) .. controls +(0,-.5) and +(0,-.5) .. +(.5,0);

\node (A) at (0,-1.15) {$\quad$};
\end{tikzpicture}\\
\hline $\gamma$ & $\{1,2 \}$ & $\{1,3 \}$ & $\{1,2 \}$ & $\{1,2 \}$ & $\{1,3 \}$ & $\{1,2 \}$\\
\hline $\delta$ & $\{2,3 \}$ & $\{2,3 \}$ & $\{1,3 \}$ & $\{2,3 \}$ & $\{2,3 \}$ & $\{1,3 \}$\\
\hline $\lambda''$ &
\begin{tikzpicture}[thick,scale=0.4]
\node (B) at (0,.1) {$\quad$};

\draw (0,0) .. controls +(0,-1) and +(0,-1) .. +(2.5,0);
\draw (0.5,0) .. controls +(0,-.5) and +(0,-.5) .. +(.5,0);
\draw (1.5,0) .. controls +(0,-.5) and +(0,-.5) .. +(.5,0);

\node (A) at (0,-1.15) {$\quad$};
\end{tikzpicture}
&
\begin{tikzpicture}[thick,scale=0.4]
\node (B) at (0,.1) {$\quad$};

\draw (0,0) .. controls +(0,-1) and +(0,-1) .. +(2.5,0);
\draw (0.5,0) .. controls +(0,-.5) and +(0,-.5) .. +(.5,0);
\draw (1.5,0) .. controls +(0,-.5) and +(0,-.5) .. +(.5,0);

\node (A) at (0,-1.15) {$\quad$};
\end{tikzpicture}
&
\begin{tikzpicture}[thick,scale=0.4]
\node (B) at (0,.1) {$\quad$};

\draw (0,0) .. controls +(0,-1) and +(0,-1) .. +(2.5,0);
\draw (0.5,0) .. controls +(0,-.5) and +(0,-.5) .. +(.5,0);
\draw (1.5,0) .. controls +(0,-.5) and +(0,-.5) .. +(.5,0);

\node (A) at (0,-1.15) {$\quad$};
\end{tikzpicture}
&
\begin{tikzpicture}[thick,scale=0.4]
\node (B) at (0,.1) {$\quad$};

\draw (0,0) .. controls +(0,-.5) and +(0,-.5) .. +(.5,0);
\draw (1,0) .. controls +(0,-.5) and +(0,-.5) .. +(.5,0);
\draw (2,0) .. controls +(0,-.5) and +(0,-.5) .. +(.5,0);

\node (A) at (0,-1.15) {$\quad$};
\end{tikzpicture}
&
\begin{tikzpicture}[thick,scale=0.4]
\node (B) at (0,.1) {$\quad$};

\draw (0,0) .. controls +(0,-.5) and +(0,-.5) .. +(.5,0);
\draw (1,0) .. controls +(0,-.5) and +(0,-.5) .. +(.5,0);
\draw (2,0) .. controls +(0,-.5) and +(0,-.5) .. +(.5,0);

\node (A) at (0,-1.15) {$\quad$};
\end{tikzpicture}
&
\begin{tikzpicture}[thick,scale=0.4]
\node (B) at (0,.1) {$\quad$};

\draw (0,0) .. controls +(0,-.5) and +(0,-.5) .. +(.5,0);
\draw (1,0) .. controls +(0,-.5) and +(0,-.5) .. +(.5,0);
\draw (2,0) .. controls +(0,-.5) and +(0,-.5) .. +(.5,0);

\node (A) at (0,-1.15) {$\quad$};
\end{tikzpicture} \\
\hline
\end{tabular}

\begin{tabular}{|c|c|c|c|}
\hline $\lambda$ &
\begin{tikzpicture}[thick,scale=0.4]
\node (B) at (0,.1) {$\quad$};

\draw (1,0) .. controls +(0,-1) and +(0,-1) .. +(1.5,0);
\draw (0,0) .. controls +(0,-.5) and +(0,-.5) .. +(.5,0);
\draw (1.5,0) .. controls +(0,-.5) and +(0,-.5) .. +(.5,0);

\node (A) at (0,-1.15) {$\quad$};
\end{tikzpicture}
&
\begin{tikzpicture}[thick,scale=0.4]
\node (B) at (0,.1) {$\quad$};

\draw (0,0) .. controls +(0,-1.25) and +(0,-1.25) .. +(2.5,0);
\draw (0.5,0) .. controls +(0,-1) and +(0,-1) .. +(1.5,0);
\draw (1,0) .. controls +(0,-.5) and +(0,-.5) .. +(.5,0);

\node (A) at (0,-1.15) {$\quad$};
\end{tikzpicture}
&
\begin{tikzpicture}[thick,scale=0.4]
\node (B) at (0,.1) {$\quad$};

\draw (0,0) .. controls +(0,-1) and +(0,-1) .. +(1.5,0);
\draw (.5,0) .. controls +(0,-.5) and +(0,-.5) .. +(.5,0);
\draw (2,0) .. controls +(0,-.5) and +(0,-.5) .. +(.5,0);

\node (A) at (0,-1.15) {$\quad$};
\end{tikzpicture}
\\
\hline $\gamma$ & $\{1,2 \}$ & $\{1,2 \}$ & $\{1,2 \}$\\
\hline $\delta$ & $\{2,3 \}$ & $\{2,3 \}$ & $\{2,3 \}$\\
\hline $\lambda''$ &
\begin{tikzpicture}[thick,scale=0.4]
\node (B) at (0,.1) {$\quad$};

\draw (0,0) .. controls +(0,-1) and +(0,-1) .. +(1.5,0);
\draw (.5,0) .. controls +(0,-.5) and +(0,-.5) .. +(.5,0);
\draw (2,0) .. controls +(0,-.5) and +(0,-.5) .. +(.5,0);

\node (A) at (0,-1.15) {$\quad$};
\end{tikzpicture}
&
\begin{tikzpicture}[thick,scale=0.4]
\node (B) at (0,.1) {$\quad$};

\draw (0,0) .. controls +(0,-.5) and +(0,-.5) .. +(.5,0);
\draw (1,0) .. controls +(0,-.5) and +(0,-.5) .. +(.5,0);
\draw (2,0) .. controls +(0,-.5) and +(0,-.5) .. +(.5,0);

\node (A) at (0,-1.15) {$\quad$};
\end{tikzpicture}
&
\begin{tikzpicture}[thick,scale=0.4]
\node (B) at (0,.1) {$\quad$};

\draw (1,0) .. controls +(0,-1) and +(0,-1) .. +(1.5,0);
\draw (0,0) .. controls +(0,-.5) and +(0,-.5) .. +(.5,0);
\draw (1.5,0) .. controls +(0,-.5) and +(0,-.5) .. +(.5,0);

\node (A) at (0,-1.15) {$\quad$};
\end{tikzpicture}\\
\hline
\end{tabular}
\end{table}
\begin{table}[h]
\caption{Configurations for $\gamma \cap \delta  = \emptyset$}
\label{table_conf_2}
\begin{tabular}{|c|c|c|c|c|c|c|}
\hline $\lambda$ &
\begin{tikzpicture}[thick,scale=0.4]
\node (B) at (0,.1) {$\quad$};

\draw (0,0) .. controls +(0,-.5) and +(0,-.5) .. +(.5,0);
\draw (1,0) .. controls +(0,-.5) and +(0,-.5) .. +(.5,0);
\draw (2,0) .. controls +(0,-1) and +(0,-1) .. +(1.5,0);
\draw (2.5,0) .. controls +(0,-.5) and +(0,-.5) .. +(.5,0);

\node (A) at (0,-1.15) {$\quad$};
\end{tikzpicture}
&
\begin{tikzpicture}[thick,scale=0.4]
\draw (0,0) .. controls +(0,-.5) and +(0,-.5) .. +(.5,0);
\draw (1,0) .. controls +(0,-.5) and +(0,-.5) .. +(.5,0);
\draw (2,0) .. controls +(0,-.5) and +(0,-.5) .. +(.5,0);
\draw (3,0) .. controls +(0,-.5) and +(0,-.5) .. +(.5,0);

\node (A) at (0,-1.15) {$\quad$};
\end{tikzpicture}
&
\begin{tikzpicture}[thick,scale=0.4]
\draw (0,0) .. controls +(0,-.5) and +(0,-.5) .. +(.5,0);
\draw (1,0) .. controls +(0,-.5) and +(0,-.5) .. +(.5,0);
\draw (2,0) .. controls +(0,-.5) and +(0,-.5) .. +(.5,0);
\draw (3,0) .. controls +(0,-.5) and +(0,-.5) .. +(.5,0);

\node (A) at (0,-1.15) {$\quad$};
\end{tikzpicture}
&
\begin{tikzpicture}[thick,scale=0.4]
\draw (0,0) .. controls +(0,-1) and +(0,-1) .. +(1.5,0);
\draw (0.5,0) .. controls +(0,-.5) and +(0,-.5) .. +(.5,0);
\draw (2,0) .. controls +(0,-1) and +(0,-1) .. +(1.5,0);
\draw (2.5,0) .. controls +(0,-.5) and +(0,-.5) .. +(.5,0);

\node (A) at (0,-1.15) {$\quad$};
\end{tikzpicture}
&
\begin{tikzpicture}[thick,scale=0.4]
\draw (0,0) .. controls +(0,-1.25) and +(0,-1.25) .. +(3.5,0);
\draw (0.5,0) .. controls +(0,-1) and +(0,-1) .. +(1.5,0);
\draw (1,0) .. controls +(0,-.5) and +(0,-.5) .. +(.5,0);
\draw (2.5,0) .. controls +(0,-.5) and +(0,-.5) .. +(.5,0);

\node (A) at (0,-1.15) {$\quad$};
\end{tikzpicture}
&
\begin{tikzpicture}[thick,scale=0.4]
\draw (0,0) .. controls +(0,-1.5) and +(0,-1.5) .. +(3.5,0);
\draw (0.5,0) .. controls +(0,-1.25) and +(0,-1.25) .. +(2.5,0);
\draw (1,0) .. controls +(0,-1) and +(0,-1) .. +(1.5,0);
\draw (1.5,0) .. controls +(0,-.5) and +(0,-.5) .. +(.5,0);

\node (A) at (0,-1.15) {$\quad$};
\end{tikzpicture}

\\
\hline $\gamma$ & $\{1,2 \}$ & $\{1,2 \}$ & $\{1,4 \}$ & $\{1,2 \}$ & $\{2,3 \}$ & $\{1,2 \}$\\
\hline $\delta$ & $\{3,4 \}$ & $\{1,2 \}$ & $\{2,3 \}$ & $\{3,4 \}$ & $\{1,4 \}$ & $\{3,4 \}$\\
\hline $\lambda''$ &
\begin{tikzpicture}[thick,scale=0.4]
\draw (0,0) .. controls +(0,-1) and +(0,-1) .. +(1.5,0);
\draw (.5,0) .. controls +(0,-.5) and +(0,-.5) .. +(.5,0);
\draw (2,0) .. controls +(0,-.5) and +(0,-.5) .. +(.5,0);
\draw (3,0) .. controls +(0,-.5) and +(0,-.5) .. +(.5,0);

\node (A) at (0,-1.15) {$\quad$};
\end{tikzpicture}
&
\begin{tikzpicture}[thick,scale=0.4]
\draw (0,0) .. controls +(0,-1) and +(0,-1) .. +(1.5,0);
\draw (0.5,0) .. controls +(0,-.5) and +(0,-.5) .. +(.5,0);
\draw (2,0) .. controls +(0,-1) and +(0,-1) .. +(1.5,0);
\draw (2.5,0) .. controls +(0,-.5) and +(0,-.5) .. +(.5,0);

\node (A) at (0,-1.15) {$\quad$};
\end{tikzpicture}
&
\begin{tikzpicture}[thick,scale=0.4]
\draw (0,0) .. controls +(0,-1.5) and +(0,-1.5) .. +(3.5,0);
\draw (0.5,0) .. controls +(0,-1.25) and +(0,-1.25) .. +(2.5,0);
\draw (1,0) .. controls +(0,-1) and +(0,-1) .. +(1.5,0);
\draw (1.5,0) .. controls +(0,-.5) and +(0,-.5) .. +(.5,0);

\node (A) at (0,-1.15) {$\quad$};
\end{tikzpicture}
&
\begin{tikzpicture}[thick,scale=0.4]
\draw (0,0) .. controls +(0,-.5) and +(0,-.5) .. +(.5,0);
\draw (1,0) .. controls +(0,-.5) and +(0,-.5) .. +(.5,0);
\draw (2,0) .. controls +(0,-.5) and +(0,-.5) .. +(.5,0);
\draw (3,0) .. controls +(0,-.5) and +(0,-.5) .. +(.5,0);

\node (A) at (0,-1.15) {$\quad$};
\end{tikzpicture}
&
\begin{tikzpicture}[thick,scale=0.4]
\draw (0,0) .. controls +(0,-1) and +(0,-1) .. +(2.5,0);
\draw (0.5,0) .. controls +(0,-.5) and +(0,-.5) .. +(.5,0);
\draw (1.5,0) .. controls +(0,-.5) and +(0,-.5) .. +(.5,0);
\draw (3,0) .. controls +(0,-.5) and +(0,-.5) .. +(.5,0);

\node (A) at (0,-1.15) {$\quad$};
\end{tikzpicture}
&
\begin{tikzpicture}[thick,scale=0.4]
\node (B) at (0,.1) {$\quad$};

\draw (0,0) .. controls +(0,-.5) and +(0,-.5) .. +(.5,0);
\draw (1,0) .. controls +(0,-.5) and +(0,-.5) .. +(.5,0);
\draw (2,0) .. controls +(0,-.5) and +(0,-.5) .. +(.5,0);
\draw (3,0) .. controls +(0,-.5) and +(0,-.5) .. +(.5,0);

\node (A) at (0,-1.15) {$\quad$};
\end{tikzpicture}
\\
\hline
\end{tabular}
\begin{tabular}{|c|c|c|c|}
\hline
$\lambda$ &
\begin{tikzpicture}[thick,scale=0.4]
\node (B) at (0,.1) {$\quad$};

\draw (0,0) .. controls +(0,-.5) and +(0,-.5) .. +(.5,0);
\draw (1,0) .. controls +(0,-1) and +(0,-1) .. +(1.5,0);
\draw (1.5,0) .. controls +(0,-.5) and +(0,-.5) .. +(.5,0);
\draw (3,0) .. controls +(0,-.5) and +(0,-.5) .. +(.5,0);

\node (A) at (0,-1.15) {$\quad$};
\end{tikzpicture}
&
\begin{tikzpicture}[thick,scale=0.4]
\draw (0,0) .. controls +(0,-.5) and +(0,-.5) .. +(.5,0);
\draw (1,0) .. controls +(0,-1) and +(0,-1) .. +(2.5,0);
\draw (1.5,0) .. controls +(0,-.5) and +(0,-.5) .. +(.5,0);
\draw (2.5,0) .. controls +(0,-.5) and +(0,-.5) .. +(.5,0);

\node (A) at (0,-1.15) {$\quad$};
\end{tikzpicture}
&
\begin{tikzpicture}[thick,scale=0.4]
\draw (0,0) .. controls +(0,-1.25) and +(0,-1.25) .. +(3.5,0);
\draw (0.5,0) .. controls +(0,-1) and +(0,-1) .. +(2.5,0);
\draw (1,0) .. controls +(0,-.5) and +(0,-.5) .. +(.5,0);
\draw (2,0) .. controls +(0,-.5) and +(0,-.5) .. +(.5,0);

\node (A) at (0,-1.15) {$\quad$};
\end{tikzpicture}

\\
\hline $\alpha$ & $\{1,4 \}$ & $\{1,2 \}$ & $\{3,4 \}$ \\
\hline $\beta$ & $\{2,3 \}$ & $\{3,4 \}$ & $\{1,4 \}$ \\
\hline $\lambda''$ &
\begin{tikzpicture}[thick,scale=0.4]
\node (B) at (0,.1) {$\quad$};

\draw (0,0) .. controls +(0,-1.25) and +(0,-1.25) .. +(3.5,0);
\draw (0.5,0) .. controls +(0,-1) and +(0,-1) .. +(2.5,0);
\draw (1,0) .. controls +(0,-.5) and +(0,-.5) .. +(.5,0);
\draw (2,0) .. controls +(0,-.5) and +(0,-.5) .. +(.5,0);

\node (A) at (0,-1.15) {$\quad$};
\end{tikzpicture}
&
\begin{tikzpicture}[thick,scale=0.4]
\draw (0,0) .. controls +(0,-1.25) and +(0,-1.25) .. +(3.5,0);
\draw (1.5,0) .. controls +(0,-1) and +(0,-1) .. +(1.5,0);
\draw (2,0) .. controls +(0,-.5) and +(0,-.5) .. +(.5,0);
\draw (0.5,0) .. controls +(0,-.5) and +(0,-.5) .. +(.5,0);

\node (A) at (0,-1.15) {$\quad$};
\end{tikzpicture}
&
\begin{tikzpicture}[thick,scale=0.4]
\draw (0,0) .. controls +(0,-.5) and +(0,-.5) .. +(.5,0);
\draw (1,0) .. controls +(0,-1) and +(0,-1) .. +(1.5,0);
\draw (1.5,0) .. controls +(0,-.5) and +(0,-.5) .. +(.5,0);
\draw (3,0) .. controls +(0,-.5) and +(0,-.5) .. +(.5,0);

\node (A) at (0,-1.15) {$\quad$};
\end{tikzpicture}
\\
\hline
\end{tabular}
\end{table}

\subsection{The isomorphism}
To prove Theorem~\ref{thm:main} it is enough to establish an isomorphism of algebras $A(D_k)\cong\mathbb{D}_{\Lambda_k^{\ov{0}}}$. The claim follows then from Theorem~\ref{thmBraden} by the localization theorem and Riemann-Hilbert correspondence \cite{BuchHottaetal}.

\begin{definition}
To shorten the notations we define the following elements
\begin{enumerate}
\item ${}_{\lambda} \mathbbm{1}_\mu := \underline{\lambda}\lambda \overline{\mu}$ for $\la,\mu \in \Lambda_k^{\ov{0}}$ with
    $\mu\stackrel{(i,j)}{\rightarrow}\la$,

\item ${}_{\mu} \mathbbm{1}_{\lambda} := \underline{\mu}\lambda \overline{\la}$ for $\la,\mu \in \Lambda_k^{\ov{0}}$ with
    $\mu\stackrel{(i,j)}{\rightarrow}\la$,

\item ${}_{\lambda} \mathbbm{1}_{\lambda} := \underline{\lambda}\lambda \overline{\lambda}$ for $\la \in \Lambda_k^{\ov{0}}$,

\item For $\la \leftrightarrow \mu$ we let  ${}_{\lambda} X_{\mu}$ be the basis vector obtained from ${}_{\lambda} \mathbbm{1}_\mu$ by reversing the orientation of the circle containing the cup defining the relation $\la\leftrightarrow\mu$.

\item $X_{\alpha,\la}$ the basis vector obtained from ${}_{\lambda} \mathbbm{1}_{\lambda}$ by reversing the orientation of the circle
    containing the point $\alpha$. If there is no such circle we declare  $X_{\alpha,\la}=0.$
\end{enumerate}
\end{definition}

\begin{theorem}
\label{mainthm}
Let $k\geq 4$. The assignments
\begin{eqnarray}
e_\lambda & \mapsto & {}_{\lambda} \mathbbm{1}_{\lambda}\nonumber\\
\label{isops}
p(\lambda,\mu) & \mapsto & {}_{\lambda} \mathbbm{1}_{\mu} + \frac{1}{2} (-1)^j {}_{\lambda} X_{\mu} \text{ if } \lambda \stackrel{(\mp i,j)}\leftrightarrow \mu\\
t_{\alpha,\lambda} & \mapsto &
\begin{cases}
{}_{\lambda} \mathbbm{1}_{\lambda} + X_{\alpha,\lambda} & \text{ if } 1 \leq \alpha \leq k, \\
{}_{\lambda} \mathbbm{1}_{\lambda} - X_{-\alpha,\lambda} & \text{ if } -k \leq \alpha \leq -1, \\
{}_{\lambda} \mathbbm{1}_{\lambda} & \text{ otherwise.}\\
\end{cases}
\nonumber
\end{eqnarray}
define an isomorphism of algebras $\Phi:A(D_k)\rightarrow \mathbb{D}_k$ and induces a nonnegative grading on $A(D_k)$.
\end{theorem}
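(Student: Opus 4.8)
The plan is to prove the two assertions separately: first that the assignment $\Phi$ respects the defining relations (R1)--(R9) of $A(D_k)$, so that it is a well-defined algebra homomorphism, and then that it is bijective. For the latter I would \emph{not} construct an inverse directly; instead, since by Theorem~\ref{thmBraden} the algebra $A(D_k)$ is basic and by Corollary~\ref{CorCartan} its Cartan matrix coincides with that of $\mathbb{D}_k$, the two algebras have the same finite dimension $\sum_{\la,\mu}c_{\la,\mu}(1)$. Hence it suffices to establish that $\Phi$ is surjective, and an isomorphism follows for dimension reasons. The final clause about the grading is then immediate: transporting the positive grading of $\mathbb{D}_k$ through the isomorphism $\Phi$ equips $A(D_k)$ with a nonnegative grading, under which the generators are visibly inhomogeneous (the ``logarithm'' phenomenon advertised in the introduction).

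Checking the relations splits into an easy and a hard part. Relations (R1)--(R2) are exactly the statement in Theorem~\ref{algebra_structure} that the $e_\la={}_{\la}\mathbbm{1}_{\la}$ are pairwise orthogonal primitive idempotents summing to the unit, and (R3)--(R4) encode the left/right action of these idempotents computed in its proof. Relation (R5) holds because all $t_{\alpha,\la}$ lie in the commutative ring ${}_{\la}(\mathbb{D}_k)_{\la}=\op{End}(P(\la))$ of Corollary~\ref{lem:endo_commutative}. In that ring one has $t_{\alpha,\la}=e_\la\pm X_{\alpha,\la}$ with $X_{\alpha,\la}^2=0$, so (R7) reduces to products of the loop elements $X_{\alpha,\la}$, where $X_{\alpha,\la}$ vanishes exactly when $\alpha$ lies on no circle of $\un\la\ov\la$ and $X_{\alpha,\la}=\pm X_{\beta,\la}$ when $\alpha,\beta$ lie on a common circle; these facts come from Proposition~\ref{coho} and Lemma~\ref{signmove}. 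Relations (R6) and (R8) compare an arrow ${}_\la\mathbbm{1}_\mu$ multiplied or conjugated by loops, and are verified by running the signed surgery procedure: the parent pair $(\alpha',\beta')$ and the auxiliary index $\zeta$ encode the circle of $\un\la\ov\mu$ on which the relevant vertices sit, and the signs $(-1)^\beta$ and $(-1)^{\op{pos}(i)}$ produced by the signed split rule must be reconciled with the factor $\tfrac12(-1)^j$ in the definition of $\Phi(p(\la,\mu))$.

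The relations (R9) are handled using the classification of diamonds in Proposition~\ref{diamondsclass}. For (R9)(i) one evaluates both $p(\la_3,\la_2)p(\la_2,\la_1)$ and $p(\la_3,\la_4)p(\la_4,\la_1)$ on each local configuration in \eqref{diamonds1}--\eqref{diamonds2} and checks that they give the same oriented circle diagram with the same sign; here it is crucial that the $\tfrac12$-corrections in $\Phi(p)$ combine so that the mixed higher-degree contributions cancel and leave a single basis vector, precisely the phenomenon illustrated in Example~\ref{triples}. For the vanishing relations (R9)(ii)--(iii) one checks that the corresponding product of arrows passes through a circle diagram such as $\un{\la_1}\ov{\la_3}$ admitting no orientation, so the product is zero by the convention that a non-orientable diagram is declared zero, exactly the mechanism of the impossible configurations in the proof of Theorem~\ref{cellular}. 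The dictionary between the combinatorial $\la$-pairs used here and Braden's original indexing is supplied by Lemma~\ref{lem:comparelapairs}, so the relations verified are indeed those of \cite[1.7]{Braden}.

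For surjectivity, note first that $e_\la\in\op{im}\Phi$ gives $X_{\alpha,\la}=\pm(\Phi(t_{\alpha,\la})-e_\la)\in\op{im}\Phi$ for all $\alpha,\la$, so all loops lie in the image. Multiplying a suitable loop onto $\Phi(p(\la,\mu))={}_\la\mathbbm{1}_\mu+\tfrac12(-1)^j{}_\la X_\mu$ reverses the orientation of the circle carrying the defining cup and hence produces ${}_\la X_\mu$ up to a nonzero scalar, whence ${}_\la\mathbbm{1}_\mu\in\op{im}\Phi$ as well. Thus the image contains all $e_\la$, all arrows ${}_\la\mathbbm{1}_\mu$ and all loops $X_{\alpha,\la}$, and one shows from the surgery description that these generate $\mathbb{D}_k$ as an algebra, giving surjectivity. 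I expect the main obstacle to be the sign bookkeeping in (R8) and (R9): matching the global signs produced by the rightmost-pair surgery rule and the parent index $\zeta$ with the normalization $\tfrac12(-1)^j$ in $\Phi(p)$, uniformly over all diamond types of Proposition~\ref{diamondsclass}, is delicate, and it is exactly the step where an incorrect sign would destroy the homomorphism property. Everything else is either structural (idempotents, commutativity, the dimension count) or a finite, if tedious, diagrammatic verification.
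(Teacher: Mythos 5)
Your proposal is correct and follows essentially the same route as the paper: well-definedness by verifying (R1)--(R9) (idempotent relations from Theorem~\ref{algebra_structure}, commutativity via Corollary~\ref{lem:endo_commutative}, the $t$-relations via Proposition~\ref{coho}, the $\op{m}$-relations via the parent-pair surgery computations, and the diamond relations via the classification in Proposition~\ref{diamondsclass} together with non-orientability for the vanishing cases), then isomorphism by combining the Cartan-matrix dimension count of Corollary~\ref{CorCartan} with the same surjectivity argument (idempotents, then loops $X_{\alpha,\la}$ from the $t$'s, then ${}_\la\mathbbm{1}_\mu$ and ${}_\la X_\mu$ from the span of $\Phi(p(\la,\mu))$ and its loop multiples). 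The paper's proof of Proposition~\ref{welldefined} and Lemmas~\ref{stupidcalcc}--\ref{stupidcalca} is exactly the "finite, if tedious, diagrammatic verification" you describe, including the sign bookkeeping you flag as the delicate point.
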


\begin{proof}
The hardest part is to show that the map is well-defined which will be done in the next subsection. Assuming this we have to show that the map $\Phi$ is an isomorphism. By Corollary~\ref{CorCartan} the two algebras have the same dimension, hence it is enough to prove surjectivity. By definition all idempotents ${}_{\lambda} \mathbbm{1}_{\lambda}$ are in the image. Taking the images of the $t_{\alpha,\la}$ and their squares we also have all $X_{\alpha,\lambda}$ and their products in the image. Note that in the span of the image $p$ of $p(\lambda,\mu)$ and $X_{\alpha,\lambda}p$ we have ${}_{\lambda} \mathbbm{1}_{\mu}$. Hence it is also contained in the image and so is ${}_{\lambda} {X}_{\mu}$. Therefore $\Phi$ is surjective.
\end{proof}

 To show that $\Phi$ defines an algebra homomorphism it remains to verify it is well-defined,i,e, it respect the relations of Braden's algebra.

\subsection{The map $\Phi$ is a well-defined algebra homomorphism}

\begin{prop}
\label{welldefined}
$\Phi$ is welldefined.
\end{prop}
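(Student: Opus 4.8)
The plan is to verify that $\Phi$ respects each of the defining relations (R1)--(R9) of $A(D_k)$ from Definition~\ref{Bradenalgebra}, proceeding from the purely structural identities to the genuine sign computations. The uniform tool throughout is Proposition~\ref{coho}: since ${}_\mu(\D)_\la\cong\cM(\un\la\ov\mu)=S/I$ and in particular $e_\la\D e_\la=\op{End}_\La(P(\la))$ is a \emph{commutative} ring by Corollary~\ref{lem:endo_commutative}, any relation supported on a single idempotent block can be checked as an identity of polynomials modulo the circle relations $X_i=(-1)^rX_j$ of Lemma~\ref{signmove}. The easy relations come first: (R1) and (R2) are recorded already in Theorem~\ref{algebra_structure}, where the $e_\la={}_\la\mathbbm{1}_\la$ are shown to be orthogonal idempotents summing to $1$; relations (R3) and (R4) assert only that $\Phi(p(\la,\mu))$ and $\Phi(t_{\alpha,\la})$ live in the components ${}_\la(\D)_\mu$, resp.\ ${}_\la(\D)_\la$, which is immediate from the shapes of ${}_\la\mathbbm{1}_\mu$, ${}_\la X_\mu$, $X_{\alpha,\la}$ and the formulas for $e_\la(a\mu b)$, $(a\mu b)e_\la$ in the proof of Theorem~\ref{algebra_structure}; and (R5) follows from Corollary~\ref{lem:endo_commutative}, all $t_{\alpha,\la}$ mapping into the commutative ring $e_\la\D e_\la$.

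Next I would treat the $t$-relations (R7) and the $p$--$t$ intertwining (R6). Under $\Psi$ of Proposition~\ref{coho} one has ${}_\la\mathbbm{1}_\la\mapsto 1$, while $X_{\alpha,\la}$ is a degree-two class whose value is pinned down by Lemma~\ref{signmove}: the two endpoints of a cup satisfy $X_{l_\gamma}=-X_{r_\gamma}$ if the cup is undotted and $X_{l_\gamma}=X_{r_\gamma}$ if it is dotted, and every such class squares to zero. Relation (R7)(i) is then $(\mathbbm{1}_\la+X_{\alpha,\la})(\mathbbm{1}_\la-X_{\alpha,\la})=\mathbbm{1}_\la-X_{\alpha,\la}^2=\mathbbm{1}_\la$; (R7)(ii) holds because no circle of ${}_\la\mathbbm{1}_\la$ meets a vertex outside $\{1,\dots,k\}$, so the reorientation class vanishes; and (R7)(iii) for a $\la$-pair follows because its two endpoints lie on one common circle, so that the endpoint sign from Lemma~\ref{signmove} combines with the $\mp$ prescribed in the definition of $t_{\alpha,\la}$ (positive versus negative index) to give $(\mathbbm{1}_\la\mp X)(\mathbbm{1}_\la+X)=\mathbbm{1}_\la$ in every case. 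Relation (R6) is verified by computing $\Phi(p(\la,\mu))\Phi(t_{\alpha,\mu})$ and $\Phi(t_{\alpha,\la})\Phi(p(\la,\mu))$ through a single surgery (a merge followed by a split, governed by the signed rules (\textbf{Surg}-1)--(\textbf{Surg}-2)) and matching the reoriented circles on the two sides.

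The main obstacle is relation (R8), which couples the ``formal logarithm'' coefficient $\tfrac12(-1)^j$ appearing in $\Phi(p)$ to an exponential-type product of the $t$'s. Here I would expand $\op{m}(\la,\la')=e_\la+p(\la,\la')p(\la',\la)$ under $\Phi$, compute the product $\bigl({}_\la\mathbbm{1}_{\la'}+\tfrac12(-1)^j{}_\la X_{\la'}\bigr)\bigl({}_{\la'}\mathbbm{1}_\la+\tfrac12(-1)^j{}_{\la'}X_\la\bigr)$ by the signed surgery procedure, and check that the two factors of $\tfrac12$ reassemble into the \emph{integral} reorientation producing $\mathbbm{1}_\la+X_{\cdot,\la}$ or its inverse $\mathbbm{1}_\la-X_{\cdot,\la}$, with the parity $(-1)^\beta$ selecting $\op{m}$ versus $\op{m}^{-1}$. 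The delicate content is that the auxiliary index $\zeta$ is engineered precisely to record \emph{which} circle the right-hand product $t_{\alpha,\la}t_{\zeta,\la}$ reorients, and this is dictated by the parent $(\alpha',\beta')$ of the cup; I would run through the parent configurations \eqref{parent1}--\eqref{parent4} case by case, tracking with Lemma~\ref{signmove} the sign $(-1)^r$ accumulated along the relevant circle and confirming it matches the case split in the definition of $\zeta$. This is where the non-locality of the signs, emphasized throughout the paper, makes the bookkeeping genuinely subtle rather than formal.

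Finally, for the diamond relations (R9) I would invoke the complete list of local diamond configurations in Proposition~\ref{diamondsclass}, namely \eqref{diamonds1}--\eqref{diamonds2} (together with their ray-degenerations). For (R9)(i) the equality $p(\la_3,\la_2)p(\la_2,\la_1)=p(\la_3,\la_4)p(\la_4,\la_1)$ is checked by evaluating both composites via surgery on each diamond type and comparing the resulting oriented diagrams and signs; for (R9)(ii)--(iii) the products vanish because the composite circle diagram $\un{\la_1}\ov{\la_3}$ cannot be oriented at all, so the surgery output is $0$ by the orientability clause of the multiplication, exactly as in the triple computed in Example~\ref{triples}. Because Proposition~\ref{diamondsclass} reduces the verification to finitely many local pictures and Lemma~\ref{lem:comparelapairs} guarantees that our combinatorial $\la$-pairs coincide with Braden's, the cases to be checked are precisely Braden's; each is then a finite, if lengthy, sign calculation of the same flavour as (R8), completing the proof that $\Phi$ is well defined.
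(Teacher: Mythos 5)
Your proposal is correct and follows essentially the same route as the paper: structural relations (R1)--(R5) via Theorem~\ref{algebra_structure} and Corollary~\ref{lem:endo_commutative}, the $t$-relations and $p$--$t$ intertwining via Proposition~\ref{coho} and Lemma~\ref{signmove}, relation (R8) by the parent case analysis \eqref{parent1}--\eqref{parent4} with explicit polynomial computations in $\cM(\la,\la)$, and the diamond relations (R9) reduced to the classification of Proposition~\ref{diamondsclass} together with non-orientability of $\underline{\la_1}\overline{\la_3}$ (the paper packages these last checks as Lemmas~\ref{stupidcalcc}--\ref{stupidcalca}). No substantive gap.
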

\begin{proof}
Clearly the ${}_{\lambda} \mathbbm{1}_{\lambda}$'s form a set of orthogonal idempotents and $\sum_\lambda {}_{\lambda} \mathbbm{1}_{\lambda} = 1$ by Theorem~~\ref{algebra_structure}, hence $(\text{R}1)$ and $(\text{R}2)$ hold and $(\text{R}3)$ and $(\text{R}4)$ hold by definition.

({\text R}5): We have (with the appropriate sign choice)
\begin{eqnarray*}
\Phi(t_{\alpha,\lambda} t_{\beta,\mu}) &=& \left({}_{\lambda} \mathbbm{1}_{\lambda} \mp X_{\alpha,\lambda}\right)
\left({}_{\mu} \mathbbm{1}_{\mu}\mp X_{\beta,\mu}\right)\\
&=& \begin{cases}
\quad 0 & \text{if } \lambda \neq \mu\\
{}_{\lambda} \mathbbm{1}_{\lambda} \mp\alpha X_{\alpha,\lambda} \mp X_{\beta,\lambda} +
(-1)^{\alpha+\beta}X_{\alpha,\lambda}X_{\beta,\lambda} & \text{otherwise.\quad}
\end{cases}
\end{eqnarray*}
and similarly for $\Phi(t_{\beta,\lambda} t_{\alpha,\mu})$ with analogous signs. Hence it is enough to show
$$X_{\alpha,\lambda}X_{\beta,\lambda} = X_{\beta,\lambda}X_{\alpha,\lambda}.$$
But this is obviously true by Lemma~\ref{lem:endo_commutative}.

({\text R}6):
From the definitions we obtain
$$ \Phi(p(\lambda,\mu)t_{\alpha,\mu}) = {}_{\lambda} \mathbbm{1}_{\mu} \Phi(t_{\alpha,\mu}) + \frac{1}{2}(-1)^j{}_{\lambda} X_{\mu}
\Phi(t_{\alpha,\mu})$$
and
$$ \Phi(t_{\alpha,\lambda}p(\lambda,\mu)) = \Phi(t_{\alpha,\lambda}){}_{\lambda} \mathbbm{1}_{\mu} +
\frac{1}{2}(-1)^j\Phi(t_{\alpha,\lambda}) {}_{\lambda} X_{\mu}.$$
Since ${}_{\lambda} \mathbbm{1}_{\mu}$ and ${}_{\lambda} X_{\mu}$ are basis vectors whose underlying cup diagram looks like ${}_{\mu} \mathbbm{1}_{\mu}$
except for a local change given by the $\lambda$-pair, the circle diagram $\underline{\la}\ov{\mu}$ contains only vertical lines obtained by gluing two rays, small circles and one extra component $C$ containing the defining cup for $\la\stackrel{(\alpha,\beta)}\leftrightarrow\mu$. Assume first that $C$ is a circle. Then it involves 4 vertices lets say on positions $a_1<a_2<a_3<a_4$. We now compare multiplication of ${}_{\lambda} \mathbbm{1}_{\mu}$ and ${}_{\lambda}
    X_{\mu}$ by the image of $t_{\alpha,\lambda}$ from the left or $t_{\alpha,\mu}$ from the right. There are two cases:
\begin{enumerate}[(I)]
\item
$\alpha \not\in \{a_1,a_2,a_3,a_4\}$: then both multiplications change the
    orientation of one small anticlockwise circle and multiply with the same overall sign or they both annihilate the diagram.
\item
$\alpha \in \{a_1,a_2,a_3,a_4\}$: then both multiplications change the orientation of $C$ to clockwise with the same overall sign or both annihilate  the diagram in case $C$ was already clockwise.

If $C$ is not a circle, both multiplications annihilate the diagram.

({\text R}7): We have
$$ \left({}_{\lambda} \mathbbm{1}_{\lambda} + X_{\alpha,\lambda}\right) \left({}_{\lambda} \mathbbm{1}_{\lambda} - X_{\alpha,\lambda}\right) = {}_{\lambda} \mathbbm{1}_{\lambda} - X_{\alpha,\lambda}^2 = {}_{\lambda}
\mathbbm{1}_{\lambda}.$$
Hence the first equality follows. The second is clear by definition.  For the third let us first assume $1 \leq \alpha, \beta \leq k$. Then
$$\Phi(t_{\alpha,\lambda} t_{\beta,\lambda}) = \left({}_{\lambda} \mathbbm{1}_{\lambda} + X_{\alpha,\lambda}\right)\left({}_{\lambda} \mathbbm{1}_{\lambda} + X_{\beta,\lambda}\right)
$$
Since $\alpha$ and $\beta$ are connected by an undotted cup in $\underline\la\ov{\la}$ we have by Proposition~\ref{coho} $X_{\alpha,\lambda} =-X_{\beta,\lambda}$ and thus
$$\Phi(t_{\alpha,\lambda} t_{\beta,\lambda})={}_{\lambda} \mathbbm{1}_{\lambda}-X_{\alpha,\lambda}X_{\alpha,\lambda}={}_{\lambda} \mathbbm{1}_{\lambda}.$$

If instead $1 \leq -\alpha, \beta \leq k$. Then
$$\Phi(t_{-\alpha,\lambda} t_{\beta,\lambda}) = \left({}_{\lambda} \mathbbm{1}_{\lambda} - X_{\alpha,\lambda}\right)\left({}_{\lambda} \mathbbm{1}_{\lambda} + X_{\beta,\lambda}\right)
$$
Since $\alpha$ and $\beta$ are connected by a dotted cup in $\underline\la\ov{\lambda}$ we have by Proposition~\ref{coho} $X_{\alpha,\lambda} =X_{\beta,\lambda}$ and thus again
$$\Phi(t_{\alpha,\lambda} t_{\beta,\lambda})={}_{\lambda} \mathbbm{1}_{\lambda}-X_{\alpha,\lambda}X_{\alpha,\lambda}={}_{\lambda} \mathbbm{1}_{\lambda}.$$
and thus the third equality follows.
\end{enumerate}
({\text R}8):
Consider first the cases where the relevant changes from $\la$ and $\mu$ involve no rays. The possible ${}_{\lambda} \mathbbm{1}_{\mu}{}_{\lambda} X_{\mu}$ and  ${}_{\mu} \mathbbm{1}_{\lambda}$, ${}_{\mu} X_{\lambda}$ are displayed in Example~\ref{ps} when putting clockwise respectively anticlockwise orientation.
We go through all cases $\mu\leftarrow\la$ as displayed in Example~\ref{ps}, see \eqref{parent1}-\eqref{parent4}. Note that $\pos{i}=i$ holds since we are in the principal block. We start with the case
\begin{equation}
\label{parent1}
\begin{tikzpicture}[thick,scale=0.7]
\draw (0,0) node[above]{$$} .. controls +(0,-.5) and +(0,-.5) ..
+(.5,0) node[above]{};
\draw (1,0) node[above]{$$} .. controls +(0,-.5) and +(0,-.5) ..
+(.5,0) node[above]{};
\draw[<-] (1.75,-.2) -- +(1,0);
\draw (3,0) node[above]{$\alpha'$} .. controls +(0,-1) and +(0,-1) .. +(1.5,0)
node[above]{$\beta'$};
\draw (3.5,0) node[above]{$\alpha$} .. controls +(0,-.5) and +(0,-.5) ..
+(.5,0) node[above]{$\beta$};
\end{tikzpicture}
\end{equation}
Then the image of $\op{m}(\mu,\la)=1+p(\mu,\la)p(\la,\mu)$ equals
$$({}_{\mu} \mathbbm{1}_{\lambda}+\frac{1}{2}(-1)^\beta {}_{\mu} X_{\la})
({}_{\mu} \mathbbm{1}_{\mu}+\frac{1}{2}(-1)^\beta {}_{\la} X_{\mu})$$ which equals using our signed surgery rules the following expression in $\cM(\la,\la)$:
\begin{eqnarray*}
&&1+(-1)^\alpha(1+(-1)^\beta X_{\beta'})(X_{\beta}-X_\alpha)\\
&=&1+(-1)^\alpha(X_{\beta}-X_\alpha)-(-1)^{\alpha+\beta}X_\alpha X_{\beta'}\\
&=&1-(-1)^\alpha(X_{\beta'}+X_\alpha)+X_\alpha X_{\beta'}\\
&=&1+(-1)^\beta(X_{\beta'}+X_\alpha)+X_\alpha X_{\beta'}
\end{eqnarray*}
since $\alpha+\beta$ is odd.
On the other hand $\Phi\left((t_{\alpha,\lambda} t_{\beta',\lambda})^{({-1})^\beta}\right)$
corresponds to $(1+(-1)^\beta X_\alpha)(1+(-1)^\beta X_{\beta'})=1+(-1)^\beta(X_{\beta'}+X_\alpha)+X_\alpha X_{\beta'}.$ Hence the required relation for $\op{m}(\mu,\la)$ holds.\\
Similar calculations give $\Phi(e_\la+p(\la,\mu)p(\mu,\la))=\Phi((t_{\alpha,\la} t_{\beta',\la})^{({-1})^\beta})$ corresponding to
\begin{eqnarray*}
1+(-1)^\beta(X_{\beta'}+X_\alpha)+X_\alpha X_{\beta'}
&=&1+(-1)^\beta(X_{\beta'}-X_{\beta})-X_\beta X_{\beta'}
\end{eqnarray*}
and the claim follows in this case.
For the remaining cases we list just the corresponding polynomials in $\cM(\mu,\mu)$ and $\cM(\la,\la)$ respectively, since the calculations are totally analogous:

\begin{equation}
\label{parent2}
\begin{tikzpicture}[thick,scale=0.7]
\draw (0,0) node[above]{} .. controls +(0,-.5) and +(0,-.5) .. +(.5,0)
node[above]{};
\fill (.25,-.36) circle(2.5pt);
\draw (1,0) node[above]{$$} .. controls +(0,-.5) and +(0,-.5) ..
+(.5,0) node[above]{$$};
\draw[<-] (1.75,-.2) -- +(1,0);
\draw (3,0) node[above]{$\alpha'$} .. controls +(0,-1) and +(0,-1) .. +(1.5,0)
node[above]{$\beta'$};
\fill (3.75,-.74) circle(2.5pt);
\draw (3.5,0) node[above]{$\alpha$} .. controls +(0,-.5) and +(0,-.5) ..
+(.5,0) node[above]{$\beta$};
\node at (11,0)
{$\begin{cases}
\; 1+(-1)^\beta(X_{\beta'}-X_\beta)-X_{\beta'}X_\beta\in\cM(\mu,\mu)\\
\; 1+(-1)^\beta(X_\alpha+X_{\beta'}) +X_\alpha X_{\beta'}\in\cM(\la,\la)
\end{cases}$};
\end{tikzpicture}
\end{equation}

\begin{equation}
\label{parent3}
\begin{tikzpicture}[thick,scale=0.7]
\draw (0,0) node[above]{$$} .. controls +(0,-1) and +(0,-1) .. +(1.5,0)
node[above]{};
\draw (.5,0) node[above]{$$} .. controls +(0,-.5) and +(0,-.5) ..
+(.5,0) node[above]{};
\fill (0.75,-.74) circle(2.5pt);
\draw[<-] (1.75,-.2) -- +(1,0);
\draw (3,0) node[above]{$\alpha$} .. controls +(0,-.5) and +(0,-.5) ..
+(.5,0) node[above]{$\beta$};
\draw (4,0) node[above]{$\alpha'$} .. controls +(0,-.5) and +(0,-.5) .. +(.5,0)
node[above]{$\beta'$};
\fill (4.25,-.36) circle(2.5pt);
\node at (11,0)
{$\begin{cases}
\;1+(-1)^\beta(X_{\beta'}+X_{\alpha'})+X_{\beta'}X_{\alpha'}\in\cM(\mu,\mu)\\
\;1+(-1)^\beta (X_{\beta'}-X_\beta)-X_\beta X_{\beta'}\in\cM(\la,\la)
\end{cases}$};
\end{tikzpicture}
\end{equation}
\begin{equation}
\label{parent4}
\begin{tikzpicture}[thick,scale=0.7]
\draw (0,0) node[above]{$$} .. controls +(0,-1) and +(0,-1) .. +(1.5,0)
node[above]{};
\draw (.5,0) node[above]{$$} .. controls +(0,-.5) and +(0,-.5) ..
+(.5,0) node[above]{};
\draw[<-] (1.75,-.2) -- +(1,0);
\draw (3,0) node[above]{$\alpha$} .. controls +(0,-.5) and +(0,-.5) .. +(.5,0)
node[above]{$\beta$};
\fill (3.25,-.36) circle(2.5pt);
\draw (4,0) node[above]{$\alpha'$} .. controls +(0,-.5) and +(0,-.5) .. +(.5,0)
node[above]{$\beta'$};
\fill (4.25,-.36) circle(2.5pt);
\node at (11,0)
{$\begin{cases}
1+(-1)^\beta(X_{\alpha}+X_{\beta'})+X_{\alpha'}X_{\beta'}\in\cM(\mu,\mu)\\
1+(-1)^\beta(X_{\alpha'}-X_\beta)-X_{\beta}X_{\beta'}\in\cM(\la,\la)
\end{cases}$};
\end{tikzpicture}
\end{equation}

The relations for $\la$-pairs not involving cups only as the relevant pieces are totally analogous, except that some of the generators of type $t_\gamma$ act by an idempotent and the corresponding variables $x$ should be set to zero.

({\text R}8): These are Lemmas~\ref{stupidcalcc}-\ref{stupidcalca} below.

Hence the assignments \eqref{isops} define an algebra homomorphism.
\end{proof}

\begin{lemma}
\label{stupidcalcc}
Let $k\geq 4$ and assume that $(\la_1,\la_2,\la_3)$ is a triple as in Definition~\ref{Bradenalgebra} \eqref{rel7} c). With $\Phi$ as in \eqref{isops} we have
  \begin{eqnarray}
  \label{zerooo}
    \Phi(p(\la_3,\la_2))\Phi(p(\la_2,\la_1))&=0=&\Phi(p(\la_1,\la_2))\Phi(p(\la_2,\la_3)).
  \end{eqnarray}
\end{lemma}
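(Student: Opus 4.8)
The plan is to sidestep the four-term expansion of the product and instead show that the product is forced into a zero subspace of $\mathbb{D}_k$. First I would record where the two factors live. Since $\la_1\stackrel{(\alpha,\beta)}{\to}\la_2\stackrel{(\gamma,\delta)}{\to}\la_3$, the defining formula \eqref{isops} gives
\[
\Phi(p(\la_3,\la_2))={}_{\la_3}\mathbbm{1}_{\la_2}+\tfrac12(-1)^\delta\,{}_{\la_3}X_{\la_2},
\qquad
\Phi(p(\la_2,\la_1))={}_{\la_2}\mathbbm{1}_{\la_1}+\tfrac12(-1)^\beta\,{}_{\la_2}X_{\la_1},
\]
so every basis vector occurring in the left factor has the shape $\underline{\la_3}\nu\overline{\la_2}$, i.e. lies in ${}_{\la_2}(\mathbb{D})_{\la_3}$, and every one in the right factor has the shape $\underline{\la_2}\nu\overline{\la_1}\in{}_{\la_1}(\mathbb{D})_{\la_2}$. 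By the definition of the surgery multiplication (and Corollary~\ref{ideal}) their product is therefore a sum of oriented circle diagrams of shape $\underline{\la_3}\nu\overline{\la_1}$; that is, it lies entirely in ${}_{\la_1}(\mathbb{D})_{\la_3}$.

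The key reduction is then that it suffices to prove ${}_{\la_1}(\mathbb{D})_{\la_3}=\{0\}$, since then all four terms vanish at once and no signs or individual surgeries need be tracked. By Proposition~\ref{coho} this space is spanned by the oriented diagrams $\underline{\la_3}\nu\overline{\la_1}$, so it is zero precisely when the circle diagram $\underline{\la_3}\overline{\la_1}$ admits no orientation at all. To establish non-orientability I would exhibit a closed circle of $\underline{\la_3}\overline{\la_1}$ carrying an odd number of dots, which cannot be oriented by Proposition~\ref{lem:stupidlemma}(1). This is visible already in the running Example~\ref{triples}, where $\underline{\la_3}\overline{\la_1}=\underline{\down\up\down\up}\;\overline{\up\up\up\up}$ splits into two circles on $\{1,2\}$ and $\{3,4\}$, each an undotted cup closed by a dotted cap, hence each with a single dot.

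The main obstacle is producing this odd-dot circle for an arbitrary triple satisfying the hypotheses of Definition~\ref{Bradenalgebra}\,\eqref{rel7}(iii). The three conditions are exactly what is needed to pin the local picture down: $\alpha<0$ forces the first move $\la_1\to\la_2$ to be the dotted-cup move turning $\up\up$ into $\down\down$, while the requirements that $(\gamma,\delta)$ be \emph{not} a $\la_1$-pair and that the triple \emph{cannot be extended to a diamond} cut the possible relative positions of the second cup, via Lemma~\ref{lapairscup} and the classification of diamonds in Proposition~\ref{diamondsclass}, down to a short explicit list. For each surviving configuration I would read off $\underline{\la_1}$ and $\underline{\la_3}$ and check that the cups of $\underline{\la_3}$ together with the reflected caps of $\underline{\la_1}$ close up into at least one circle with an odd dot count; equivalently, that there is no weight $\nu$ with $\la_3\subset\nu\supset\la_1$ simultaneously. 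The configurations involving rays are reduced to the ray-free ones exactly as described in the final paragraph of Proposition~\ref{diamondsclass}. This case analysis is routine but lengthy, and is where essentially all the work sits.

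Finally, the second identity $\Phi(p(\la_1,\la_2))\Phi(p(\la_2,\la_3))=0$ I would deduce for free from the first by applying the graded anti-automorphism $*$ of Lemma~\ref{antiaut}. Since reflecting $\underline{\la}\la\overline{\mu}$ with the weight fixed yields $\underline{\mu}\la\overline{\la}$, one has $({}_{\la}\mathbbm{1}_\mu)^*={}_{\mu}\mathbbm{1}_\la$ and likewise $({}_{\la}X_\mu)^*={}_{\mu}X_\la$, whence $\Phi(p(\la,\mu))^*=\Phi(p(\mu,\la))$. Because $*$ reverses the order of multiplication, $\bigl(\Phi(p(\la_3,\la_2))\Phi(p(\la_2,\la_1))\bigr)^*=\Phi(p(\la_1,\la_2))\Phi(p(\la_2,\la_3))$, and as $*$ is a bijection the vanishing of the first product immediately transports to the second.
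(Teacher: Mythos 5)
Your proposal is correct and takes essentially the same route as the paper's own proof: both arguments reduce the claim to the non-orientability of the circle diagram $\underline{\la_3}\overline{\la_1}$ (equivalently $\underline{\la_1}\overline{\la_3}$), forced by the hypotheses of (R9)(iii) via the local picture \eqref{parent4} and exactly the configurations of Example~\ref{triples}, with the ray case handled by the same reduction to the ray-free case. The only cosmetic differences are that you invoke the anti-automorphism of Lemma~\ref{antiaut} for the second identity (the paper gets both vanishings at once from non-orientability) and that you defer a broader case enumeration, whereas the paper makes it short by observing that a $\la_2$-pair which is not a $\la_1$-pair must be one of the two cups $C_1$, $C_2$ created by the move $\la_1\to\la_2$, the inner-cup case being excluded by the no-diamond hypothesis.
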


\begin{proof}
Let us first assume that the corresponding cup diagrams $\underline{\la_i}$ have no rays. The restriction $\alpha<0$ forces $\la_2\leftarrow \la_1$ to be locally of the form \eqref{parent4}. Let $C_1, C_2$ be the two cups (the outer and inner resp.) in the left picture of \eqref{parent4}. Since $(\gamma,\delta)$ is a $\la_2$-pair, but not a $\la_1$-pair, it must be given by one of the cups $C_1$ or $C_2$. The triple $(\la_1,\la_2,\la_3)$ look then locally like $(\la_1,\la_2,\la_3)$ resp. $(\mu_1,\mu_2,\mu_3)$ in Example~\ref{triples}. Hence in the case of $C_1$ the compositions \eqref{zerooo} are obviously zero, whereas in case of $C_2$ the triple is not of the required form. Obviously, (by adding $\up$'s to the left and $\down$'s to the right of the diagram), the general case can be deduced from the case where no rays occur.
\end{proof}

\begin{lemma}
\label{stupidcalb}
Let $k\geq 4$ and assume that $(\la_1,\la_2,\la_3)$ is a triple as in Definition~\ref{Bradenalgebra} \eqref{rel7} b). With $\Phi$ as in \eqref{isops} we have
  \begin{eqnarray}
    \Phi(p(\la_3,\la_2))\Phi(p(\la_2,\la_1))&=0=&\Phi(p(\la_1,\la_2))\Phi(p(\la_2,\la_3)).
  \end{eqnarray}
\end{lemma}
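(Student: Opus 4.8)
The plan is to show that the two products vanish because they already lie in a subspace of $\D$ which is zero. By construction the surgery procedure computing $\Phi(p(\la_3,\la_2))\Phi(p(\la_2,\la_1))$ only produces basis diagrams of the shape $\underline{\la_1}\nu\overline{\la_3}$, so this product lies in ${}_{\la_3}(\D)_{\la_1}$; symmetrically the second product lies in ${}_{\la_1}(\D)_{\la_3}$. The anti-automorphism $*$ of Lemma~\ref{antiaut} sends $\underline{\la_1}\nu\overline{\la_3}$ to $\underline{\la_3}\nu\overline{\la_1}$ and hence interchanges these two subspaces, so it is enough to prove that ${}_{\la_3}(\D)_{\la_1}=0$. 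By the definition of the basis $\B$, this space is nonzero precisely when the circle diagram $\underline{\la_1}\overline{\la_3}$ admits an orientation, i.e.\ when there exists a weight $\nu\in\Lambda_k^{\ov{0}}$ with $\la_1\subset\nu\supset\la_3$.

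Thus the statement reduces to the combinatorial assertion that, under the hypotheses of (R9)(ii), the cup diagrams $\underline{\la_1}$ and $\underline{\la_3}$ have no common orientation weight. First I would normalise the local picture by the device used in Lemma~\ref{stupidcalcc}, adding $\up$'s on the far left and $\down$'s on the far right so that the genuinely relevant arcs become cups while the bookkeeping of rays stays transparent. The defining feature of case (b) is that the triple becomes a diamond only after enlarging in the \emph{opposite} direction, adding $\down$'s on the left and $\up$'s on the right. Comparing this against the classification of all diamonds in Proposition~\ref{diamondsclass}, together with its description of how diamonds with rays arise from the no-ray diamonds of \eqref{diamonds2}--\eqref{diamonds1} by restriction to an interval, the fourth vertex $\la_4$ of the would-be diamond is distinguished by a cup one of whose endpoints is a vertex added in the enlargement and hence lies outside $\{1,\dots,k\}$. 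Upon restriction to $\{1,\dots,k\}$ this cup degenerates into a ray, and it is exactly at the in-range endpoint of that ray that $\underline{\la_1}$ and $\underline{\la_3}$ force opposite labels. This is precisely what happens in Example~\ref{extended}, where $\underline{\la_1}$ forces $\down$ and $\underline{\la_3}$ forces $\up$ at the first vertex, so that no $\nu$ can orient $\underline{\la_1}\overline{\la_3}$ at all.

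As supporting intuition for why an obstruction must occur, note that any orientation weight $\nu$ of $\underline{\la_1}\overline{\la_3}$ of total degree two would, by Lemma~\ref{degone}, satisfy $\la_1\rightarrow\nu\leftarrow\la_3$ and thus complete $(\la_1,\la_2,\la_3,\nu)$ to a diamond in $\Lambda_k^{\ov{0}}$, contradicting the assumption that the triple cannot be extended; this is the mechanism already visible in Example~\ref{triples}, where the unique orientation weight $\tau=\up\up\down\down$ of $\underline{\mu_1}\overline{\mu_3}$ is nothing but the fourth diamond vertex $\mu_4$. Under hypothesis (b) the enlargement data show that not merely the degree-two orientation but every orientation is obstructed by the boundary ray, so ${}_{\la_3}(\D)_{\la_1}$ vanishes outright and both products are zero.

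The main obstacle is the combinatorial step in the second paragraph: converting the indirect hypothesis ``cannot be extended but can be enlarged to a diamond'' into the concrete statement that $\underline{\la_1}$ and $\underline{\la_3}$ carry incompatible forced labels at a common ray. I expect to carry this out by reading off the finitely many local configurations underlying Proposition~\ref{diamondsclass} and Tables~\ref{table_conf_1}--\ref{table_conf_2}, rather than by any direct manipulation of the signed surgery rules, which would be considerably more painful.
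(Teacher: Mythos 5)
Your opening reduction is correct and is exactly the one the paper uses (implicitly): both products are linear combinations of basis vectors of shape $\underline{\la_3}\nu\overline{\la_1}$ resp.\ $\underline{\la_1}\nu\overline{\la_3}$, these two spaces are exchanged by the $*$ of Lemma~\ref{antiaut}, and they vanish precisely when the circle diagram $\underline{\la_1}\overline{\la_3}$ admits no orientation by a weight of the block. The genuine gap is that your proof stops here: the statement you still need --- that hypothesis (b) forces non-orientability --- \emph{is} the content of Lemma~\ref{stupidcalb}, and the paper proves it by the very case analysis you only promise to carry out. Concretely, the paper first shows that the completing vertex $\tilde\la_4$ of the enlarged diamond must carry either exactly one special $\up$ on an added vertex to the left of position $1$, or a special $\down$ on an added vertex to the right of position $k$, and that only one of the two can occur; it then determines, via the local moves \eqref{parent1}--\eqref{parent4} and \eqref{downout}, which pairs of moves $\tilde\la_1\leftrightarrow\tilde\la_4\leftrightarrow\tilde\la_3$ can involve that special vertex; this pins the enlarged diamond down to the finitely many shapes displayed in and after \eqref{nodiamonds}, and in each shape one checks directly that $\underline{\la_1}\overline{\la_3}$ cannot be oriented; finally the general case with rays is reduced to these. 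Deferring all of this to ``reading off Proposition~\ref{diamondsclass} and the tables'' leaves the heart of the lemma unproved; note also that Proposition~\ref{diamondsclass} classifies diamonds that \emph{do} exist, whereas here one must control triples whose fourth vertex is pushed out of range, which is a related but different analysis.

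Moreover, the mechanism you predict for the deferred check is not correct in general, so the verification is not the routine confirmation you anticipate. You assert the obstruction is always ``incompatible forced labels at a common ray''. In Example~\ref{extended} such a clash does occur at vertex $1$ ($\underline{\la_1}$ has an undotted ray forcing $\down$, $\underline{\la_3}$ a dotted ray forcing $\up$), but only because $\underline{\la_1}$ happens to have a ray at the in-range endpoint of the degenerated cup of $\underline{\tilde\la_3}$; at the in-range endpoint of the other degenerated cup (vertex $3$), $\underline{\la_3}$ has a cup there and forces nothing, and the obstruction is instead the non-propagating line formed by the undotted rays of $\underline{\la_1}$ at vertices $2,3$ together with the undotted cap $\{2,3\}$ of $\overline{\la_3}$, which cannot be oriented for a reason unrelated to clashing ray decorations. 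In yet other configurations of the paper's analysis the failure comes from closed circles acquiring an odd number of dots, forbidden by Proposition~\ref{lem:stupidlemma}(1). So several distinct obstruction types must be identified configuration by configuration --- precisely the work the paper does and you omit. Finally, your ``supporting intuition'' via Lemma~\ref{degone} cannot be upgraded into the missing argument: it only rules out orientations whose degree splits as $1+1$ between the cup and cap halves, and says nothing about splits $(2,0)$ or $(0,2)$, nor about orientations of any other total degree.
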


\begin{proof}
By assumption the triple can not be extended to a diamond, but can be enlarged to a diamond $(\tilde{\la}_1,\tilde{\la}_2,\tilde{\la}_3,\tilde{\la}_4)$. In particular, $\tilde{\la}_4$ must contains at least one $\down$ at position larger then $k$ or at least one $\up$ at position smaller than $1$ and only one of these two cases can occur.
Assume we have such an $\up$, then there is only one, since assuming there are two or more we have to find
\begin{eqnarray}
\label{leftright}
\tilde{\la}_1\leftrightarrow \tilde{\la}_4\leftrightarrow \tilde{\la}_3
\end{eqnarray}
with both $\leftrightarrow$ converting these $\up$'s into $\down$'s. But at most two $\up$'s can we switched and there is only one way to do so, which implies $\tilde{\la}_1=\tilde{\la}_3$, a contradiction.
Hence we need
two $\leftrightarrow$'s as in \eqref{leftright} which either remove our special (hence leftmost) $\up$ or at least move it to the right. Wlog. we may assume there are no rays. If we take one of the form \eqref{parent1}, then there is no second possibility. Hence they must be of the form \eqref{parent3} or \eqref{parent4} respectively. Then locally the enlarged diamond has to be of the form displayed on the left
\begin{eqnarray}
\label{nodiamonds}
\begin{tikzpicture}[thick,,scale=.5]
\draw[<->] (2.5,-1) -- +(-1,-1);
\draw[<->] (4,-1) -- +(1,-1);
\draw[<->] (2.5,-4.5) -- +(-1,1);
\draw[<->] (4,-4.5) -- +(1,1);

\begin{scope}[shift={(2,0)}]
\draw [dashed,red] (.25,-.75) -- +(0,1);
\draw (0,0) .. controls +(0,-1) and +(0,-1) .. +(2.5,0);
\draw (.5,0) .. controls +(0,-.5) and +(0,-.5) .. +(.5,0);
\draw (1.5,0) .. controls +(0,-.5) and +(0,-.5) .. +(.5,0);
\end{scope}

\begin{scope}[shift={(0,-2.5)}]
\draw [dashed,red] (.25,-.75) -- +(0,1);
\draw (1,0) .. controls +(0,-1) and +(0,-1) .. +(1.5,0);
\draw (0,0) .. controls +(0,-.5) and +(0,-.5) .. +(.5,0);
\draw (1.5,0) .. controls +(0,-.5) and +(0,-.5) .. +(.5,0);
\end{scope}

\begin{scope}[shift={(2,-5)}]
\draw [dashed,red] (.25,-.75) -- +(0,1);
\draw (0,0) .. controls +(0,-.5) and +(0,-.5) .. +(.5,0);
\draw (1,0) .. controls +(0,-.5) and +(0,-.5) .. +(.5,0);
\draw (2,0) .. controls +(0,-.5) and +(0,-.5) .. +(.5,0);
\fill (1.25,-.36) circle(2.5pt);
\fill (2.25,-.36) circle(2.5pt);
\end{scope}

\begin{scope}[shift={(4,-2.5)}]
\draw [dashed,red] (.25,-.75) -- +(0,1);
\draw (0,0) node[above]{\tiny $\wedge$} .. controls +(0,-1) and +(0,-1) .. +(1.5,0);
\draw (.5,0) .. controls +(0,-.5) and +(0,-.5) .. +(.5,0);
\draw (2,0) .. controls +(0,-.5) and +(0,-.5) .. +(.5,0);
\fill (.75,-.75) circle(2.5pt);
\fill (2.25,-.36) circle(2.5pt);
\end{scope}

\begin{scope}[xshift=8cm]
\draw[<->] (1.5,-1) -- +(0,-1);
\draw[<->] (1.5,-4.5) -- +(0,1);

\begin{scope}[shift={(0,0)}]
\draw (.5,0) .. controls +(0,-.5) and +(0,-.5) .. +(.5,0);
\draw (1.5,0) .. controls +(0,-.5) and +(0,-.5) .. +(.5,0);
\draw (2.5,0) -- +(0,-.7);
\end{scope}

\begin{scope}[shift={(0,-2.5)}]
\draw (.5,0) -- +(0,-.7);
\draw (1,0) .. controls +(0,-1) and +(0,-1) .. +(1.5,0);
\draw (1.5,0) .. controls +(0,-.5) and +(0,-.5) .. +(.5,0);
\end{scope}

\begin{scope}[shift={(0,-5)}]
\draw (.5,0) -- +(0,-.7);
\draw (1,0) .. controls +(0,-.5) and +(0,-.5) .. +(.5,0);
\draw (2,0) .. controls +(0,-.5) and +(0,-.5) .. +(.5,0);
\fill (1.25,-.36) circle(2.5pt);
\fill (2.25,-.36) circle(2.5pt);
\end{scope}
\end{scope}
\end{tikzpicture}
&\quad&
\begin{tikzpicture}[thick,,scale=.5]
\draw[<->] (2.5,-1) -- +(-1,-1);
\draw[<->] (4,-1) -- +(1,-1);
\draw[<->] (2.5,-4.5) -- +(-1,1);
\draw[<->] (4,-4.5) -- +(1,1);

\begin{scope}[shift={(2,.5)}]
\draw [dashed,red] (1.25,.2) -- +(0,-1.5);
\draw (0,0) .. controls +(0,-1.5) and +(0,-1.5) .. +(2.5,0);
\draw (.5,0) .. controls +(0,-1) and +(0,-1) .. +(1.5,0);
\draw (1,0) .. controls +(0,-.5) and +(0,-.5) .. +(.5,0);
\end{scope}

\begin{scope}[shift={(0,-2.5)}]
\draw [dashed,red] (1.25,.2) -- +(0,-.9);
\draw (0,0) .. controls +(0,-.5) and +(0,-.5) .. +(.5,0);
\draw (1,0) .. controls +(0,-.5) and +(0,-.5) .. +(.5,0);
\draw (2,0) .. controls +(0,-.5) and +(0,-.5) .. +(.5,0);
\fill (.25,-.36) circle(2.5pt);
\fill (2.25,-.36) circle(2.5pt);
\end{scope}

\begin{scope}[shift={(4,-2.5)}]
\draw [dashed,red] (1.25,.2) -- +(0,-1.1);
\draw (0,0) .. controls +(0,-1) and +(0,-1) .. +(2.5,0);
\draw (.5,0) .. controls +(0,-.5) and +(0,-.5) .. +(.5,0);
\draw (1.5,0) node[above]{\tiny $\vee$} .. controls +(0,-.5) and +(0,-.5) .. +(.5,0);
\end{scope}

\begin{scope}[shift={(2,-5)}]
\draw [dashed,red] (1.25,.2) -- +(0,-1);
\draw (0,0) .. controls +(0,-1) and +(0,-1) .. +(1.5,0);
\draw (.5,0) .. controls +(0,-.5) and +(0,-.5) .. +(.5,0);
\draw (2,0) .. controls +(0,-.5) and +(0,-.5) .. +(.5,0);
\fill (.75,-.75) circle(2.5pt);
\fill (2.25,-.36) circle(2.5pt);
\end{scope}

\begin{scope}[xshift=8cm]
\draw[<->] (.5,-1) -- +(0,-1);
\draw[<->] (.5,-4.5) -- +(0,1);

\begin{scope}[shift={(0,.5)}]
\draw (0,0) -- +(0,-.7);
\draw (.5,0) -- +(0,-.7);
\draw (1,0) -- +(0,-.7);
\end{scope}

\begin{scope}[shift={(0,-2.5)}]
\draw (0,0) .. controls +(0,-.5) and +(0,-.5) .. +(.5,0);
\draw (1,0) -- +(0,-.7);
\fill (.25,-.36) circle(2.5pt);
\end{scope}

\begin{scope}[shift={(0,-5)}]
\draw (0,0) -- +(0,-.7);
\draw (.5,0) .. controls +(0,-.5) and +(0,-.5) .. +(.5,0);
\fill (0,-.35) circle(2.5pt);
\end{scope}
\end{scope}

\end{tikzpicture}
\end{eqnarray}
with the triple $(\underline{\tilde{\la}_1},\underline{\tilde{\la}_2},
\underline{\tilde{\la}_3})$ displayed next to it (the additional points are to the left of the dashed line). In particular, the circle diagrams $\underline{\tilde{\la}_1}\overline{\tilde{\la}_3}$ and $\underline{\tilde{\la}_3}\overline{\tilde{\la}_1}$ cannot be oriented, hence the claim of the Lemma follows in this case.

Now assume we have a special $\down$ (automatically the rightmost one) which either has to be removed or moved to the left. Then the possible moves (with the position of our special $\down$ indicated) are of the form
\begin{equation}
\label{downout}
\begin{tikzpicture}[thick,scale=0.5]
\draw (0,0) node[above]{$$} .. controls +(0,-.5) and +(0,-.5) ..
+(.5,0) node[above]{};
\draw (1,0) node[above]{$\down$} .. controls +(0,-.5) and +(0,-.5) ..
+(.5,0) node[above]{};
\draw[<->] (1.75,-.2) -- +(1,0);
\draw (3,0) node[above]{} .. controls +(0,-1) and +(0,-1) .. +(1.5,0)
node[above]{};
\draw (3.5,0) node[above]{} .. controls +(0,-.5) and +(0,-.5) ..
+(.5,0) node[above]{};
\end{tikzpicture}
\quad
\begin{tikzpicture}[thick,scale=0.5]
\draw (0,0) node[above]{} .. controls +(0,-.5) and +(0,-.5) .. +(.5,0)
node[above]{};
\fill (.25,-.36) circle(2.5pt);
\draw (1,0) node[above]{$\down$} .. controls +(0,-.5) and +(0,-.5) ..
+(.5,0) node[above]{};
\draw[<->] (1.75,-.2) -- +(1,0);
\draw (3,0) node[above]{$$} .. controls +(0,-1) and +(0,-1) .. +(1.5,0)
node[above]{};
\fill (3.75,-.74) circle(2.5pt);
\draw (3.5,0) node[above]{} .. controls +(0,-.5) and +(0,-.5) ..
+(.5,0) node[above]{};
\end{tikzpicture}
\quad
\begin{tikzpicture}[thick,scale=0.5]
\draw (0,0) node[above]{$$} .. controls +(0,-1) and +(0,-1) .. +(1.5,0)
node[above]{};
\draw (.5,0) node[above]{$\down$} .. controls +(0,-.5) and +(0,-.5) ..
+(.5,0) node[above]{};
\fill (0.75,-.74) circle(2.5pt);
\draw[<->] (1.75,-.2) -- +(1,0);
\draw (3,0) node[above]{} .. controls +(0,-.5) and +(0,-.5) ..
+(.5,0) node[above]{};
\draw (4,0) node[above]{} .. controls +(0,-.5) and +(0,-.5) .. +(.5,0)
node[above]{};
\fill (4.25,-.36) circle(2.5pt);
\end{tikzpicture}
\quad
\begin{tikzpicture}[thick,scale=0.5]
\draw (0,0) node[above]{} .. controls +(0,-1) and +(0,-1) .. +(1.5,0)
node[above]{};
\draw (.5,0) node[above]{$\down$} .. controls +(0,-.5) and +(0,-.5) ..
+(.5,0) node[above]{};
\draw[<->] (1.75,-.2) -- +(1,0);
\draw (3,0) node[above]{} .. controls +(0,-.5) and +(0,-.5) .. +(.5,0)
node[above]{};
\fill (3.25,-.36) circle(2.5pt);
\draw (4,0) node[above]{} .. controls +(0,-.5) and +(0,-.5) .. +(.5,0)
node[above]{};
\fill (4.25,-.36) circle(2.5pt);
\end{tikzpicture}
\end{equation}
One can easily verify that the possible extended diamonds must involve the first move. (For instance the third move in \eqref{downout} is only possible if the special cup is nested inside a dotted cup, but then the second move is impossible).
We are left with the following diamonds and the second diamond in \eqref{nodiamonds} (the additional points are now to the right of the dashed line).
\begin{eqnarray*}
\begin{tikzpicture}[thick,,scale=.5]
\draw[<->] (2.5,-1) -- +(-1,-1);
\draw[<->] (4,-1) -- +(1,-1);
\draw[<->] (2.5,-4.5) -- +(-1,1);
\draw[<->] (4,-4.5) -- +(1,1);

\begin{scope}[shift={(2,0)}]
\draw [dashed,red] (1.75,.2) -- +(0,-1);
\draw (1,0) .. controls +(0,-1) and +(0,-1) .. +(1.5,0);
\draw (0,0) .. controls +(0,-.5) and +(0,-.5) .. +(.5,0);
\draw (1.5,0) .. controls +(0,-.5) and +(0,-.5) .. +(.5,0);
\fill (.25,-.36) circle(2.5pt);
\end{scope}

\begin{scope}[shift={(0,-2.5)}]
\draw [dashed,red] (1.75,.2) -- +(0,-1);
\draw (0,0) .. controls +(0,-1) and +(0,-1) .. +(2.5,0);
\draw (.5,0) .. controls +(0,-.5) and +(0,-.5) .. +(.5,0);
\draw (1.5,0) .. controls +(0,-.5) and +(0,-.5) .. +(.5,0);
\fill (1.25,-.75) circle(2.5pt);
\end{scope}

\begin{scope}[shift={(4,-2.5)}]
\draw [dashed,red] (1.75,.2) -- +(0,-1);
\draw (0,0) .. controls +(0,-.5) and +(0,-.5) .. +(.5,0);
\draw (1,0) .. controls +(0,-.5) and +(0,-.5) .. +(.5,0);
\draw (2,0) node[above]{\tiny $\vee$} .. controls +(0,-.5) and +(0,-.5) .. +(.5,0);
\fill (.25,-.36) circle(2.5pt);
\end{scope}

\begin{scope}[shift={(2,-5)}]
\draw [dashed,red] (1.75,.2) -- +(0,-1.5);
\draw (0,0) .. controls +(0,-1.5) and +(0,-1.5) .. +(2.5,0);
\draw (.5,0) .. controls +(0,-1) and +(0,-1) .. +(1.5,0);
\draw (1,0) .. controls +(0,-.5) and +(0,-.5) .. +(.5,0);
\fill (1.25,-1.13) circle(2.5pt);
\end{scope}

\begin{scope}[xshift=8cm]
\draw[<->] (.75,-1) -- +(0,-1);
\draw[<->] (.75,-4.5) -- +(0,1);

\begin{scope}[shift={(0,0)}]
\draw (0,0) .. controls +(0,-.5) and +(0,-.5) .. +(.5,0);
\draw (1,0) -- +(0,-.7);
\draw (1.5,0) -- +(0,-.7);
\fill (.25,-.36) circle(2.5pt);
\end{scope}

\begin{scope}[shift={(0,-2.5)}]
\draw (0,0) -- +(0,-.7);
\draw (.5,0) .. controls +(0,-.5) and +(0,-.5) .. +(.5,0);
\draw (1.5,0) -- +(0,-.7);
\fill (0,-.35) circle(2.5pt);
\end{scope}

\begin{scope}[shift={(0,-5)}]
\draw (0,0) -- +(0,-.7);
\draw (.5,0) -- +(0,-.7);
\draw (1,0) .. controls +(0,-.5) and +(0,-.5) .. +(.5,0);
\fill (0,-.35) circle(2.5pt);
\end{scope}
\end{scope}
\end{tikzpicture}
&\quad&
\begin{tikzpicture}[thick,,scale=.5]
\draw[<->] (2.5,-1) -- +(-1,-1);
\draw[<->] (4,-1) -- +(1,-1);
\draw[<->] (2.5,-4.5) -- +(-1,1);
\draw[<->] (4,-4.5) -- +(1,1);

\begin{scope}[shift={(2,.5)}]
\draw [dashed,red] (1.25,.2) -- +(0,-1.5);
\draw (0,0) .. controls +(0,-1.5) and +(0,-1.5) .. +(2.5,0);
\draw (.5,0) .. controls +(0,-1) and +(0,-1) .. +(1.5,0);
\draw (1,0) .. controls +(0,-.5) and +(0,-.5) .. +(.5,0);
\fill (1.25,-1.13) circle(2.5pt);
\end{scope}

\begin{scope}[shift={(0,-2.5)}]
\draw [dashed,red] (1.25,.2) -- +(0,-1);
\draw (0,0) .. controls +(0,-.5) and +(0,-.5) .. +(.5,0);
\draw (1,0) .. controls +(0,-.5) and +(0,-.5) .. +(.5,0);
\draw (2,0) .. controls +(0,-.5) and +(0,-.5) .. +(.5,0);
\fill (2.25,-.36) circle(2.5pt);
\end{scope}

\begin{scope}[shift={(4,-2.5)}]
\draw [dashed,red] (1.25,.2) -- +(0,-1.1);
\draw (0,0) .. controls +(0,-1) and +(0,-1) .. +(2.5,0);
\draw (.5,0) .. controls +(0,-.5) and +(0,-.5) .. +(.5,0);
\draw (1.5,0) node[above]{\tiny $\vee$} .. controls +(0,-.5) and +(0,-.5) .. +(.5,0);
\fill (1.25,-.75) circle(2.5pt);
\end{scope}

\begin{scope}[shift={(2,-5)}]
\draw [dashed,red] (1.25,.2) -- +(0,-1);
\draw (0,0) .. controls +(0,-1) and +(0,-1) .. +(1.5,0);
\draw (.5,0) .. controls +(0,-.5) and +(0,-.5) .. +(.5,0);
\draw (2,0) .. controls +(0,-.5) and +(0,-.5) .. +(.5,0);
\fill (2.25,-.36) circle(2.5pt);
\end{scope}

\begin{scope}[xshift=8cm]
\draw[<->] (.5,-1) -- +(0,-1);
\draw[<->] (.5,-4.5) -- +(0,1);

\begin{scope}[shift={(0,.5)}]
\draw (0,0) -- +(0,-.7);
\draw (.5,0) -- +(0,-.7);
\draw (1,0) -- +(0,-.7);
\fill (0,-.35) circle(2.5pt);
\end{scope}

\begin{scope}[shift={(0,-2.5)}]
\draw (0,0) .. controls +(0,-.5) and +(0,-.5) .. +(.5,0);
\draw (1,0) -- +(0,-.7);
\end{scope}

\begin{scope}[shift={(0,-5)}]
\draw (0,0) -- +(0,-.7);
\draw (.5,0) .. controls +(0,-.5) and +(0,-.5) .. +(.5,0);
\end{scope}
\end{scope}

\end{tikzpicture}
\end{eqnarray*}
In all cases the circle diagrams $\underline{\tilde{\la}_1}\overline{\tilde{\la}_3}$ and $\underline{\tilde{\la}_3}\overline{\tilde{\la}_1}$ cannot be oriented. Since the compositions ar zero if no rays are involved they are obviously also zero in the general case. The lemma follows.
\end{proof}

\begin{lemma}
\label{stupidcalca}
Let $k\geq 4$ and assume that $(\la_1,\la_2,\la_3,\la_4)$ is a diamond in $\Lambda_{k}^{\ov{0}}$. With $\Phi$ as in \eqref{isops} we have
  \begin{eqnarray*}
    \Phi(p(\la_3,\la_2))\Phi(p(\la_2,\la_1))&=&\Phi(p(\la_3,\la_4))\Phi(p(\la_4,\la_1)).
  \end{eqnarray*}
\end{lemma}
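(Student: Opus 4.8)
The plan is to verify the identity by a direct computation in $\mathbb{D}_k$, reduced to a finite check by means of the classification of diamonds. First I would reduce to the case in which the relevant local pieces of $\un{\la_1},\ldots,\un{\la_4}$ contain no rays. Exactly as in the proofs of Lemma~\ref{stupidcalcc} and Lemma~\ref{stupidcalb}, the morphisms $\Phi(p(\la_i,\la_j))$ and all the surgeries computing the two composites only affect the local region where the four weights differ; adding a block of $\up$'s to the right and $\down$'s to the left converts a diamond with rays into a ray-free one, and by Proposition~\ref{diamondsclass} every diamond with rays arises from a ray-free one by restriction to an interval. Hence it suffices to treat the ray-free diamonds explicitly listed in \eqref{diamonds2} and \eqref{diamonds1}.

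For a fixed ray-free diamond I would note that $\la_1$ and $\la_3$ sit at opposite corners (the two Bruhat-extreme weights), so that both composites are degree-two morphisms landing in the single space ${}_{\la_3}(\mathbb{D})_{\la_1}\cong\cM(\un{\la_1}\ov{\la_3})$ of Proposition~\ref{coho}. Expanding $\Phi(p(\la_3,\la_2))={}_{\la_3}\mathbbm{1}_{\la_2}+\tfrac12(-1)^{j}{}_{\la_3}X_{\la_2}$, and likewise for the other three factors, each composite becomes a sum of four products of basis vectors. I would evaluate these with the signed surgery rules (Surg-1)--(Surg-3): form the double circle diagram (through $\la_2$ on one side, through $\la_4$ on the other), apply the merge surjection of Lemma~\ref{join} and the split map of Definition~\ref{surg}, and then the collapse isomorphism of Lemma~\ref{lem:collapsing}. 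Each path thus produces an explicit polynomial in $\cM(\un{\la_1}\ov{\la_3})$, and the assertion is that the two polynomials coincide. As in Example~\ref{triples}, one typically finds the relevant degree-two part to be one-dimensional, so that both composites are forced to be the same basis vector up to sign; the content of the lemma is then that the two signs actually agree.

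The main obstacle will be the sign bookkeeping, which is genuinely asymmetric between the two sides. As observed in the proof of Proposition~\ref{diamondsclass}, one of the two composites passes through a middle section that forms a single circle (so its surgery is a split followed by a merge), while the other passes through two circles (a merge followed by a split). The split contributes a factor $(-1)^{\pos(i)}$ together with a multiplication by $X_j\mp X_i$ (the sign depending on whether the cup--cap pair is dotted), whereas the merge is a plain surjection carrying no sign. One must check that these surgery signs, combined with the $\tfrac12(-1)^{j}$ coefficients built into $\Phi(p)$ and with the various dot patterns occurring on the two distinguished cup--cap pairs, conspire to cancel the split/merge discrepancy and leave the two composites equal.

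Concretely, I would carry out one representative configuration from \eqref{diamonds2} in full: translate every clockwise/anticlockwise orientation into the element $X$ or $1$ of $\cM(\un{\la_1}\ov{\la_3})$ via Proposition~\ref{coho}, fix all relative signs using the parity statement of Lemma~\ref{signmove}, and verify that the split-then-merge value equals the merge-then-split value. Since the remaining diamonds of \eqref{diamonds2} and \eqref{diamonds1}, and each admissible decoration of the two cup--cap pairs, are governed by the identical bookkeeping, I would then record that the other cases are entirely analogous and omit the routine calculations, concluding that $\Phi(p(\la_3,\la_2))\Phi(p(\la_2,\la_1))=\Phi(p(\la_3,\la_4))\Phi(p(\la_4,\la_1))$.
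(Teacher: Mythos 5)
Your overall skeleton---reduce to the ray-free diamonds classified in Proposition~\ref{diamondsclass}, expand each $\Phi(p)$ into its two basis-vector terms, evaluate both composites inside the space $\cM$ of Proposition~\ref{coho}, and check cases---is the same as the paper's. But two of your structural claims are false, and they sit exactly where the real content of the proof lies. First, no splits occur: for every diamond in \eqref{diamonds2} and \eqref{diamonds1}, \emph{all} surgery moves arising in \emph{both} composites are merges. One can see this by counting components: e.g.\ in the first diamond of \eqref{diamonds2} each of the two stacked circle diagrams consists of two circles and the final diagram $\un{\la_1}\ov{\la_3}$ is a single circle, so the three middle surgeries change the component count by $2+2-3=1$, forcing every one of them to be a merge; the analogous count works for each listed diamond and for both composites. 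This is precisely the simplification the paper exploits: merges are sign-free surjections, so both composites are computed by ordinary multiplication of polynomial representatives in the quotient ring, and the only signs come from the $\tfrac{1}{2}(-1)^{\beta}$ coefficients in $\Phi(p)$ together with the relations $X_i=\pm X_j$ of Lemma~\ref{signmove}, which is where the dots enter. The split-then-merge versus merge-then-split dichotomy you invoke is not in Proposition~\ref{diamondsclass}; it comes from the associativity argument (the case $C(i,j)=4$ in the proof of the surgery-square commutativity), which concerns a different configuration, and importing it here sends the computation chasing split signs $(-1)^{\pos(i)}$ that never appear.

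Second, your proposed shortcut---both composites are degree-two morphisms, the degree-two part is one-dimensional, hence equality up to sign---fails because $\Phi(p(\la,\mu))={}_{\la}\mathbbm{1}_{\mu}+\tfrac{1}{2}(-1)^{j}\,{}_{\la}X_{\mu}$ is \emph{not} homogeneous: it is the sum of a degree-$1$ and a degree-$3$ term (this is the ``formal logarithm'' phenomenon emphasized in the introduction). Consequently each composite mixes degrees $2$ and $4$; the paper's computation indeed yields inhomogeneous values such as $1-X_6$, $1+X_6$, $1$, or $1+\tfrac{1}{2}(X_7\pm X_8)$, depending on the decoration. This also shows why ``one representative case plus analogy'' is inadequate here: the answer genuinely varies with the dot sets $D_1,\dots,D_4$, since the dots flip signs in the relations $X_i=\pm X_j$ used to simplify both sides, so the verification requires going through every admissible decoration of every diamond shape (which is exactly what the paper's tables do), or else a uniform argument in the dots that your proposal does not supply.
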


\begin{remark}
{\rm
When working over $\mathbb{F}_2$, we can ignore the dots and refer to the type $A$ treated in \cite{Str09}.
}
\end{remark}

\begin{proof}
 Given a diamond from \eqref{diamonds2} or \eqref{diamonds1}, say
\begin{eqnarray}
\label{diamondwrong}
\begin{tikzpicture}[thick,scale=0.7]
\draw[<->] (1,-1) node[above]{\quad $\lambda$} -- node[pos=0.5, left]{$\;(\alpha_1,\beta_1)$\;} +(-1,-1);
\draw[<->] (1.5,-1) --
node[pos=0.5, right]{$\;(\alpha_2,\beta_2)$} +(1,-1);
\draw[<->] (0,-2.7) node[above]{$\lambda'$} --
node[pos=0.5,left]{$(\alpha_3,\beta_3)$\;} +(1,-1)
node[below]{\quad $\lambda''$};
\draw[<->] (2.5,-2.7) node[above]{$\lambda'''$} --
node[pos=0.5,right]{\;$(\alpha_4,\beta_4)$} +(-1,-1);
\end{tikzpicture}
\end{eqnarray}
we consider the four cup diagrams  $\underline{\la},\underline{\la'},\underline{\la'''},\underline{\la''}$ and number the involved  points from left to right by $1$ to $6$ respectively $1$ to $8$.
Number in each of them the cups from left to right according to their left endpoint. Encode the dots via the four sets $D_1,D_2,D_3,D_4$ of dotted cups for the top, middle left, middle right and bottom cup diagram. For a $\la$-pair $(\alpha,\beta)$ we denote by $m$ the number of the rightmost vertex of the component which contains the cup-cap pair involved in the surgery. For instance, in the notation of \eqref{lapair}, the first diagram in \eqref{diamonds2} without dots has the following $\la$-pairs and maximal vertices $(\alpha_i,\beta_i,m_i)$ for $i=1,\ldots, 4$:
$$(2,3,4),(2,5,6),(4,5,6),(3,4,5).$$
Then the composition of the  maps on the left of \eqref{diamondwrong} equals $1+\frac{1}{2}(-1)^3X_4+\frac{1}{2}(-1)^5X_6=1-X_6$ as elements of $\cM(\underline{\la'}\overline{\la'''})$ from \eqref{MG}
by the relations given by $\underline{\la'''}$, whereas the composition of the  maps on the right equals $1+\frac{1}{2}(-1)^5X_6+\frac{1}{2}(-1)^4X_5=1-X_6$. The allowed composition of any other two maps in the diamond is also equal to $1-X_6$. Note that all the involved surgery moves in the diamonds \eqref{diamonds1} and \eqref{diamonds2} are merges, hence no additional signs appear and we do not have to care about the actual positions but only the relative position of the cups.

The following three tables list all the possible decorations with the corresponding resulting maps for the three cases in \eqref{diamonds2} where we abbreviate $P_i:=(\alpha_i,\beta_i,m_i)$.

\begin{eqnarray*}
\begin{array}[t]{ccccccccc}
D_1&D_2&D_3&D_4&P_1&P_2&P_3&P_4&\text{result}\\
\hline
\{\}&\{\}&\{\}&\{\}&(2,3,4)&(2,5,6)&(4,5,6)&(3,4,5)&1-X_6\\
\{1,3\}&\{1,3\}&\{\}&\{\}&(2,3,4)&(1,2,6)&(1,4,6)&(3,4,5)&1+X_6\\
\{1\}&\{1\}&\{1\}&\{1\}&(2,3,4)&(2,5,6)&(4,5,6)&(3,4,5)&1-X_6\\
\{3\}&\{3\}&\{1\}&\{1\}&(2,3,4)&(1,2,6)&(1,4,6)&(3,4,5)&1+X_6
\end{array}
\end{eqnarray*}

\begin{eqnarray*}
\begin{array}[t]{ccccccccc}
D_1&D_2&D_3&D_4&P_1&P_2&P_3&P_4&\text{result}\\
\hline
\{\}&\{\}&\{\}&\{\}&(4,5,6)&(2,5,6)&(2,3,6)&(3,4,5)&1-X_6\\
\{3\}&\{2\}&\{1\}&\{1\}&(3,4,6)&(1,2,6)&(1,2,6)&(3,4,5)&1+X_6\\
\{1\}&\{1\}&\{1\}&\{1\}&(4,5,6)&(2,5,6)&(2,3,6)&(3,4,5)&1-X_6\\
\{1,3\}&\{1,2\}&\{\}&\{\}&(3,4,6)&(1,2,6)&(1,2,6)&(3,4,5)&1+X_6\\
\end{array}
\end{eqnarray*}

\begin{eqnarray*}
\begin{array}[t]{ccccccccc}
D_1&D_2&D_3&D_4&P_1&P_2&P_3&P_4&\text{result}\\
\hline
\{\}&\{\}&\{\}&\{\}&(2,3,4)&(4,5,6)&(4,5,6)&(2,3,6)&1-X_6\\
\{1\}&\{1\}&\{1\}&\{1\}&(2,3,4)&(4,5,6)&(4,5,6)&(2,3,6)&1-X_6\\
\{2\}&\{1\}&\{2\}&\{1\}&(1,2,4)&(4,5,6)&(4,5,6)&(1,2,6)&1\\
\{1,2,3\}&\{3\}&\{1\}&\{1\}&(1,2,4)&(3,4,6)&(1,4,6)&(2,3,6)&1\\
\{3\}&\{3\}&\{2\}&\{1\}&(2,3,4)&(3,4,6)&(1,4,6)&(1,2,6)&1+X_6\\
\{2,3\}&\{1,3\}&\{\}&\{\}&(1,2,4)&(3,4,6)&(1,4,6)&(2,3,6)&1\\
\{1,3\}&\{1,3\}&\{1,2\}&\{\}&(2,3,4)&(3,4,6)&(1,4,6)&(1,2,6)&1+X_6\\
\{1,2\}&\{\}&\{1,2\}&\{\}&(1,2,4)&(4,5,6)&(4,5,6)&(1,2,6)&1\\
\end{array}
\end{eqnarray*}

The claim is obviously true for the first two diamonds in \eqref{diamonds1} with all possible decorations.
The following five tables list the possible decorations for the remaining five diamonds with the corresponding resulting maps:

\begin{eqnarray*}
\begin{array}[t]{ccccc|cccccc}
D_1&D_2&D_3&D_4&\text{result}&D_1&D_2&D_3&D_4&\text{result}\\
\hline
\{\}&\{\}&\{\}&\{\}&1-X_8&
\{1,2,4\}&\{4\}&\{1\}&\{1\}&1\\
\{1,2\}&\{\}&\{1,2\}&\{\}&1&
\{2,4\}&\{1,4\}&\{\}&\{\}&1\\
\{1,4\}&\{1,4\}&\{1,2\}&\{\}&1+X_8&
\{1\}&\{1\}&\{1\}&\{1\}&1-X_8\\
\{2\}&\{1\}&\{2\}&\{1\}&1\\
\end{array}
\end{eqnarray*}

\begin{eqnarray*}
\begin{array}[t]{ccccc|cccccc}
D_1&D_2&D_3&D_4&\text{result}&D_1&D_2&D_3&D_4&\text{result}\\
\hline
\{\}&\{\}&\{\}&\{\}&1-X_8&
\{1\}&\{1\}&\{1\}&\{1\}&1-X_8\\
\{1,3\}&\{\}&\{1,2\}&\{\}&1+X_8&
\{3\}&\{1\}&\{2\}&\{1\}&1+X_8\\
\end{array}
\end{eqnarray*}

\begin{eqnarray*}
\begin{array}[t]{ccccc|cccccc}
D_1&D_2&D_3&D_4&\text{result}&D_1&D_2&D_3&D_4&\text{result}\\
\hline
\{\}&\{\}&\{\}&\{\}&1-X_8&
\{1,4\}&\{\}&\{1,4\}&\{\}&1+X_8\\
\{1\}&\{1\}&\{1\}&\{1\}&1-X_8&
\{4\}&\{1\}&\{4\}&\{1\}&1+X_8\\
\end{array}
\end{eqnarray*}

\begin{eqnarray*}
\begin{array}[t]{ccccc|cccccc}
D_1&D_2&D_3&D_4&\text{result}&D_1&D_2&D_3&D_4&\text{result}\\
\hline
\{\}&\{\}&\{\}&\{\}&1+\frac{X_7-X_8}{2}&
\{1,2\}&\{\}&\{1,2\}&\{\}&1+\frac{X_7+X_8}{2}\\
\{1\}&\{1\}&\{1\}&\{1\}&1+\frac{X_7-X_8}{2}&
\{2\}&\{1\}&\{2\}&\{1\}&1+\frac{X_7+X_8}{2}\\
\end{array}
\end{eqnarray*}

\begin{eqnarray*}
\begin{array}[t]{ccccc|cccccc}
D_1&D_2&D_3&D_4&\text{result}&D_1&D_2&D_3&D_4&\text{result}\\
\hline
\{\}&\{\}&\{\}&\{\}&1+\frac{X_5-X_8}{2}&
\{1,4\}&\{\}&\{1,4\}&\{\}&1+\frac{X_5+X_8}{2}\\
\{1\}&\{1\}&\{1\}&\{1\}&1+\frac{X_5-X_8}{2}&
\{4\}&\{1\}&\{4\}&\{1\}&1+\frac{X_5+X_8}{2}\\
\end{array}
\end{eqnarray*}
The claim follows.
\end{proof}

\section{The orthosymplectic supergroup}
\label{super}

As a general convention, for a homogeneous vector $v$ in a vector superspace, ie. a $\mZ_2$-graded vector space $V=V_{\overline{0}}\oplus V_{\overline{1}}$, we will write $|v| \in \mZ_2$ for its parity. If $V=C^{a|b}$ is of superdimension $(a|b)$, ie. $\op{dim}V_{\overline{0}}=a$ and $V_{\overline{1}}=b$, the {\it general Lie superalgebra} $\mathfrak{gl}(a|b)$ is the
vector superspace $\End_\mC(V)$ of all (not necessarily homogeneous) linear endomorphisms of $V$, with superbracket $[x,y] := xy - (-1)^{|x| | y|} yx$.

\subsection{Finite dimensional representations}
Fix $m, n \geq 0$ let $r=2m$ or $r=2m+1$. Then $G=G(r|2n)$ denotes the algebraic supergroup $SOSP(r|2n)$ over $\mC$. Using scheme-theoretic language, $G$ can be regarded as a functor
from the category of commutative superalgebras over $\mC$
to the category of groups,
mapping a commutative superalgebra $A = A_{\overline{0}}\oplus A_{\overline{1}}$
to the group $G(A)$ of all  invertible $(r+2n) \times (2m+2n)$ orthosymplectic matrices.

We are interested here in the category of finite dimensional
representations of $G$ which can be viewed equivalently as
{\it integrable} supermodules over its
complex Lie superalgebra $\mg=\mathfrak{sosp}(r|2n,\mC)$. We allow
only {\em even} morphisms between $G$-modules, so that these
categories are abelian. In the following we will omit indicating the ground field $\mC$ in the notation. With the (skew)symmetric matrices $H=\begin{pmatrix}
0&\mathbf{1}_n\\
-\mathbf{1}_n&0
\end{pmatrix}$
and
$G=\begin{pmatrix}
1&0&0\\
0&0&\mathbf{1}_m\\
0&\mathbf{1}_m&0
\end{pmatrix}$
 (with the first column and row removed for $r$ even), $\mg$ can explicitly be realized as the Lie supersubalgebra of matrices in $\mathfrak{gl}(r|n)$ of the form
$
\begin{pmatrix}
A&B\\C&D
\end{pmatrix}$
where $A^tG+GA=B^tG-HC=D^tH+HD=0$. Then $\mg_0$ (resp. $\mg_1$) is the subset of all such matrices with $B=C=0$ (resp. $A=D=0$). In particular, $\mg_0\cong\mathfrak{o}(r)\oplus\mathfrak{sp}(2n)$ with its standard Cartan $\mathfrak{h}$ of diagonal matrices.
\def\eps{{\varepsilon}}
We denote by $\delta$ the super-trace of the vector representation, ie. $\delta=2m-2n$ for $\mathfrak{sosp}(2m|2n)$ and $\delta=2m-2n+1$ for $\mathfrak{sosp}(2m+1|2n)$.

Let $X=X(\mg)=\bigoplus_{i=1}^m\mathbb{Z}\eps_i\oplus\bigoplus_{j=1}^n\mathbb{Z}\delta_j$ be the integral weight lattice with even $\eps$'s and odd $\delta$'s and the standard symmetric bilinear form $(\eps_i,\eps_j)=\delta_{i,j}$, $(\eps_i,\delta_j)=0$, $(\eps_i,\delta_j)=-\delta_{i,j}$ for $1\leq i\leq m$, $1\leq j\leq n$. The even roots for $\mathfrak{sosp}(2m|2n)$ are the $\mp2\delta_i$ and $\mp\eps_i\mp\eps_j, \mp\delta_i\mp\delta_j$ for $i\not=j$ (where all signs can be chosen independently); the odd roots are the $\mp\epsilon_i\mp\delta_j$ (again with all combinations of signs) for $1\leq i\leq m$, $1\leq j\leq n$. For $\mathfrak{sosp}(2m+1|2n)$ we have additionally the even roots $\mp\epsilon_i$ and the odd roots $\mp\delta_j$.

The category $\cC$ of integrable $\mg$-modules is then the category of finite dimensional $\mathfrak{g}$-modules which are $\mh$-semisimple with integral weights. It has enough projectives and injectives, \cite[Lemma 3.1]{Vera}. Every simple object is a highest weight module and up to isomorphism and parity shift uniquely determined by its highest weight \cite[Theorem 9.9]{Vera}. The category $\cC$ decomposes into a direct sum  $\cC=\cF\oplus \Pi(\cF)$
of subcategories, $\mathcal{F}$ and its parity shift $\Pi\mathcal{F}$, where $\cF$ contains all objects such that the parity of any weight space agrees with parity of the corresponding weight. Therefore it is enough to study $\cF$.

With a fixed Borel, every irreducible module in $\cF$ is a quotient of a Verma module, in particular a highest weight module $L(\la)$ with highest weight $\la$. The occurring highest weights are precisely the dominant integral highest weights. The explicit dominance condition on the coefficients of $\la$ in our chosen basis depend on the choice of Borel we made, since in the orthosymplectic case Borels are not always pairwise conjugate. We follow closely \cite{GS2} and fix the slightly unusual choice of Borel with maximal possible number of odd simple roots, see \cite{GS1}. We only list here $\rho$, half of the sum of positive even roots minus the sum of positive odd roots:

\begin{itemize}
\item $\mg=\mathfrak{sosp}(2m|2n)$:
\begin{eqnarray*}
\rho&=&
\begin{cases}
\sum_{j=1}^{m-n}(m-n-j)\epsilon_j,&\text{ if $m>n$,}\\
\sum_{i=1}^{n-m}(n-m-i+1)\delta_i,&\text{ if $m\leq n$}.
\end{cases}
\end{eqnarray*}
\item $\mg=\mathfrak{sosp}(2m+1|2n)$:
\begin{eqnarray*}
\rho&=&-\frac{1}{2}\sum_{j=1}^{m}\epsilon_j+\frac{1}{2}\sum_{i=1}^{n}\delta_i+
\begin{cases}
\sum_{i=1}^{n-m}(n-m-i)\delta_i, &\text{if $m<n$,}\\
\sum_{j=1}^{m-n}(m-n-i+1)\epsilon_j, &\text{if $m\geq n$}.
\end{cases}
\end{eqnarray*}
\end{itemize}

For our choice of Borel, a weight $\la\in\Lambda$ is {\it dominant} if
\begin{eqnarray}
\label{laab}
\la+\rho=\sum_{i=1}^m a_i\eps_i+\sum_{j=1}^n b_j\delta_j
\end{eqnarray}
satisfies the following dominance condition, see \cite{GS1}:
\begin{itemize}
\label{dominance condition}
\item for $\mg=\mathfrak{sosp}(2m+1|2n)$:
\begin{enumerate}[(i)]
\item either $a_1>a_2>\cdots> a_m\geq \frac{1}{2}$ and  $b_1>b_2>\cdots> b_n\geq \frac{1}{2}$,
\item
or $a_1>a_2>\cdots> a_{m-l-1}> a_{m-l}=\cdots= a_m=-\frac{1}{2}$ and \hfill\\ $b_1>b_2>\cdots> b_{n-l-1}\geq b_{n-l}=\cdots= b_n=\frac{1}{2}$;
\end{enumerate}
\item
for $\mg=\mathfrak{sosp}(2m|2n)$:
\begin{enumerate}[(i)]
\item either $a_1>a_2>\cdots> a_{m-1}>|a_m|$ and  $b_1>b_2>\cdots> b_n>0$,
    \item
or $a_1>a_2>\cdots> a_{m-l-1}\geq a_{m-l}=\cdots= a_m=0$ and \hfill\\  $b_1>b_2>\cdots> b_{n-l-1}> b_{n-l}=\cdots= b_n=0$.
\end{enumerate}
\end{itemize}
Weights satisfying $(i)$ are called {\it tailless} and the number $l$ from \eqref{dominance condition} is the $\it{\op{d}-degree}$ of $\la$. The set  of dominant weights is denoted $X^+(\mg)$.

For $\la\in X^+(\mg)$ let $P(\la)$ be a fixed projective cover of $L(\la)$, see \cite{BKN} for an explicit construction. Then the $P(\la)$ form a complete set of representatives for the isomorphism classes of indecomposable projective objects in $\cF$. Note that they are also injective by \cite[Proposition 2.2.2]{BKN}.

\subsection{Hook partitions and weight diagrams}
A {\it partition of length $r$} is a non-increasing sequence $\la=(\la_1\geq\la_2,\ldots, \geq\la_{r}\geq0)$ of non-negative integers. We denote by $\la^t$ its transpose partition, ie. $\la^t_i=|\{k\mid \la_k\geq i\}|$. A partition $\la$ is called {\it $(n,m)$-hook} if $\la_{n+1}\leq m$. Given an $(n,m)$-hook partition $\la$ we associate $\op{w}(\la)=(a_1,a_2,\ldots, a_m, b_1,b_2,\ldots, b_n)-\rho\in\mZ^{m+n}$ and $\op{w}'(\la)=(a_1,a_2,\ldots, a_m, b_1,b_2,\ldots b_n)-\rho\in\frac{1}{2}+\mZ^{m+n}$ as follows
\begin{itemize}
\item For $\op{wt}(\la)$: $b_i=\op{max}\{\la_i-m+n-i+1,0\}$ and $a_j=\op{max}\{\la^t_j-n+m-j,0\}$, $1\leq i\leq n$, $1\leq j\leq m$,
\item For $ \op{wt}'(\la)$: $b_i=\op{max}\{\la_i-m+n-i+\frac{1}{2},\frac{1}{2}\}$ and $a_j=\op{max}\{\la^t_j-n+m-j+\frac{1}{2},-\frac{1}{2}\}$, $1\leq i\leq n$, $1\leq j\leq m$.
\end{itemize}

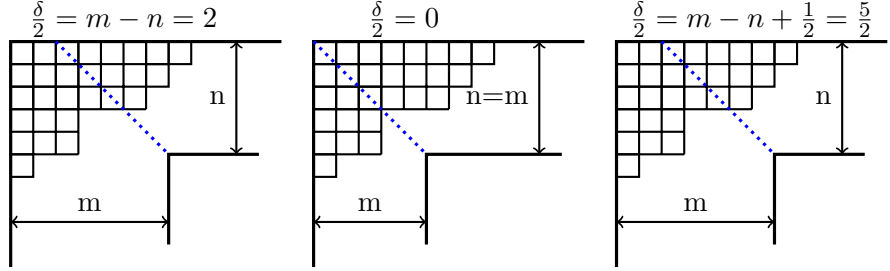
\begin{figure}
\begin{eqnarray*}
&
\begin{tikzpicture}[thick,scale=0.3]

\node at (5,1) {$\frac{\delta}{2}=m-n=2$};
\draw[very thick] (0,-10) -- (0,0) -- (12,0);
\draw[very thick] (7,-9) -- (7,-5) -- (11,-5);
\draw[<->] (0,-8) -- node[above]{m} (7,-8);
\draw[<->] (10,0) -- node[left]{n} (10,-5);

\draw[step=1] (0,-3) grid +(6,3);
\draw (6,0) rectangle +(1,-1);
\draw (7,0) rectangle +(1,-1);
\draw (6,-1) rectangle +(1,-1);
\draw[step=1] (0,-5) grid +(3,5);
\draw (0,-5) rectangle +(1,-1);

\begin{scope}[xshift=2cm]
\draw[blue,very thick,dotted] (0,0) -- (5,-5);
\end{scope}
\end{tikzpicture}
\quad
\begin{tikzpicture}[thick,scale=0.3]

\node at (4,1) {$\frac{\delta}{2}=0$};
\draw[very thick] (0,-10) -- (0,0) -- (12,0);
\draw[very thick] (5,-9) -- (5,-5) -- (11,-5);
\draw[<->] (0,-8) -- node[above]{m} (5,-8);
\draw[<->] (10,0) -- node[left]{n=m} (10,-5);

\draw[step=1] (0,-3) grid +(6,3);
\draw (6,0) rectangle +(1,-1);
\draw (7,0) rectangle +(1,-1);
\draw (6,-1) rectangle +(1,-1);
\draw[step=1] (0,-5) grid +(3,5);
\draw (0,-5) rectangle +(1,-1);

\draw[blue,very thick, dotted] (0,0) -- (5,-5);
\end{tikzpicture}
\quad
\begin{tikzpicture}[thick,scale=0.3]
\node at (6,1) {$\frac{\delta}{2}=m-n+\frac{1}{2}=\frac{5}{2}$};
\draw[very thick] (0,-10) -- (0,0) -- (12,0);
\draw[very thick] (7,-9) -- (7,-5) -- (11,-5);
\draw[<->] (0,-8) -- node[above]{m} (7,-8);
\draw[<->] (10,0) -- node[left]{n} (10,-5);

\draw[step=1] (0,-3) grid +(6,3);
\draw (6,0) rectangle +(1,-1);
\draw (7,0) rectangle +(1,-1);
\draw (6,-1) rectangle +(1,-1);
\draw[step=1] (0,-5) grid +(3,5);
\draw (0,-5) rectangle +(1,-1);

\begin{scope}[xshift=2cm]
\draw[blue,very thick,dotted] (0,0) -- (5,-5);
\end{scope}
\end{tikzpicture}
&
\end{eqnarray*}
\caption{The lower and upper part of a partition separated by the twisted diagonal.}
\label{shifteddiag}
\end{figure}

The $a_j$ and $b_i$ give a different way to describe $(n,m)$-hook partitions by encoding the number of boxes below and to the right of the $\lfloor\frac{\delta}{2}\rfloor$-shifted diagonal; for instance the pictures in Figure~\ref{shifteddiag} corresponds to $\mathfrak{sosp}(14|10)$ with $\mathbf{a}=(7,5,4,1,0,0,0)$, $\mathbf{b}=(6,4,2,0,0)$ and to $\mathfrak{sosp}(10|10)$ with
$\mathbf{a}=(5,3,2,0,0)$, and $\mathbf{b}=(8,6,4,0,0)$; and finally
$\mathfrak{sosp}(15|10)$ with $\mathbf{a}=(\frac{15}{2},\frac{11}{2},\frac{9}{2},\frac{3}{2},
\frac{1}{2},-\frac{1}{2},-\frac{1}{2})$ and $\mathbf{b}=(\frac{11}{2},\frac{7}{2},\frac{3}{2},\frac{1}{2},\frac{1}{2})$.

\begin{lemma}
\label{AB}
The assignments $\la\mapsto \op{wt}(\la)$ and $\la\mapsto  \op{wt}'(\la)$ define a bijection resp. inclusion
\begin{eqnarray*}
\Psi_{2m+1,2n}:\quad\left\{(n,m)-\text{hook partitions}\right\}
&\stackrel{1:1}{\leftrightarrow}&
X^+(\mathfrak{sosp}(2m+1|2n))\\
\Psi_{2m,2n}:\quad \left\{(n,m)-\text{hook partitions} \right\}
&{\hookrightarrow}&X^+(\mathfrak{sosp}(2m|2n))
\end{eqnarray*}
where the image of $\Psi_{2m,2n}$ misses exactly all tailless weights with $a_m<0$.
\end{lemma}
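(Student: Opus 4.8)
The plan is to prove the statement as a direct combinatorial verification, organized into three parts: well-definedness (the images are dominant weights), injectivity (by explicitly inverting the formulas), and the identification of the image. Throughout I would use that the coordinates $(a_1,\dots,a_m,b_1,\dots,b_n)$ attached to a hook partition by $\op{wt}$ or $\op{wt}'$ are, by construction, exactly the coordinates of $\op{wt}(\la)+\rho$ resp. $\op{wt}'(\la)+\rho$, so that $\rho$ is fully absorbed and the dominance conditions can be checked directly on these $a_j,b_i$. The half-integrality forces $\op{wt}'$ to be the relevant map for $\mathfrak{sosp}(2m+1|2n)$ and $\op{wt}$ for $\mathfrak{sosp}(2m|2n)$.

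First I would establish well-definedness. The "interior" quantities $\la^t_j-n+m-j$ and $\la_i-m+n-i$ are \emph{strictly} decreasing in $j$ and $i$, since $\la$ and $\la^t$ are partitions and each consecutive difference $\la^t_j-\la^t_{j+1}\geq 0$, $\la_i-\la_{i+1}\geq 0$ forces a drop of at least one after the $-j$ (resp. $-i$) shift. Truncating by $\max\{-,-\tfrac12\}$ (resp. $\max\{-,\tfrac12\}$, $\max\{-,0\}$) therefore produces a strictly decreasing head followed by a constant tail at the boundary value, which is precisely the shape demanded by cases (i) and (ii) of the dominance condition. The key step, and where I expect the real work to lie, is to show that the tails of the $a$-sequence and the $b$-sequence have matching lengths, so that the weight lands in case (i) or case (ii) with one consistent $\op{d}$-degree $l$. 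Here I would run a Durfee/shifted-diagonal argument: letting $d$ be the size of the twisted diagonal of $\la$ visible in Figure~\ref{shifteddiag}, the entries $\la^t_j$ are large exactly for columns left of the diagonal and $\la_i$ large exactly for rows above it, so the numbers of boundary entries among the $a_j$ and among the $b_i$ are both controlled by $d$ together with the hook condition $\la_{n+1}\leq m$. This coupling is what yields the relation between the number of $-\tfrac12$'s (resp. $0$'s) among the $a$'s and of $\tfrac12$'s (resp. $0$'s) among the $b$'s; the single \emph{weak} inequality in each dominance condition is exactly the contribution of diagonal boxes with $\la_i=i$, which is why it sits on the $b$-side for $\mathfrak{sosp}(2m+1|2n)$ and on the $a$-side for $\mathfrak{sosp}(2m|2n)$.

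For injectivity I would invert the defining formulas: off the tails the parts $\la_i$ and the column lengths $\la^t_j$ are read back off from $b_i$ and $a_j$, and the hook condition together with $\la^{tt}=\la$ pins down the remaining boxes, reconstructing $\la$ uniquely from its weight. Finally, for the image, in the odd case every dominant weight satisfies (i) or (ii), and applying the inversion to such a weight returns an $(n,m)$-hook partition, giving surjectivity and hence the bijection $\Psi_{2m+1,2n}$. In the even case the only obstruction is the $|a_m|$ in dominance condition (i): since $a_m=\max\{\la^t_m-n+m-m,0\}\geq 0$ is always non-negative, $\op{wt}$ can never reach a tailless weight with $a_m<0$, while the same inversion reaches every other dominant weight. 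Thus $\Psi_{2m,2n}$ is an injection whose image is exactly $X^+(\mathfrak{sosp}(2m|2n))$ minus the tailless weights with $a_m<0$, as claimed. The main obstacle will be the careful bookkeeping of the tail/$\op{d}$-degree correspondence in part two, keeping the strict-versus-weak inequalities aligned with the two distinct shapes of the dominance condition.
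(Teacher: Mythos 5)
Your proposal is correct and follows essentially the same route as the paper's proof: a direct combinatorial verification consisting of strict decrease of the untruncated quantities, matching of the tail lengths of the $a$- and $b$-sequences, reconstruction of $\la$ from its weight for injectivity (which the paper dismisses as obvious), and the observation that the truncation at the boundary value forces $a_m\geq 0$ (resp. $a_m\geq -\tfrac{1}{2}$), which is exactly why the even image misses the tailless weights with $a_m<0$. The only real difference is in the key tail-matching step, and it is cosmetic: where you couple the two tail lengths through the shifted diagonal (a Durfee-square count, consistent with the paper's remark that the $\op{d}$-degree equals $\op{min}\{m,n\}-d$), the paper derives the same coupling by two explicit implications read off from the defining formulas, namely $b_{n-s}=0\Rightarrow a_{m-s}=0$ and, using $a_{m-n-1}>0$, $a_{m-r}=0\Rightarrow b_{n-r+1}=0$ --- both arguments encode the same fact that a column ends on or above the shifted diagonal precisely when the corresponding row ends on or below it.
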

From now on we will abuse notation and write $\la$ instead of $w(\la)$ or $w'(\la)$.
\begin{remark}
{\rm
The $\op{d}$-degree of $\la\in X^+(\mg)$ equals $\op{min}\{m,n\}-d$, where $d$ is the number of boxes on the diagonal of the corresponding Young diagram.
}
\end{remark}

\begin{proof}[Proof of Lemma \ref{AB}]
Let $\la$ be a hook partition and consider $\op{wt}(\la)$. Let us first consider the case $\Psi_{2m,2n}$. Since $\la$ is a partition we have $a_{i+1}<a_i$ and $b_{i+1}<b_i$ whenever they are defined and nonzero.
For the map to be well-defined it remains to show that the number of zero $a$'s is equal or one larger then the number of zero $b$'s.
We first claim that $a_{m-s}>0$ implies $b_{n-s}$. Let first $m\geq n$, $s<n$. If $b_{n-s}= 0$ then $\la_{n-s}-m+n-n+s+1\leq 0$, hence $\la_{n-s}\leq m-s-1$ and so $\la$ has at most $n-s-1$ rows of length $m-s$. This means $\la^t_{m-s}\leq n-s-1$. Hence $a_{m-s}=\la^t_{m-s}-n+m-m+s\leq n-s-1-n+s=-1$ which is a contradiction. Let now $m<n$. If $b_{n-s}=0$ for $0\leq s\leq m-1$ then $a_{m-s}=0$ as above, but $b_{n-m}=\la_{n-m}-m+n-n+m+1=\la_{n-m}+1>0$. Altogether this shows that there are at least as many zero $a$'s as $b$'s. Hence it suffices now to see that $a_{m-n-1}>0$ and $a_{m-r}=0$ implies $b_{n-r+1}=0$ for $1\leq r\leq m$, $r\leq n$. Clearly, $a_{m-n-1}=\la^t_{m-n-1}-n+m-m+n-1=\la^t_{m-n-1}+1>0$, so assume $a_{m-r}=0$ and $b_{n-r+1}>0$. Then $\la_{n-r+1}-m+n-n+r-1+1>0$, hence $\la_{n-r+1}>m-r$ which implies $\la^t_{m-r}\leq n-r+1$ and therefore $a_{m-r}=\la^t_{m-r}-n+m-m+r\geq n-r+1-n+r\geq 1$ which is a contradiction.
Hence the map is well-defined and obviously injective. It remains to show that if $({\bf a},{\bf b})\in X^+(\mg)$ satisfies the dominance condition \eqref{dominance condition} with $a_m>0$ in case (i) then  it comes from a hook partition. It is enough to see it defines a partition, since $b_i$ is only defined for $1\leq i\leq n$ and $a_j$ for $1\leq j\leq m$, hence if it is a partition it must be $(n,m)$-hook. It suffices to see that $a_j\not=0$ implies $b_{j-m+n}\geq 1$. Setting $j:=m-s$ this is equivalent to $a_{m-l}\not=0$ implies $b_{m-l-m+n}=b_{n-l}\geq 1$. But we have seen this implication already above. The arguments for $w'(\la)$ are analogous.
\end{proof}

To formulate our main conjecture we connect the representation theory of $\mathfrak{g}$ now with our diagram calculus.

\subsection{The case $\mathfrak{sosp}(2m+1|2n)$}
Let $\mg=\mathfrak{sosp}(2m+1|2n)$ for the whole remaining section. Given a $(n,m)$-hook partition $\la$, we assign an {\it infinite diagrammatical weight $w(\la)$} to $\la$ by first considering the sequence of half-integers
\begin{eqnarray}
\label{Sla}
\cS(\la)&:=&\left(-\frac{\delta}{2}+i-\la_i^t\right)_{i\geq 1}
\end{eqnarray}

then number the vertices on the nonnegative number line by $\frac{1}{2}+\mZ_{\geq0}$ and finally attach to the vertex $i>0$ the label

\begin{eqnarray}\label{dict}
\left\{
\begin{array}{ll}
\circ&\text{if $i$ nor $-i$ occurs in $\cS(\la)$},\\
{\scriptstyle\down} &\text{if $-i$, but not $i$, occurs in $\cS(\la)$,}\\
{\scriptstyle\up} &\text{if $i$, but not $-i$, occurs in $\cS(\la)$,}\\
\times&\text{if both, $-i$ and $i$ occurs in $\cS(\la)$.}\\
\end{array}\right.
\end{eqnarray}

\begin{lemma}
\label{abovebelow}
We have $\cS(\la)_i>0$ (resp.  $\cS(\la)_i<0$) in  \eqref{Sla} iff the column $i$ in the corresponding Young diagram ends above (resp. below or on) the $\lfloor\frac{\delta}{2}\rfloor$-shifted diagonal, see Figure~\ref{shifteddiag}.
\end{lemma}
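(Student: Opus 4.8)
The plan is to unwind both sides into a single elementary integer inequality and match them. First I would recall the ingredients: by \eqref{Sla} we have $\cS(\la)_i = -\tfrac{\delta}{2} + i - \la_i^t$, where $\la_i^t = |\{k : \la_k \geq i\}|$ is precisely the number of boxes in the $i$-th column, and for $\mg = \mathfrak{sosp}(2m+1|2n)$ one has $\delta = 2m-2n+1$, so that $\tfrac{\delta}{2} = m-n+\tfrac12$ and $\lfloor\tfrac{\delta}{2}\rfloor = m-n$. The key preliminary observation is that $i - \la_i^t \in \mZ$ while $\tfrac{\delta}{2}$ is a half-integer; hence $\cS(\la)_i$ is always a \emph{nonzero} half-integer. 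This is exactly what makes the dichotomy ``above'' versus ``on or below'' both exhaustive and mutually exclusive, the borderline value $0$ never occurring.

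Next I would fix coordinates compatible with Figure~\ref{shifteddiag}, drawing the Young diagram so that column $i$ occupies the horizontal band $x \in [i-1,i]$ and its lowest box reaches depth $\la_i^t$ (depth measured positively downward). In these coordinates the $\lfloor\tfrac{\delta}{2}\rfloor$-shifted diagonal is the line through the grid points $(c,-r)$ with $c - r = \lfloor\tfrac{\delta}{2}\rfloor = m-n$, exactly as drawn in the figure, where it emanates from $(m-n,0)$ with slope $-1$. The bottom-right corner of column $i$ is then the point $(i, -\la_i^t)$, and the diagonal at horizontal coordinate $x=i$ sits at depth $i - (m-n)$.

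It then remains to compare these two depths. The corner $(i,-\la_i^t)$ lies strictly above the diagonal iff $\la_i^t < i - (m-n)$, i.e.\ iff $i - \la_i^t > m-n$; since $i - \la_i^t$ is an integer this is equivalent to $i - \la_i^t > m-n+\tfrac12 = \tfrac{\delta}{2}$, that is, to $\cS(\la)_i > 0$. Dually, the corner lies on or below the diagonal iff $\la_i^t \geq i-(m-n)$, i.e.\ iff $i - \la_i^t \leq m-n < m-n+\tfrac12$, which is exactly $\cS(\la)_i < 0$. Together with the non-vanishing of $\cS(\la)_i$ noted above, this establishes both equivalences simultaneously, and the argument is uniform: empty columns (where $\la_i^t = 0$, so $i-\la_i^t = i$ is large and the corner sits at depth $0$ above the diagonal) require no separate treatment.

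The only real subtlety, hence the step I would treat most carefully, is pinning down the conventions so that they agree with Figure~\ref{shifteddiag}: which corner of a column is compared against the diagonal, whether ``ends above'' is meant strictly, and how the floor $\lfloor\tfrac{\delta}{2}\rfloor$ in the drawn diagonal reconciles with the genuine half-integer $\tfrac{\delta}{2}$ appearing in $\cS(\la)$. Once these are fixed as above (bottom-right corner, strict ``above'', and the line $c-r = m-n$), the remaining argument is the single elementary inequality already carried out, with no case distinction on the shape of $\la$ needed.
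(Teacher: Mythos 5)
Your proof is correct and follows essentially the same route as the paper, whose entire argument is the one-line unwinding $\cS(\la)_i>0 \iff \la_i^t < -\tfrac{\delta}{2}+i$ with the geometric translation left implicit. Your additional observations (that $\cS(\la)_i$ is a nonzero half-integer since $\delta$ is odd here, and the reconciliation of $\lfloor\tfrac{\delta}{2}\rfloor$ with $\tfrac{\delta}{2}$) just make explicit what the paper takes for granted.
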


\begin{proof}
$\cS(\la)_i>0$ is equivalent to $-\frac{\delta}{2}+i-\la_i^t>0$, hence $\la_i^t<-\frac{\delta}{2}+i$.
\end{proof}

Formally we can define $\underline{w(\la)}$ as in Definition~\ref{decoratedcups}; the resulting diagram has however infinitely many dotted cups.

Note that $w(\la)$ only depends on $\la$, but not on $m,n$. Therefore we introduce a dependence on $m,n$:

\begin{definition}
\label{fakecups}
Given an $(n,m)$-hook partition $\la$ with infinite cup diagram $\underline{w(\la)}$, a cup $C$ is a \emph{ fake cup} if $C$ is dotted and there are at least $\op{d}$-degree dotted cups to the left of $C$. The vertices attached to fake cups are called \emph{frozen} vertices in $w(\la)$.
\end{definition}

For instance, the empty partition corresponds to the zero weight and then to the infinite weight diagram $w(\varnothing)$ of the following form with $\op{d}$-degree equal to $\op{min}\{m,n\}$ and infinitely many frozen vertices indicated by $\owedge$.

\begin{eqnarray}
\label{empty}
&&
\begin{cases}
\begin{picture}(260,12)
\put(30,14){$_{\frac{1}{2}}$}
\put(100.5,14){$_{-\frac{\delta}{2}+1}$}
\put(105,-5){$\underbrace{\phantom{hellow worleeeuuuuu uuuu}}_{2m}$}
\put(8,-.3){}
\put(227,-.3){}
\put(30,-5){$\underbrace{\phantom{hellow worleeee}}_{n-m}$}
\put(25,2.3){\line(1,0){210}}
\put(30,-.4){$\circ$}
\put(50,-.4){$\circ$}
\put(70,-.4){$\circ$}
\put(90,-.4){$\circ$}
\put(130,-3.4){$\up$}
\put(110,-3.4){$\up$}
\put(150,-3.4){$\up$}
\put(170,-3.4){$\up$}
\put(190,-3.4){$\up$}
\put(210,-3.4){$\up$}
\put(230,-0.4){$\owedge\quad\cdots\quad$}
\end{picture}&\text{if $\delta<0$;}\\
\\
\\
\begin{picture}(260,12)
\put(30,14){$_{\frac{1}{2}}$}
\put(110.5,14){$_{\frac{\delta}{2}}$}
\put(105,-5){$\underbrace{\phantom{hellow worleeeuuuuuuuuu}}_{2n}$}
\put(8,-.3){}
\put(227,-.3){}
\put(30,-5){$\underbrace{\phantom{hellow worleeee}}_{m-n}$}
\put(25,2.3){\line(1,0){210}}
\put(30,-.4){$\times$}
\put(50,-.4){$\times$}
\put(70,-.4){$\times$}
\put(90,-.4){$\times$}
\put(130,-3){$\up$}
\put(110,-3){$\up$}
\put(150,-3){$\up$}
\put(170,-3){$\up$}
\put(190,-3){$\up$}
\put(210,-3){$\up$}
\put(230,-.4){$\owedge\quad\cdots\quad$}
\end{picture}&\text{if $\delta>0$;}
\end{cases}
\nonumber\\
\nonumber\\
\end{eqnarray}
Obviously fake cups are never nested insider another cup.

\begin{definition}
\label{superweight}
Given a hook partition $\la$ or the corresponding dominant weight $\Psi_{2m+1,2n}(\la)\in X^+(\mg)$, we define the \emph {super weight diagram $\la^\circ$} as the one obtained from $w(\la)$ by replacing the frozen labels by $\down$'s if the $\op{d}$-degree is even and  replacing all of them except of the leftmost (which is kept as an $\up$) by $\down$'s if the $\op{d}$-degree is odd.
\end{definition}

Again, we have formally a cup diagram $\underline {\la^\circ}$ which has an infinite number of undotted rays,  $\op{d}$-degree dotted cups and no dotted ray if the  $\op{d}$-degree is even and exactly one dotted ray if it is odd. Observe that this diagram coincides with $\underline{w(\la)}$, except that each fake cup is replaced by two vertical rays (possibly the leftmost one dotted). Figure~\ref{huge} shows an example how the cup diagrams $\un\la^\circ$ change if we add boxes to the partitions.

\begin{definition}
A \emph{ super weight diagram} is a labelling $\la$ of the vertices such that vertex $i$ has an $\down$ for $i\gg 0$ and the associated infinite cup diagram $\underline{\la}$ via the rules from Definition~\ref{decoratedcups} has an even number of dots.
Two super weight diagrams are \emph{ super-linked} if the positions of their $\circ$'s and $\times$ agree and the total number of cups in their cup diagrams are the same.
\end{definition}

\begin{figure}
\label{huge}
\usetikzlibrary{arrows}
\begin{tikzpicture}[thick,>=angle 60,circ/.style={insert path={circle[radius=3pt]}},scale=.6]
\draw (0,.2) .. controls +(0,-1) and +(0,-1) .. +(1,0);\fill (.5,-.55) circle(2.5pt);
\draw (2,.2) .. controls +(0,-1) and +(0,-1) .. +(1,0);\fill (2.5,-.55) circle(2.5pt);
\draw (4,.2) .. controls +(0,-1) and +(0,-1) .. +(1,0);\fill (4.5,-.55) circle(2.5pt);
\draw (6,-.6) -- +(0,1);\fill (6,-.25) circle(2.5pt);\draw (7,-.6) -- +(0,1);
\draw[thin] (-3,0) -- +(2.5,0);
\node at (-3.85,.2) {$\emptyset$};

\draw[thin] (-3,1.3) -- +(2.5,0);
\draw (-4,1.3) rectangle +(.25,.25);
\draw [>-<] (0,0) -- (0,1.5);
\fill (.02,.75) circle(2.5pt);
\foreach \x in {1,2,3,4,5} \draw [>->] (\x,0) -- +(0,1.5);
\draw [>-] (6,0) -- +(0,1.5);\fill (6,.75) circle(2.5pt);\draw [<-] (7,-.05) -- +(0,1.5);
\draw [red] (-.2,-.05) rectangle +(.4,.35);
\draw [red] (1.8,-.05) rectangle +(.4,.35);
\draw [red] (3.8,-.05) rectangle +(.4,.35);

\draw[thin] (-3,2.75) -- +(2.5,0);
\foreach \p in {0,.25} \draw (-4,2.75+\p) rectangle +(.25,.25);
\draw (0,1.3) .. controls +(0,1) and +(0,1) .. +(1,0);
\draw (0,2.85) [circ];\draw (.9,2.75) -- +(.2,.2) +(0,.2) -- +(.2,0);
\foreach \x in {2,3,4,5} \draw [->] (\x,1.5) -- +(0,1.5);
\draw [<-] (6,1.15) -- +(0,1.6);\draw [<-] (7,1.15) -- +(0,1.6);
\draw [red] (1.8,1.15) rectangle +(.4,.35);\draw [red] (3.8,1.15) rectangle +(.4,.35);

\draw[thin] (-3,4.2) -- +(2.5,0);
\foreach \p in {0,.25,.5} \draw (-4,4.2+\p) rectangle +(.25,.25);
\draw (0,4.35) [circ];\draw (1.9,4.25) -- +(.2,.2) +(0,.2) -- +(.2,0);
\draw [->] (2,3) to [out=90,in=-90] +(-1,1.5);
\foreach \x in {3,4,5} \draw [->] (\x,3) -- +(0,1.5);
\draw [<-] (6,2.65) -- +(0,1.6);\draw [<-] (7,2.65) -- +(0,1.6);
\draw [red] (1.8,2.65) rectangle +(.4,.35);\draw [red] (3.8,2.65) rectangle +(.4,.35);

\draw[thin] (-3,5.7) -- +(2.5,0);
\foreach \p in {0,.25,.5} \draw (-4,5.7+\p) rectangle +(.25,.25);
\draw (-3.75,6.2) rectangle +(.25,.25);
\draw [->] (1,4.5) to [out=90,in=-90] +(-1,1.5);
\draw (1,5.85) [circ];\draw (1.9,5.75) -- +(.2,.2) +(0,.2) -- +(.2,0);
\foreach \x in {3,4,5} \draw [->] (\x,4.5) -- +(0,1.5);
\draw [<-] (6,4.15) -- +(0,1.6);\draw [<-] (7,4.15) -- +(0,1.6);
\draw [red] (.8,4.15) rectangle +(.4,.35);\draw [red] (3.8,4.15) rectangle +(.4,.35);

\draw[thin] (-3,7.2) -- +(2.5,0);
\foreach \p in {0,.25,.5} \draw (-4,7.2+\p) rectangle +(.25,.25);
\draw (-3.75,7.7) rectangle +(.25,.25);
\draw (-3.5,7.7) rectangle +(.25,.25);
\draw [->] (3,6) to [out=90,in=-90] +(-2,1.5);
\draw [>->] (2,7.5) .. controls +(0,-.8) and +(0,-.8) .. +(1,0);
\foreach \x in {0,4,5} \draw [->] (\x,6) -- +(0,1.5);
\draw [<-] (6,5.65) -- +(0,1.6);\draw [<-] (7,5.65) -- +(0,1.6);
\draw [red] (-.2,5.65) rectangle +(.4,.35);\draw [red] (3.8,5.65) rectangle +(.4,.35);

\draw[thin] (-3,8.7) -- +(2.5,0);
\foreach \p in {0,.25,.5} \draw (-4,8.7+\p) rectangle +(.25,.25);
\foreach \p in {.25,.5} \draw (-3.75,8.7+\p) rectangle +(.25,.25);
\draw (-3.5,9.2) rectangle +(.25,.25);
\draw [-<] (0,7.5) -- +(0,1.5);\fill (.02,8.15) circle(2.5pt);
\draw [-<] (2,7.3) -- +(0,1.7);
\foreach \x in {1,3,4,5} \draw [->] (\x,7.5) -- +(0,1.5);
\draw [<-] (6,7.15) -- +(0,1.8);\fill (6,8.1) circle(2.5pt);\draw [<-] (7,7.15) -- +(0,1.6);
\draw [red] (-.2,7.15) rectangle +(.4,.35);\draw [red] (3.8,7.15) rectangle +(.4,.35);

\draw[thin] (-3,10.2) -- +(2.5,0);
\foreach \p in {0,.25,.5} \draw (-4,10.2+\p) rectangle +(.25,.25);
\foreach \p in {.25,.5} \draw (-3.75,10.2+\p) rectangle +(.25,.25);
\draw (-3.5,10.7) rectangle +(.25,.25);
\draw (0,8.8) .. controls +(0,1) and +(0,1) .. +(1,0);
\draw (0,10.35) [circ];\draw (.9,10.25) -- +(.2,.2) +(0,.2) -- +(.2,0);
\draw [-<] (2,8.8) -- +(0,1.7);
\foreach \x in {3,4,5} \draw [->] (\x,9) -- +(0,1.5);
\draw [>-] (6,8.7) -- +(0,1.7);\draw [<-] (7,8.65) -- +(0,1.6);
\draw [red] (3.8,8.65) rectangle +(.4,.35);

\draw[thin] (-3,11.7) -- +(2.5,0);
\foreach \p in {0,.25,.5} \draw (-4,11.7+\p) rectangle +(.25,.25);
\foreach \p in {0,.25,.5} \draw (-3.75,11.7+\p) rectangle +(.25,.25);
\foreach \p in {.25,.5} \draw (-3.5,11.7+\p) rectangle +(.25,.25);
\draw [->] (4,10.5) to [out=90,in=-90] +(-4,1.5);
\draw [-<] (2,10.3) to [out=90,in=-90] +(-1,1.7);
\draw [->] (3,10.5) to [out=90,in=-90] +(1,1.5);
\draw [>->] (2,12) .. controls +(0,-.7) and +(0,-.7) .. +(1,0);
\draw [->] (5,10.5) -- +(0,1.5);
\draw [>-] (6,10.2) -- +(0,1.7);\draw [<-] (7,10.15) -- +(0,1.6);
\draw [red] (3.8,10.15) rectangle +(.4,.35);

\draw[thin] (-3,13.3) -- +(2.5,0);
\foreach \p in {0,.25,.5} \draw (-4,13.3+\p) rectangle +(.25,.25);
\foreach \p in {0,.25,.5} \draw (-3.75,13.3+\p) rectangle +(.25,.25);
\foreach \p in {0,.25,.5} \draw (-3.5,13.3+\p) rectangle +(.25,.25);
\draw [-<] (0,12) -- +(0,1.5);\fill (.02,12.6) circle(2.5pt);
\foreach \x in {1,2} \draw [-<] (\x,11.8) -- +(0,1.7);
\foreach \x in {3,4,5} \draw [->] (\x,12) -- +(0,1.5);
\draw [>-<] (6,11.7) -- +(0,1.75);\fill (6,12.6) circle(2.5pt);\draw [<-<] (7,11.65) -- +(0,1.8);
\draw [red] (-.2,11.65) rectangle +(.4,.35);
\end{tikzpicture}
\caption{Super cup diagrams $\un{\la^\circ}$ associated to hook partitions. The red boxes count the $\op{d}$-degree.}
\end{figure}
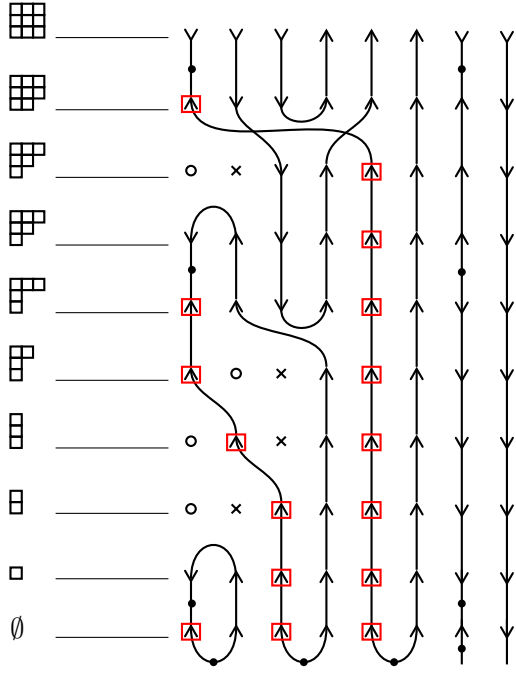

Note that for  a super weight diagram $\la$, the number $\op{def}(\la)$ of cups (dotted or undotted) in $\underline{\la}$ is always finite and called its {\it atypicality} or {\it defect}. Moreover, two super weights are super linked if they have the same atypicality and differ otherwise by a finite sequence of basic linkage moves.

Given a super weight diagram $\la$, an orientation of $\un\la$ is obtained by putting a super weight diagram $\nu$ on top to obtain $\un\la\nu$ such that the symbols $\circ$ and $\times $ appear in $\nu$ and $\la$ at the same places and every cup and ray is oriented in the sense of \eqref{oriented}. We call then $\la\supset\nu$ oriented and write as before  $\un\la\nu$.
Of course all definitions carry over to cap diagrams $\overline{\la}$ as well.

\begin{definition}
Given two super-weight diagrams $\la,\mu$ we say $\un\la\ov\mu\in\mathbb{I}$ if it contains at least one line which is not propagating.
\end{definition}
To avoid too much notation we will from now on denote both,  a hook partition $\la$ and its dominant weight $\Psi_{2m+1,2n}(\la)\in X^+(\mg)$, just by $\la$.

\begin{theorem}
\label{mainprop}
Consider $\mathfrak{g}=\mathfrak{sosp}(2m+1|2n)$ for fixed $m,n$.
\begin{enumerate}
\item The Cartan matrix of $\cF$ is symmetric, ie. for any $\la,\mu\in X^+(\mg)$
\begin{eqnarray*}
[P(\la)\::\:L(\mu)]=[P(\mu)\::\:L(\la)],
\end{eqnarray*}
and therefore
    $\op{dim}\op{Hom}_\cF(P(\la),P(\mu))
    =\op{dim}\op{Hom}_\cF(P(\mu),P(\la))$.
\item Moreover, there are isomorphisms of vector spaces
\begin{equation}
\op{Hom}_\cF(P(\la),P(\mu))\cong
\begin{cases}
\mathbb{B}(\la,\mu)&\text{if $\la$ is super-linked to $\mu$;} \\
0&\text{otherwise.}
\end{cases}
\end{equation}
where $\mathbb{B}(\la,\mu)$ is the vector space on basis
\begin{eqnarray}
\left\{\underline{\la^\circ}\nu\overline{\mu^\circ}\mid
\mu\supset\nu\subset\la\text{ and }\underline{\la^\circ}\overline{\mu^\circ}\not\in \mathbb{I}\}
 \right\}.
\end{eqnarray}
\end{enumerate}
\end{theorem}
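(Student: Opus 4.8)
The plan is to reduce both assertions to the single computation of the composition multiplicities $[P(\mu):L(\la)]$. Since $\cF$ has finite length and $P(\mu)$ is a projective cover of $L(\mu)$, exactness of $\Hom_\cF(P(\mu),-)$ gives $\dim\Hom_\cF(P(\la),P(\mu)) = [P(\mu):L(\la)]$, so everything comes down to counting composition factors. Observe that the target space $\mathbb{B}(\la,\mu)$ is \emph{manifestly} symmetric: reflecting an oriented circle diagram $\un{\la^\circ}\nu\ov{\mu^\circ}$ in the horizontal axis (the operation $*$ of Lemma~\ref{antiaut}) sends it to $\un{\mu^\circ}\nu\ov{\la^\circ}$, it again is oriented, and the condition of having no non-propagating line is preserved; hence $\dim\mathbb{B}(\la,\mu)=\dim\mathbb{B}(\mu,\la)$. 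Consequently, once part (2) is established, the Cartan symmetry of part (1) follows from $[P(\mu):L(\la)]=\dim\mathbb{B}(\la,\mu)=\dim\mathbb{B}(\mu,\la)=[P(\la):L(\mu)]$, and the stated $\Hom$-equality is then automatic. (Independently, part (1) admits an a priori proof from the simple-preserving duality on $\cF$ together with the projective-injectivity of the $P(\la)$ from \cite[Proposition 2.2.2]{BKN}.) Thus the real content is part (2).

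For part (2) I would first treat the vanishing alternative by identifying super-linkage with the block decomposition of $\cF$. The blocks of $\cF(\mathfrak{sosp}(2m+1|2n))$ are determined by the central character, equivalently by the positions of the $\circ$'s and $\times$'s and the degree of atypicality of a weight. Under the dictionary $\la\mapsto\la^\circ$ of \eqref{dict} and Definition~\ref{superweight}, built on Lemma~\ref{abovebelow} and the bijection $\Psi_{2m+1,2n}$ of Lemma~\ref{AB}, these invariants become exactly the fixed $\circ/\times$-positions and the cup-number $\op{def}(\la^\circ)$. Therefore $\la$ and $\mu$ lie in the same block if and only if they are super-linked, and $\Hom_\cF(P(\la),P(\mu))=0$ otherwise, matching the second case of the formula.

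Within a fixed block I would then compute $[P(\mu):L(\la)]$ explicitly and compare it with the number of admissible orientations. The composition multiplicities of $\cF(\mathfrak{sosp}(2m+1|2n))$ are governed by the combinatorics of Gruson--Serganova \cite{GS1},\cite{GS2}; the plan is to rewrite their multiplicity rule in terms of our super weight and super cup diagrams. One expects a reciprocity of the form $[P(\mu):L(\la)] = \sum_{\nu} m_{\mu,\nu}\,m_{\la,\nu}$, where $\nu$ runs over the block and $m_{\la,\nu}\in\{0,1\}$ records whether $\un{\la^\circ}\nu$ is orientable; after translation the $m$'s are the type $D$ parabolic Kazhdan--Lusztig data of Lemma~\ref{lem:KLpolys} (via \cite{LS}) specialised at $q=1$, so that the factorisation parallels the finite-rank identity $C=M^tM$. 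The sum then counts precisely the weights $\nu$ for which $\un{\la^\circ}\nu\ov{\mu^\circ}$ is an oriented circle diagram. Finally, the finiteness of the hook-partition label set forces exactly those orientations $\nu$ occupying a frozen vertex (Definition~\ref{fakecups}) to be absent: these are exactly the diagrams $\un{\la^\circ}\nu\ov{\mu^\circ}$ whose underlying $\un{\la^\circ}\ov{\mu^\circ}$ carries a non-propagating line, i.e.\ those lying in $\mathbb{I}$. Removing them cuts the count down to the basis of $\mathbb{B}(\la,\mu)$, and the equality of dimensions yields the asserted vector space isomorphism.

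The hard part will be the last step: faithfully translating the Gruson--Serganova multiplicity rule into the super cup/cap calculus and, above all, proving that the diagrams discarded by $\mathbb{I}$ coincide exactly with the non-contributing weights $\nu$. This demands careful bookkeeping of the $\op{d}$-degree parity, which controls whether the leftmost fake cup survives as a dotted ray (Definition~\ref{superweight}), and of the interaction between frozen vertices and the orientability conditions on $\un{\la^\circ}\nu\ov{\mu^\circ}$. A secondary difficulty is that $\cF$ is \emph{not} a highest weight category in the usual sense, so the ``standard'' objects and the reciprocity invoked above cannot be imported from a cellular structure but must be justified through the Gruson--Serganova filtrations; ensuring that the resulting count is positive and independent of these auxiliary choices is where the argument does its real work.
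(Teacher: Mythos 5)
Your overall strategy is the paper's own: identify super-linkage with the block decomposition (this is Lemma~\ref{superlink}), then compute $\dim\op{Hom}_\cF(P(\la),P(\mu))$ inside a block from the Gruson--Serganova combinatorics and match it with the count of orientations of $\underline{\la^\circ}\,\overline{\mu^\circ}$. Your derivation of part (1) from part (2) via the reflection symmetry $\mathbb{B}(\la,\mu)\cong\mathbb{B}(\mu,\la)$ is a legitimate variant of the paper's argument, which instead quotes BGG reciprocity \cite[Theorem 1]{GS2} together with $\op{End}(L(\la))\cong\mC$ (and notes the duality/projective-injectivity alternative you also mention); since part (2) is proved independently of part (1), the reversal is harmless.

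There is, however, a genuine gap at the heart of part (2). You posit a reciprocity $[P(\mu):L(\la)]=\sum_\nu m_{\mu,\nu}m_{\la,\nu}$ with \emph{unsigned} coefficients $m_{\la,\nu}\in\{0,1\}$ recording orientability. But what Gruson--Serganova actually provide is $\dim\op{Hom}_\cF(P(\la),P(\mu))=\sum_\nu a(\la,\nu)a(\mu,\nu)$ with \emph{signed} coefficients $a(\la,\nu)\in\{0,1,-1\}$, the signs being $(-1)^{x+y+z}$ in terms of coloured cups and the indicator at the vertex $\tfrac12$. The entire mathematical content of the paper's Proposition~\ref{cancel} -- which is what Theorem~\ref{mainprop}(2) is reduced to -- is the sign analysis: pairing each $\nu$ with the weight $\nu'$ obtained by swapping $\times$ and $\circ$ along one component shows that the contributions cancel in pairs precisely when some closed component of $\underline{\la^\circ}\,\overline{\mu^\circ}$ carries an odd number of dots, while a kink-removal induction shows that when every component has an even number of dots, every nonzero product $a(\la,\nu)a(\mu,\nu)$ equals $+1$, so the signed sum equals the unsigned orientation count $2^c$. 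Your proposal assumes this cancellation theorem as its starting point rather than proving it, so the step ``the equality of dimensions yields the asserted isomorphism'' is unsupported. A secondary inaccuracy: the diagrams excluded by $\mathbb{I}$ are not ``orientations occupying a frozen vertex''; $\mathbb{I}$ is defined by the presence of a non-propagating line, and in the GS picture such a line forces $a(\la,\nu)=0$ for every $\nu$ because it must have $\circ$'s at both endpoints while containing an odd total number of cups and caps -- frozen vertices enter only in the construction of $\la^\circ$ itself, not in the discard rule.
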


\begin{proof}
The first part is by BGG-reciprocity, \cite[Theorem 1]{GS2} and the fact that $\op{End}(L(\la))\cong\mC$, since $L(\la)$ is a highest weight module. Alternatively one could use that $P(\la)\cong I(\la)$ and apply duality. The second part follows directly from Proposition~ \ref{cancel} below.
\end{proof}

\subsection{A positive version of the Gruson-Serganova combinatorics}
To prove Proposition \ref{mainprop} we have to connect the diagram calculus developed in \cite{GS2} to our calculus. For later reference we give an explicit dictionary, although we could prove the result more directly. The GS-weight diagram $\op{GS}(\la)$ associated with $\la\in X^+(\mg)$ or its $(n,m)$-hook partition is
a certain labelling of the positive number line (again indexed by $\frac{1}{2}+\mZ_{\geq0}$) with the symbols $<,>,\times ,\circ,\color{red}{\otimes}$ with almost all vertices labelled $\circ$ and an extra indicator $\mathbf{[+]}$ or $\mathbf{[-]}$ at vertex $\frac{1}{2}$ if there are only $\times$ at this vertex. It is obtained as the image of a composite of two maps
\begin{eqnarray}
\op{GS}:\quad X^+(\mg)&\longrightarrow& \left\{(m,n)-\text{diagrams with tail} \right\}\nonumber\\
&\longrightarrow &\left\{\text{coloured } (m,n)-\text{diagrams without tail} \right\}\label{GSmap}
\end{eqnarray}
We refer to the proof of Lemma~\ref{GSES} and \cite{GS2} for details.
\begin{lemma}
\label{GSES}
With the assignment $\op{T}$ from \eqref{T} we have $$\op{T}(\op{GS}(\la))=w(\la).$$ The associated cup diagram in the sense of \cite{GS2} agrees with $\underline{w(\la)}$ when forgetting the decoration and fake cups and with $\underline{\la^\circ}$ when forgetting the decoration and all rays.
\begin{eqnarray}
\label{T}
\begin{array}{c||c|c|c|c|c|c|c}
\op{GS}-\text{label }$s$&<&>&\times&\circ&\color{red}{\otimes}&
\text{at ${\frac{1}{2}}$: }{[-]\color{red}{\otimes}}&
\text{at ${\frac{1}{2}}$: }{[+]\color{red}{\otimes}}\\
\hline
$T(s)$&\circ&\times&\down&\up&\up&\up&\down
\end{array}
\end{eqnarray}
The cups attached to $\color{red}{\otimes}$'s except of the cups attached to the vertex $\frac{1}{2}$ with the sign $[+]$
correspond precisely to the dotted non-fake cups in $\underline{w(\la)}$.
\end{lemma}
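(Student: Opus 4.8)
The plan is to make both recipes fully explicit in the Frobenius-type coordinates of $\la+\rho$ and then match them symbol by symbol on the positive half-integer line. Writing $\la+\rho=\sum_{j=1}^m a_j\eps_j+\sum_{i=1}^n b_i\delta_i$ as in \eqref{laab}, the first observation is that the sequence $\cS(\la)$ of \eqref{Sla} is exactly the sequence of signed column-coordinates: since $\delta=2m-2n+1$ one computes directly $\cS(\la)_c=-\bigl(\la_c^t-n+m-c+\tfrac12\bigr)$, so that $-\cS(\la)_c$ is precisely the untruncated value defining the half-integer coordinate $a_c$ in the $\op{wt}'$-dictionary preceding Lemma~\ref{AB}. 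By Lemma~\ref{abovebelow} this identifies the negative entries of $\cS(\la)$ with the positive $\eps$-coordinates, and, via the standard particle--hole (Maya diagram) duality between the gaps of $\{c-\la_c^t\}$ and the rows of $\la$, identifies the positive half-integers \emph{missing} from $\cS(\la)$ with the $\delta$-coordinates. Thus I would first establish the two equivalences
\begin{align*}
-i\in\cS(\la)\ &\Longleftrightarrow\ i\text{ is an $\eps$-value of }\la+\rho,\\
+i\notin\cS(\la)\ &\Longleftrightarrow\ i\text{ is a $\delta$-value of }\la+\rho,
\end{align*}
valid away from the frozen region (a charge count shows the discrepancy equals $n-m$, which is exactly absorbed by the diagonal cluster measured by the $\op{d}$-degree).

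Next I would recall the Gruson--Serganova recipe \eqref{GSmap} from \cite{GS2}: the first map records at a position $i$ whether it carries an $\eps$-value only ($>$), a $\delta$-value only ($<$), both ($\times$), or neither ($\circ$), together with the tail encoding the coincident coordinates; the second map resolves the tail into the symbols $\otimes$ and attaches the indicator $[\pm]$ at vertex $\tfrac12$. Combining this with the two equivalences above forces the translation table \eqref{T}: a $>$ means $\eps$ only, hence $-i\in\cS(\la)$ and (no $\delta$, so) $+i\in\cS(\la)$, giving $\times$ under \eqref{dict}; a $<$ means $\delta$ only, hence $\pm i\notin\cS(\la)$, giving $\circ$; a $\times$ means $-i\in\cS(\la)$, $+i\notin\cS(\la)$, giving $\down$; and a $\circ$ means $+i\in\cS(\la)$, $-i\notin\cS(\la)$, giving $\up$. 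This proves $\op{T}(\op{GS}(\la))=w(\la)$ on the unfrozen part.

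It then remains to treat the tail, where the $\otimes$ symbols, the sign $[\pm]$ and the fake cups all live; this is the \textbf{main obstacle}. Here I would argue that the coincident coordinates counted by the $\op{d}$-degree correspond exactly to the vertices lying beyond the first $\op{d}$-degree dotted cups that Definition~\ref{fakecups} declares frozen, so that the resolved tail symbols $\otimes$ sit precisely at the frozen vertices of $w(\la)$. Since those vertices are joined in $\underline{w(\la)}$ by dotted cups connecting pairs of $\up$'s, the generic rule $\otimes\mapsto\up$ reproduces $w(\la)$ there, while the single parity ambiguity at vertex $\tfrac12$ (where a cluster of $\times$'s forces a choice) is exactly recorded by the indicator, matching $[-]\otimes\mapsto\up$ and $[+]\otimes\mapsto\down$. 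The delicate bookkeeping is to check that the total number of dots produced is even and that the $[\pm]$ sign agrees with the parity of the $\op{d}$-degree encoded in $\la^\circ$ (Definition~\ref{superweight}); this is precisely the point where Gruson--Serganova's tailless/tail dichotomy has to be reconciled with our fake-versus-genuine distinction.

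Finally, for the statements about cup diagrams I would note that the Gruson--Serganova cup diagram and $\underline{w(\la)}$ are built from the same underlying $\{\up,\down,\circ,\times\}$-sequence by the identical nesting rules, so once the weights coincide the undecorated diagrams coincide after deleting the fake cups; deleting instead all rays yields $\underline{\la^\circ}$ directly, since $\la^\circ$ is obtained from $w(\la)$ by replacing each fake cup with two rays. The last assertion, that the $\otimes$-cups other than the one at $\tfrac12$ carrying the sign $[+]$ are exactly the dotted non-fake cups of $\underline{w(\la)}$, then follows by tracking which cups each procedure dots: the colouring of \cite{GS2} dots precisely the cups with an $\otimes$ endpoint, and by the previous paragraph these are the type-(C3) dotted cups of $\underline{w(\la)}$ that are not frozen.
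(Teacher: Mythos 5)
Your strategy is genuinely different from the paper's: you want a direct, non-inductive match of the two dictionaries via Maya/Frobenius-coordinate complementarity (the entries of $\cS(\la)$ versus the $a_j$, and the missing positive half-integers versus the $b_i$), whereas the paper proceeds by induction on the number of boxes of the hook partition, comparing \eqref{withtail} with \eqref{empty} for the empty partition and then checking explicit local configuration tables for a box added above, below, or on the diagonal. Your derivation of the four generic columns of \eqref{T} from the two complementarity statements is sound away from the vertex $\tfrac12$. But the step you yourself flag as the main obstacle is resolved incorrectly. You assert that ``the resolved tail symbols $\otimes$ sit precisely at the frozen vertices of $w(\la)$.'' This is backwards. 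By the resolution rule recalled in the paper, the $\otimes$'s are placed at the $1$st, $3$rd, \dots, $(2k-1)$-st \emph{free} vertices, where $k$ is the $\op{d}$-degree, and the coloured cups join free vertex $2j-1$ to free vertex $2j$ for $j\le k$; under (C1)--(C3) of Definition~\ref{decoratedcups} these are exactly the first $k$ dotted cups of $\underline{w(\la)}$, i.e.\ precisely the \emph{non-fake} ones. The frozen vertices of Definition~\ref{fakecups} are the free vertices strictly beyond the $2k$-th: in $\op{GS}(\la)$ they keep the plain label $\circ$ and lie on no cup at all. So no $\otimes$ ever occupies a frozen vertex. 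This is not a harmless slip of terminology: the final assertion of the lemma identifies the $\otimes$-cups with the dotted \emph{non-fake} cups, and your tail paragraph forces the opposite identification --- indeed your last paragraph silently contradicts your third one, which shows the obstacle has not actually been overcome.

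Beyond this, the part that carries the real content of the lemma is only announced, not proved. Your two equivalences fail exactly at the vertex $\tfrac12$ (because of the truncated values $a_j=-\tfrac12$ and $b_i=\tfrac12$), not ``away from the frozen region''; so what remains is to show that the indicator is $[+]$ precisely when $w(\la)$ carries the label $\down$ at $\tfrac12$ and $[-]$ precisely when it carries $\up$, together with the compatibility of this with the $\op{d}$-degree parity entering Definition~\ref{superweight}. You defer this to ``delicate bookkeeping \dots to be reconciled,'' which is precisely where a proof is needed: in the paper this is the third case of the induction, where adding a box on the diagonal flips $[-]$ to $[+]$, removes one dot on the cup at vertex $\tfrac12$, and decreases the $\op{d}$-degree by one, so that the frozen set is unchanged. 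If you want to keep the non-inductive route, you must (i) correct the placement of the $\otimes$'s as above, so that the coloured cups are matched with the first $k$ dotted cups and the fake cups are matched with the cupless $\circ$-tail of $\op{GS}(\la)$, and (ii) actually establish the vertex-$\tfrac12$ dichotomy just described; without (i) and (ii) both the identity $\op{T}(\op{GS}(\la))=w(\la)$ at $\tfrac12$ and the cup-diagram statements remain unproved.
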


\begin{proof}
We start by recalling the construction of $\op{GS}$. The first map in \eqref{GSmap} takes a weight $\la\in X^+(\mg)$ writes $\la+\rho$ in the form \eqref{laab} and puts $\alpha_p$ symbols $>$ at the $p$th vertex, where $\alpha_p=|\{1\leq j\leq m\mid a_j=\mp p\}|$ and $\beta_p$ symbols $<$ at the $p$th vertex, where $\beta_p=|\{1\leq i\leq n\mid b_i=\mp p\}|$ and a symbol $\circ$ if $\alpha_p=\beta_p=0$. We use the abbreviation $\times$ for a pair $>$ and $<$ at a common vertex. Then the dominance condition is equivalent to the statement that there is at most one symbol, $>$, $<$, $\times$ or $\circ$ at each vertex $p>\frac{1}{2}$ and at $\frac{1}{2}$ at most one $<$ or $>$, but possibly many $\times$. If there are only $\times$'s at $\frac{1}{2}$ we have to put an indicator which is $\mathbf{[+]}$ if $a_j=\frac{1}{2}$ for some $j$ and $\mathbf{[-]}$ otherwise. For instance, the diagram for the weight $\rho$ are the following for $n<m$, $m=n$, $m>n$ respectively.
\begin{eqnarray}
\label{withtail}
m\left\{
\begin{picture}(100,15)(0,5)
\put(0,-9){$<<\cdots<<\circ\circ\circ\circ\cdots$}
\put(0,-11){$\underbrace{\phantom{hellokikiki}}_{n-m}$}
\put(0,-3){$\times$}
\put(0,3){$\times$}
\put(0,9){$\times$}
\put(0,15){$\times$}
\end{picture}\right.
n\left\{
\begin{picture}(80,15)(0,5)
\put(0,-9){$\text{\tiny[-]}\circ\circ\circ\circ\cdots$}
\put(0,-3){$\times$}
\put(0,3){$\times$}
\put(0,9){$\times$}
\put(0,15){$\times$}
\end{picture}\right.
n\left\{
\begin{picture}(100,15)(0,5)
\put(0,-9){$>>\cdots>>\circ\circ\circ\circ\cdots$}
\put(0,-11){$\underbrace{\phantom{hellokikiki}}_{m-n}$}
\put(0,-3){$\times$}
\put(0,3){$\times$}
\put(0,9){$\times$}
\put(0,15){$\times$}
\end{picture}\right.
\nonumber\\
\nonumber\\
\end{eqnarray}

Note that the $\op{d}$-degree is always the number of $\times$ at $\frac{1}{2}$. Ignoring the position $\frac{1}{2}$ there is an associated cup diagram obtained as follows: first ignore the symbols $<$ and $>$ and connect neighboured pairs $\times$ and $\circ$ successively by a cup. Then number the vertices not connected to a cup and not containing $<$ or $>$ from the left by $1,2$ etc and relabel those with number $1,3,5, \ldots, 2k-1$ etc. by $\color{red}{\otimes}$ where $k=\op{d}$-degree of $\la$. (The symbol $\color{red}{\otimes}$ should indicate that at least apart from the special case $\frac{1}{2}$ a $\times$ was actually moved and placed on top of a $\circ$). The resulting labelling is $\op{GS}(\la)$. Finally connect neighboured pairs $\color{red}{\otimes}$ and $\circ$ successively by a cup.
In \cite{GS2} these new labels $\color{red}{\otimes}$ are called {\it coloured} and we call the attached cups {\it coloured}; note they are never nested inside other cups. For instance, the weights from \eqref{withtail} are transferred into $m$, respectively $n$ in the last case, coloured cups placed next to each other starting at position $-\frac{\delta}{2}+1$ and $\frac{\delta}{2}$ respectively. On the other hand, our diagrammatics assigns to the empty partition the weight diagrams \eqref{empty} and then gets transferred into $n$, respectively $m$ in the last case, dotted cups placed next to each other starting at position $0,\frac{\delta}{2}$, and  $-\frac{\delta}{2}+1$  respectively. Hence the claim of the lemma is true for the weight corresponding to the empty partition via Lemma \ref{AB} and so we proceed by induction on the number of boxes in the hook partition.

Assume that the partition for $\la+\rho$ is obtained from the one for $\mu+\rho$ by adding an extra box and the statements are true for $\mu$. Then we distinguish the three cases
\subsubsection*{The additional box is added above the diagonal} say in row $i$ and column $j$. Then $b_i>\frac{1}{2}$ gets increased by $1$ and all other $a$'s and $b$'s are preserved. That means (ignoring the colourings) a symbol $<$ is moved to the right from position $b_i$ to $b_i+1$ with $b_i=-\frac{\delta}{2}+1+\la_i-i$. The only possibilities for configurations at $b_i$ and $b_{i+1}$ are the following (where the translation into our diagrammatics is displayed in the second row and the symbols in brackets are added to indicate the shape of the cup diagram).
\begin{eqnarray*}
 \usetikzlibrary{arrows}
\def\down{\vee}
\def\up{\wedge}
\begin{tikzpicture}[scale=.5]
\draw (-2,2.3) -- +(24.5,0);
\draw (-2,-1.3) -- +(24.5,0);
\foreach \x in {-1,1.2,4.7,7.1,10.8,14,18.2,20.4} {\draw (\x,-1.3) -- (\x,2.3);}

\node at (-1.5,-.15) {$\mu$};
\node at (-1.5,1.6) {$\lambda$};

\node at (0,0) {$< \circ$};
\draw (.45,.3) to [out=90,in=-90] +(-.85,.9);
\node at (0,1.5) {$\circ <$};

\node at (3,0) {$< > (\circ)$};
\draw (3.75,.3) to [out=90,in=-90] +(-1.8,.9);
\draw (2.7,1.25) .. controls +(0,-.5) and +(0,-.5) .. +(1,0);
\node at (3,1.5) {$\circ \times (\circ)$};

\draw (5.7,-.25) .. controls +(0,-.5) and +(0,-.5) .. +(.7,0);
\node at (6,0) {$\times \circ$};
\draw (5.7,.25) .. controls +(0,.5) and +(0,.5) .. +(.7,0);
\node at (6,1.5) {$> <$};

\draw (7.9,-.25) .. controls +(0,-.5) and +(0,-.5) .. +(2,0);
\node at (9,0) {$\times > (\circ)$};
\draw (7.9,.3) to [out=90,in=-90] +(1,.9);
\draw (9.85,.3) -- +(-.1,.9);
\node at (9,1.5) {$> \times (\circ)$};

\node at (12.5,0) {$< {\color{red} \otimes} \circ$};
\draw (12.75,.3) to [out=90,in=-90] +(-1,.9);
\draw (12.6,-.3) .. controls +(0,-.5) and +(0,-.5) .. +(.7,0);
\draw (13.25,.3) -- +(.1,.9);
\node at (12.5,1.5) {${\color{red} \otimes} < \circ$};

\node at (16.3,0) {$< > ({\color{red} \otimes} \circ)$};
\draw (16.75,-.3) .. controls +(0,-.5) and +(0,-.5) .. +(.6,0);
\draw (17.35,.3) -- +(-.05,.9);
\draw (16.8,.35) to [out=90,in=-90] +(-2.1,.8);
\node at (16.1,1.5) {${\color{red} \otimes} \times (\circ \; \circ)$};
\draw (15.55,1.25) .. controls +(0,-.5) and +(0,-.5) .. +(1.05,0);

\draw (19.25,-.25) .. controls +(0,-.5) and +(0,-.5) .. +(.6,0);
\node at (19.2,-.8) {\tiny $\frac{1}{2}$};
\node at (18.7,0) {\tiny{[+]}};
\node at (19.5,0) {${\color{red}\otimes} \circ$};
\draw (19.25,.25) .. controls +(0,.5) and +(0,.5) .. +(.6,0);
\node at (19.5,1.5) {$> <$};

\node at (21.5,0) {$\times {\color{red} \otimes}$};
\node at (21.6,1.5) {$> <$};
\node at (21.5,-1) {\tiny imposs.};

\begin{scope}[yshift=-5cm]
\draw (-2,2.3) -- +(22.4,0);
\draw (-2,-1.1) -- +(22.4,0);
\foreach \x in {-1,1.2,4.7,7.1,10.8,14,18.2} {\draw (\x,-1.1) -- (\x,2.3);}

\node at (-1.5,-.1) {$\mu$};
\node at (-1.5,1.6) {$\lambda$};

\node at (0,0) {$\circ \up$};
\draw (.2,.4) to [out=90,in=-90] +(-.45,.9);
\node at (0,1.5) {$\up \circ$};

\node at (3,0) {$\circ \times (\up)$};
\draw (3.7,.35) to [out=90,in=-90] +(-1.8,.9);
\draw (2.7,1.25) .. controls +(0,-.5) and +(0,-.5) .. +(1,0);
\node at (3,1.5) {$\up \down (\up)$};

\draw (5.7,-.35) .. controls +(0,-.5) and +(0,-.5) .. +(.6,0);
\node at (6,0) {$\down \up$};
\draw (5.7,.35) .. controls +(0,.5) and +(0,.5) .. +(.6,0);
\node at (6,1.5) {$\times \circ$};

\draw (8,-.3) .. controls +(0,-.5) and +(0,-.5) .. +(1.7,0);
\node at (9,0) {$\down \times (\up)$};
\draw (7.85,.3) to [out=90,in=-90] +(.85,.9);
\draw (9.75,.45) -- +(0,.8);
\node at (9,1.5) {$\times \down (\up)$};

\draw (12.4,-.3) .. controls +(0,-.5) and +(0,-.5) .. +(.75,0);
\fill (12.75,-.65) circle(2.5pt);
\node at (12.5, 0) {$\circ \up \up$};
\draw (12.4,.4) to [out=90,in=-90] +(-.6,.9);
\draw (13.16,.4) -- +(0,.8);
\node at (12.5,1.5) {$\up \circ \up$};

\fill (17.05,-.65) circle(2.5pt);
\draw (16.7,-.3) .. controls +(0,-.5) and +(0,-.5) .. +(.7,0);
\node at (16.3,0) {$\circ \times (\up \;\up)$};
\draw (16.6,.35) to [out=90,in=-90] +(-1.9,.8);
\draw (15.45,1.25) .. controls +(0,-.5) and +(0,-.5) .. +(1.1,0);
\draw (17.4,.4) -- +(-.05,.8);
\node at (16.2,1.5) {$\up \down (\up \; \up)$};

\draw (19,-.3) .. controls +(0,-.5) and +(0,-.5) .. +(.6,0);
\node at (18.9,-.7) {\tiny $\frac{1}{2}$};
\node at (19.3,0) {$\down \up$};
\draw (19,.35) .. controls +(0,.5) and +(0,.5) .. +(.6,0);
\node at (19.3,1.5) {$\times \circ$};

\end{scope}

\end{tikzpicture}
\end{eqnarray*}

Note that via the translation $T$ the $\up$ at position $b_{i+1}$ would be moved to the left in each case. On the other hand $w(\la)_j>0$ by Lemma \ref{abovebelow} and hence adding a box moves in our diagrammatics indeed an $\up$ to the left. Since $i=\la_j^t+1$ and $j=\la_i+1$ we have  $w(\la)_j=-\frac{\delta}{2}+j-\la_j^t
=-\frac{\delta}{2}+\la_i+1-(i-1)
=(-\frac{\delta}{2}+\la_i-i+1)+1=b_i+1>0$ it really involves $b_i$ and $b_i+1$. Moreover, adding the extra box the $\op{d}$-degree is preserved, hence in case any of the involved $\up$'s is frozen in $w(\mu)$, then the corresponding $\up$ in $w(\la)$ connected by a line in the diagram is also frozen. Hence the claim about the weights is clear by induction and the rest follows from the two constructions of the cup diagrams.

\subsubsection*{The additional box is added below the diagonal} In this case one $a_j$ is increased, all other $a$'s and $b$'s are preserved. Hence the $j$th symbol $>$ to moved to the right corresponds in our diagrammatics to a $\down$ moved to the right:
 \begin{eqnarray*}
\usetikzlibrary{arrows}
\def\down{\vee}
\def\up{\wedge}
\begin{tikzpicture}[scale=.5]
\draw (-2,2.3) -- +(24.5,0);
\draw (-2,-1.3) -- +(24.5,0);
\foreach \x in {-1,1.2,4.7,7.1,10.8,14,18.2,20.4} {\draw (\x,-1.3) -- (\x,2.3);}

\node at (-1.5,-.15) {$\mu$};
\node at (-1.5,1.6) {$\lambda$};

\node at (0,0) {$> \circ$};
\draw (.45,.3) to [out=90,in=-90] +(-.85,.9);
\node at (0,1.5) {$\circ >$};

\node at (3,0) {$> < (\circ)$};
\draw (3.75,.3) to [out=90,in=-90] +(-1.5,.9);
\draw (2.7,1.25) .. controls +(0,-.5) and +(0,-.5) .. +(1,0);
\node at (3,1.5) {$\circ \times (\circ)$};

\draw (5.7,-.25) .. controls +(0,-.5) and +(0,-.5) .. +(.7,0);
\node at (6,0) {$\times \circ$};
\draw (5.7,.25) .. controls +(0,.5) and +(0,.5) .. +(.7,0);
\node at (6,1.5) {$< >$};

\draw (7.9,-.25) .. controls +(0,-.5) and +(0,-.5) .. +(2,0);
\node at (9,0) {$\times < (\circ)$};
\draw (7.9,.3) to [out=90,in=-90] +(1,.9);
\draw (9.85,.3) -- +(-.1,.9);
\node at (9,1.5) {$< \times (\circ)$};

\node at (12.5,0) {$> {\color{red} \otimes} \circ$};
\draw (12.75,.3) to [out=90,in=-90] +(-1,.9);
\draw (12.6,-.3) .. controls +(0,-.5) and +(0,-.5) .. +(.7,0);
\draw (13.25,.3) -- +(.1,.9);
\node at (12.5,1.5) {${\color{red} \otimes} > \circ$};

\node at (16.3,0) {$> < ({\color{red} \otimes} \circ)$};
\draw (16.75,-.3) .. controls +(0,-.5) and +(0,-.5) .. +(.6,0);
\draw (17.35,.3) -- +(-.05,.9);
\draw (16.8,.35) to [out=90,in=-90] +(-2.1,.8);
\node at (16.1,1.5) {${\color{red} \otimes} \times (\circ \; \circ)$};
\draw (15.55,1.25) .. controls +(0,-.5) and +(0,-.5) .. +(1.05,0);

\draw (19.25,-.25) .. controls +(0,-.5) and +(0,-.5) .. +(.6,0);
\node at (19.2,-.8) {\tiny $\frac{1}{2}$};
\node at (18.7,0) {\tiny{[+]}};
\node at (19.5,0) {${\color{red} \otimes} \circ$};
\draw (19.25,.25) .. controls +(0,.5) and +(0,.5) .. +(.6,0);
\node at (19.5,1.5) {$< >$};

\node at (21.5,0) {$\times {\color{red} \otimes}$};
\node at (21.6,1.5) {$< >$};
\node at (21.5,-1) {\tiny imposs.};

\begin{scope}[yshift=-5cm]
\draw (-2,2.3) -- +(22.4,0);
\draw (-2,-1.1) -- +(22.4,0);
\foreach \x in {-1,1.2,4.7,7.1,10.8,14,18.2} {\draw (\x,-1.1) -- (\x,2.3);}

\node at (-1.5,-.1) {$\mu$};
\node at (-1.5,1.6) {$\lambda$};

\node at (0,0) {$\times \up$};
\draw (.35,.4) to [out=90,in=-90] +(-.7,.9);
\node at (0,1.5) {$\up \times$};

\node at (3,0) {$\times \circ (\up)$};
\draw (3.7,.35) to [out=90,in=-90] +(-1.8,.9);
\draw (2.7,1.25) .. controls +(0,-.5) and +(0,-.5) .. +(1,0);
\node at (3,1.5) {$\up \down (\up)$};

\draw (5.7,-.35) .. controls +(0,-.5) and +(0,-.5) .. +(.6,0);
\node at (6,0) {$\down \up$};
\draw (5.7,.35) .. controls +(0,.5) and +(0,.5) .. +(.6,0);
\node at (6,1.5) {$\circ \times$};

\draw (8,-.3) .. controls +(0,-.5) and +(0,-.5) .. +(1.7,0);
\node at (9,0) {$\down \circ (\up)$};
\draw (8,.3) to [out=90,in=-90] +(.7,.9);
\draw (9.65,.45) -- +(0,.8);
\node at (9,1.5) {$\circ \down (\up)$};

\fill (12.95,-.65) circle(2.5pt);
\node at (12.5, 0) {$\times \up \up$};
\draw (12.5,.4) to [out=90,in=-90] +(-.8,.9);
\draw (12.6,-.3) .. controls +(0,-.5) and +(0,-.5) .. +(.7,0);
\draw (13.26,.4) -- +(0,.8);
\node at (12.5,1.5) {$\up \times \up$};

\fill (17.05,-.65) circle(2.5pt);
\draw (16.7,-.3) .. controls +(0,-.5) and +(0,-.5) .. +(.7,0);
\node at (16.3,0) {$\times \circ (\up \;\up)$};
\draw (16.6,.35) to [out=90,in=-90] +(-1.9,.8);
\draw (15.45,1.25) .. controls +(0,-.5) and +(0,-.5) .. +(1.1,0);
\draw (17.4,.4) -- +(-.05,.8);
\node at (16.2,1.5) {$\up \down (\up \; \up)$};

\draw (19,-.3) .. controls +(0,-.5) and +(0,-.5) .. +(.6,0);
\node at (18.9,-.7) {\tiny $\frac{1}{2}$};
\node at (19.3,0) {$\down \up$};
\draw (19,.35) .. controls +(0,.5) and +(0,.5) .. +(.6,0);
\node at (19.3,1.5) {$\circ \times$};

\end{scope}

\end{tikzpicture}
    \end{eqnarray*}
    Here the $j$th $>$ is moved from position $a_j=\frac{\delta}{2}+\la_j^t-j$ to $a_{j}+1$, but $a_j=-w(\la)_j$ and the claim follows. Again, the $\op{d}$-dot is preserved and the statement follows by induction.

\subsubsection*{The additional box is added on the diagonal} Then we have just a switch from $[-]$ to $[+]$ not changing the cup diagram. In our diagrammatics it swaps $\up$ to $\down$, ie. removes a dot on the cup attached to vertex $\frac{1}{2}$. On the other hand, the $\op{d}$-degree is also decreased by $1$ and so the frozen vertices are preserved.

Since the cup diagrams agree, their leftmost label determines if they are coloured (in the sense of \cite{GS2} or dotted in our sense, hence the statement follows from the definition of \eqref{T}.
\end{proof}

\begin{lemma}
\label{superlink}
The super linkage defines the blocks for the category $\cF$, ie. $\la$ and $\mu$ are super-linked if and only if $L(\la)$ and $L(\mu)$ are in the same block.
\end{lemma}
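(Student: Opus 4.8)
The plan is to identify the block relation on $\cF$ with super-linkage by playing off the two halves of Theorem~\ref{mainprop}. Recall first that for a category with enough projectives the block containing $L(\la)$ is governed by the primitive idempotents of $\op{End}_\cF(P)$, and concretely $L(\la),L(\mu)$ lie in the same block if and only if there is a chain $\la=\nu_0,\nu_1,\dots,\nu_r=\mu$ with $\op{Hom}_\cF(P(\nu_i),P(\nu_{i+1}))\neq 0$ for every $i$: a nonzero such morphism means $L(\nu_i)$ is a composition factor of the indecomposable projective $P(\nu_{i+1})$, so $L(\nu_i)$ and $L(\nu_{i+1})$ share $P(\nu_{i+1})$ and lie in one block, while conversely the block relation is generated by such coincidences. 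By the symmetry of the Cartan matrix (Theorem~\ref{mainprop}(1)) it is irrelevant in which order the arguments of $\op{Hom}$ are taken.

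For the implication ``same block $\Rightarrow$ super-linked'' I would use Theorem~\ref{mainprop}(2) in contrapositive form: if $\la$ is not super-linked to $\mu$, then $\op{Hom}_\cF(P(\la),P(\mu))=0$. Since super-linkage is manifestly an equivalence relation, asking only for equality of two invariants (the positions of the $\circ$'s and $\times$'s, and the number of cups, i.e. the atypicality, of the associated super cup diagrams), every generating pair of the block relation is super-linked, and hence the whole block relation refines super-linkage. This settles one direction with no further combinatorics.

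For the converse ``super-linked $\Rightarrow$ same block'' I would use the second description of super-linkage: $\la$ and $\mu$ are super-linked precisely when they have the same atypicality and are joined by a finite sequence of basic linkage moves. Each basic move preserving the atypicality is exactly an arrow $\la'\leftrightarrow\mu'$ of the $\la$-pair calculus, since the local pictures of Lemma~\ref{lapairscup} turn cups into cups and rays into rays and so preserve the number of cups; it therefore suffices to treat a single arrow $\la\to\mu$ and show $\op{Hom}_\cF(P(\la),P(\mu))\neq0$. By Theorem~\ref{mainprop}(2) this space is $\mathbb{B}(\la,\mu)$, so I must produce one admissible oriented diagram. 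The natural candidate is $\un{\la^\circ}\,\mu^\circ\,\ov{\mu^\circ}$: by the super analogue of Lemma~\ref{degone} the arrow $\la\to\mu$ makes $\un{\la^\circ}\mu^\circ$ an oriented cup diagram of degree one, while $\mu^\circ\ov{\mu^\circ}$ is oriented of degree zero, so the only point to check is that $\un{\la^\circ}\,\ov{\mu^\circ}$ contains no non-propagating line, i.e. lies outside $\mathbb{I}$. As $\la$ and $\mu$ differ only at the two vertices of the cup defining the arrow, all rays of $\un{\la^\circ}$ and $\ov{\mu^\circ}$ match into propagating lines and the single altered component is a closed circle; hence $\mathbb{B}(\la,\mu)\neq0$, so $L(\la)$ is a composition factor of $P(\mu)$ and the two simples share a block. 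Chaining along the basic moves then connects the whole super-linkage class inside one block.

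The main obstacle is this last verification together with the connectivity it rests on: I must check, move by move (the neighbour swap $\down\up\leftrightarrow\up\down$, and in particular the exceptional move $\up\up\leftrightarrow\down\down$ at the first positions), that an atypicality-preserving basic move is realized by an arrow whose circle diagram $\un{\la^\circ}\ov{\mu^\circ}$ avoids $\mathbb{I}$, paying attention to the interaction with the leftmost dotted ray and with the frozen/fake cups of Definition~\ref{fakecups}, and that inside a fixed super-linkage class any two weights are joined by such atypicality-preserving moves (the analogue of connectivity of a Weyl-group orbit, Section~\ref{typeD}). As a cross-check, and to avoid re-deriving the linkage from scratch, I would compare with the Gruson--Serganova description of the blocks of $\cF$ in \cite{GS2}: under the dictionary of Lemma~\ref{GSES} their block invariants, the core of $\op{GS}(\la)$ and the number of $\color{red}{\otimes}$'s, translate exactly into the positions of the $\circ$'s and $\times$'s and the number of cups, so their determination of the blocks already yields ``same block $\iff$ super-linked'', and the two approaches corroborate each other.
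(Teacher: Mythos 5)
Your closing ``cross-check'' is, in fact, the paper's entire proof: Lemma~\ref{GSES} translates, via the dictionary \eqref{T}, the positions of the $\circ$'s and $\times$'s into the core symbols of \cite{GS2} and the number of cups into the number of $\color{red}{\otimes}$'s, and the lemma then follows immediately from the Gruson--Serganova description of the blocks of $\cF$. Promoted from corroboration to the argument itself, that paragraph is complete and correct; the first direction of your primary route (same block $\Rightarrow$ super-linked, via the contrapositive of Theorem~\ref{mainprop}(2) and the fact that super-linkage is defined by equality of invariants) is also sound, and there is no circularity since Theorem~\ref{mainprop}(2) rests on Proposition~\ref{cancel}, whose proof is independent of the present lemma.

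The converse direction of your primary route, however, has a genuine gap. Your justification that the moves of Lemma~\ref{lapairscup} ``turn cups into cups and rays into rays and so preserve the number of cups'' is false: several of the local moves listed there trade a cup for two rays, which is exactly how $\la$-pair arrows change the defect --- compare Figure~\ref{fig:quiv}, where the all-ray (defect zero) cup diagram is joined by the arrows $a_1,b_1$ to a defect-one diagram. So neither basic linkage moves nor $\la$-pair arrows preserve atypicality in general, and the statement you actually need --- that any two weights in a fixed super-linkage class are joined by a chain of \emph{atypicality-preserving} arrows, each with nonvanishing $\op{Hom}$ between the corresponding projectives --- is precisely the nontrivial content of the converse, which you flag as ``the main obstacle'' but do not prove. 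Note that you cannot shortcut the chain: $\op{Hom}_\cF(P(\la),P(\mu))$ can vanish for super-linked $\la,\mu$ (by Proposition~\ref{cancel} this happens whenever some closed component of $\underline{\la^\circ}\,\overline{\mu^\circ}$ carries an odd number of dots, e.g.\ for distant cups with mismatched decorations), so connectivity of the super-linkage class under such moves is indispensable and would have to be established by an explicit combinatorial argument --- or avoided altogether, as the paper does, by citing the block classification of \cite{GS2} through the dictionary of Lemma~\ref{GSES}.
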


\begin{proof}
Two weights are super linked, if the labels $\circ$ and $\times$ appear at the same places and the weights have the same atypicality. Via \eqref{T}, $\circ$ and $\times$ correspond precisely the so-called core symbols in \cite{GS2} and the claim follows from the description of blocks there.
\end{proof}

Let $\la,\mu\in X^+(\mg)$. Then recall from \cite{GS2} that $\op{dim}\op{Hom}_\cF(P(\la),P(\mu))=\sum a(\la,\nu)a(\mu,\nu)$
where $\nu$ runs through all tailless dominant weights and $a(\la,\nu)$ expresses the Euler characteristics $\cE(\nu)$ in terms of simple modules, ie. we have in the Grothendieck group
$[\cE(\nu)]=\sum_\nu a(\la,\nu)[L(\la)]$.  By \cite{GS2} $a(\la,\nu)\in\{0,1,-1\}$.

\begin{prop}
\label{cancel}
Let $\la,\mu\in X^+(\mg)$. Then the following are equivalent
\begin{enumerate}[(I)]
\item \label{equI} $\op{Hom}_\cF(P(\la),P(\mu))\not=0$,
\item \label{equII} $\underline{w(\la^\circ)}\overline{w(\mu^\circ)}$ has no non-propagating line and every component has an  even number of dots.
\end{enumerate}
In these cases moreover, the following holds
\begin{enumerate}
\item $a(\la,\nu)a(\mu,\nu)\in\{0,1\}$ for any $\nu\in X^+(\mg)$, and
\item $\op{dim}\op{Hom}_\cF(P(\la),P(\mu))=2^c$, where $c$ is the number of closed components.
\end{enumerate}
\end{prop}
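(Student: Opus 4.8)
The plan is to start from the Gruson--Serganova formula $\op{dim}\op{Hom}_\cF(P(\la),P(\mu))=\sum_\nu a(\la,\nu)a(\mu,\nu)$, where $\nu$ runs over tailless dominant weights and $a(\la,\nu)\in\{0,\pm1\}$, and to reinterpret everything in terms of oriented circle diagrams. The first and most substantial step is to translate the coefficients $a(\la,\nu)$ into our calculus using the dictionary $\op{T}(\op{GS}(\la))=w(\la)$ of Lemma~\ref{GSES} together with the combinatorics of \cite{GS2}. Concretely, I would show that $a(\la,\nu)\neq0$ if and only if $\nu$ is tailless and $\un{\la^\circ}\nu$ is an oriented cup diagram (the infinitely many rays, including those arising from fake cups, being oriented in their unique admissible way), and that in this case $a(\la,\nu)=(-1)^{r}$ with $r$ the number of clockwise cups of $\un{\la^\circ}\nu$. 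Granting this, a summand $a(\la,\nu)a(\mu,\nu)$ is nonzero precisely when the tailless weight $\nu$ simultaneously orients $\un{\la^\circ}$ from below and $\ov{\mu^\circ}$ from above, i.e.\ when $\nu$ is an admissible middle weight of the circle diagram $\un{\la^\circ}\,\ov{\mu^\circ}$, and it then equals $(-1)^{\deg(\un{\la^\circ}\nu\ov{\mu^\circ})}$. Lemma~\ref{superlink} ensures that this whole discussion is confined to a single block, so that the labelling conventions for $\la^\circ$ and $\mu^\circ$ are compatible and the sum is supported on finitely many $\nu$.

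Next I would prove the positivity $a(\la,\nu)a(\mu,\nu)\in\{0,1\}$ of part~(1) by a parity count carried out component by component. On any closed circle $C$ each middle vertex is the endpoint of exactly one cup and one cap of $C$, so $C$ has equally many cups and caps and hence an even total number of cups plus caps; consequently the sign $(-1)^{(\text{clockwise cups}+\text{clockwise caps on }C)}$ is unchanged when the orientation of $C$ is reversed (every clockwise arc becomes anti-clockwise and conversely), and evaluating on the all--anticlockwise orientation shows it equals $+1$. The analogous arc count shows a propagating line also carries equally many cups and caps, so its forced orientation contributes $+1$ as well. Thus every nonzero summand equals $+1$, which is part~(1) and reduces the sum to the number of tailless orientations of $\un{\la^\circ}\,\ov{\mu^\circ}$.

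It then remains to count these tailless orientations, which simultaneously yields the equivalence (\ref{equI})$\Leftrightarrow$(\ref{equII}) and the value $2^c$. By Proposition~\ref{lem:stupidlemma} a closed circle always has an even number of dots and admits exactly two orientations, whereas a line admits at most one, and is orientable exactly when its number of dots is even. The main obstacle will be to match taillessness of $\nu$ with the \emph{absence of non-propagating lines}: the rays of the common tail force the orientation of $\nu$ to agree with those of $\la^\circ$ and $\mu^\circ$ far to the right, and I would show that "all lines propagating" is exactly the condition under which this forced data extends to a tailless $\nu$, while a non-propagating line necessarily forces $\nu$ to have nonzero $\op{d}$-degree, i.e.\ a tail. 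Hence if (\ref{equII}) holds, the tailless orientations are obtained by choosing independently one of the two orientations of each of the $c$ circles with all lines forced, giving exactly $2^c$ of them, each contributing $+1$; this proves part~(2) and shows $\op{Hom}_\cF(P(\la),P(\mu))\neq0$. If (\ref{equII}) fails, then either some (necessarily non-circular) component has an odd number of dots and cannot be oriented, or a non-propagating line is present so that no tailless $\nu$ orients the diagram; in either case the sum is empty and the $\op{Hom}$-space vanishes, giving the reverse implication. The two points requiring genuine work are Step~1 (the sign-refined dictionary for $a(\la,\nu)$ extracted from the Gruson--Serganova Euler characteristics) and the taillessness versus non-propagating-line correspondence in the final step.
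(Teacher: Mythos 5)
Your overall route (start from the Gruson--Serganova formula $\op{dim}\op{Hom}_\cF(P(\la),P(\mu))=\sum_\nu a(\la,\nu)a(\mu,\nu)$, reinterpret the nonzero summands as orientations of $\un{\la^\circ}\,\ov{\mu^\circ}$, then count them) is the paper's route, but two of your key steps are wrong, and the errors do not cancel. The sign dictionary in your Step 1 is false: $a(\la,\nu)$ is \emph{not} $(-1)^{\#\{\text{clockwise cups}\}}$. As used in the paper's proof, the sign is $(-1)^{x+y+z}$, where $x$ is the number of coloured cups, $y$ counts coloured cups carrying $\times\,\circ$ in this order, and $z$ is an indicator correction at vertex $\tfrac12$; under the dictionary of Lemma~\ref{GSES} only the \emph{dotted} cups and the $[\pm]$ indicator ever contribute, never undotted clockwise cups. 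Your Step 2 then compounds this: by Proposition~\ref{lem:stupidlemma} the two degrees of a closed circle are $\#\{\text{caps}\}\pm 1$, so an ``all-anticlockwise orientation'' exists only for small circles, and the parity of the degree is that of $\#\{\text{caps}\}-1$, not always even. Concretely, if $\un{\la^\circ}$ has undotted cups $(1,4),(2,3)$ and $\ov{\mu^\circ}$ has undotted caps $(1,2),(3,4)$, the resulting single circle has degrees $1$ and $3$ in its two orientations, so your formula yields $(-1)+(-1)=-2$ for a dimension which the proposition (and the correct computation, with $x=y=z=0$, giving $1+1=2$) shows equals $2$.

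The second gap is in your (I)$\Rightarrow$(II) direction, where you assert that if a closed component has an odd number of dots then ``the sum is empty.'' It is not: the GS nonvanishing condition only demands $\circ$ and $\times$ (in either order) at the two ends of each cup, with no dot-parity constraint, so such a component still supports admissible labellings $\nu$ --- two per circle, obtained by orienting the circle \emph{after removing its dots} --- and the corresponding summands $a(\la,\nu)a(\mu,\nu)$ are individually nonzero. The vanishing of the Hom-space is instead a pairwise \emph{cancellation}: swapping $\times$ and $\circ$ along that component to get $\nu'$ flips the sign of exactly one of the two factors (through the count $y$, resp.\ the indicator $z$ when the vertex $\tfrac12$ is involved), so $a(\la,\nu)a(\mu,\nu)=-a(\la,\nu')a(\mu,\nu')$. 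This cancellation is the heart of the paper's proof of that direction and is entirely absent from your proposal; your emptiness claim is valid only for non-propagating lines, where indeed every $a(\la,\nu)$ vanishes term by term. A related smaller slip: under $\op{T}$ an admissible $\nu$ orients the diagram with dots removed, not $\un{\la^\circ}\,\ov{\mu^\circ}$ itself, since a dotted cup must carry $\up\up$ or $\down\down$ while $\op{T}$ sends the GS pair $\{\circ,\times\}$ to $\{\up,\down\}$.
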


\begin{proof}
 For $\eqref{equII}\Rightarrow \eqref{equI}$ it is enough to show that $\op{Hom}_\cF(P(\la),P(\mu))\not=0$ implies we have no non-propagating line and that each closed component has an even number of dots, since then only the leftmost line is allowed to carry dots and by definition of $\la^\circ$ and $\mu^\circ$ the total number of dots is even.
By \cite{GS2}, $\op{dim}\op{Hom}_\cF(P(\la),P(\mu))
=\sum_\nu a(\la,\nu)a(\mu,\nu)$ and $a(\la,\nu)$ is non-zero if putting the GS-weight $\op{GS}(\nu)$ on top of the cup diagram $D$ associated with $\op{GS}(\la)$ results in a picture where the labels $>$ and $<$ in $\la$ and $\nu$ agree and each cup has the two symbols $\circ$, $\times$ in any order at its two endpoints. Clearly it is zero if there is a non-propagating line, since the line must have $\circ$'s at the end, but has an odd total number of cups and caps.
If non-zero, then it is $(-1)^{x+z+y}$, where $x$ is the number of coloured cups in $C$ and $y=y(\la,\nu)$ is the number of coloured cups with $\times$ $\circ$ in this order at the endpoints when putting $\nu$ on top, and $z=1$ if $\la$ has the indicator $[+]$ and $\nu$ has a $\times$ at position $\frac{1}{2}$. In particular, $x$ doesn't depend on $\nu$. Let $K$ be a closed component of  $\underline{w(\la^\circ)}\overline{w(\mu^\circ)}$. If $a(\la,\nu)a(\mu\nu)\not=0$ then we can find a weight $\nu'$ such that $\op{GK}(\nu')$  agrees with $\op{GK}(\nu)$ at all vertices not contained in $K$, but the symbols $\times$ and $\circ$ swapped for the vertices contained in $K$. Assume now that $K$ has an odd total number of dots. If $K$ does not contain the vertex $\frac{1}{2}$ then $(-1)^{y(\la,\nu)+y(\mu,\nu)}=-(-1)^{y(\la,\nu')+y(\mu,\nu')}$, hence $a(\la,\nu)a(\mu\nu)=-a(\la,\nu')a(\mu\nu')$ and so the two contributions cancel. The same holds if $K$ does contain the vertex $\frac{1}{2}$ but with the same indicator $[+]$ or $[-]$ in $\la$ and $\mu$. In case the symbols differ then we have an even number of coloured cups and caps in $K$, hence
$(-1)^{y(\la,\nu)+y(\mu,\nu)}=(-1)^{y(\la,\nu')+y(\mu,\nu')}$ and
$(-1)^{z(\la,\nu)+z(\mu,\nu)}=-(-1)^{z(\la,\nu')+z(\mu,\nu')}$ hence again $a(\la,\nu)a(\mu\nu)=-a(\la,\nu')a(\mu\nu')$. Hence each closed component requires an odd number of dots and so \eqref{equI} implies \eqref{equII}.

For the converse note that \eqref{equII} implies that the diagram is orientable, each line in a unique way and each closed component in exactly two ways. The same holds if we remove the dots. After applying $\op{T}$, any such orientation gives an allowed labelling $\nu$ in the sense of \cite{GS2}. We claim that the corresponding value $A:=a(\la,\nu)a(\mu\nu)$ is equal to $1$. By definition $A:=(-1)^{x(\la)+x(\mu)+z(\la,\nu)+z(\mu,\nu)}
(-1)^{y(\la,\nu)+y(\mu,\nu)}$. If $X:=\underline{w(\la^\circ)}\overline{w(\mu^\circ)}$ is a small circle then it has either no dots, hence no coloured cups and there is nothing to check. Or two dots and two coloured cups (and the same indicator) and the statement is clear as well. Otherwise, if $X$ contains a kink without coloured cups and caps then we can remove the kink to obtain a new $\la$ and $\mu$ with the same value $A$ attached. So we assume there is no such kink, but then it contains a configuration of the form (dashed lines indicate the colouring)
\begin{eqnarray*}
\begin{tikzpicture}[thick,scale=1]
\draw[color=red,dashed] (0,0) .. controls +(0,.5) and +(0,.5) .. +(.5,0);
\draw (.5,0) .. controls +(0,-.5) and +(0,-.5) .. +(.5,0);
\draw[color=red,dashed] (1,0) .. controls +(0,.5) and +(0,.5) .. +(.5,0);
\node at (2.2,0) {or};
\draw[color=red,dashed] (3,0) .. controls +(0,-.5) and +(0,-.5) .. +(.5,0);
\draw (3.5,0) .. controls +(0,.5) and +(0,.5) .. +(.5,0);
\draw[color=red,dashed] (4,0) .. controls +(0,-.5) and +(0,-.5) .. +(.5,0);
\end{tikzpicture}
\end{eqnarray*}
Removing the colouring and then also the newly created uncoloured kink changes $\la$ and $\mu$, but not the corresponding value $A$. Hence it must be equal to $1$.

Altogether every orientation of the circle diagram $\underline{w(\la^\circ)}\overline{w(\mu^\circ)}$ gives a contribution of $1$ to $\op{dim}\op{Hom}_\cF(P(\la),P(\mu))
=\sum_\nu a(\la,\nu)a(\mu,\nu)$. But on the other hand the number of possible $\nu$'s is precisely the number of orientations. Therefore all the remaining statements follow.
\end{proof}

\subsection{The conjecture}
Let $S\subset X^+(\mg)$ be a finite subset of pairwise super-linked dominant weights. Then we find $N>0$ such that the $N$-truncations $(\la_i)_{1\leq i\leq N}$ are good for all $\la\in S$, where good means that all $\la_i$ for $i>N$ correspond to frozen vertices. By assumption all the truncated weights from $S^\circ_N=\{\la^\circ_N\mid, \la\in S\}$ are linked, hence belong to a single block $\La(N)$ in the sense of Section~\ref{linkage} and we have the generalized Khovanov algebra $\mathbb{D}_{\La(N)}$. Define the idempotent $e_S:=\sum_{\la\in S^\circ_N} e_{\la}$ and consider the idempotent truncation subalgebra $e_S\mathbb{D}_{\La(N)} e_S$ with its distinguished basis
\begin{eqnarray}
\mathbb{B}_S:=\left\{\underline{\la}\nu\overline{\mu}\mid
\la,\mu\in  S^\circ_N, \nu\in\La
 \right\}.
\end{eqnarray}
Note that all $\la\in S^\circ_N$ have the same atypicality.

\begin{lemma}
\label{isideal}
The subspace $I=I_S$ of $e_S\mathbb{D}_{\La(N)} e_S$ spanned by all $b\in \mathbb{B}_S$ such that $b$ contains at least one non-propagating line is an ideal.
\end{lemma}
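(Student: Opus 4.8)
The plan is to exploit two facts: that the anti-automorphism $*$ of Lemma~\ref{antiaut} preserves both $e_S$ and the defining property of $I$, so that it suffices to establish one-sidedness; and that the number of rays in $\underline{\la}$ is the same for every $\la\in S^\circ_N$, since these weights all have the same atypicality. Writing $\rho$ for this common ray number, every basis diagram $\underline{\la}\nu\overline{\mu}\in\mathbb{B}_S$ has exactly $\rho$ ray-endpoints on the bottom boundary (from $\underline{\la}$) and exactly $\rho$ on the top boundary (from $\overline{\mu}$). If such a diagram has $p$ propagating lines, then it has $(\rho-p)/2$ non-propagating lines meeting the bottom and equally many meeting the top; in particular it lies in $I$ if and only if $p<\rho$, and then it has at least one non-propagating line \emph{both} of whose endpoints lie on the bottom boundary.

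First I would record the persistence statement: if a diagram occurring at some stage of the surgery procedure contains a non-propagating line, then after one surgery move the result is either $0$ or again contains a non-propagating line, with unchanged boundary endpoints. This is a direct inspection of the line-rules \eqref{circline}, \eqref{yuk} and \eqref{lineline}. A surgery move is local to the one or two components carrying the chosen rightmost cup-cap pair, so any non-propagating line not meeting that pair is untouched. If the line meets a circle, rule \eqref{circline} either absorbs the circle ($1\otimes y\mapsto y$), splits off a clockwise circle ($y\mapsto y\otimes x$), or yields $0$ ($x\otimes y\mapsto 0$); in the two nonzero cases the line keeps its two boundary endpoints, since a split of a single line must produce one line carrying both endpoints together with a closed circle (a component has either two ray-endpoints or none). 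If instead the move connects two lines, rule \eqref{yuk} gives a nonzero result only when both lines are propagating, so a move involving a non-propagating line necessarily produces $0$. Thus a non-propagating line can never be converted into a propagating one without annihilating the diagram, and by induction on the number of surgery moves every nonzero summand of a product whose initial stitched diagram has a non-propagating line again has one; the final collapse does not affect boundary endpoints.

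It then remains to see that the starting diagram of the surgery already carries a non-propagating line. Take $(a\la b)\in I$ and any basis element $(c\mu d)$; if $b^*\neq c$ the product is $0\in I$, so assume $b^*=c$, so that $(a\la b)$ is drawn underneath $(c\mu d)$. By the counting above, $(a\la b)$ carries a non-propagating line both of whose endpoints are rays of $a$ on the bottom boundary. Such a line never reaches the top boundary of $(a\la b)$, hence uses none of the rays of $b$ that are stitched to $c$; its path (through cups of $a$ and caps of $b$ staying below the top) is therefore unchanged by the stitching, and it survives as a non-propagating line of the combined stitched diagram. The persistence statement now shows every nonzero summand of $(a\la b)(c\mu d)$ lies in $I$, proving $I\cdot e_S\mathbb{D}_{\La(N)}e_S\subseteq I$. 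Applying $*$, which fixes $e_S$ and sends $\underline{\la}\nu\overline{\mu}$ to $\underline{\mu}\nu\overline{\la}$ (interchanging top and bottom non-propagating lines, hence preserving membership in $I$), converts this right-ideal property into the left-ideal property, so $I$ is two-sided.

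I expect the main obstacle to be the bookkeeping in the persistence step: verifying case by case in the line-rules that a split of a single non-propagating line cannot manufacture a propagating line, and that the equal-count identity $(\rho-p)/2$ really forces a \emph{bottom} non-propagating line whenever the diagram lies in $I$. Everything else is either formal (the $*$-reduction) or an immediate consequence of the constancy of the ray number across $S^\circ_N$.
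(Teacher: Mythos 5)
Your proof is correct and follows essentially the same route as the paper's: reduce to one-sidedness via the anti-automorphism of Lemma~\ref{antiaut}, use the equal count of top and bottom non-propagating lines (coming from the common defect of the weights in $S^\circ_N$) to produce a non-propagating line ending at the bottom, and then check persistence under surgery via the rules \eqref{circline}, \eqref{yuk} and \eqref{lineline}. Your write-up is simply a more detailed version of the paper's argument, spelling out the ray-counting identity and the case analysis that the paper leaves to inspection.
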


\begin{proof}
Let $b\in I_S\cap \mathbb{B}_S$. It is enough to show that $cb, bc\in I_S$ for any $c\in \mathbb{B}_S$. By Lemma~\ref{antiaut} it is enough to show $bc\in I$. Consider the non-propagating lines in $b$. Then the number of those ending at the top equals the number of those ending at the bottom since the weights in $S^\circ_N$ are linked and have the same defect. Hence assume there is at least one such line $L$ ending at the bottom. From \eqref{circline} and the diagrams below we see that any surgery involving such a line and a circle either preserves this property or produces zero. By \eqref{yuk} or \eqref{lineline} any surgery with a second line produces zero.
\end{proof}

Let $\Gamma$ be a block of $\cF$ and $P:=\oplus_{\la\in\Gamma}P(\la)$ be a minimal projective generator. For any finite subset $S$ of $\Gamma$ choose $N=N(\Gamma)$ as above and consider the algebra  $e_S\mathbb{D}_{\La(N)} e_S$.

\begin{prop}
Let $\Lambda$ be a block of $\cF$ and $P:=\oplus_{\la\in\La}P(\la)$ be a minimal projective generator. For any finite subset $S$ of $\Lambda$ set $P_S=\oplus_{\la\in S}P(\la)$ and choose $N$ such that the $N$-truncations of all $s\in S$ are good. Then there is an isomorphism of vector spaces
\begin{eqnarray}
\label{isoPS}
End_\mg(P_S)\cong e_S\mathbb{D}_{\La(N)} e_S/\mathbb{I}
\end{eqnarray}
and the right hand side has an induced algebra structure
\end{prop}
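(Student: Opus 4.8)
The algebra structure on the right-hand side of \eqref{isoPS} is immediate: since $\mathbb{I}=I_S$ is by Lemma~\ref{isideal} a two-sided ideal of $e_S\mathbb{D}_{\La(N)}e_S$, the quotient inherits an associative algebra structure. The real content is therefore the vector space isomorphism, and my plan is to exhibit matching bases on both sides. On the left I would first decompose
$$\op{End}_\mg(P_S)=\bigoplus_{\la,\mu\in S}\op{Hom}_\mg(P(\la),P(\mu))$$
and apply Theorem~\ref{mainprop}(2), which identifies each summand with the space $\mathbb{B}(\la,\mu)$ on basis the oriented circle diagrams $\underline{\la^\circ}\nu\overline{\mu^\circ}$ with $\mu\supset\nu\subset\la$ and $\underline{\la^\circ}\,\overline{\mu^\circ}\notin\mathbb{I}$. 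By the definition of $\mathbb{I}$ for super weight diagrams, this last condition means exactly that every line of $\underline{\la^\circ}\,\overline{\mu^\circ}$ is propagating.

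On the right I would use Lemma~\ref{isideal} to split $\mathbb{B}_S$ into the diagrams containing a non-propagating line, which span $\mathbb{I}$, and those all of whose lines are propagating; the images of the latter then form a basis of the quotient. Thus both sides carry a basis indexed by oriented circle diagrams with only propagating lines, the left using the infinite super cup diagrams $\underline{\la^\circ}$ and the right their finite truncations $\underline{\la^\circ_N}$ inside the block $\La(N)$.

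The key step, which I expect to be the main obstacle, is to show that truncation at a good $N$ induces a bijection between these two families. First I would enlarge $N$ if necessary so that $\la\mapsto\la^\circ_N$ is injective on the finite set $S$, which is possible since its elements differ in only finitely many positions. By the definition of \emph{good}, all vertices beyond position $N$ are frozen, so by Definition~\ref{superweight} the diagrams $\underline{\la^\circ}$ and $\underline{\mu^\circ}$ consist there only of vertical rays; consequently in $\underline{\la^\circ}\nu\overline{\mu^\circ}$ every strand past $N$ is a propagating line and no closed component meets the region past $N$. Cutting the diagram at $N$ therefore neither creates nor destroys a circle, nor turns a propagating line into a non-propagating one or vice versa, so it restricts to a bijection between the propagating-line diagrams $\underline{\la^\circ}\nu\overline{\mu^\circ}$ and the propagating-line diagrams $\underline{\la^\circ_N}\nu_N\overline{\mu^\circ_N}$, where $\nu_N$ is the induced truncation of $\nu$.

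Finally I would verify that this bijection respects orientations, so that it matches basis vectors and not merely underlying shapes. Here Proposition~\ref{cancel} is the essential tool: an orientable propagating diagram has exactly $2^c$ orientations, with $c$ the number of closed components, each carrying an even number of dots. Truncation at a good $N$ leaves all closed components, which lie in the region up to $N$, and their dot parities unchanged, so $c$ is preserved, the quotient lands in the correct parity block $\La(N)$, and the admissible top weights $\nu$ correspond bijectively to the admissible $\nu_N$. Summing these bijections over all pairs $\la,\mu\in S$ assembles the desired linear isomorphism \eqref{isoPS}; only this vector space identification is needed for the present statement, the compatibility with the two multiplications being the subject of the accompanying conjecture.
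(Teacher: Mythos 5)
Your proposal is correct and follows the paper's own route: the paper proves this proposition by citing exactly your two ingredients, Theorem~\ref{mainprop}(2) for the identification of each $\op{Hom}_\mg(P(\la),P(\mu))$ with $\mathbb{B}(\la,\mu)$ and Lemma~\ref{isideal} for the ideal structure giving the quotient algebra. The only difference is that you spell out the truncation-at-good-$N$ bijection (frozen vertices past $N$ give only vertical rays, so closed components, propagating lines and orientation counts are preserved), a step the paper treats as immediate from the definitions of good truncation and $S^\circ_N$.
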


\begin{proof}
This follows directly from Proposition \ref{mainprop} and Lemma~\ref{isideal}.
\end{proof}

\begin{conjecture}
\label{conjnaiv}
The isomorphisms \eqref{isoPS} are isomorphisms of algebras.
\end{conjecture}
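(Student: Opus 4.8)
The plan is to upgrade the vector space isomorphism $\Psi_S$ of \eqref{isoPS} to an algebra isomorphism by realising both sides as endomorphism rings inside one categorical framework, namely the super Schur--Weyl duality between $\cF$ and the level two cyclotomic Nazarov--Wenzl (VW) algebras $W_d$ of \cite{AMR}, announced in the introduction and developed in \cite{ES2}. Writing $V$ for the natural $\mg$-module and $F = -\otimes V$ for the associated translation functor, I would first show that every indecomposable projective $P(\la)$ with $\la\in S$ occurs as a direct summand of an iterated translate $F^{d}P_0$ of a fixed projective $P_0$, for $d$ depending on $N$ and the defect. The double centraliser property of the VW-action then identifies $\End_\mg$ of a generator built from these translates with an idempotent subquotient of $W_d$, so that $\End_\mg(P_S)$ becomes $f\,(W_d/J)\,f$ for an explicit idempotent $f$ and an explicit kernel $J$ of the action.

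Next I would invoke two structural inputs from \cite{ES2}: (a) the graded version of $W_d$ carries a positive Koszul grading and each of its blocks is isomorphic, \emph{as a graded algebra}, to an idempotent truncation $\epsilon\,\mathbb{D}_{\La(N)}\,\epsilon$ of our algebra; and (b) the ungraded VW-action on the relevant tensor spaces factors through precisely the ideal whose diagrammatic incarnation is the span $\mathbb{I}$ of the non-propagating line diagrams (Lemma~\ref{isideal}). Granting (a) and (b), transporting the idempotent $f$ through the algebra isomorphism of (a) converts $f\,(W_d/J)\,f$ into $e_S\,\mathbb{D}_{\La(N)}\,e_S/\mathbb{I}$ as an \emph{algebra}, because composition of module homomorphisms on the representation side is, by construction of the identification in \cite{ES2}, matched with the surgery multiplication on the diagram side.

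It then remains to check that this functorial algebra isomorphism coincides with the combinatorially defined $\Psi_S$. Both are maps of the same graded vector space and both are supported on the distinguished basis $\mathbb{B}_S$, so it suffices to compare them on basis vectors. Here I would use the dictionary of Lemma~\ref{GSES} together with Proposition~\ref{cancel}: the orientations of a circle diagram $\un{\la^\circ}\nu\ov{\mu^\circ}$ without non-propagating lines index exactly the basis of $\Hom_\cF(P(\la),P(\mu))$ arising from the Gruson--Serganova Euler characteristic expansions, and this is the bijection underlying $\Psi_S$. Matching the two bases, up to the normalisation fixed in \eqref{isops}, would then show that the two algebra structures agree on $\mathbb{B}_S$ and hence everywhere.

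The hard part will be controlling the signs. The surgery multiplication on $\mathbb{D}_{\La(N)}$ is non-local and carries the subtle signs of the rules (Surg-1)--(Surg-3), and the Schur--Weyl identification must reproduce these on the nose; this amounts to pinning down the cups, caps and the $X$-action of the categorified coideal/VW-action up to \emph{canonical} isomorphism rather than merely up to scalars. Equivalently, one must verify that the associativity and diamond identities established combinatorially in Proposition~\ref{prop:multiplication_associative} and Lemma~\ref{stupidcalca} are realised by genuine identities of natural transformations in $\cF$. Since $\cF$ is wild in defect greater than $1$ (as noted after Conjecture~\ref{conj}), there is no cellular or highest weight shortcut available, so this sign bookkeeping inside the VW-setup is exactly where the argument is expected to require the full machinery of \cite{ES2} and the Fock space combinatorics of \cite{SS}.
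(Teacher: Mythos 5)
You were asked to prove a statement that the paper itself does not prove: Conjecture~\ref{conjnaiv} is stated as a conjecture, and the authors explicitly defer its treatment to the subsequent papers \cite{ES2} and \cite{SS}, where the super Schur--Weyl duality with level two cyclotomic VW-algebras and the Fock space combinatorics are to be developed. So there is no proof in the paper to compare yours against; the only thing the paper offers is the announcement of precisely the strategy you describe. In that sense your proposal reproduces the authors' intended route rather than giving an alternative to it.

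As a proof, however, your text has a genuine gap, and you in fact name it yourself. The two ``structural inputs'' (a) and (b) that you invoke from \cite{ES2} --- that blocks of the graded VW-algebras are isomorphic \emph{as graded algebras} to idempotent truncations of $\mathbb{D}_{\La(N)}$, and that the kernel of the VW-action on tensor space is exactly the diagrammatic ideal $\mathbb{I}$ --- are not established in the present paper and cannot be cited as known facts; they are themselves part of what must be proven, and assuming them amounts to reducing the conjecture to other open statements. Likewise your double centraliser step is asserted rather than derived: in the orthosymplectic setting it is not obvious that every $P(\la)$, $\la\in S$, occurs as a summand of an iterated translate $F^{d}P_0$ of a single projective, nor that $\End_\mg(P_S)$ is thereby identified with $f\,(W_d/J)\,f$ for an explicit idempotent $f$ and kernel $J$; each of these requires an argument that is absent. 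Finally, the step you call ``the hard part'' --- verifying that the functorially obtained isomorphism agrees with the combinatorial $\Psi_S$ on the basis $\mathbb{B}_S$, including the non-local signs of the surgery rules (Surg-1)--(Surg-3) --- is exactly the mathematical content of the conjecture, so deferring it to the machinery of \cite{ES2} and \cite{SS} leaves the substance untouched. Your outline is a sensible research plan (indeed, it is the authors' own plan), but it is not a proof.
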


The above construction is an ad hoc construction avoiding working with infinite weights and weight diagrams, but does not cover the whole endomorphism ring of $P$. Hence we prefer the following slightly different perspective

\begin{prop}
\label{better}
The assignment $\la\mapsto \la^\circ$ from Definition~\ref{superweight} defines a bijection between $X^+(\mathfrak{g})$ (or equivalently $(n,m)$-hook partitions) and the set $D$ of super weight diagrams $\mu$ satisfying

\begin{eqnarray}
\label{timescirc}
\op{def}(\mu)+\#\!\!\times(\mu)=m,&&\#\!\!\circ(\mu)-\#\!\!\times(\mu)=n-m.
\end{eqnarray}
where $\#\!\!\times(\mu)$ resp. $\#\!\circ(\mu)$ denotes the number of $\times$'s resp. $\circ$'s appearing in $\mu$.
\end{prop}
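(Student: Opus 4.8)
The plan is to exploit that the whole construction factors as $\la \mapsto \cS(\la) \mapsto w(\la) \mapsto \la^\circ$, where the first arrow is the classical bijection between partitions and strictly increasing sequences of half-integers that are eventually consecutive (a Maya/particle diagram of fixed asymptotic charge), and where the label of $w(\la)$ at a vertex $p \in \frac12 + \mZ_{\geq 0}$ records only whether $p$ and $-p$ lie in the value set $M := \{\cS(\la)_i\}$. Since passing from $w(\la)$ to $\la^\circ$ only recolours frozen $\up$'s (endpoints of fake cups) into $\down$'s, and never touches a $\circ$ or a $\times$ (those never sit on a fake cup), one has $\#\circ(\la^\circ)=\#\circ(w(\la))$ and $\#\times(\la^\circ)=\#\times(w(\la))$, so every statistic in \eqref{timescirc} is readable from $M$ and from the cup matching. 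First I would check $\la^\circ \in D$: it is a super weight diagram because the recolouring rule of Definition~\ref{superweight} is engineered so that $\underline{\la^\circ}$ carries $\op{d}$-degree dots when $\op{d}$-degree is even, and $\op{d}$-degree plus one (the surviving dotted ray) when it is odd, hence always an even number; and it has $\down$'s for $i\gg0$ since every frozen $\up$ becomes a $\down$.

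For the two identities I would argue separately. For the second, writing $x=[p\in M]$, $y=[-p\in M]$ gives $\#\circ-\#\times=\sum_{p>0}\bigl((1-x)(1-y)-xy\bigr)=\sum_{p>0}\bigl(1-[p\in M]-[-p\in M]\bigr)$, which telescopes to the (finite) number of positive holes of $M$ minus the (finite) number of negative particles of $M$; this difference is exactly the charge of the Maya diagram, fixed by the common asymptotics $\cS(\la)_i=n-m-\frac12+i$ for $i\gg0$. Evaluating it on the empty partition via \eqref{empty} gives $n-m$, so the identity holds for all $\la$. For the first identity I would split $\op{def}(\la^\circ)$ into undotted and non-fake dotted cups. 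Because $w(\la)$ has infinitely many $\up$'s and only finitely many $\down$'s, every $\down$ is matched in step (C1), so the number of undotted cups equals $\#\down(w(\la))$, and this count is unchanged in $\la^\circ$ (a recoloured frozen $\down$ has only $\down$'s to its right, so becomes a ray). By Lemma~\ref{abovebelow} together with $\cS(\la)_i=-a_i$ (untruncated), $\#\down(w(\la))+\#\times$ equals the number $A^+$ of strictly positive coordinates $a_j$; and the non-fake dotted cups number exactly $\op{d}$-degree by Definitions~\ref{fakecups} and~\ref{superweight}. Hence $\op{def}(\la^\circ)+\#\times = A^+ + \op{d}\text{-degree}$, and since the $\op{d}$-degree is precisely the number of coordinates $a_j$ at the floor value $-\frac12$ (the tail length), one has $A^+=m-\op{d}\text{-degree}$ and the sum collapses to $m$.

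For bijectivity I would construct the inverse by refreezing. Given $\mu\in D$, its cup diagram separates the $\down$'s that are endpoints of undotted cups (the genuine $\down$'s of $w(\la)$) from the $\down$'s carrying rays (the recoloured frozen $\up$'s); turning the latter back into $\up$'s, and converting the unique dotted ray (present iff $\op{d}$-degree is odd) back into the leftmost fake dotted cup, reconstructs a candidate infinite weight $w$. Reading off $M$ from $w$ via \eqref{dict} and inverting $\cS$ yields a partition $\la$, and by construction $w=w(\la)$ and $\la^\circ=\mu$, giving injectivity. For surjectivity the constraints do the work: the second identity in \eqref{timescirc} forces the reconstructed $M$ to have charge $n-m$, i.e.\ the correct asymptotics, while the first forces the tail of the reconstructed weight to have length exactly $\op{d}$-degree, so that $\la$ satisfies the dominance conditions of Lemma~\ref{AB} and is a genuine $(n,m)$-hook partition with $\la^\circ=\mu$.

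I anticipate the first identity in \eqref{timescirc} to be the main obstacle: the facts that every $\down$ of $w(\la)$ is matched by (C1), that $\#\down(w(\la))+\#\times=A^+$, and that the number of non-fake dotted cups equals the $\op{d}$-degree all rest on the precise interaction of the matching rules (C1)--(C4) with the frozen region. The cleanest alternative route is to run the box-adding induction underlying Lemma~\ref{GSES}, tracking how each elementary move in its tables changes the pairs $(\op{def},\#\times)$ and $(\#\circ,\#\times)$, and verifying their invariance move-by-move against the empty-partition base case; I expect the parity distinction in Definition~\ref{superweight} (even versus odd $\op{d}$-degree, and the single dotted ray it produces) to demand the most care.
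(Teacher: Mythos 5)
Your proposal is correct, and it reaches the statement by a genuinely different route in the part that carries the real content. The paper disposes of well-definedness (the two identities in \eqref{timescirc}) by citing the box-adding tables from the proof of Lemma~\ref{GSES}: both quantities are checked to be invariant move-by-move under adding a box, and the base case is the empty partition via \eqref{empty} --- exactly the ``alternative route'' you mention at the end. You instead compute both identities directly from the particle picture of $\cS(\la)$: the identity $\#\circ-\#\times = n-m$ as a charge count $\sum_{p>0}(1-[p\in M]-[-p\in M])$, and $\op{def}+\#\times = m$ by splitting $\op{def}(\la^\circ)$ into undotted cups (which equal $\#\down(w(\la))$, since the infinitely many $\up$'s on the right force every $\down$ to be matched in (C1)) plus the $\op{d}$-degree many non-fake dotted cups, then using Lemma~\ref{abovebelow} to identify $\#\down+\#\times$ with the number of strictly positive $a_j$'s. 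This buys independence from the long case tables of Lemma~\ref{GSES} and makes the meaning of the two constraints transparent (charge, respectively tail length); the paper's citation is shorter given that Lemma~\ref{GSES} is already proved. For surjectivity the two arguments are essentially the same construction in different clothing: the paper's explicit formulas $\la^t_i = n-m+i+s_i-\tfrac12$ (cup-$\down$'s and $\times$'s) and $\la^t_i=n-m+i-t_i-\tfrac12$ (the rest) are precisely your ``refreeze, read off $M$, invert the Maya correspondence,'' and in both cases the constraints \eqref{timescirc} are what force the hook condition and the correct $\op{d}$-degree. Two small imprecisions in your write-up, neither fatal: a single dotted ray cannot literally be ``converted back into the leftmost fake dotted cup'' --- rather, once \emph{all} ray vertices are relabelled $\up$, the kept $\up$ re-pairs with its neighbour into a dotted cup when the cup diagram is re-formed; and your identity $A^+=m-\op{d}\text{-degree}$ rests on reading the $\op{d}$-degree as the number of coordinates $a_j=-\tfrac12$, which is what the paper's Remark and the example \eqref{empty} support (the indexing in the displayed dominance condition is off by one from this), so you should say explicitly that this is the normalization you use.
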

Observe that \eqref{timescirc} is equivalent to $\op{def}(\mu)+\#\!\circ(\mu)=n$, $\#\!\!\times(\mu)-\#\!\circ(\mu)=m-n$.

\begin{proof}
First note that (see the tables in the proof of Lemma~\ref{GSES}), the number $\#\!\circ(\mu)-\#\!\!\times(\mu)$ does not change when adding an additional box and the same holds for $\op{def}(\mu)+\#\!\!\times(\mu)$. Hence $\la\mapsto \la^\circ$ has image in $D$ because the image, call it $\emptyset^\circ$, of the empty partition is in $D$ by \eqref{empty}. The map is obviously injective. To see that is surjective let $\mu\in D$. Set $m=\op{def}(\mu)+\#\!\!\times(\mu)$ and $n=\#\!\circ(\mu)-\#\!\!\times(\mu)+m$. Consider $\underline{\mu}$ and let $s_r<s_{r-1}<\ldots <s_1$ be the positions where the $\down$'s defining a cup in $\underline{\mu}$ and $\times$'s occur. Clearly $r\leq m$. Then set $\la^t_i=n-m+i+s_i-\frac{1}{2}$ for $1\leq i\leq r$. Let $t_{r+1}<t_{r+2},\ldots$ be the positions of all the $\up$'s and the $\down$'s not defining cups. Then set $\la^t_i=n-m+i-t_{i}-\frac{1}{2}$ for $i>r$. Since $t_i\leq t_j-j+i$ we have $\la^t_i\geq \la^t_j$ for any $j>i$. Moreover, $\la^t_{m+1}=n-m+m+1-t_{m+1}-\frac{1}{2}=n+\frac{1}{2}-t_{m+1}\leq n$, since $t_{m+1}\geq \frac{1}{2}$.
Choose $N$ such that the $N$-truncations of $\emptyset^\circ$ and $\mu$ are good. Then $N$ is the number of symbols appearing in the truncation,
$$\#_{N}\up(\nu)+\#_{N}\down(\nu)+\#_{N}\times(\nu)+\#_{N}\circ(\nu)=N$$
where $\#_N$ is the number of symbols appearing at positions smaller than $N+1$. Hence, the number  $k=\#_{N}\!\up(\nu)+\#_{N}\!\down(\nu)+2\#_{N}\!\times(\nu)$ is the same for $\nu=\mu$ and $\nu=\emptyset^\circ$, by our assumption \eqref{timescirc}. Therefore, $\la^t_j=0$ for $j>k$. Altogether we defined a $(n,m)$-hook partition $\la$ which by construction defines a preimage of $\mu$.
\end{proof}

Note that when fixing $m,n$ as well as the positions of $\circ$ and $\times$ in \eqref{timescirc}, the atypicality, say $k$, is fixed as well. The resulting weights form a set of pairwise super-linked weights defining a block $\La$ for $\cF$. Moreover, the generalized surgery procedure with the signed rules ({Surg}-1)-({Surg}-3) defines a graded (non-unitary) associative algebra structure $\mathbb{H}_k^\infty$ on the infinite dimensional graded vector space
\begin{eqnarray}
\B&:=&\left\{ \underline{\la}\nu\ov{\mu}\mid \la,\mu,\nu\in\Lambda,\quad\ov{\mu}\nu \text{ and $\underline{\la}\nu$ are
oriented}\right\},
\end{eqnarray}

with finite dimensional graded pieces.

Since for such a weight $\la\in\La$ for large enough $N$, the $N$-truncation and $N+1$-truncation only differ by adding an extra $\down$ at the end of the weights, the multiplication doesn't really depend on $N$. For a fixed atypicality $k$ we can therefore take the union $\La(k,n)$ of all blocks of atypicality $k$ supported on the first $n$ vertices and its corresponding algebra $\mathbb{D}_{\La(k,n)}$. Then we have a directed system of algebras $\mathbb{D}_{\La(k,n)}\subset \mathbb{D}_{\La(k,n+1)}$ and the limiting algebra $\mathbb{H}^\infty_k=\varinjlim_n\mathbb{D}_{\La(k,n)}$ similar to the construction in \cite[Section 4]{BS1}. The arguments from there show that this limiting algebra agrees with the algebra $\mathbb{H}^\infty_k$ from above.

\begin{conjecture}
Let $m,n$ be natural numbers and fix a block $\cB$ of atypicality $k$ in $\cF(\op{SOSP}(m|2n))$. Then
\begin{center}
$\cB$ is equivalent to the category of finite dimensional $\mathbb{H}_k^\infty/\mathbb{I}$-modules.
\end{center}
The equivalence sends $P(\la)\in\cF$ to the indecomposable projective module $P(\la^\circ)$ and the irreducible highest weight module $L(\la)$ to $L(\la^\circ)$.
\end{conjecture}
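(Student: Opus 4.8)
The plan is to reduce the categorical equivalence to a single algebra isomorphism and then to harvest everything else from results already in place. First I would invoke the standard Morita-type formalism: since $\cB$ has enough projectives (by \cite[Lemma~3.1]{Vera}) with projective generator $P=\oplus_{\la\in\La}P(\la)$, the functor $\op{Hom}_\mg(P,-)$ is an equivalence $\cB\simeq\op{End}_\mg(P)^{\op{op}}\MOD$. The anti-automorphism $*$ of Lemma~\ref{antiaut} reflects diagrams and preserves the property of having a non-propagating line, hence descends to $\mathbb{H}_k^\infty/\mathbb{I}$ and identifies it with its opposite; so it suffices to produce an algebra isomorphism $\op{End}_\mg(P)\cong\mathbb{H}_k^\infty/\mathbb{I}$. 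The assertion $P(\la)\mapsto P(\la^\circ)$ is then just the statement that the primitive idempotent of $\op{End}_\mg(P)$ projecting onto the $P(\la)$-summand is carried to $e_{\la^\circ}$, which is exactly the bijection $\la\mapsto\la^\circ$ of Proposition~\ref{better}; passing to radical quotients gives $L(\la)\mapsto L(\la^\circ)$.

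Second, I would assemble the underlying linear isomorphism from the pieces already proved. Proposition~\ref{better} matches the dominant weights of $\cB$ with super weight diagrams of fixed atypicality $k$; Theorem~\ref{mainprop} together with Proposition~\ref{cancel} identifies each space $\op{Hom}_\mg(P(\la),P(\mu))$ with the span $\mathbb{B}(\la,\mu)$ of propagating oriented circle diagrams $\underline{\la^\circ}\nu\overline{\mu^\circ}$; and Lemma~\ref{isideal} shows that the non-propagating diagrams form the ideal $\mathbb{I}$. Choosing $N$ large and gluing the truncated identifications $\op{End}_\mg(P_S)\cong e_S\mathbb{D}_{\La(N)}e_S/\mathbb{I}$ from \eqref{isoPS} along the directed system $\mathbb{D}_{\La(k,n)}\subset\mathbb{D}_{\La(k,n+1)}$, exactly as in the colimit construction $\mathbb{H}_k^\infty=\varinjlim_n\mathbb{D}_{\La(k,n)}$ of \cite[Section~4]{BS1}, yields a single linear isomorphism $\varphi\colon\op{End}_\mg(P)\xrightarrow{\sim}\mathbb{H}_k^\infty/\mathbb{I}$ carrying composable homomorphisms to the distinguished diagram basis.

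The heart of the matter, and the main obstacle, is promoting $\varphi$ to an algebra map, i.e.\ proving Conjecture~\ref{conjnaiv}: that composition of homomorphisms between the $P(\la)$ is computed by the signed surgery multiplication of Theorem~\ref{algebra_structure}. Comparing Jordan-H\"older data is not enough, since the decorations and their signs are invisible at the level of Cartan invariants. The strategy I would follow is the one flagged in the introduction: set up a super higher Schur-Weyl duality relating $\mathfrak{sosp}(m|2n)$ with the level $2$ cyclotomic Nazarov-Wenzl (VW) algebras, so that the translation functors on $\cF$ become the categorified generators whose action is recorded on $\op{End}_\mg(P)$. One then transports across this duality the explicit graded presentation of the blocks of the cyclotomic VW-algebras established in \cite{ES2}, where they are identified with idempotent truncations of $\mathbb{D}_k$; matching the elementary VW-generators with the merges and splits of the surgery procedure pins the multiplication down on the nose, signs included.

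Finally I would verify compatibility with the limit and the quotient. One checks that the VW-action respects the directed system and that, under the duality, the kernel of composition forced by $P(\la)\cong I(\la)$ (the projective-injectivity of \cite{BKN}) is precisely the span of non-propagating diagrams; this is the diagrammatic shadow of ``projectives become injective'', exactly the extra relations from $\mathbb{I}$ visible in the rank-one example. I expect the genuinely hard steps to be (i) constructing the duality functor with full control of the Koszul grading, and (ii) checking that the signs dictated by the VW relations coincide with the $(-1)^{\op{pos}(i)}$ signs of the surgery rule; both are deferred to \cite{ES2} and \cite{SS}. Once these are in hand, the equivalence, with the stated correspondence on projectives and simples, follows immediately from the reduction of the first two paragraphs. (For $\mathfrak{sosp}(2m|2n)$ one argues analogously, with the minor additional bookkeeping coming from $\Psi_{2m,2n}$ being only an inclusion.)
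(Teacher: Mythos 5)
The statement you were asked to prove is stated in the paper as a conjecture, and the paper does not prove it: it assembles exactly the evidence you assemble (the weight bijection of Proposition~\ref{better}, the Hom-space identification of Theorem~\ref{mainprop} and Proposition~\ref{cancel}, the ideal property of Lemma~\ref{isideal}, and the vector space isomorphisms \eqref{isoPS}), records the multiplicative compatibility as Conjecture~\ref{conjnaiv}, and defers the actual equivalence to the sequels \cite{ES2} and \cite{SS}. Your first two paragraphs are a correct reconstruction of that scaffolding, and your Morita-type reduction (locally unital endomorphism algebra of the projective generator, the anti-automorphism of Lemma~\ref{antiaut} to handle opposite algebras, idempotents matching projectives and simples) is sound as far as it goes.

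But as a proof there is a genuine gap, and you name it yourself: nothing in the proposal establishes that your linear map $\varphi$ intertwines composition of $\mg$-homomorphisms with the signed surgery multiplication of Theorem~\ref{algebra_structure}. Your third and fourth paragraphs are a research program, not an argument: the super Schur--Weyl duality with level~$2$ cyclotomic VW-algebras is not constructed, the matching of VW-generators with merges and splits is not carried out, and the comparison with the $(-1)^{\op{pos}(i)}$ sign convention is not performed; these cannot be outsourced to \cite{ES2} and \cite{SS} within the logic of this paper, since those results are only cited as forthcoming. A secondary gap sits already in your second paragraph: gluing the isomorphisms \eqref{isoPS} over finite subsets $S$ into one isomorphism on all of $\op{End}_\mg(P)$ requires the maps $\Psi_S$ to be compatible with the inclusions of the directed system, and the paper explicitly only \emph{conjectures} this compatibility (it is part of the statement surrounding \eqref{isoPS1} in the introduction); Theorem~\ref{mainprop} and Proposition~\ref{cancel} produce the identifications essentially by dimension count, so no canonical compatible choice has been exhibited. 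In short, what you have written matches the approach the paper envisions, but under it the statement remains a conjecture, exactly as it does in the paper.
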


In particular, $\cF$ inherits a grading from $\mathbb{H}_k^\infty/\mathbb{I}$ induced from $\mathbb{H}_k^\infty/\mathbb{I}$.

\subsection{The case $\mathfrak{sosp}(2m|2n)$}
The case $\mathfrak{sosp}(2m|2n)$ is analogous to the previous one, but technically more difficult and with a slightly different dictionary. It will appear in an extra note.

\bibliographystyle{alpha}
\bibliography{references}

\def\cprime{$'$}
\begin{thebibliography}{CDVM09b}

\bibitem[AMR06]{AMR}
S.~Ariki, A.~Mathas, and H.~Rui.
\newblock Cyclotomic {N}azarov-{W}enzl algebras.
\newblock {\em Nagoya Math. J.}, 182:47--134, 2006.

\bibitem[BC90]{BoeCollingwood}
B.~D. Boe and D.~H. Collingwood.
\newblock Multiplicity free categories of highest weight representations. {I}.
\newblock {\em Comm. Algebra}, 18(4):947--1032, 1990.

\bibitem[BGS96]{BGS}
A.~Beilinson, V.~Ginzburg, and W.~Soergel.
\newblock Koszul duality patterns in representation theory.
\newblock {\em J. Amer. Math. Soc.}, 9(2):473--527, 1996.

\bibitem[BH09]{BHKostant}
B.~D. Boe and M.~Hunziker.
\newblock Kostant modules in blocks of category {${\mathcal O}_S$}.
\newblock {\em Comm. Algebra}, 37(1):323--356, 2009.

\bibitem[BKN11]{BKN}
B.~D. Boe, J.~R. Kujawa, and D.~K. Nakano.
\newblock Complexity and module varieties for classical {L}ie superalgebras.
\newblock {\em Int. Math. Res. Not. IMRN}, (3):696--724, 2011.

\bibitem[BN05]{BoeNakano}
B.~D. Boe and D.~K. Nakano.
\newblock Representation type of the blocks of category {$\mathcal{O}_S$}.
\newblock {\em Adv. Math.}, 196(1):193--256, 2005.

\bibitem[BP99]{BrionPolo}
M.~Brion and P.~Polo.
\newblock Generic singularities of certain {S}chubert varieties.
\newblock {\em Math. Z.}, 231(2):301--324, 1999.

\bibitem[Bra02]{Braden}
T.~Braden.
\newblock Perverse sheaves on {G}rassmannians.
\newblock {\em Canad. J. Math.}, 54(3):493--532, 2002.

\bibitem[BS11a]{BS1}
J.~Brundan and C.~Stroppel.
\newblock Highest weight categories arising from {K}hovanov's diagram algebra
  {I}: cellularity.
\newblock {\em Mosc. Math. J.}, 11(4):685--722, 2011.

\bibitem[BS11b]{BS3}
J.~Brundan and C.~Stroppel.
\newblock Highest weight categories arising from {K}hovanov's diagram algebra
  {III}: Category $\mathcal{O}$.
\newblock {\em Repr. Theory}, 15:170--243, 2011.

\bibitem[CDVM09a]{CMD1}
A.~Cox, M.~De~Visscher, and P.~Martin.
\newblock The blocks of the {B}rauer algebra in characteristic zero.
\newblock {\em Represent. Theory}, 13:272--308, 2009.

\bibitem[CDVM09b]{CMD2}
A.~Cox, M.~De~Visscher, and P.~Martin.
\newblock A geometric characterisation of the blocks of the {B}rauer algebra.
\newblock {\em J. Lond. Math. Soc. (2)}, 80(2):471--494, 2009.

\bibitem[CLW11]{CLW}
S.-J. Cheng, N.~Lam, and W.~Wang.
\newblock Super duality and irreducible characters of ortho-symplectic {L}ie
  superalgebras.
\newblock {\em Invent. Math.}, 183(1):189--224, 2011.

\bibitem[ES12a]{ES1}
M.~Ehrig and C.~Stroppel.
\newblock 2-row {S}pringer fibres and {K}hovanov diagram algebras for type
  {$D$}.
\newblock {\em arXiv:1209.4998}, 2012.

\bibitem[ES12b]{ES2}
M.~Ehrig and C.~Stroppel.
\newblock Graded cyclotomic {VW}-algebras and categorified coideal subalgebras.
\newblock {\em in preparation}, 2012.

\bibitem[FKK98]{FKK}
I.~B. Frenkel, M.~G. Khovanov, and A.~A. Kirillov, Jr.
\newblock Kazhdan-{L}usztig polynomials and canonical basis.
\newblock {\em Transform. Groups}, 3(4):321--336, 1998.

\bibitem[FKS06]{FKS}
I.~Frenkel, M.~Khovanov, and C.~Stroppel.
\newblock A categorification of finite-dimensional irreducible representations
  of quantum {$\mathfrak{sl}_2$} and their tensor products.
\newblock {\em Selecta Math. (N.S.)}, 12(3-4):379--431, 2006.

\bibitem[Ger00]{Germoni}
J.~Germoni.
\newblock Indecomposable representations of {${\rm osp}(3,2)$},
  {$D(2,1;\alpha)$} and {$G(3)$}.
\newblock {\em Bol. Acad. Nac. Cienc. (C\'ordoba)}, 65:147--163, 2000.
\newblock Colloquium on Homology and Representation Theory (Spanish)
  (Vaquer{\'{\i}}as, 1998).

\bibitem[GL96]{GL}
J.~J. Graham and G.~I. Lehrer.
\newblock Cellular algebras.
\newblock {\em Invent. Math.}, 123(1):1--34, 1996.

\bibitem[Gre98]{Green}
R.~M. Green.
\newblock Generalized {T}emperley-{L}ieb algebras and decorated tangles.
\newblock {\em J. Knot Theory Ramifications}, 7(2):155--171, 1998.

\bibitem[Gru03]{Gruson}
C.~Gruson.
\newblock Cohomologie des modules de dimension finie sur la super alg\`ebre de
  {L}ie {$\mathfrak{osp}(3,2)$}.
\newblock {\em J. Algebra}, 259(2):581--598, 2003.

\bibitem[GS10]{GS1}
C.~Gruson and V.~Serganova.
\newblock Cohomology of generalized supergrassmannians and character formulae
  for basic classical {L}ie superalgebras.
\newblock {\em Proc. Lond. Math. Soc. (3)}, 101(3):852--892, 2010.

\bibitem[GS13]{GS2}
C.~Gruson and V.~Serganova.
\newblock Bernstein-{G}elfand-{G}elfand reciprocity and indecomposable modules
  for classical algebraic supergroups.
\newblock {\em Moscow Math journal}, 13(2):1--33, 2013.

\bibitem[HM10]{MH}
J.~Hu and A.~Mathas.
\newblock Graded cellular bases for the cyclotomic
  {K}hovanov-{L}auda-{R}ouquier algebras of type {$A$}.
\newblock {\em Adv. Math.}, 225(2):598--642, 2010.

\bibitem[HTT08]{BuchHottaetal}
R.~Hotta, K.~Takeuchi, and T.~Tanisaki.
\newblock {\em {$D$}-modules, perverse sheaves, and representation theory},
  volume 236 of {\em Progress in Mathematics}.
\newblock Birkh\"auser Boston Inc., Boston, MA, 2008.
\newblock Translated from the 1995 Japanese edition by Takeuchi.

\bibitem[Irv85]{Irvingselfdual}
R.~S. Irving.
\newblock Projective modules in the category {${\mathcal O}_S$}: self-duality.
\newblock {\em Trans. Amer. Math. Soc.}, 291(2):701--732, 1985.

\bibitem[Kho00]{KhJones}
M.~Khovanov.
\newblock A categorification of the {J}ones polynomial.
\newblock {\em Duke Math. J.}, 101(3):359--426, 2000.

\bibitem[KL09]{KL}
M.~Khovanov and A.~D. Lauda.
\newblock A diagrammatic approach to categorification of quantum groups. {I}.
\newblock {\em Represent. Theory}, 13:309--347, 2009.

\bibitem[KX98]{KoenigXi}
S.~K{\"o}nig and C.~Xi.
\newblock On the structure of cellular algebras.
\newblock In {\em Algebras and modules, {II} ({G}eiranger, 1996)}, volume~24 of
  {\em CMS Conf. Proc.}, pages 365--386. Amer. Math. Soc., Providence, RI,
  1998.

\bibitem[Let02]{Letzter}
G.~Letzter.
\newblock Coideal subalgebras and quantum symmetric pairs.
\newblock In {\em New directions in {H}opf algebras}, volume~43 of {\em Math.
  Sci. Res. Inst. Publ.}, pages 117--165. Cambridge Univ. Press, Cambridge,
  2002.

\bibitem[LS12]{LS}
T.~Lejczyk and C.~Stroppel.
\newblock A graphical description of $({D}_n,{A}_{n-1})$ {K}azhdan-{L}usztig
  polynomials.
\newblock {\em GMJ}, 55:313--340, 2012.

\bibitem[Maz12]{Mazcat}
V.~Mazorchuk.
\newblock {\em Lectures on algebraic categorification}.
\newblock QGM Master Class Series. European Mathematical Society (EMS),
  Z\"urich, 2012.

\bibitem[MS94]{Martinblob}
P.~Martin and H.~Saleur.
\newblock The blob algebra and the periodic {T}emperley-{L}ieb algebra.
\newblock {\em Lett. Math. Phys.}, 30(3):189--206, 1994.

\bibitem[Rou08]{Rou2KM}
R.~Rouquier.
\newblock 2-{K}ac-{M}oody algebras, 2008.
\newblock arXiv:0812.5023.

\bibitem[Sar13]{Antonio}
A.~Sartori.
\newblock Categorification of tensor powers of the vector representation of
  $\mathcal{U}_q(gl(1|1))$, 2013.
\newblock arXiv:1305.6162.

\bibitem[Ser11]{Vera}
V.~Serganova.
\newblock Quasireductive supergroups.
\newblock In {\em New developments in {L}ie theory and its applications},
  volume 544 of {\em Contemp. Math.}, pages 141--159. Amer. Math. Soc.,
  Providence, RI, 2011.

\bibitem[Soe90]{Sperv}
W.~Soergel.
\newblock Kategorie {$\mathcal O$}, perverse {G}arben und {M}oduln \"uber den
  {K}oinvarianten zur {W}eylgruppe.
\newblock {\em J. Amer. Math. Soc.}, 3(2):421--445, 1990.

\bibitem[Soe97]{SoergelKL}
W.~Soergel.
\newblock Kazhdan-{L}usztig polynomials and a combinatoric[s] for tilting
  modules.
\newblock {\em Represent. Theory}, 1:83--114, 1997.

\bibitem[SS13]{SS}
V.~Serganova and C~Stroppel.
\newblock Finite dimensional representations of the {L}ie supergroup
  $\op{SOSP}(m|n)$, 2013.
\newblock in preparation.

\bibitem[Str09]{Str09}
C.~Stroppel.
\newblock {Parabolic category {$\mathcal{O}$}, perverse sheaves on
  Grassmanninans, Springer fibres and Khovanov homology}.
\newblock {\em Comp. Math.}, 145:954--992, 2009.

\end{thebibliography}

\end{document}